\newtheorem{theorem}{Theorem}
\newtheorem{lemma}{Lemma}
\newtheorem{proposition}{Proposition}
\newtheorem{corollary}{Corollary}
\newtheorem{remark}{Remark}
\newcommand{\R}{\mathbb{R}}
\newcommand{\N}{\mathbb{N}}
\newcommand{\E}{\mathbb{E}}
\newcommand{\CL}[1]{C_{#1}}
\newcommand{\Cbar}{\bar C}
\newcommand{\Cnu}{c_{\nu}}
\newcommand{\Bernoulli}[1]{}
\newcommand{\revision}[1]{#1}
\title{Sparse Signal Detection in Heteroscedastic  Gaussian Sequence Models: Sharp Minimax Rates}
\author[1]{Julien Chhor}
\author[1]{Rajarshi Mukherjee}
\author[1]{Subhabrata Sen}
\affil[1]{Harvard University}
\date{}
\begin{document}

\maketitle
\vspace{-12mm}

\begin{center}
    Contact: jchhor@hsph.harvard.edu, ram521@mail.harvard.edu, subhabratasen@fas.harvard.edu
\end{center}

\begin{abstract}
    Given a heterogeneous Gaussian sequence model with unknown mean $\theta \in \R^d$ and known covariance matrix $\Sigma = \operatorname{diag}(\sigma_1^2,\dots, \sigma_d^2)$, we study the signal detection problem against sparse alternatives, for known sparsity~$s$. 
    Namely, we characterize how large $\epsilon^*>0$ should be, in order to distinguish with high probability the null hypothesis $\theta=0$ from the alternative composed of $s$-sparse vectors in $\R^d$, separated from $0$ in $L^t$ norm ($t \in [1,\infty]$) by at least~$\epsilon^*$.
    We find non-asymptotic minimax upper and lower bounds over the minimax separation radius $\epsilon^*$ and prove that they are always matching. 
    We also derive the corresponding minimax tests achieving these bounds. 
    Our results reveal new phase transitions regarding the behavior of $\epsilon^*$ with respect to the level of sparsity, to the $L^t$ metric, and to the heteroscedasticity profile of $\Sigma$. In the case of the Euclidean (i.e. $L^2$) separation, we bridge the remaining gaps in the literature.
\end{abstract}
\section{Introduction}
Global testing against structured alternatives is a canonical problem in modern statistics. Under the minimax hypothesis testing framework formulated by \cite{burnashev1979minimax,ingster1982minimax,ingster1987minimax,ingster2003nonparametric}, the central object of interest is the \emph{minimax separation radius}--- the smallest separation between the null and alternative hypotheses so that consistent detection is possible.  Motivated by questions arising from genomics, communications, social sciences etc., diverse global testing problems have been rigorously investigated, and their associated separation radii have been characterized -- see e.g. \cite{burnashev1979minimax,ingster1982minimax,ingster1987minimax,ingster2003nonparametric,donoho2004higher,hall2010innovated,arias2011global,ingster2010detection,tony2011optimal} and references therein. Further papers have investigated the effect of nuisance parameters (e.g. effect of dependence parameters on high-dimensional mean testing problems)  on the separation radius \cite{hall2010innovated,kotekal2021minimax,mukherjee2018global}. Responding to this line of research, in this paper, we  provide a complete picture of the precise dependence on the minimax rate of testing for the mean vector in  the Gaussian Sequence Model \cite{ingster2003nonparametric} as a function of a heteroscedastic variance profile.

The Gaussian sequence model furnishes arguably the most canonical setup to explore the fundamental thresholds for global testing. Starting with the original works of \cite{burnashev1979minimax,ingster1982minimax,ingster1987minimax,ingster2003nonparametric}, the global testing problem has been carefully studied in this context under diverse structured alternatives, and under many different notions of separation. In the modern age of big-data, sparse alternatives are particularly important. The detection thresholds for Gaussian sequence model under sparse alternatives has been derived in \cite{ingster2003nonparametric,donoho2004higher,hall2010innovated,arias2011global,ingster2010detection}. However, most of the existing literature focuses on the homoscedastic sequence model, where the error variances are all equal. In this work, we go beyond the homoscedastic case, and derive the separation radius for heteroscedastic Gaussian sequence models under sparse alternatives.

\vspace{2mm}
Hereafter, we write $[k] = \{1,\dots,k\}$ for any positive integer $k$. 
For any positive integer $d$, any vector $u \in \R^d$, and any $t >0$, we let $\|u\|_t = \left(\sum_{j=1}^d |u_j|^t\right)^{1/t}$, $\|u\|_\infty = \max_{i=1}^d |u_i|$ and $\|u\|_0 = \operatorname{Card} \left\{j\in [d] \,\big|\, u_j \neq 0\right\}$. 

\subsection{Model}
Fix $d$ real numbers $\sigma_1 \geq \dots \geq \sigma_d >0$, which we assume to be known throughout the paper. 
For some unknown $\theta \in \R^d$, suppose we observe $X = (X_1,\dots, X_d)$ where $\forall j \in [d]: X_j = \theta_j + \sigma_j \xi_j$. 
Here, $\xi_j \sim \mathcal{N}(0, \sigma_j^2)$ are independent and distributed as normal random variables with mean $0 \in \R^d$ and variance $\sigma_j^2$.
We shall often denote this observation scheme as $X \sim \mathcal{N}(\theta, \Sigma)$ where $\theta \in \R^d$ and $\Sigma = \operatorname{diag}(\sigma_1^2, \dots, \sigma_d^2)$. 
For some $t \in [1,\infty]$, some known sparsity $s \in [d]$, and some $\epsilon>0$, we define $\Theta(\epsilon,s,t) = \big\{\theta \in \R^d ~\big|~ \|\theta\|_t \geq \epsilon \text{ and }
    \|\theta\|_0 \leq s\big\}$ and consider the following testing problem
\begin{align}
    H_0: \theta = 0 \quad \text{ against } \quad H_1: \theta \in \Theta(\epsilon,s,t).\label{eq:testing_pb}
\end{align}

Here, the parameter $\epsilon>0$ induces a separation between the two hypotheses, and our goal is to characterize how large $\epsilon$ should be for the testing problem~\eqref{eq:testing_pb} to be feasible in a sense defined below. 
The $L^2$ separation (i.e. the $t=2$ case) is by far the most studied case in the literature~\cite{baraud2002non,ingster2010detection,laurent2012non,collier2017minimax,kotekal2021minimax,liu2021minimax}. 
In this paper, we go beyond the $L^2$ case and study general separations in $L^t$ norm for any $t  \in [1, \infty]$. 

To fix ideas, we set up a few definitions and define some notation. A \textit{test} $\psi$ is defined as a measurable function of the data $X$ taking its values in $\{0,1\}$:
$$ \psi: \R^d \longrightarrow \{0,1\}.$$

In the minimax paradigm, one measures the quality of any test by its \textit{risk}.
We assume that the matrix $\Sigma$ is fixed and known, and for any $\theta \in \R^d$, we let $\mathbb P_\theta$ denote the probability measure associated with the law of $X \sim \mathcal N(\theta, \Sigma)$. 
Then the \textit{risk} of test $\psi$ is defined as the sum of its type I and type II error probabilities:
\begin{align}
    R(\psi, \epsilon, s,t,\Sigma):= \mathbb P_{0} \left(\psi = 1\right) + \sup \left\{\mathbb P_{\theta} \left(\psi = 0\right)~ \Big| ~ \theta \in \Theta(\epsilon,s,t)\right\}.\label{eq:risk}
\end{align}

Note that in equation~\eqref{eq:risk}, the notation $\mathbb P_\theta$ denotes the probability distribution of $\mathcal{N}(\theta, \Sigma)$ and implicitly depends on $\Sigma$.
The \textit{minimax risk} represents the infimal risk among all possible tests, and can be understood as the risk of the best test if it exists:
\begin{align}
    R^*(\epsilon,s,t,\Sigma) := \inf_{\psi} R(\psi,\epsilon,s,t,\Sigma).
\end{align}

In the above definition, the infimum is taken over all tests $\psi$.
Note that if $R^*(\epsilon,s,t,\Sigma)=1$, then random guessing is optimal (the test $\overline \psi$ that accepts $H_0$ with probability $1/2$ independently of the data $X$ achieves a risk equal to $1$).
Therefore, we say that the testing problem~\eqref{eq:testing_pb} is feasible if, for some tolerance $\eta \in (0,1)$ fixed in advance, we have $R^*(\epsilon,s,t,\Sigma)\leq \eta<1$.
The sparsity $s$, the metric-inducing norm $\|\cdot\|_t$ and the covariance matrix $\Sigma$ being fixed, the difficulty of this testing problem is entirely characterized by the separation parameter $\epsilon>0$. 
Noting that $R^*(\epsilon,s,t,\Sigma)$ decreases with respect to $\epsilon$, our goal is therefore to determine the smallest value of $\epsilon$ ensuring feasibility of Problem~\eqref{eq:testing_pb}. 
This value is referred to as the \textit{minimax separation radius}, defined as
\begin{align}
    \epsilon^*(s,t,\Sigma) = \inf\left\{\epsilon>0 \;\big|\; R^*(\epsilon,s,t,\Sigma) \leq \eta\right\}.
\end{align}

Note that we drop the dependency of $\epsilon^*(s,t,\Sigma)$ on $\eta$ as this parameter is assumed to be a fixed constant throughout the paper. 
Our goal is to characterize the minimax separation radius $\epsilon^*(s,t,\Sigma)$ up to multiplicative constants depending only on $\eta$ and $t$.
\vspace{2mm}

\subsection{Motivation and background} 

\revision{
One specific example which motivates our mathematical framework pertains to large-scale genetic data analysis where one aims to test for the effects of a collection of genetic variants on a particular disease phenotype \cite{kraft2009genetic,mccarthy2008genome,manolio2009finding,visscher2012five}.
To do so, a common procedure pertains to developing a  test statistic for each individual genetic variant by computing a z-score -- that is centered if the genetic variant has no influence \cite{laird2011fundamentals} and has a mean that reflects the strength of the effect of the genetic variant on the phenotype otherwise. Moreover one also assumes the operates under approximate normality of the Z-scores \cite{visscher2012five,barnett2017generalized,sun2020genetic,zhang2022general} and thereby they can be thought to jointly represent an approximate Gaussian sequence model.
In this regard, since different genetic variants occur under different minor allele frequencies \cite{hartl1997principles}, the standard errors of these Z-scores are naturally heteroscedastic. Moreover, under careful LD-pruning one often also assumes almost independence of the resulting Z-scores \cite{hartl1997principles}.
Finally, only a few out of many genetic variants are often thought to be related to the phenotype -- and thereby implying sparsity on the mean vector of the Z-scores \cite{visscher2012five}. 
In terms of the statistical question of interest, given the possibility of a large number of heteroscedastic genetic variants,  one might not have enough data to accurately identify the variants responsible for the disease, if any \cite{lee2014rare,li2008methods}. Therefore, an investigator often is only interested in detecting whether genetic variants can explain some variability of the disease phenotype at all \cite{lee2014rare,barnett2017generalized}. This naturally motivates us to explore the setting~\eqref{eq:testing_pb} where our goal is to determine the minimum signal strength that allows for consistent detection. 
\vspace{2mm}
}

In terms of theoretical motivations, the aim of this paper is two-fold:
\vspace{-2mm}
\begin{itemize}
    \item To go beyond isotropic noise and study the interplay between sparsity and heteroscedasticity for testing~\eqref{eq:testing_pb}.
    \vspace{0mm}
    \item To go beyond the Euclidean separation ($t=2$), by providing a complete overview of the results for the $L^t$ separation, $t \in [1,\infty]$.
\end{itemize} 

We explain here why these two goals are natural and challenging.

\vspace{1mm}

{\textit{Heteroscedasticity}:} 
Here we discuss a another class of motivating examples in the setting of statistical inverse problems \cite{laurent2012non}, as described below,   where it is natural to consider heteroscedastic variance profiles. 
    Specifically, consider a Hilbert space $H$ with inner product $(\cdot,\cdot)$, and suppose that $T$ denotes a known linear operator on $H$. Assume moreover that, for an unknown element $f \in H$, our observations $Y$ from the following model
    \begin{align}
        Y(g) = (Tf,g) + \sigma \varepsilon(g), \quad \forall g \in H,  \nonumber 
    \end{align}
    where $\varepsilon(g)$ is a centered Gaussian with variance $(g,g)$. 
    If the operator $T$ is compact, there exist  two orthonormal bases of $H$, denoted as $(\psi_j)_j$ and $(\phi_j)_j$, for which the following singular value decomposition holds $T \phi_j = \lambda_j \psi_j$ and $T^* \psi_j = \lambda_j \phi_j$, for any $j\in\N$, where $T^*$ is the adjoint of $T$. 
    In particular, the observations $(Y(\psi_j)_{j\in\N})$ are independent and distributed as $Y(\psi_j) \sim \mathcal{N}(\lambda_j \theta_j, \sigma^2)$, where $\theta_j = (f, \phi_j)$ for any $j\in\N$. This is clearly equivalent to the heteroscedastic Gaussian sequence model introduced above, with $\sigma_j^2 = \sigma^2/\lambda_j^2$. We note that under an appropriate choice of basis $\phi_j$, the representation of the function $f$ is often \emph{sparse}---we refer the interested reader to \cite{johnstone2002function,laurent2012non} for an in-depth discussion of this phenomenon. 
    \vspace{1mm}

    \revision{Heteroscedasticity also arises naturally in some applications in genetics, where the simplification to the Gaussian sequence model comes from the computation of summary statistics. 
    Each one may have a different standard deviation, which is then treated as a known variance parameter in later analysis (see~\cite{stephens2017false} for instance).}
    \vspace{1mm}

Exploring heteroscedasticity turns out to be significantly more complex than testing with isotropic noise, even for $t=2$, as the transition phenomena depend on intricate interactions between the sparsity level and the heteroscedasticity profile of $\Sigma$. 

\vspace{2mm}


{\textit{$L^t$ separations}:} The geometry induced by the $\|\cdot\|_t$ metric changes which signals are easier to detect. 
Since we aim at detecting signals $\theta$ for which $\|\theta\|_t$ is large, the $L^t$ norms for large $t$ are more adapted to detect imbalanced signals, such as, for instance, those with one dominating coordinate. 
The extreme case of the $L^\infty$ norm can be used if we aim at detecting if the signal has at least \textit{one} non-zero coordinate at all.
In contrast, the $L^1$ norm enables us to better detect signals with numerous tiny coordinates, but large total mass. 
\vspace{2mm}

We motivate in more detail the case of the $L^1$, $L^2$ and $L^\infty$ norms, which stand out as important particular cases of our results and could have implications in related settings. 
Another goal of the present paper is also to thoroughly study the interpolation between those three cases of interest.
\vspace{-1mm}
\begin{enumerate}
    \item \revision{The $L^1$ metric has an important connection with the total variation distance in the context of discrete (i.e. multinomial) distributions.    Namely, if $p$ and $q$ are two probability vectors over the space $\{1,\dots, d\}$, then the total variation between the probability measures induced by $p$ and $q$ is equal to $\|p-q\|_1/2$.} 
    Considering that multinomial data are fundamentally heteroscedastic, our results could find applications of independent interest in the important field of distribution learning and testing, where sparse alternatives have recently received much attention~\cite{bhattacharya2021sparse,donoho2022higher,kipnis2022higher,kipnis2021unification,kipnis2021two}. 
    We provide a more detailed discussion about our results' implications for multinomial testing in Section~\ref{subsec:mult_testing}.
    It turns out that our results in $L^1$ separation are not direct analogs of the case $t=2$, and instead exhibit much more intricate phase transitions. 
    \vspace{0mm}
    \item The $L^2$ or Euclidean separation has a natural geometric interpretation, and its smoothness properties make it an ideal choice for studying signal detection. 
    As such, it constitutes a canonical case to compare results with existing literature. 
    In Euclidean separation, signal detection under general covariance matrix is well understood~\cite{ingster2003nonparametric,laurent2012non}. 
    Signal detection under sparsity and isotropic noise is also well understood~\cite{collier2017minimax}. 
    However, the interaction of the two constraints raises important challenges and is just starting to be explored (see Subsection~\ref{subsec:prior_results}).
    \vspace{0mm}
    \item It turns out that the case of the $L^\infty$ has occurred naturally and extensively in the literature of hypothesis testing under sparse alternatives \cite{cai2021statistical,cai2013two,cai2014two,cai2014high,cao2018two,chang2017simulation,li2021novel,xia2018two,xue2020distribution,yu2022power,zhang2017simultaneous}.   Indeed, for homoscedastic variance profiles it is easy to see that testing in $L^\infty$-separation is naturally equivalent to testing with a sparsity of $s=1$. Our results supplement this philosophy for heteroscedastic variance profiles and complete the picture of hypothesis testing against sparse alternatives in Gaussian sequence models under any $L_r$-norm.  
\end{enumerate}

Going beyond the $L^2$ case presents numerous technical challenges. 
For instance, when $t \in [1,2)$, the functional $\|\cdot\|_t$ is not twice continuously differentiable, which raises complications for constructing chi-square type test statistics, which are based on degree $2$ polynomials in the coordinates $X_j$'s. 
Similar difficulties for estimating non-smooth functionals have been highlighted in~\cite{cai2011testing,han2020estimation,lepski1999estimation}.
We further discuss the technical challenges raised by the $L^t$ separation in the Sections below.

\subsection{Prior results and contributions}\label{subsec:prior_results}
\textit{Prior results:} As remarked above, global null testing for the Gaussian sequence model has been extensively studied, for various notions of separation and alternatives. 
In the Gaussian mean model, minimax testing under sparse alternatives was pioneered by~\cite{ingster1997some,donoho2004higher} in the asymptotic where $s = d^{\beta}$, for $\beta \in (0,1)$. 
Nonasymptotic rates were first given in~\cite{baraud2002non}, matching up to an extra logarithmic gap in the upper bound, which was later closed by~\cite{collier2017minimax}.  
However, all of the above papers only restrict to isotropic noise. 
It turns out that non-isotropic Gaussian noise still represents a challenge in sparse signal detection. 
Important attempts to go beyond isotropic noise include~\cite{kotekal2021minimax}, dealing with correlated noise structures, and more closely, \cite{laurent2012non} considering the same heteroscedastic sequence model (Problem~\ref{eq:testing_pb}) studied in this paper, but only with Euclidean separation ($t=2$). 
Leveraging techniques developed in~\cite{baraud2002non}, they obtained non-asymptotic 
upper and lower bounds on the minimax separation radius, which did not match for specific profiles of the covariance matrix $\Sigma$. 

\vspace{2mm}

\textit{Contributions:} In this paper, we provide a complete understanding of sparse signal detection with diagonal noise covariance matrix and general $L^t$ separation, $t\in[1,\infty]$. 
We derive upper and lower bounds for the minimax separation radius that are always matching, and explicitly construct the corresponding minimax tests. 
All of our results are non-asymptotic. 
We uncover new interplays between sparsity, the $L^t$ metric, and to the heteroscedasticity profile of $\Sigma$, and thoroughly study the corresponding phase transitions.
To the best of our knowledge, the matching upper and lower bounds for diagonal covariance had not been established in the literature, even for the $L^2$ separation.


\revision{To obtain some insight into our results, we will show that the minimax separation radius $\epsilon^*(s,t,\Sigma)$ can be derived by solving the following optimization problem:
\begin{align*}
    \epsilon^*(s,t,\Sigma)^t \asymp \max_{\gamma, \pi} ~~ \sum\limits_{j=1}^d \gamma_j^t \pi_j ~~ \text{ s.t. } ~~ \begin{cases}\sum_{j=1}^d \pi_j = s/2\\
    \pi_j \in [0,1] ~~ \forall j \in [d]\\
    \sum_{j=1}^d \pi_j^2 \sinh^2\left(\frac{\gamma_j^2}{2\sigma_j^2}\right) \leq c,\end{cases}
\end{align*}
for some small enough constant $c>0$ depending only on $\eta$. 
Although this is not how $\epsilon^*(s,t,\Sigma)$ is expressed in Theorems~\ref{th:rate_ellt_>2}--\ref{th:rate_Linfty}, this expression can be understood from the lower bound perspective. 
More precisely, for $s$ larger than some $c(\eta)$, the optimal prior is obtained by defining a random vector $\theta \in \R^d$ whose coordinates $\theta_j$ are mutually independent and satisfy 
\begin{align*}
    \forall j \in [d]: \theta_j = b_j \omega_j \gamma_j,
\end{align*}
where $b_j \sim \operatorname{Ber}(\pi_j)$, $\omega_j \sim \operatorname{Rad}(1/2)$ are mutually independent, and where $\pi_j$ and $\gamma_j$ solve the above optimization problem. 
The term $\sum_{j=1}^d \gamma_j^t \pi_j$ represents $\mathbb E\|\theta\|_t^t$, and the constraints $\sum_{j=1}^d \pi_j = s/2$ and $
    \pi_j \in [0,1]$ ensure that $\|\theta\|_0 \leq s$ with high probability. 
    Moreover, the condition $\sum_{j=1}^d \pi_j^2 \sinh^2\big(\gamma_j^2/2\sigma_j^2\big) \leq c$ ensures \textit{indistinguishability condition} 
    $$\operatorname{TV}^2(\mathbb P_{prior}, \mathbb P_0) \leq \chi^2(\mathbb P_{prior}, \mathbb P_0) \leq c,$$ 
where $\mathbb P_{prior}$ denotes the probability distribution induced by the above prior. 
Understanding the phase transitions in the behavior of $(\pi_j)_j$ and $(\gamma_j)_j$ gives valuable insights into the phase transitions occurring in the problem under study, including for the upper bounds. 
We discuss this optimization problem in more detail in Subsections~\ref{subsec:LB_t>2} and~\ref{subsec:LB_ellt}, and in Appendix~\ref{appendix:derivation_weights}, we give a heuristic intuition on how some test statistics from the upper bounds can be derived from this optimization problem as well.
}

\vspace{2mm}

\textit{Organization:} The rest of the paper is structured as follows. In Section~\ref{sec:results_ellt_>2}, we present the case where $t \geq 2$ before moving to the case $t \in [1,2]$ in Section~\ref{sec:results_ellt_leq2} and $t=\infty$ in Section~\ref{sec:Results_Linfty}. We give some examples in Section~\ref{sec:examples}, and conclude with a discussion of our results and some directions for future inquiry in Section \ref{sec:discussion}.

\vspace{2mm} 

\textit{Notation:} We denote by $\N^*$ the set of positive integers. 
For any $k \in \N^*$, we denote by $I_k$ the identity matrix of size $k$. 
Let $\eta>0$ and $t\in [1,\infty]$. 
For any two real-valued functions $f$ and $g$, we write $f \lesssim g$ (resp. $f \gtrsim g$) when there exists a constant $c(\eta,t) >0$ (resp. $C(\eta,t) >0$) depending only on $\eta$ and $t$, such that $c(\eta,t) \cdot g \leq f$ (resp. $f \leq C(\eta,t) \cdot g$). We write $f \asymp g$ if $g \lesssim f \text{ and } f \lesssim g$. 
We respectively denote by $x \vee y$ and $x \wedge y$ the maximum and minimum of the two real values~$x$ and~$y$, and we set $x_+ = x \lor 0$.
Note that the constants denoted by $C$ or $c$, depending on $\eta$ and $t$, are allowed to take different values on each appearance.
We also denote by $\operatorname{TV}(P,Q)$ the total variation between any two probability measures $P,Q$ defined over the same measurable space $(\mathcal{X},\mathcal{U})$. 
For any $d \in \mathbb N^*$, and for any property $P(j)$ over index $j \in [d]$, we set $\max\big\{j \in [d] ~\big|~ P(j)\big\} = 0$ if for any $j \in [d]$, $P(j)$ is false.

\section{Minimax rates in $L^t$ separation for $t \in [2,\infty)$}\label{sec:results_ellt_>2}

In this Section, we study Problem~\eqref{eq:testing_pb} with $t \in [2,\infty)$, and highlight that all of these problems can be approached in a similar way. 
The $L^2$ separation represents an important special case of this Section's results, as it has been extensively studied in the literature. 
Here we bridge the remaining gaps in Euclidean separation, which were left as an open question in~\cite{laurent2012non}.
To present the main result, we first need to introduce a few definitions.

Let $\beta \in \R$ be the unique solution to the equation
\begin{align}
    \frac{\sum_{j=1}^d \sigma_j^{t} \exp\left(-\beta/\sigma_j^2\right)}{\sqrt{\sum_{j=1}^d \sigma_j^{2t} \exp\left(-\beta/\sigma_j^2\right)}} =
     \frac{s}{2}, \quad \text{ and let } \lambda = \sqrt{\beta_+}.\label{eq:beta_t>2}
\end{align}

We first observe that the values $\beta$ and $\lambda$ in~\eqref{eq:beta_t>2} are well-defined: Lemma~\ref{lem:monotonicity_t>2} guarantees that equation~\eqref{eq:beta_t>2} admits a unique solution, by ensuring that the left-hand side is continuous, strictly decreasing, and tends to $+\infty$ as $\beta \to -\infty$ and to $0$ as $\beta \to +\infty$. 
The parameter $\lambda$ connects the three components of the problem under study, namely: the sparsity $s$, the heteroscedasticity profile of $\Sigma$, and the metric-inducing norm $\|\cdot\|_t$. 
The information captured by $\lambda$ is essential, as this value fundamentally appears in the expression of the minimax separation radius $\epsilon^*(s,t,\Sigma)$, in our lower bounds, and in the construction of our minimax optimal tests.
Unfortunately, the value of $\lambda$ cannot be expressed explicitly as a function of the $\sigma_j$'s in general, although it is possible to solve equation~\eqref{eq:beta_t>2} for some specific profiles of $\Sigma$ (see Section~\ref{sec:examples}).
Theorem \ref{th:rate_ellt_>2} below states the expression of the minimax separation radius.

\begin{theorem}\label{th:rate_ellt_>2}
Assume that $t \geq 2$.
Let $\lambda$ be defined as in equation~\eqref{eq:beta_t>2} and let $\nu = \bigg[\sum_{j=1}^d\sigma_j^{2t} e^{-\lambda^2/\sigma_j^2}\bigg]^{1/2t}$. Then the following hold.

\begin{enumerate}[label=\textbf{\roman*}.]
    \item\label{th:rate_ellt_>2_lower} [Lower Bound] 
    There exists a small constant $c$ depending only on $\eta$, such that 
    \vspace{-2mm}
    $$\epsilon^*(s,t,\Sigma)^t \geq  c \big(\lambda^t s + \nu^t \big).$$
    \Bernoulli{\vspace{5mm}}
    \vspace{-10mm}
    
    \item\label{th:rate_ellt_>2_upper} [Upper Bound] There exists a large enough constant $C'$ depending only on $\eta$ such that the test $\psi$ defined in~\eqref{eq:def_tests_>2} satisfies
    \vspace{-3mm}
\begin{align*}
\begin{cases}
    \mathbb P_\theta\left(\psi = 1\right) \leq \eta/2 & \text{ if } ~~ \theta=0,\\
    \mathbb P_\theta\left(\psi = 0\right)\leq \eta/2 & \text{ if } ~ \|\theta\|_0 \leq s \text{ and }\big\|\theta\big\|_t^t \geq C'\big(\lambda^t s + \nu^t\big).
\end{cases}  
\end{align*}
Therefore, $\epsilon^*(s,t,\Sigma)^t \leq C'\!\left( \lambda^t s \hspace{-.5mm} + \hspace{-.5mm} \nu^t\right).$
\end{enumerate}
\end{theorem}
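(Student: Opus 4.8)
The plan is to establish the upper and lower bounds separately, with the bulk of the work being a careful choice of prior for the lower bound and a careful design of the test statistic for the upper bound; the two bounds are linked through the common quantities $\lambda$ and $\nu$, which in turn come from the saddle-point analysis of the optimization problem sketched in the introduction. For the \emph{lower bound}, I would use the standard reduction: to show $R^*(\epsilon,s,t,\Sigma)>\eta$ for $\epsilon^t < c(\lambda^t s+\nu^t)$, it suffices to exhibit a prior $\mu$ supported on $\Theta(\epsilon,s,t)$ (up to a negligible-probability event) such that $\chi^2(\mathbb P_\mu,\mathbb P_0)\le c'$. Guided by the displayed optimization problem, I would take $\theta_j = b_j\omega_j\gamma_j$ with $b_j\sim\mathrm{Ber}(\pi_j)$, $\omega_j\sim\mathrm{Rad}(1/2)$ independent, where the weights are essentially $\gamma_j^2 \asymp \lambda^2$ on a ``bulk'' set of coordinates (those with $\sigma_j$ not too large) and $\pi_j$ chosen so that $\sum_j\pi_j = s/2$ and $\sum_j \pi_j^2\sinh^2(\gamma_j^2/2\sigma_j^2)\lesssim 1$. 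Here the Gaussian $\chi^2$-divergence factorizes over coordinates, $\chi^2(\mathbb P_\mu,\mathbb P_0)+1 = \prod_j\big(1+\pi_j^2(\cosh(\gamma_j^2/\sigma_j^2)-1)\big)$, so the indistinguishability constraint is exactly the third constraint above; the defining equation \eqref{eq:beta_t>2} for $\beta=\lambda^2$ is precisely the first-order (Lagrangian) optimality condition that balances $\sum\gamma_j^t\pi_j$ against these two constraints. One then checks $\mathbb E\|\theta\|_t^t = \sum_j\gamma_j^t\pi_j \gtrsim \lambda^t s+\nu^t$ and uses concentration (Bernstein/Chernoff for $\|\theta\|_0 = \sum b_j$, and a second-moment or Paley–Zygmund argument for $\|\theta\|_t^t$) to move from the expectation to the event $\theta\in\Theta(\epsilon,s,t)$, discarding the small exceptional mass by a standard truncation of the prior.

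For the \emph{upper bound}, the test $\psi$ in \eqref{eq:def_tests_>2} (which I have not yet seen, but whose shape is dictated by the heuristic in Appendix~\ref{appendix:derivation_weights}) should be a combination of two pieces matching the two terms $\lambda^t s$ and $\nu^t$: (i) a thresholding/scan-type statistic — count or sum a function of the largest standardized coordinates $|X_j|/\sigma_j$, with threshold of order $\lambda$, which detects the ``many moderate spikes'' regime contributing $\lambda^t s$; and (ii) a weighted quadratic (chi-square–type) statistic $\sum_j w_j(X_j^2-\sigma_j^2)$ with weights $w_j \propto \sigma_j^{2t-2}e^{-\lambda^2/\sigma_j^2}$ (so that $\|w\|$ is governed by $\nu$), which detects the ``diffuse small signal'' regime contributing $\nu^t$. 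For each sub-test I would: bound the null fluctuations (Gaussian tail bounds for the scan part; variance computation plus Chebyshev, or a Hanson–Wright/Bernstein bound for the quadratic part) to get type-I error $\le\eta/4$ after calibrating the threshold; and under any $\theta$ with $\|\theta\|_0\le s$ and $\|\theta\|_t^t\ge C'(\lambda^t s+\nu^t)$, show that at least one of the two statistics is pushed beyond its critical value. The key structural fact is a dichotomy: either the ``large'' coordinates (those with $|\theta_j|\gtrsim\lambda\sigma_j$, of which there are at most $s$) already account for a constant fraction of $\|\theta\|_t^t$ — then the scan statistic fires — or else the remaining mass is spread over ``small'' coordinates, where Cauchy–Schwarz against the weights $w_j$, together with the constraint $\|\theta\|_0\le s$ and the definition of $\nu$ via \eqref{eq:beta_t>2}, forces the quadratic statistic to exceed its threshold. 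Taking $\psi = \max$ of the two sub-tests and a union bound over their type-I errors completes the argument.

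The final statement of the theorem, $\epsilon^*(s,t,\Sigma)^t \le C'(\lambda^t s+\nu^t)$, is then immediate: by definition $\epsilon^*(s,t,\Sigma) = \inf\{\epsilon : R^*(\epsilon,s,t,\Sigma)\le\eta\}$, and the test $\psi$ constructed above has $R(\psi,\epsilon,s,t,\Sigma)\le\eta/2+\eta/2=\eta$ for every $\epsilon$ with $\epsilon^t\ge C'(\lambda^t s+\nu^t)$, so every such $\epsilon$ lies in the feasible set and the infimum is at most $\big(C'(\lambda^t s+\nu^t)\big)^{1/t}$.

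\textbf{Main obstacle.} I expect the hard part to be the upper bound, specifically making the dichotomy quantitative: one must show that the implicit threshold $\lambda$ coming from \eqref{eq:beta_t>2} is \emph{simultaneously} the right cutoff separating ``scan-detectable'' from ``quadratic-detectable'' coordinates and the right scale to calibrate both statistics, and one must control the quadratic statistic's type-I error uniformly even though the weights $w_j$ can be very heterogeneous (the effective number of contributing coordinates may be as small as a constant, so a crude variance bound does not suffice and one needs the precise form of $\nu$). For $t\in[2,\infty)$ the functional $\|\cdot\|_t^t$ is $C^2$ away from the origin, so the quadratic linearization is benign here; the genuine non-smoothness complications flagged in the introduction arise only for $t<2$, which is treated in the next section.
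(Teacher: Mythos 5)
Your lower-bound plan matches the paper's in all essential respects: the paper uses exactly the Bernoulli--Rademacher prior $\theta_j=b_j\omega_j\gamma_j$ you describe, computes $\chi^2(\mathbb P_{\pi},\mathbb P_0)=\prod_j\big(1+2\pi_j^2\sinh^2(\gamma_j^2/2\sigma_j^2)\big)-1$, verifies $\sum_j\pi_j=s/2$ via the defining equation for $\lambda$, and uses Chebyshev to control the random sparsity $\sum_j b_j$. Two small imprecisions: (i) the optimal magnitudes satisfy $\gamma_j\asymp\lambda+\sigma_j$, not $\gamma_j\asymp\lambda$ — the dense coordinates with $\sigma_j\geq\lambda$ carry perturbations of size $\sigma_j$; (ii) the paper handles the regime $s\leq c(\eta)$ separately with a $1$-sparse prior, because for small $s$ the Chebyshev bound on $\sum_j b_j$ fails.

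Your upper-bound plan, however, diverges from the paper in a way that would not close the argument for $t>2$. The paper's test splits the coordinates at $j_*=\max\{j:\sigma_j\geq\lambda\}$ and uses $T_{dense}=\sum_{j\leq j_*}\big(|X_j|^t-\mathbb E_{H_0}|X_j|^t\big)$ on the large-variance block and $T_{sparse}=\sum_{j>j_*}\big(|X_j|^t-\alpha_j\big)\mathbb 1\{|X_j|^t>\uptau\}$ on the small-variance block; both are sums of $t$-th absolute powers (with no $\sigma_j$-dependent reweighting and no per-coordinate standardization), and the coordinate sets are disjoint. Your proposal replaces $T_{dense}$ by a weighted quadratic $\sum_j w_j(X_j^2-\sigma_j^2)$. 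This cannot match the $L^t$ rate when $t>2$: a quadratic statistic estimates $\sum_jw_j\theta_j^2$, whereas the alternative is calibrated in $\|\theta\|_t^t$. Concretely, take $\sigma_1=\dots=\sigma_d=1$ with $s$ so large that $\lambda=0$ and $j_*=d$, and put all the signal on a single coordinate, $\theta_1=C\nu$ with $\nu=d^{1/2t}$. Then $\|\theta\|_t^t=C^t\nu^t=C^t\sqrt d$ meets the detection boundary, and the paper's $T_{dense}$ has expectation $\asymp C^t\sqrt d$ against a null standard deviation $\asymp\sqrt d$, so it detects for $C$ large. But any (nonnegatively weighted, normalized) quadratic statistic has signal-to-noise ratio at most $\|\theta\|_2^2/\sqrt{\sum_j\sigma_j^4}=C^2 d^{1/t}/\sqrt d=C^2 d^{1/t-1/2}$, which tends to $0$ as $d\to\infty$ for any $t>2$, no matter how large the constant $C$. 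One would then have to push this detection onto your scan statistic, but that breaks the intended division of labor and leaves you without a clean way to handle signals spread over several dense coordinates — exactly what $T_{dense}$ is designed for. The key structural idea you are missing is that, for $t\geq2$, the natural ``dense'' statistic is the uniformly weighted sum of centered $|X_j|^t$'s restricted to $\{j\leq j_*\}$ (the paper's Appendix~\ref{appendix:derivation_weights} shows that reweighting is suboptimal in this regime), while the complementary set $\{j>j_*\}$ is handled by the truncated sum of $|X_j|^t$'s with a single common threshold $\uptau=C_t\lambda^t+\nu^t/s$, not by standardizing $|X_j|/\sigma_j$. The paper's analysis of $T_{dense}$ (expectation bounded below by $\|\theta_{\leq j_*}\|_t^t$ via a Taylor expansion of $|\cdot|^t$, variance controlled by H\"older against $\sum_{j\leq j_*}\sigma_j^{2t}$) really does use the $t$-th power structure and would not survive the substitution of a quadratic.
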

Theorem \ref{th:rate_ellt_>2} immediately establishes that $\epsilon^*(s,t,\Sigma)^t \asymp \lambda^t s \!+\! \nu^t$. 
Another expression of $\epsilon^*(s,t,\Sigma)$ is given in the following corollary, which will be useful for further connections and discussions in later sections. 

\begin{corollary}\label{corr:ellt>2_lower} Under the assumptions of Theorem \ref{th:rate_ellt_>2} we have that
\begin{align}
    \epsilon^*(s,t,\Sigma)^t &\asymp  \lambda^t s + \nu^t, \quad\quad\quad\quad~ \text{ where } \nu^t = \Bigg[\sum_{j=1}^d\sigma_j^{2t} \exp\left(-\lambda^2/\sigma_j^2\right)\Bigg]^{1/2},\label{eq:implicit_separation_ellt_>2}
    \\
    & \asymp \lambda^t s + \sqrt{\sum_{j\leq j_*} \sigma_j^{2t}}, \quad \text{ where } j_* = \max\Big\{j \in [d]~\big|~ \sigma_j \geq \lambda\Big\}. \label{eq:explicit_separation_ellt_>2}
\end{align}

\end{corollary}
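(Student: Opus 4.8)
The plan is to obtain both displays from Theorem~\ref{th:rate_ellt_>2}. The relation~\eqref{eq:implicit_separation_ellt_>2} is merely a rewriting of the theorem: since $\nu = \big[\sum_{j=1}^d \sigma_j^{2t}e^{-\lambda^2/\sigma_j^2}\big]^{1/2t}$ by definition, we have $\nu^t = \big[\sum_{j=1}^d \sigma_j^{2t}e^{-\lambda^2/\sigma_j^2}\big]^{1/2}$, and $\epsilon^*(s,t,\Sigma)^t \asymp \lambda^t s + \nu^t$ is exactly the conclusion of Theorem~\ref{th:rate_ellt_>2}. Hence the only real content is to prove
$$\lambda^t s + \nu^t \ \asymp\ \lambda^t s + \sqrt{\textstyle\sum_{j\le j_*}\sigma_j^{2t}}, \qquad j_* = \max\big\{j\in[d] : \sigma_j \ge \lambda\big\}.$$

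For the direction $\gtrsim$, I use that every index $j\le j_*$ satisfies $\sigma_j \ge \lambda$, hence $e^{-\lambda^2/\sigma_j^2}\ge e^{-1}$, so that $(\nu^t)^2 = \sum_{j=1}^d \sigma_j^{2t}e^{-\lambda^2/\sigma_j^2} \ge e^{-1}\sum_{j\le j_*}\sigma_j^{2t}$; this gives $\nu^t \gtrsim \sqrt{\sum_{j\le j_*}\sigma_j^{2t}}$ and the desired inequality. For the direction $\lesssim$ I distinguish two cases. If $\lambda = 0$ (equivalently $\beta\le 0$ in~\eqref{eq:beta_t>2}), then $j_* = d$ and $\nu^t = \sqrt{\sum_{j=1}^d\sigma_j^{2t}}$, so the two sides coincide. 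If $\lambda>0$, then $\lambda^2 = \beta$, so I may invoke the defining equation~\eqref{eq:beta_t>2} in the form $\tfrac s2\, \nu^t = \sum_{j=1}^d \sigma_j^t e^{-\lambda^2/\sigma_j^2}$. Splitting $(\nu^t)^2$ at $j_*$, I bound the part over $j\le j_*$ by $\sum_{j\le j_*}\sigma_j^{2t}$ (using $e^{-\lambda^2/\sigma_j^2}\le1$), while for $j>j_*$, using $\sigma_j<\lambda$, I write $\sigma_j^{2t}e^{-\lambda^2/\sigma_j^2} = \sigma_j^t\cdot\sigma_j^t e^{-\lambda^2/\sigma_j^2}\le \lambda^t\sigma_j^t e^{-\lambda^2/\sigma_j^2}$, so that this part is at most $\lambda^t\sum_{j=1}^d \sigma_j^t e^{-\lambda^2/\sigma_j^2} = \tfrac{\lambda^t s}{2}\nu^t$. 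This produces the self-bounding inequality $(\nu^t)^2 \le \sum_{j\le j_*}\sigma_j^{2t} + \tfrac{\lambda^t s}{2}\nu^t$, and solving this quadratic in $\nu^t$ yields $\nu^t \le \tfrac{\lambda^t s}{2} + \sqrt{\sum_{j\le j_*}\sigma_j^{2t}}$, which gives $\lambda^t s + \nu^t \lesssim \lambda^t s + \sqrt{\sum_{j\le j_*}\sigma_j^{2t}}$.

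The step requiring the most care is the tail estimate over $j>j_*$: bounding $\sum_{j>j_*}\sigma_j^{2t}e^{-\lambda^2/\sigma_j^2}$ crudely by (number of such indices) $\times$ (largest summand) is useless, since there may be $d-j_*$ of them. The point that makes the argument close is that on these indices the factor $\sigma_j^t$ is dominated by $\lambda^t$, which lets the tail be reabsorbed into the defining relation~\eqref{eq:beta_t>2} and the whole estimate finished via a quadratic inequality rather than a direct summation. The remaining manipulations, including the boundary case $\lambda=0$ and the case $j_*=0$ (empty sum, where the claim reduces to $\epsilon^*(s,t,\Sigma)^t\asymp\lambda^t s$), are immediate.
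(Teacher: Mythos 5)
Your proof is correct and follows essentially the same route as the paper's Lemma~\ref{lem:relations_between_contributions_ell>2}: in both arguments the display $\nu^{2t}=\sum_j\sigma_j^{2t}e^{-\lambda^2/\sigma_j^2}$ is split at $j_*$, the head is bounded between $e^{-1}\sum_{j\le j_*}\sigma_j^{2t}$ and $\sum_{j\le j_*}\sigma_j^{2t}$, and the tail is controlled by the key estimate $\sigma_j^{2t}e^{-\lambda^2/\sigma_j^2}\le\lambda^t\sigma_j^te^{-\lambda^2/\sigma_j^2}$ (valid since $\sigma_j<\lambda$ for $j>j_*$), which is then absorbed using the defining equation~\eqref{eq:beta_t>2}. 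The only cosmetic difference is the final bookkeeping: the paper introduces the quantities $\nu_{dense},\nu_{sparse},s_{sparse}$ and runs a two-case dichotomy on whether $\nu_{sparse}\lessgtr\nu_{dense}$, whereas you package the same information as a self-bounding quadratic $(\nu^t)^2\le\sum_{j\le j_*}\sigma_j^{2t}+\tfrac{\lambda^ts}{2}\nu^t$ and solve it; both yield the same conclusion with comparable effort.
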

Theorem~\ref{th:rate_ellt_>2} is proved in Section \ref{sec:proof_theorem_ellt>2}. Proofs of the lower and upper bound are provided in subsections \ref{subsec:proof_theorem_ellt>2_lower} and \ref{subsec:proof_theorem_ellt>2_upper} respectively. Further, 
the simplification in~\eqref{eq:explicit_separation_ellt_>2} of Corollary \ref{corr:ellt>2_lower}  is proved in Lemma~\ref{lem:relations_between_contributions_ell>2}. 
 
Equation~\eqref{eq:explicit_separation_ellt_>2} will be interesting to compare with the opposite case $t\in [1,2]$ covered in Theorem~\ref{th:rate_ellt}, Section~\ref{sec:results_ellt_leq2}. 
\revision{It turns out that the separation radius $\epsilon^*(s,t,\Sigma)$ admits a simpler expression when $s\leq c(\eta)$ for some constant depending only on $\eta$. 
This simpler expression is given in Corollary~\ref{cor:s_leq_constant}.}
Finally, a few remarks are in order regarding the implications of Theorem~\ref{th:rate_ellt_>2}. We organize them along three subsections regarding the special case of $L^2$-separation, proof ideas behind the lower bounds, and motivations behind the upper bounds in the theorem.

\subsection{$L^2$-Separation}\label{subsec:L2_separation}

The $L^2$ separation has attracted the strongest attention in signal detection under Gaussian sequence models.
In the next corollary, we therefore collect our result in Euclidean separation, and specifically discuss its connections with and differences from existing literature.

\begin{corollary}\label{cor:rate_ell2} 
Let $\lambda_2 = \lambda^2$ where $\lambda$ is defined as in equation~\eqref{eq:beta_t>2} for $t=2$.
Then 
\begin{align}
    \epsilon^*(s,2,\Sigma)^2 &\asymp  \lambda_2 s + \nu_2 \quad\quad\quad\quad\quad\quad \text{ where } \nu_2 = \bigg(\sum_{j=1}^d\sigma_j^4 \exp\left(-\lambda_2/\sigma_j^2\right)\bigg)^{1/2}.\label{eq:implicit_separation_ell2}
    \\
    &\asymp \lambda_2 s + \bigg(\sum_{j\leq j_*} \sigma_j^4\bigg)^{1/2}, ~~ \text{ where } j_* = \max\Big\{j \in [d]~\big|~ \sigma_j^2 \geq \lambda_2\Big\}. \label{eq:explicit_separation_ell2}
\end{align}
\end{corollary}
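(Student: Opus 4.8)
The plan is to observe that Corollary \ref{cor:rate_ell2} is simply the $t=2$ specialization of Corollary \ref{corr:ellt>2_lower}, so no new argument is needed beyond bookkeeping. First I would substitute $t=2$ into equation \eqref{eq:beta_t>2}: the defining equation for $\beta$ becomes $\sum_j \sigma_j^2 e^{-\beta/\sigma_j^2} \big/ \sqrt{\sum_j \sigma_j^4 e^{-\beta/\sigma_j^2}} = s/2$, and $\lambda = \sqrt{\beta_+}$ exactly as before. Setting $\lambda_2 := \lambda^2 = \beta_+$ is just a change of notation convenient for the $L^2$ case. Then I would instantiate the quantity $\nu$ from Theorem \ref{th:rate_ellt_>2} at $t=2$: $\nu = \big[\sum_j \sigma_j^4 e^{-\lambda^2/\sigma_j^2}\big]^{1/4}$, so that $\nu^2 = \nu_2 = \big(\sum_j \sigma_j^4 e^{-\lambda_2/\sigma_j^2}\big)^{1/2}$, matching \eqref{eq:implicit_separation_ell2}. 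Plugging $t=2$ into the conclusion $\epsilon^*(s,t,\Sigma)^t \asymp \lambda^t s + \nu^t$ of Corollary \ref{corr:ellt>2_lower} and rewriting $\lambda^2 = \lambda_2$, $\nu^2 = \nu_2$ yields exactly \eqref{eq:implicit_separation_ell2}.

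For the second equivalence \eqref{eq:explicit_separation_ell2}, I would again specialize \eqref{eq:explicit_separation_ellt_>2}: there $j_* = \max\{j \in [d] \mid \sigma_j \geq \lambda\}$, and since $\sigma_j \geq \lambda \iff \sigma_j^2 \geq \lambda^2 = \lambda_2$ (all quantities nonnegative), this is the same index set as $\{j \in [d] \mid \sigma_j^2 \geq \lambda_2\}$. The sum $\sqrt{\sum_{j \leq j_*} \sigma_j^{2t}}$ at $t=2$ is $\big(\sum_{j \leq j_*} \sigma_j^4\big)^{1/2}$, which is exactly the second term in \eqref{eq:explicit_separation_ell2}. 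Since \eqref{eq:implicit_separation_ellt_>2} $\asymp$ \eqref{eq:explicit_separation_ellt_>2} holds by Lemma \ref{lem:relations_between_contributions_ell>2}, the same chain of $\asymp$ relations transfers verbatim to the $t=2$ case.

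There is essentially no obstacle here: the only point requiring a word of care is that the constants hidden in $\asymp$ are now allowed to depend on $\eta$ and $t=2$, i.e. they are absolute up to $\eta$, which is consistent with the conventions of the paper. I would simply note that all invocations of Theorem \ref{th:rate_ellt_>2}, Corollary \ref{corr:ellt>2_lower}, and Lemma \ref{lem:relations_between_contributions_ell>2} are legitimate since their hypothesis $t \geq 2$ is satisfied by $t=2$, and conclude.
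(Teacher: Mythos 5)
Your proposal is correct and is exactly the argument the paper intends: Corollary~\ref{cor:rate_ell2} is the $t=2$ instantiation of Corollary~\ref{corr:ellt>2_lower}, with the notational substitutions $\lambda_2=\lambda^2=\beta_+$ and $\nu_2=\nu^2$, the equivalence $\sigma_j\geq\lambda\iff\sigma_j^2\geq\lambda_2$ giving the same cutoff $j_*$, and the $\asymp$-chain already established by Theorem~\ref{th:rate_ellt_>2} and Lemma~\ref{lem:relations_between_contributions_ell>2} carrying over verbatim. Nothing is missing.
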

A few remarks are in order regarding Corollary~\ref{cor:rate_ell2}.
\begin{enumerate}
    \item [(a)] Corollary~\ref{cor:rate_ell2} is best understood when compared with classical results in the isotropic case for $t = 2$. 
    In particular, assuming that $\sigma_1 = \dots = \sigma_d =: \sigma$ and using $I_d$ to denote the identity matrix of size $d$, ~\cite{collier2017minimax} derives
    \begin{align*}
     \epsilon^*(s,2,\sigma^2 I_d) \asymp \begin{cases}\sigma d^{1/4} & \text{ if } s\geq \sqrt{d},\\
        \sigma \sqrt{s\log\left(1+d/s^2\right)}& \text{ otherwise.}\end{cases}
    \end{align*}
    Further, the elbow at $s = \sqrt{d}$ in the expression above can be replaced by $s = c\sqrt{d}$ for any absolute constant~$c$ without affecting the rate, up to multiplicative constants depending only on $c$. Indeed, 
    this result can be recovered from Corollary~\ref{cor:rate_ell2} in the present paper. 
    To see this note that in the homoscedastic model $\beta$ solves $\sqrt{d}\exp\left(-\beta/2\sigma^2\right) = s/2$, i.e. $\beta = 2\sigma^2 \log\left(2\sqrt{d}/s\right)$. Therefore
 two cases emerge.
    \begin{itemize}
        \item [--] If $s> 2\sqrt{d/e}$, then $\sigma^2> \lambda_2$ and $j_* = d$.
    Therefore, equation~\eqref{eq:explicit_separation_ell2} yields that $\epsilon^*(s,2,\sigma^2 I_d) = \sigma d^{1/4}$. 
    \item [--] Otherwise, if $s < 2\sqrt{d/e}$, then $\sigma^2\leq \lambda_2$ and $j_*=0$ so that the minimax separation radius scales as $\epsilon^*(s,2,\sigma^2 I_d) \asymp \sqrt{\lambda_2 s} \asymp \sigma \sqrt{s \log(1+d/s^2)}$. 
    \end{itemize}
    \item [(b)] Noticeably, the isotropic case involves an extreme phase transition: we either have $j_*=0$  or $j_* = d$.
    In this case, all of the coordinates exclusively belong to the \textit{dense set of coordinates} $\{j \leq j_*\}$ or to the \textit{sparse set of coordinates} $\{j>j_*\}$.
    In our heteroscedastic model, however, the phase transition is more subtle. 
    When $s =  d$, all of the coordinates contribute to the dense regime $\left(\sum_{j=1}^d \sigma_j^4\right)^{1/4}$.
    When we let $s$ decrease from $d$ to $1$, the cut-off $j_*$ progressively shifts from $d$ to $0$. 
    \item [(c)] This progressive shift between the dense and sparse regimes is reflected in our expression of $\epsilon^*(s,2,\Sigma)$, which involves two contributions. 
    We recall that in the fully dense case $s=d$, the minimax separation radius $\epsilon^*(d,2, \Sigma)^2 $ is known to be $\left(\sum_{j=1}^d \sigma_j^4\right)^{1/2}$ -- see e.g.  \cite[Propositions 1 and 2]{laurent2012non}. 
    In comparison, the dense contribution $\big(\sum_{j\leq j_*} \sigma_j^4\big)^{1/4}$in Corollary~\ref{cor:rate_ell2} therefore represents the separation obtained by only testing the first $j_*$ coordinates with a sparsity $s' = j_*$. 
    The second contribution in this rate corresponds to the term $\lambda_2 s$. 
    In the isotropic case, the term $\lambda_2 s$ is responsible for the rate $\sigma \sqrt{s\log(d/s^2)}$ when $s\ll \sqrt{d}$.
    \item [(d)] When $\lambda_2 = 0$, which, by Lemma~\ref{lem:monotonicity_t>2}, is equivalent to the condition $$s/2 \geq  \frac{\sum_{j=1}^d \sigma_j^2}{\sqrt{\sum_{j=1}^d \sigma_j^4}},$$ it holds that $\epsilon^*(s,2,\Sigma) \asymp \epsilon^*(d,2,\Sigma)$. 
    In other words, sparsity does not help. 
    In the homoscedastic case, this phenomenon arises when $s\geq \sqrt{d}$. 
    In the heteroscedastic case, the elbow at $s^2=d$ is replaced by an elbow at $s^2 = \operatorname{Tr}^2(\Sigma) / \operatorname{Tr}(\Sigma^2)$. 
    This quantity is commonly referred to as the stable rank of $\Sigma$, and represents a suitable notion of intrinsic dimension.
    \item[(e)] Note that for general $t \geq 2$, still by Lemma~\ref{lem:monotonicity_t>2}, we have $\lambda = 0$ whenever $s^2\geq \operatorname{Tr}^2(\Sigma^{t/2}) / \operatorname{Tr}(\Sigma^t)$ b. 
    Sparsity never helps in this case. 
    We also note that the intrinsic dimension depends on $t$.
    \end{enumerate}

\subsection{Lower bounds for $t \geq 2$}\label{subsec:LB_t>2}

In our lower bound construction, we use Le Cam's two points method by defining a prior distribution over the parameter space $\Theta = \R^d$, which we detail here. 
We distinguish between two cases. 
\vspace{1mm}

If $s< c(\eta)$ for a sufficiently large constant $c(\eta)$ depending only on $\eta$, we use the immediate relation $\epsilon^*(s,t,\Sigma) \geq \epsilon^*(1,t,\Sigma)$ and further show that $\epsilon^*(1,t,\Sigma) \geq c(\lambda+\sigma_1) \asymp \lambda + \nu$, where $c>0$ is a sufficiently small constant.  
To do so, we propose a combination of two $1$-sparse priors, each being separated from the null hypothesis in $L^t$ norm by $c\lambda$ and $c\sigma_1$ respectively. 
Moreover, we prove that no test can distinguish them from the null hypothesis with high probability. 
This is made precise in Lemma~\ref{lem:generalities_LB} from Appendix~\ref{subapp:generalities_LB}, where we give the expression of the corresponding priors.
\vspace{1mm}

Conversely, if $s\geq c(\eta)$, our prior is as follows. 
We define a random vector $\theta \in \R^d$ whose coordinates $\theta_j$ are mutually independent and satisfy 
\begin{align}
    \forall j \in [d]: \theta_j = b_j \omega_j \gamma_j,\label{eq:def_prior_ellt>2}
\end{align}
where $b_j \sim \operatorname{Ber}(\pi_j)$, $\omega_j \sim \operatorname{Rad}(1/2)$ are mutually independent. 
Here,
\begin{align}
    \pi_j &= \frac{\sigma_j^t \exp\left(-\lambda^2/\sigma_j^2\right)}{\sqrt{\sum_{j=1}^d \sigma_j^{2t} \exp\left(-\lambda^2/\sigma_j^2\right)}},\label{eq:pi_j_>2}\\
    \gamma_j &= \sigma_j \arg \sinh^\frac{1}{2} \left[c \cdot \exp\left(\lambda^2/\sigma_j^2\right)\right] \asymp \lambda + \sigma_j,\label{eq:gamma_j>2}
\end{align} 
for some small enough $c$ depending only on $\eta$. 
In other words, each coordinate $\theta_j$ takes the value $0$ with probability $1-\pi_j$, and $\pm \gamma_j$ with probability $\pi_j \in [0,1]$. 
This vector $\theta$ therefore has a random sparsity $\sum_{j=1}^d b_j$, whose expectation is $\sum_{j=1}^d \pi_j  = s/2$ by the definition of $\lambda$ from~\eqref{eq:beta_t>2}. 

\vspace{2mm}

We now describe the intuitions behind the latter prior construction. 
The parameters $\pi_j$ and $\gamma_j$ in~\eqref{eq:pi_j_>2} and~\eqref{eq:gamma_j>2} are found by solving the following optimization problem
\begin{align}
    \max_{\gamma, \pi} ~~ \sum\limits_{j=1}^d \gamma_j^t \pi_j ~~ \text{ s.t. } ~~ \begin{cases}\sum_{j=1}^d \pi_j = s/2\\
    \pi_j \in [0,1] ~~ \forall j \in [d]\\
    \sum_{j=1}^d \pi_j^2 \sinh^2\left(\frac{\gamma_j^2}{2\sigma_j^2}\right) \leq c',\end{cases}
    \label{eq:optim_pb}
\end{align}
for some sufficiently small constant $c'$ depending on $\eta$. 
The interpretation of Problem~\eqref{eq:optim_pb} is as follows: Among all priors of the form~\eqref{eq:def_prior_ellt>2}, Problem~\eqref{eq:optim_pb} finds the parameters $(\pi_j)_j$ and $(\gamma_j)_j$ maximizing $\mathbb E\|\theta\|_t^t = \sum_{j=1}^d \gamma_j^t \pi_j$ under the constraints that $\|\theta\|_0 \leq s$ with high probability, and that no test can distinguish this prior from the null hypothesis with sufficient probability.
\vspace{2mm}

More precisely, 
the condition $\sum_{j=1}^d \pi_j = s/2$ guarantees that our prior's sparsity is at most $s$ with high probability, and the condition $\sum_{j=1}^d \pi_j^2 \sinh^2\big(\gamma_j^2/2\sigma_j^2\big) \leq c'$ ensures that $$\operatorname{TV}^2(\mathbb P_{prior}, \mathbb P_0) \leq \chi^2(\mathbb P_{prior}, \mathbb P_0) \leq c'$$ (see equation~\eqref{eq:chi2}), which we refer to as the \textit{indistinguishability condition}. 
In the last equation, we denoted by $\mathbb P_{prior}$ the probability distribution induced by the prior~\eqref{eq:def_prior_ellt>2}.
\vspace{2mm}

This variational problem elucidates how the phase transitions arise in the behavior of $\epsilon^*(s,t,\Sigma)$. 
We recall that $j_*$ is the index where the transition  $\sigma_j \lessgtr \lambda$ occurs (see equation~\eqref{eq:explicit_separation_ellt_>2}). 
Now from~\eqref{eq:gamma_j>2}, we can deduce that $j_*$ is also the index where the transition $\gamma_j \lessgtr c \sigma_j$ occurs. 
Therefore, over $\{1,\dots, j_*\}$, the following linearization holds: $\sinh\big(\gamma_j^2/2\sigma_j^2\big) \asymp \gamma_j^2/\sigma_j^2$. 
Conversely, over $\{j_*\!\!+\!1,\dots, d\}$, the following relation holds: $\sinh^2\big(\gamma_j^2/2\sigma_j^2\big) \asymp \exp(\gamma_j^2/\sigma_j^2)$. 
These two parts therefore exhibit fundamentally different behaviors in the analysis of the lower bound.
For $j\leq j_*$, the perturbation $\gamma_j \asymp \sigma_j$ coincides with the optimal perturbation that one would set in absence of sparsity (it suffices to evaluate our results at $s=d$, which implies $\lambda = 0$, so that $\gamma_j \asymp \sigma_j$ for any $j \in [d]$ by~\eqref{eq:gamma_j>2}). 
For $j>j_*$, the perturbation $\gamma_j \asymp \lambda$ is more surprising, as it does not depend on $\sigma_j$.
\vspace{2mm}

Moreover, we note that $\pi_j \propto \sigma_j^2 \exp\big(\!-\!\lambda^2/\sigma_j^2\big)$, which implies $\pi_1\geq \dots \geq \pi_d$, with a very fast decay of $\pi_j$ with $\sigma_j$ when $\sigma_j \ll \lambda^2$. 
In other words, our sparse prior preferably selects and perturbs the coordinates with largest standard deviations $\sigma_j$'s. 
This makes intuitive sense: If $\sigma_j = 0$, then under $H_0$, one should observe $X_j = 0$ \textit{a.s.}, so that the optimal value of $\pi_j$ should be $0$. 
This stands in contrast with the lower bound proposed in the paper~\cite{laurent2012non}, which also considers Problem~\eqref{eq:testing_pb} for $t = 2$. 
In the latter paper, the prior is defined by perturbing some coordinates $\big\{\theta_j ~\big|~ j\in J \big\}$, for a set $J$ selected \textit{uniformly at random} among subsets of $[d]$ of cardinality $s$, leading to sub-optimality in specific regimes. 
\vspace{2mm}

Finally, for $t \geq 2$, the constraint $\pi_j \in [0,1]$ is never saturated. 
To anticipate on Subsection~\ref{subsec:LB_ellt}, we will see that this constraint can be saturated for $t<2$, giving rise to a third regime which we will call the \textit{fully dense regime}.

\vspace{2mm}


\vspace{2mm}

Finally, the reason why we need to define two different priors when $s\geq c(\eta)$ or $s<c(\eta)$ is as follows.  
In the homoscedastic case~\cite{baraud2002non,laurent2012non,collier2017minimax, kotekal2021minimax}, the optimal sparse prior can be defined by selecting the support of $\theta$  uniformly at random among all subsets of $\{1,\dots,d\}$ with cardinality $s$. 
In our case, we would like to select exactly $s$ coordinates \textit{without replacement} in $[d]$, each coordinate being selected with probability proportional to $\pi_j$. 
This leads to technical difficulties, which we circumvent using the prior~\eqref{eq:def_prior_ellt>2}, which has independent coordinates but \textit{random} sparsity, equal to $\sum_{j=1}^d b_j$. 
In expectation, our prior has sparsity $\sum_{j=1}^d \pi_j \leq s/2$, with a  standard deviation of $\big(\sum_{j=1}^d  \pi_j(1-\pi_j)\big)^{1/2} \leq \sqrt{s}$. 
By Chebyshev's inequality our prior's sparsity is therefore at most $s$ with probability $1-\eta/10$, provided that $s$ is greater than a constant depending only on $\eta$ (see equation~\eqref{eq:control_prior_sparsity}). 
In the opposite case $s < c(\eta)$, such a prior would not be at most $s$-sparse with high probability. 
This is why we adopt another strategy, which is to set a $1$-sparse prior, selecting only \textit{one} coordinate $\{1,\dots,d\}$, each coordinate $j$ being selected with probability
proportional to~$\pi_j$. 

\subsection{Upper bounds for $t\geq 2$}

In this subsection, we describe the test achieving the upper bound in Theorem~\ref{th:rate_ellt_>2}.
Let $\lambda$ and $\nu$ be defined as in equation~\eqref{eq:beta_t>2} and~\eqref{eq:implicit_separation_ellt_>2} respectively, and let 
\begin{align}
    \uptau &= C_t \lambda^t + \nu^t/s \quad \text{ where } C_t = (4t)^{t}.
    \label{eq:def_nu_>2}
    \\
    \alpha_j &= \mathbb E \left[|Z_j|^t \,\Big|\, |Z_j|^t  >  \uptau \right]\quad \forall j > j_*, \quad\quad \text{where $Z_j \sim \mathcal{N}(0,\sigma_j^2)$}.
\end{align}

Subsequently we define our test statistics as follows:
\begin{align}
    \begin{cases}
     T_{dense} = \displaystyle\sum\limits_{j \leq j_*} |X_j|^t- \mathbb E_{H_0}|X_j|^t,\\[6pt]
       T_{sparse} = \displaystyle\sum\limits_{j > j_*} \big(|X_j|^t - \alpha_j\big)\mathbb 1\left\{|X_j|^t > \uptau\right\}.
    \end{cases}
    \label{eq:def_test_stat_>2}
\end{align}

For some large enough constant $C$ depending only on $\eta$, we finally define the test functions
\begin{align}
    &\begin{cases}
        \psi_{dense} = \mathbb 1\bigg\{T_{dense} \geq C \Big(\sum_{j \leq j_*} \sigma_j^{2t}\Big)^{1/2}\bigg\},\\ 
        \psi_{sparse} = \mathbb 1\Big\{T_{sparse} > C\rho \Big\},\\
        \psi = \psi_{dense} \lor \psi_{sparse}
    \end{cases}
    \label{eq:def_tests_>2}\\
    & \text{ where } \rho = \lambda^t s +  \nu^t.\label{eq:def_rho_>2}
\end{align}


We can now compare the upper bound from Theorem~\ref{th:rate_ellt_>2}.\ref{th:rate_ellt_>2_upper} with the literature in the isotropic case~\cite{collier2017minimax} for $t=2$. 
Recalling the notation from Corollary~\ref{cor:rate_ell2}, when $\sigma_1 = \dots = \sigma_d =: \sigma$ the test statistic used in~\cite{collier2017minimax} is defined as follows
\vspace{-10mm}

\begin{numcases}{T=}
    $$\displaystyle\sum\limits_{j=1}^d \big(X_j^2 - \sigma^2\big) $$ & $ \text{ if } s > \sqrt{d}$,\label{eq:dense_test_isotropic}
    \\[-8pt]
    $$\displaystyle\sum_{j=1}^d (X_j^2 - \alpha_s \sigma^2)\mathbb 1_{|X_j| > \sigma\sqrt{2\log(1+d/s^2)}}$$ &  $\text{ otherwise},$\label{eq:sparse_test_isotropic}
\end{numcases}
where $\alpha_s = \mathbb E\left[X^2 \,\big|\, X^2> 2\log(1+d/s^2)\right]$.
We can compare this with our results from Theorem \ref{th:rate_ellt_>2}.
As noted above, when $s > 2\sqrt{d/e}$, our result implies that $j_*=d$ and our dense test statistic coincides with~\eqref{eq:dense_test_isotropic}. 
On the other hand, when $s \leq 2\sqrt{d/e}$, we have $j_* = 0$ and our sparse test coincides with~\eqref{eq:sparse_test_isotropic}. 
Contrary to~\cite{collier2017minimax}, our phase transition occurs at $s = 2 \sqrt{d/e}$ instead of $s=\sqrt{d}$, which only affects the multiplicative constants in the rates.
Noticeably, equations~\eqref{eq:dense_test_isotropic} and~\eqref{eq:sparse_test_isotropic} show that the isotropic case requires only \textit{one} test at a time. 
To understand this, it suffices to note that in the isotropic case, $j_*$ is always equal to~$0$ or to~$d$, meaning that either the dense region $\{1,\dots, j_*\}$ or the sparse region $\{j_*\!+1,\dots,d\}$ is empty. 
In contrast, under heteroscedastic noise, two fundamentally different contributions coexist in the rate, requiring the use of two separate tests to handle both regimes. 

\vspace{2mm}

We can also analyze the truncation parameters of the sparse test statistics.
If $s \leq \sqrt{d}$, the isotropic sparse test~\eqref{eq:sparse_test_isotropic} requires the truncation $X_j^2 >2 \sigma^2 \log\!\big(1\!+\!d/s^2\big) \asymp 2 \lambda_2$. 
In comparison, our sparse test $T_{sparse}$ requires the truncation $X_j^2>\uptau_2 = C\lambda_2 + \nu_2/s$, which is larger than~$2\lambda_2$. 
The reason is that setting $\uptau_2 = 2\lambda_2$ would cause  $\operatorname{Var}_{H_0}\!\left[T_{sparse}\right]$ to be too large when the dense regime dominates, that is, when $\nu_2 \gg \lambda_2 s$. 
However, when the sparse regime dominates, i.e. when $\nu_2 \lesssim \lambda_2 s$, we recover $\uptau_2 \asymp \lambda_2$. 
In the isotropic case~\cite{collier2017minimax}, truncating at $2\lambda_2$ is sufficient, since $\nu_2/s$ never dominates over $2\lambda_2$ when $s \lesssim \sqrt{d}$ (see Remark~\ref{rem:isotropic}).
\vspace{2mm}

Finally, we compare our results with \cite{laurent2012non} where the authors also consider the heteroscedastic Gaussian sequence model under $L^2$ separation. Specifically \cite{laurent2012non} proposes to combine the test $\mathbb 1\left\{\sum_{j=1}^d X_j^2 > t_{d,1-\alpha/2}(\sigma)\right\}$ with $\mathbb 1 \big\{\max_j X_j^2/\sigma_j^2 > q_{d,1-\alpha/2}\big\}$. 
For completeness, we give the expressions of $t_{d,1-\alpha}$ and $q_{d,1-\alpha}$. 
Here $\alpha$ denotes the type-I error probability;
moreover, for any $\delta>0$, the quantity $t_{d,1-\delta}$ denotes the quantile of order $1-\delta$ of $\sum_{j=1}^d \xi_j^2$, with $\xi_j \sim \mathcal{N}(0,\sigma_j^2)$, and $q_{d,1-\delta}$ denotes the quantile of order $1-\delta$ of $\max_{j} \xi_j^2$.
They obtain an upper bound of the order of
$$ \epsilon^*(s,2,\Sigma)^2\lesssim \bigg(\sum_{j=1}^d \sigma_j^4\bigg)^{1/2} \land \sum_{j: \theta_j\neq 0}\sigma_j^2 \log n.$$

In comparison, we only use the chi-square test $T_{dense}$ over the first $j_*$ coordinates, which allows us to reduce the term $\left(\sum_{j=1}^d \sigma_j^4\right)^{1/4} $ to $\left(\sum_{j\leq j_*} \sigma_j^4\right)^{1/4} $. 
Moreover, we use the test statistic $T_{sparse}$, which allows us to take into account the \textit{number} of coordinates exceeding a certain value (namely $\tau$), rather than deciding in favor of $H_1$ if one value exceeds a suitable threshold.

\section{Minimax rates in $L^t$ separation for $t \in [1,2]$}\label{sec:results_ellt_leq2}

In this Section, we examine Problem~\eqref{eq:testing_pb} with $t\in[1,2]$. We emphasize that the case $t<2$ is significantly more challenging than $t\geq2$. 
Namely, it leads to more intricate phase transitions due to the fact that $\|\cdot\|_t$ is less smooth when $t<2$ than when $t\geq 2$. 
Once again we begin with a few definitions and some intermediate lemmas which will help us present the main result of the section. 
First we let
\begin{align}
    a = \frac{4t}{4-t}, \quad b = \frac{4-2t}{4-t}. \label{eq:def_a_b}
\end{align}

For any $x\geq 0$, we define
\begin{align}
     j_*(x) := \max \left\{j \in [d] ~\big|~ \sigma_j \geq x\right\}.\label{eq:def_jstar_function}
\end{align}

Thereafter, we use the convention that $\frac{1}{0} = +\infty$; moreover, for some small enough constant $\Cnu = \Cnu(\eta)$ depending only on $\eta$, and for any $x\geq 0$, we define $\overline \nu(x)$ as the solution to the equation
\begin{align}
    \sum_{j \leq j_*(x)} \frac{\sigma_j^{a}}{\overline \nu^{a}(x)} \land \frac{\sigma_j^4}{x^{4-2t} \, \overline \nu^{2t}(x)} + \sum_{j>j_*(x)} \frac{\sigma_j^{2t} }{\overline \nu^{2t}(x)}\exp\!\bigg(\!\!-\frac{x^2}{\sigma_j^2}+1\bigg) = \Cnu.\label{eq:def_nu_ellt}
\end{align}

We note that the equation above always admits a unique solution. 
To see this, we can note that, 
$j_*(x)$ being fixed, the left-hand side is a continuous function of $\overline \nu(x)$ that decreases from $+\infty$ to $0$.
We now also set 
\begin{align}
    &f(x) = \sum_{j\leq j_*(x)} 1 \land \frac{\sigma_j^4}{x^{4-t} \, \overline \nu^t(x)} + \sum_{j>j_*(x)} \frac{\sigma_j^t}{\overline \nu^t(x)} \exp\!\bigg(\!\!-\frac{x^2}{\sigma_j^2}+1\bigg)\label{eq:def_f_ellt}\\
    & \lambda := \inf f^{-1}\!\Big(\!\!\left\{s/2\right\}\!\!\Big) \label{eq:def_lambda_ellt}\\
    & \nu = \overline \nu(\lambda)\label{eq:def_nu(lambda)_ellt}\\
    &j_* = j_*(\lambda) \label{eq:def_jstar_ellt}\\
    &i_* = i_*(\lambda) = \max \left\{j \leq j_* ~\big|~ \sigma_j^4 \geq \lambda^{4-t} \nu^t\right\}.\label{eq:def_istar_ellt}
\end{align}

Lemma \ref{lem:nu_continuous_ellt} in Appendix \ref{sec:proof_theorem_ellt} ensures that the quantities $\lambda$ and $\nu$ given in~\eqref{eq:def_lambda_ellt} and \eqref{eq:def_nu(lambda)_ellt} are well-defined.
With this we are ready to present the main result regarding the minimax separation radius $\epsilon^*(s,t,\Sigma)$ for $t\in [1,2]$.

\begin{theorem}\label{th:rate_ellt}
Let $t\in [1,2]$ and $\lambda$  and  $\nu = \nu(\lambda)$ be defined as in~\eqref{eq:def_lambda_ellt} and~\eqref{eq:def_nu(lambda)_ellt}. Then the following hold
\vspace{-2mm}
\begin{enumerate}[label=\textbf{\roman*}.]
    \item\label{th:rate_ellt_<2_lower} [Lower Bound] 
There exists a small constant $c$ depending only on $\eta$, such that 
\vspace{-1mm}
$${\epsilon^*}(s,t,\Sigma)^t \geq  c\big( \lambda^t s+\nu^t \big).$$
\Bernoulli{\vspace{5mm}}
\vspace{-10mm}
    
    \item\label{th:rate_ellt_<2_upper} [Upper Bound] There exists a large enough constant $C'$ depending only on $\eta$ such that the test~$\psi^*$ defined in~\eqref{eq:def_tests_ellt} satisfies
\begin{align*}
\begin{cases}
    \mathbb P_\theta\left(\psi^* = 1\right) \leq \eta/2 & \text{ if } ~~ \theta=0,\\
    \mathbb P_\theta\left(\psi^* = 0\right)\leq \eta/2 & \text{ if } ~ \|\theta\|_0 \leq s \text{ and } \|\theta\|_t^t \geq C' \left(\lambda^t s + \nu^t\right).
\end{cases}  
\end{align*}
Therefore, $\epsilon^*(s,t,\Sigma)^t \leq C'\left( \lambda^t s + \nu^t\right).$
\end{enumerate}

\end{theorem}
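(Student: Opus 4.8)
The plan is to prove the two bounds separately, following the architecture already used for $t\ge 2$ in Section~\ref{sec:results_ellt_>2}, while tracking the new phenomenon that distinguishes $t<2$: the emergence of a third, \emph{fully dense}, regime (the block $\{j\le i_*\}$ of large variances) in which the cap $\pi_j=1$ of the prior is saturated, and the corresponding change in the calibration of the test.

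\textbf{Lower bound.} I would split according to the size of $s$. When $s<c(\eta)$ I reduce to $\epsilon^*(1,t,\Sigma)$ and use a mixture of two $1$-sparse priors separated from $H_0$ by $\asymp\lambda$ and $\asymp\sigma_1$ respectively, showing by a direct likelihood-ratio/$\chi^2$ computation that neither can be distinguished from $\mathbb P_0$. When $s\ge c(\eta)$ I use the product prior $\theta_j=b_j\omega_j\gamma_j$ with $b_j\sim\operatorname{Ber}(\pi_j)$, $\omega_j\sim\operatorname{Rad}(1/2)$ mutually independent, where $(\pi_j,\gamma_j)$ solve the variational problem~\eqref{eq:optim_pb}. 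The core step is to solve~\eqref{eq:optim_pb} sharply for $t<2$: a Lagrangian analysis (two multipliers, for $\sum_j\pi_j=s/2$ and for the indistinguishability budget) exhibits three regimes --- a fully dense block $\{j\le i_*\}$ where $\pi_j=1$ and $\gamma_j$ is fixed by a $\chi^2$-budget allocation $\delta_j\propto\sigma_j^{a}$, which is the origin of the exponent $a=4t/(4-t)$ in~\eqref{eq:def_nu_ellt}; an intermediate dense block $\{i_*<j\le j_*\}$ where $\pi_j\in(0,1)$, $\sinh$ linearizes ($\sigma_j\ge\lambda$) and $\gamma_j\asymp\lambda$; and a sparse block $\{j>j_*\}$ where $\gamma_j\asymp\lambda$ but $\sinh$ sits in its exponential regime, producing the $e^{-x^2/\sigma_j^2}$ factors of~\eqref{eq:def_nu_ellt}. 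Identifying the optimal value with $\lambda^t s+\nu^t$ and matching the multipliers to the implicit quantities~\eqref{eq:def_nu_ellt}--\eqref{eq:def_istar_ellt} is the bulk of the work. Once this is done I verify, exactly as in the $t\ge2$ case: (i) $\mathbb E\|\theta\|_t^t=\sum_j\pi_j\gamma_j^t\gtrsim\lambda^t s+\nu^t$; (ii) $\|\theta\|_0=\sum_jb_j\le s$ with probability $\ge 1-\eta/10$ by Chebyshev, since $\mathbb E\sum_jb_j=s/2$ and the variance is at most $s$ (using $s\ge c(\eta)$); (iii) $\|\theta\|_t^t$ concentrates around its mean by Chebyshev, controlling $\operatorname{Var}(\sum_jb_j\gamma_j^t)=\sum_j\gamma_j^{2t}\pi_j(1-\pi_j)$ against $(\sum_j\gamma_j^t\pi_j)^2$; (iv) the indistinguishability constraint gives $\chi^2(\mathbb P_{prior},\mathbb P_0)+1=\prod_j\bigl(1+2\pi_j^2\sinh^2(\gamma_j^2/2\sigma_j^2)\bigr)\le\exp(2\sum_j\pi_j^2\sinh^2(\gamma_j^2/2\sigma_j^2))\le e^{2\Cnu}$. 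Restricting the prior to the event in (ii)--(iii) and applying the standard two-point (Le Cam/Ingster) argument then yields $\epsilon^*(s,t,\Sigma)^t\gtrsim\lambda^t s+\nu^t$.

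\textbf{Upper bound.} I expect $\psi^*$ in~\eqref{eq:def_tests_ellt} to be the maximum of three tests, one per block: a chi-square--type statistic $\sum_{j\le i_*}(X_j^2-\sigma_j^2)$ for the fully dense block --- the degree-$2$ version being preferable to $\sum|X_j|^t$ since $x\mapsto|x|^t$ is not twice differentiable for $t<2$ --- and truncated sums $\sum_{i_*<j\le j_*}(|X_j|^t-\alpha_j)\mathbb 1\{|X_j|^t>\uptau\}$ and $\sum_{j>j_*}(|X_j|^t-\alpha_j)\mathbb 1\{|X_j|^t>\uptau'\}$ for the intermediate and sparse blocks, with truncation levels $\uptau,\uptau'$ taken slightly above the naive value $\asymp\lambda^2$ (exactly as in the $t\ge2$ analysis) so that $\operatorname{Var}_{H_0}$ stays controlled when a single block dominates $\nu$. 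For the type-I error I union-bound over the three tests, compute the null mean and variance of each (sub)sum, and apply Chebyshev (or a Bernstein-type bound on the heavy truncated terms), the thresholds being constant multiples of the corresponding null standard deviations. For the type-II error, assuming $\|\theta\|_0\le s$ and $\|\theta\|_t^t\ge C'(\lambda^t s+\nu^t)$, I split $\|\theta\|_t^t$ over the three index blocks and, within each, over $\{j:|\theta_j|\gtrsim\lambda\}$ versus $\{j:|\theta_j|\ll\lambda\}$; in each case I lower bound $\mathbb E_\theta$ of the relevant statistic by a constant multiple of its null threshold and apply concentration. Here the non-smoothness of $|x|^t$ forces replacing the clean second-order Taylor expansion of $\mathbb E_\theta|X_j|^t$ valid for $t\ge2$ by more careful estimates (and by passing to $\mathbb E_\theta X_j^2$ on the dense block together with the sparsity-constrained norm inequality $\|\theta\|_t\lesssim s^{1/t-1/2}\|\theta\|_2$), while the defining relation $f(\lambda)=s/2$ in~\eqref{eq:def_lambda_ellt} is what guarantees the counting statistic on the sparse block sees enough coordinates above $\uptau'$.

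\textbf{Main obstacle.} On the lower-bound side the crux is solving~\eqref{eq:optim_pb} exactly for $t<2$ and reconciling its solution with~\eqref{eq:def_nu_ellt}--\eqref{eq:def_istar_ellt}, in particular justifying the $\min$ inside~\eqref{eq:def_nu_ellt} (which records the switch between the $\pi_j=1$ and $\pi_j<1$ allocations within the dense block, i.e.\ the threshold $i_*$) and the new exponent $a=4t/(4-t)$. On the upper-bound side the delicate point is the simultaneous calibration of the three thresholds and truncations so that all type-I errors hold at once while no type-II gap opens between the three regimes --- again made harder by the fact that $|x|^t$ is only once differentiable, so that the dense-block analysis must go through $X_j^2$ and sparse norm comparisons rather than a direct expansion of $\mathbb E_\theta|X_j|^t$.
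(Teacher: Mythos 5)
Your lower-bound plan matches the paper's almost step by step: reduce to $\epsilon^*(1,t,\Sigma)$ when $s$ is $O(1)$; otherwise put the independent Bernoulli--Rademacher prior, read off $(\pi_j,\gamma_j)$ from the variational problem~\eqref{eq:optim_pb} with the new saturated branch $\pi_j=1$ giving $i_*$ and the exponent $a$, check sparsity by Chebyshev, and bound $\chi^2$ via $\prod_j(1+2\pi_j^2\sinh^2(\gamma_j^2/2\sigma_j^2))$. That part is essentially right and is what the paper does.

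The upper bound, however, has a genuine gap: you guessed the wrong tests for two of the three blocks, and the analysis you sketch would not recover the claimed rate.

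First, the fully dense block. You propose the \emph{unweighted} chi-square $\sum_{j\le i_*}(X_j^2-\sigma_j^2)$ and then the conversion $\|\theta\|_t\lesssim i_*^{1/t-1/2}\|\theta\|_2$. That test has null standard deviation $\asymp(\sum_{j\le i_*}\sigma_j^4)^{1/2}=\|\sigma_{\le i_*}\|_4^2$, so it detects when $\|\theta_{\le i_*}\|_t^t\gtrsim i_*^{1-t/2}\|\sigma_{\le i_*}\|_4^t$. The target is $\|\sigma_{\le i_*}\|_a^t$ with $a=4t/(4-t)$, and these two quantities coincide only in the isotropic case. For $\sigma_1=M\gg i_*^{1/a}$ and $\sigma_2=\dots=\sigma_{i_*}=1$, the unweighted test needs $\|\theta\|_t^t\gtrsim i_*^{1-t/2}M^t$ while the minimax radius is $\asymp M^t$, a sub-optimality by the unbounded factor $i_*^{1-t/2}$ ($t<2$). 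The fix the paper uses, and which you need, is the reweighted statistic $T_{fdense}=\sum_{j\le i_*}\sigma_j^{-2b}(X_j^2-\sigma_j^2)$ with $b=(4-2t)/(4-t)$; the null variance then becomes $2\sum_{j\le i_*}\sigma_j^a$ and a single Hölder step $\sum\theta_j^2/\sigma_j^{2b}\ge\|\theta_{\le i_*}\|_t^2/(\sum\sigma_j^a)^{(2-t)/t}$ closes the gap at $\|\sigma_{\le i_*}\|_a^t$. This reweighting is precisely the feature of the $t<2$ case that is new relative to $t\ge2$, and it is not a cosmetic choice.

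Second, the intermediate block $\{i_*<j\le j_*\}$. You propose a truncated $\sum(|X_j|^t-\alpha_j)\mathbb 1\{|X_j|^t>\uptau\}$-type statistic here as well. That block consists of coordinates with $\sigma_j\ge\lambda$, so $\uptau\asymp\lambda^t+\nu^t/s$ is not a far tail of $|X_j|^t$; more importantly, for signal that is spread over many intermediate coordinates with $|\theta_j|\ll\uptau^{1/t}$, the per-coordinate contribution to $\mathbb E_\theta T$ is essentially zero, and the argument based on the high-signal set $I=\{j:|\theta_j|^t\gtrsim\uptau\}$ gives nothing. A truncated test cannot pick up diffuse intermediate signal; this is precisely what the (plain, unweighted) chi-square $T_{inter}=\sum_{i_*<j\le j_*}(X_j^2-\sigma_j^2)$ with threshold $\asymp\lambda^{-2}\sum_{i_*<j\le j_*}\sigma_j^4$ is for, and its type-II analysis hinges on the sparsity Hölder bound $\|\theta_{int}\|_2^2\ge s^{2/t-1}\|\theta_{int}\|_t^2$ (which is the step that brings the $\lambda^t s$ term into play there). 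Your truncated test is the right tool only for $\{j>j_*\}$, where $\sigma_j<\lambda$ and Lemma~\ref{lem:analysis_Tsparse} applies.

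In short: lower bound matches the paper; upper bound needs the reweighting by $\sigma_j^{-2b}$ on $\{j\le i_*\}$ and an (untruncated) chi-square on $\{i_*<j\le j_*\}$. Without those two changes the type-II error bound fails on the fully dense block and on diffuse intermediate signal, respectively, and the achieved separation exceeds $C'(\lambda^ts+\nu^t)$ in general.
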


Theorem \ref{th:rate_ellt_>2} immediately establishes $\epsilon^*(s,t,\Sigma)^t \asymp \left( \lambda^t s + \nu^t\right)$ when $t\in [1,2]$. It turns out that the separation behavior admits another representation which will be useful for interpretations and our later discussions. 
For easy reference we collect them in the following corollary.

\begin{corollary} Under the assumptions of Theorem \ref{th:rate_ellt} we have
\begin{align*}
    \epsilon^*(s,t,\Sigma)^t \asymp \lambda^t s + \nu^t.
\end{align*}
Moreover, writing $\sigma_{\leq i_*} = (\sigma_1,\dots, \sigma_{i_*})$ and recalling~\eqref{eq:def_a_b}, this expression can be rewritten as
\begin{align}
    \epsilon^*(s,t,\Sigma)^t \asymp \lambda^t s + \left\|\sigma_{\leq i_*}\right\|_a^t.\label{eq:explicit_separation_ellt_without_interm}
\end{align}
\end{corollary}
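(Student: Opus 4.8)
The plan is to show that the two claimed expressions for $\epsilon^*(s,t,\Sigma)^t$ agree up to multiplicative constants, starting from the identity $\epsilon^*(s,t,\Sigma)^t \asymp \lambda^t s + \nu^t$ furnished by Theorem~\ref{th:rate_ellt}. Since the first display is literally a restatement of that theorem, the whole content is the rewriting~\eqref{eq:explicit_separation_ellt_without_interm}, i.e. that $\nu^t \asymp \|\sigma_{\leq i_*}\|_a^t$ \emph{up to absorbing part of it into the $\lambda^t s$ term}. Concretely, I would prove $\lambda^t s + \nu^t \asymp \lambda^t s + \sum_{j \le i_*} \sigma_j^{a \cdot (t/a)} \cdot (\dots)$, recalling $i_* = \max\{j \le j_* : \sigma_j^4 \ge \lambda^{4-t}\nu^t\}$ from~\eqref{eq:def_istar_ellt} and $a = 4t/(4-t)$ from~\eqref{eq:def_a_b}, so that the threshold $\sigma_j^4 \ge \lambda^{4-t}\nu^t$ is exactly the condition under which the first minimum in the definition~\eqref{eq:def_nu_ellt} of $\overline\nu$ is achieved by its \emph{first} argument $\sigma_j^a/\overline\nu^a$ rather than the second $\sigma_j^4/(\lambda^{4-2t}\overline\nu^{2t})$ (after substituting $x = \lambda$, $\overline\nu(\lambda) = \nu$).

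The key steps, in order. First, substitute $x = \lambda$ into~\eqref{eq:def_nu_ellt}, so $\nu = \overline\nu(\lambda)$ solves $\sum_{j\le j_*} \big(\sigma_j^a/\nu^a \wedge \sigma_j^4/(\lambda^{4-2t}\nu^{2t})\big) + \sum_{j>j_*} \frac{\sigma_j^{2t}}{\nu^{2t}} e^{-\lambda^2/\sigma_j^2 + 1} = \Cnu$. Second, observe via the elementary calculation $\sigma_j^a/\nu^a \le \sigma_j^4/(\lambda^{4-2t}\nu^{2t}) \iff \sigma_j^{a-4} \le \nu^{a - 2t}/\lambda^{4-2t} \iff \sigma_j^4 \le \lambda^{4-t}\nu^t$ (using $a - 4 = 4t/(4-t) - 4 = (8t-16)/(4-t)$ and $a - 2t = (4-2t)t\cdot\frac{\dots}{\dots}$ — one must check the exponents bookkeeping, but it reduces to $a-4 = (2t-4)\cdot\frac{2}{4-t}$ and $4-2t$ carrying the same factor) that this crossover in the minimum happens precisely at the index $i_*$. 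Hence for $j \le i_*$ the first term dominates and for $i_* < j \le j_*$ the second does. Third, for $j \le i_*$, I would argue the sum $\sum_{j \le i_*}\sigma_j^a/\nu^a$ is $\Theta(\Cnu) = \Theta(1)$, i.e. $\nu^a \asymp \sum_{j\le i_*}\sigma_j^a = \|\sigma_{\le i_*}\|_a^a$, which gives $\nu^t \asymp \|\sigma_{\le i_*}\|_a^t$ — \emph{provided} the $j \le i_*$ block carries a constant fraction of the total $\Cnu$. Fourth, handle the case where instead the tail sum $\sum_{j> j_*}$ or the middle block $i_* < j \le j_*$ carries the bulk: there one shows the corresponding contributions to $\nu^t$ are $\lesssim \lambda^t s$, so they can be absorbed, using the defining equation~\eqref{eq:def_f_ellt}–\eqref{eq:def_lambda_ellt} for $\lambda$ (namely $f(\lambda) = s/2$) to convert statements about $\sum_{j>j_*} \sigma_j^{2t}\nu^{-2t}e^{\cdots}$ into statements comparable with $s$; this is exactly the kind of balancing already invoked in the well-definedness Lemma~\ref{lem:nu_continuous_ellt}.

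The main obstacle I expect is the fourth step: cleanly showing that whichever of the three blocks ($j \le i_*$, $i_* < j \le j_*$, $j > j_*$) fails to dominate the constraint equation contributes a term that is genuinely $\lesssim \lambda^t s$ and therefore harmless. This requires playing the two defining equations~\eqref{eq:def_nu_ellt} and~\eqref{eq:def_lambda_ellt} against each other — the exponents $a$, $b$, $2t$, $4-2t$, $4-t$ all appear and one must track them carefully — and it is the place where the precise choice $a = 4t/(4-t)$ is forced. I would structure this as a short case analysis on which block realizes $\Theta(\Cnu)$ in~\eqref{eq:def_nu_ellt}, reducing each case to a monotonicity/comparison estimate; the intermediate quantity $i_*$ (and the related $j_*$) is introduced precisely so that within each block the $\wedge$ and the $\exp(\cdots)$ simplify to a single power of $\sigma_j$, making the sums comparable to $\|\sigma_{\le i_*}\|_a$ and to $s$ respectively.
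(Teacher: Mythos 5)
Your plan follows essentially the same route as the paper: both rely on the decomposition $\nu^t \asymp \nu_{fdense}^t + \nu_{inter}^t + \nu_{sparse}^t$ from Lemma~\ref{lem:explicit_expr_nu_ellt}, and then on a case analysis showing that whenever $\nu_{fdense}$ does not dominate $\nu$, the whole of $\nu^t$ is $\lesssim \lambda^t s$. The paper makes your ``step four'' a one-liner via the exact identities proved in Lemma~\ref{lem:relations_between_contributions}, namely $\nu_{inter}^{2t} = \nu^t \lambda^t s_{inter}$ (which follows directly from the definitions of $s_{inter}$ and $\nu_{inter}$) and $\nu_{sparse}^{2t} \le \nu^t \lambda^t s_{sparse}$ (using $\sigma_j \le \lambda$ for $j > j_*$); these are precisely the ``play the two defining equations against each other'' computations you anticipated, and once they are on the table the absorption of $\nu_{inter}^t$ and $\nu_{sparse}^t$ into $\lambda^t s$ is immediate. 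One small sign slip: in your step two the chain should end with $\sigma_j^a/\nu^a \le \sigma_j^4/(\lambda^{4-2t}\nu^{2t}) \iff \sigma_j^4 \ge \lambda^{4-t}\nu^t$ (not $\le$); your stated direction is reversed, though it does not affect your eventual conclusion since the identification of $i_*$ as the crossover index is correct.
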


The proof of Theorem~\ref{th:rate_ellt} is detailed in Appendix \ref{sec:proof_theorem_ellt}  with proofs of the lower and upper bound provided in subsections \ref{subsec:proof_theorem_ellt_lower} and \ref{subsec:proof_theorem_ellt_upper} respectively. Further, the simplification claimed in equation~\eqref{eq:explicit_separation_ellt_without_interm} is proved in Lemma~\ref{lem:relations_between_contributions}, item~\ref{item:rate_ellt}. 
\revision{It turns out that the separation radius $\epsilon^*(s,t,\Sigma)$ admits a simpler expression when $s\leq c(\eta)$ for some constant depending only on $\eta$. 
This simpler expression is given in Corollary~\ref{cor:s_leq_constant}.} 
Now we once again provide a detailed discussion regarding the implications of Theorem~\ref{th:rate_ellt}.

\subsection{Regimes of minimax separation}

This rate is a combination of two terms. 
Similarly to the case $t \geq 2$, the first term $\lambda^t s$ cannot be expressed explicitly as a function of the $\sigma_j$'s in general. 
However, it is possible to solve the equations~\eqref{eq:def_nu_ellt}, \eqref{eq:def_f_ellt} and \eqref{eq:def_lambda_ellt} for some specific profiles of $\Sigma$ (see Section~\ref{sec:examples}).
The second term $\|\sigma_{\leq i_*}\|_a^t$ is best understood when compared with the rate in the absence of sparsity. 
When $s=d$, our result directly implies that the minimax separation radius in $L^t$ norm scales as $\epsilon^*(d,t,\Sigma)^t \asymp \|\sigma\|_a^t$. 
On the \textit{dense part} $(\sigma_1,\dots, \sigma_{i_*})$, sparsity is therefore irrelevant. 
We call this the ``fully dense'' regime, as in the lower bound, this contribution is obtained by setting a non-sparse prior (see Subsection~\ref{subsec:LB_ellt}).
This term showcases an interesting interpolation between the norms.
When $t\in[1,2]$, the minimax separation radius is expressed in terms of the $L^a$ norm where $a = a(t) = 4t/(4-t)$. 
This duality has also been highlighted for testing discrete distributions without sparsity in $\ell^t$ norm, $t\in[1,2]$ in the paper~\cite{chhor2022sharp} (see Subsection~\ref{subsec:mult_testing} for further details).

\vspace{2mm}
Our next lemma helps us present more insights to the results by providing a more interpretable expression for $\nu^t$.

\begin{lemma}\label{lem:explicit_expr_nu_ellt}
There exist two large constants $C_1, C_2 >1$ depending only on $\Cnu$ (hence independent of $\nu$), which can be made arbitrarily large provided that $\Cnu$ is small enough, such that $\nu^t \in \Big[C_1 \widetilde \nu^{\,t},\, C_2 \widetilde \nu^{\,t}\Big]$, where
$$ \widetilde \nu^{\,t} := \underbrace{\left[\sum_{j \leq i_*} \sigma_j^{a}\right]^{t/a}}_{\nu_{fdense}^t} + \underbrace{\frac{1}{\lambda^{2-t}} \left(\sum_{j = i_*\!+1}^{j_*} \sigma_j^4\right)^{1/2}}_{\nu_{inter}^t} +  \underbrace{\sqrt{\sum_{j>j_*} \sigma_j^{2t} \exp\left(-\lambda^2/\sigma_j^2\right)}}_{\nu_{sparse}^t}.$$
\end{lemma}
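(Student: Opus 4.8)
The plan is to show that $\widetilde\nu^{\,t}$ satisfies a version of the defining equation~\eqref{eq:def_nu_ellt} for $\overline\nu(\lambda)$ up to a controlled change in the constant $\Cnu$, and then invoke the monotonicity of the left-hand side of~\eqref{eq:def_nu_ellt} in $\overline\nu$ to transfer bounds. Concretely, recall that $\nu = \overline\nu(\lambda)$ is the unique root of
\[
G(\overline\nu) := \sum_{j \leq j_*} \frac{\sigma_j^{a}}{\overline \nu^{a}} \land \frac{\sigma_j^4}{\lambda^{4-2t} \, \overline \nu^{2t}} + \sum_{j>j_*} \frac{\sigma_j^{2t} }{\overline \nu^{2t}}\exp\!\bigg(\!\!-\frac{\lambda^2}{\sigma_j^2}+1\bigg) = \Cnu,
\]
and that $G$ is continuous and strictly decreasing from $+\infty$ to $0$. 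Hence it suffices to prove the two-sided estimate $G\big(C_1^{1/t}\widetilde\nu\big) \leq \Cnu \leq G\big(C_2^{1/t}\widetilde\nu\big)$ for suitable $C_1, C_2$; monotonicity of $G$ then forces $C_1^{1/t}\widetilde\nu \geq \nu \geq C_2^{1/t}\widetilde\nu$ after relabeling, i.e. $\nu^t \in [C_1\widetilde\nu^{\,t}, C_2\widetilde\nu^{\,t}]$ as claimed. To make $C_1, C_2$ arbitrarily large as $\Cnu \to 0$, I would track the dependence: evaluating $G$ at a multiple $K\widetilde\nu$ rescales the first sum's two branches by $K^{-a}$ and $K^{-2t}$ and the last sum by $K^{-2t}$, so choosing $K$ a suitable negative or positive power of $\Cnu$ makes $G(K\widetilde\nu)$ cross $\Cnu$.

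The main work is the matching lower and upper bounds on $G(\widetilde\nu)$ itself, i.e. showing $G(\widetilde\nu) \asymp 1$ up to constants that blow up/vanish polynomially in $\Cnu$. For this I would split the index set at $i_*$ (defined by $\sigma_j^4 \gtrless \lambda^{4-t}\nu^t$, equivalently, after substituting $\widetilde\nu$, by $\sigma_j^4 \gtrless \lambda^{4-t}\widetilde\nu^{\,t}$ up to constants) and at $j_*$, giving three blocks. On $j \leq i_*$ the minimum in~\eqref{eq:def_nu_ellt} is achieved by the $\sigma_j^a/\overline\nu^a$ branch; plugging $\overline\nu = \widetilde\nu \gtrsim \nu_{fdense} = \big(\sum_{j\le i_*}\sigma_j^a\big)^{1/a}$ shows $\sum_{j\le i_*}\sigma_j^a/\widetilde\nu^{\,a} \lesssim 1$, and conversely $\widetilde\nu \lesssim \nu_{fdense} + \nu_{inter} + \nu_{sparse}$ in each scale gives a lower bound on the relevant block — here I use that $\widetilde\nu$ is (up to constants) the max of the three pieces, so each piece is $\lesssim \widetilde\nu$. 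On $i_* < j \leq j_*$ the minimum is the $\sigma_j^4/(\lambda^{4-2t}\overline\nu^{2t})$ branch, and $\sum_{i_*<j\le j_*}\sigma_j^4 / (\lambda^{4-2t}\widetilde\nu^{\,2t}) \lesssim \big(\nu_{inter}^t/\widetilde\nu^{\,t}\big)^2 \lesssim 1$ since $\nu_{inter}^t = \lambda^{-(2-t)}\big(\sum_{i_*<j\le j_*}\sigma_j^4\big)^{1/2}$ and $4-2t = 2(2-t)$; the reverse inequality is the same identity read backwards. On $j > j_*$ the sum is literally $\nu_{sparse}^{2t}/\widetilde\nu^{\,2t}$ up to the harmless $e^{+1}$ factor, again $\lesssim 1$. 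Summing the three blocks gives $G(\widetilde\nu) \lesssim 3$, and taking the dominant block gives $G(\widetilde\nu) \gtrsim 1$; chasing the $\Cnu$-dependence through these "$\lesssim$" constants yields the quantitative $C_1, C_2$.

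The delicate point — and the step I expect to be the main obstacle — is the consistency of the index $i_*$: it is defined in~\eqref{eq:def_istar_ellt} via the \emph{true} $\nu$, whereas in the three-block decomposition above I want to use it as the crossover point for $\widetilde\nu$, and a priori these could differ by several indices, which would spoil the clean branch identification of the minimum in~\eqref{eq:def_nu_ellt}. I would handle this by a bootstrapping argument: first establish crude bounds $\nu \asymp_{\Cnu} \widetilde\nu$ using only the monotone structure of $G$ and the obvious inequalities $G(\widetilde\nu) \le (\text{number of terms})$-type bounds that do \emph{not} require knowing $i_*$ exactly (e.g. bound each $\min$ by the first branch everywhere, or by the second branch everywhere, whichever is convenient for the direction); then, since $\sigma_j$ is monotone, $\sigma_j^4 \gtrless \lambda^{4-t}\nu^t$ and $\sigma_j^4 \gtrless \lambda^{4-t}\widetilde\nu^{\,t}$ cut the sorted sequence at indices differing only by an amount absorbed into constants, and the contribution of the "ambiguous" indices between the two cutoffs is negligible because there $\sigma_j^a/\widetilde\nu^{\,a} \asymp \sigma_j^4/(\lambda^{4-2t}\widetilde\nu^{\,2t})$, so both branch choices agree up to constants anyway. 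Once $i_*$ is pinned down up to constants, the three-block estimate closes, and the final constants $C_1, C_2$ are read off as explicit (negative and positive, respectively) powers of $\Cnu$.
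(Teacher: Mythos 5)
The paper's proof is substantially different from yours, and simpler. It never plugs $\widetilde\nu$ into the defining equation. Instead it evaluates at the true $\nu$: by the very definition of $i_*$ in~\eqref{eq:def_istar_ellt} (it is the exact crossover for $\sigma_j^4 \geq \lambda^{4-t}\nu^t$, which is equivalent to the first branch of the $\land$ in~\eqref{eq:def_nu_ellt} being the smaller one at $\overline\nu = \nu$), equation~\eqref{eq:def_nu_ellt} becomes the exact algebraic identity
$\Cnu = A/\nu^a + B/\nu^{2t}$ with $A = \sum_{j\leq i_*}\sigma_j^a$ and $B$ collecting the intermediate and sparse blocks, so that $A^{t/a} = \nu_{fdense}^t$ and $B^{1/2} \asymp \nu_{inter}^t + \nu_{sparse}^t$. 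Both sides of the lemma then follow by elementary manipulation: each of the two nonnegative terms is $\leq \Cnu$, giving $\nu^t \geq (A/\Cnu)^{t/a}$ and $\nu^t \geq (B/\Cnu)^{1/2}$ and hence the lower bound by averaging; while the larger of the two terms is $\geq \Cnu/2$, giving $\nu^t \leq (2A/\Cnu)^{t/a} \lor (2B/\Cnu)^{1/2}$ and hence the upper bound. No monotonicity of $G$, no evaluation at $\widetilde\nu$, no bootstrap.

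Your plan has a genuine gap precisely at the point you flag as delicate, and the proposed bootstrap does not close it. For $G(K\widetilde\nu) \leq \Cnu$ you are fine, since $\min(x,y)$ can be dominated by whichever branch you like and you only need upper bounds on the summands. But the other direction requires a \emph{lower} bound on $G(K'\widetilde\nu)$, which forces you to identify the smaller branch of each $\min$; that comparison is $\sigma_j^4 \gtrless \lambda^{4-t}(K'\widetilde\nu)^t$, which is exactly a comparison of $K'\widetilde\nu$ with $\nu$, i.e.\ the conclusion of the lemma. Your "crude bounds first" step does not escape this: bounding each $\min$ by one fixed branch throughout only produces \emph{upper} bounds on $G$, never the needed lower bound, and the claim that "each piece is $\lesssim \widetilde\nu$" controls the $\land$ from above, not from below. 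The remedy is not a finer two-pass argument but the paper's change of viewpoint: evaluate the defining equation at $\nu$ itself, where the branch of every $\min$ is pinned down by the definition of $i_*$ with no circularity, and then the two-sided estimate is an algebraic consequence of the single identity $\Cnu = A/\nu^a + B/\nu^{2t}$.
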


Note that the expression of $\nu$ from Lemma~\ref{lem:explicit_expr_nu_ellt} involves three contributions. 
Surprisingly, the last two terms $\nu_{inter}^t$ and $\nu_{sparse}^t$ never dominate in the rate (see Lemma~\ref{lem:relations_between_contributions}). 
One could therefore believe that only two regimes coexist in the minimax separation radius. 
However, this is not the case. 
In fact, the minimax separation radius contains three regimes: a \textit{fully dense} regime over $(\sigma_1,\dots,\sigma_{i_*})$, an \textit{intermediate} regime over $(\sigma_{i_*\!+1}, \dots, \sigma_{j_*})$ and a \textit{sparse} regime over $(\sigma_{j_*\!+1},\dots, \sigma_d)$, 
the intermediate and sparse regimes being hidden in the term $\lambda^t s$. 
As will be discussed in Subsections~\ref{subsec:LB_ellt} and~\ref{subsec:UB_ellt}, the three regimes involve very different phenomena, the intermediate one, however, sharing similarities with the other two.
This is reflected by our upper bound, which requires three tests.
When $t=2$, we get $a=4$, and the fully dense and intermediate parts merge into one single regime.
\vspace{2mm}

Finally, we note that although Theorem~\ref{th:rate_ellt} can be made to encompass the case where $t=2$, the method presented in Section~\ref{sec:results_ellt_>2} is more intuitive in that special case.

\subsection{Lower bounds for $t\in [1,2]$}\label{subsec:LB_ellt}
\begin{proposition}\label{prop:LB_gaussian_ellt}
Let $\lambda$ be defined as in~\eqref{eq:def_lambda_ellt} and let $\nu = \nu(\lambda)$ be the solution to equation~\eqref{eq:def_nu_ellt} for $x=\lambda$.
Then there exists a small constant $c$ depending only on $\eta$, such that 
$${\epsilon^*}^t \geq  c\big(\nu^t + \lambda^t s\big).$$
\end{proposition}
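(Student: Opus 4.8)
\textbf{Proof plan for Proposition~\ref{prop:LB_gaussian_ellt}.}

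The plan is to use Le Cam's two-point (or rather, two-prior) method, splitting into the two regimes $s < c(\eta)$ and $s \geq c(\eta)$ exactly as sketched in Subsection~\ref{subsec:LB_ellt} and parallel to Subsection~\ref{subsec:LB_t>2}. In both regimes the target is the bound $\epsilon^* \geq c(\nu^t + \lambda^t s)^{1/t}$, and since the maximum of two quantities is comparable to their sum, it suffices to exhibit, for each of the two contributions $\lambda^t s$ and $\nu^t$, a prior supported on $\Theta(c\epsilon, s, t)$ that is statistically indistinguishable from $\mathbb{P}_0$ in the sense that $\chi^2(\mathbb{P}_{prior}, \mathbb{P}_0) \leq c'(\eta)$. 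The standard reduction (via $\operatorname{TV} \leq \sqrt{\chi^2}/2$ and the fact that a small TV distance forces the testing risk close to $1$) then yields $R^*(c\epsilon, s, t, \Sigma) > \eta$, hence $\epsilon^*(s,t,\Sigma) \geq c\epsilon$.

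For $s \geq c(\eta)$, I would take the product prior $\theta_j = b_j \omega_j \gamma_j$ with $b_j \sim \operatorname{Ber}(\pi_j)$, $\omega_j \sim \operatorname{Rad}(1/2)$ independent, where $(\pi_j, \gamma_j)$ are chosen to (approximately) solve the variational problem~\eqref{eq:optim_pb} — that is, to maximize $\mathbb{E}\|\theta\|_t^t = \sum_j \gamma_j^t \pi_j$ subject to $\sum_j \pi_j = s/2$, $\pi_j \in [0,1]$, and the indistinguishability constraint $\sum_j \pi_j^2 \sinh^2(\gamma_j^2/2\sigma_j^2) \leq c'$. Because $\|\cdot\|_t$ for $t \in [1,2]$ is less smooth, the Lagrangian analysis of this problem produces three regimes according to which term in the $\min$ in~\eqref{eq:def_nu_ellt} is active and whether the box constraint $\pi_j \leq 1$ is saturated: a fully dense block $j \leq i_*$ (where $\pi_j = 1$), an intermediate block $i_* < j \leq j_*$, and a sparse block $j > j_*$; the optimizing values should reproduce $\lambda, \nu, i_*, j_*$ as defined in~\eqref{eq:def_nu_ellt}--\eqref{eq:def_istar_ellt}. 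I then need three routine verifications: (i) the chi-square computation, using the standard identity $\chi^2(\mathbb{P}_{prior},\mathbb{P}_0) = \prod_j \big(1 + \pi_j^2(e^{\gamma_j^2/\sigma_j^2} - 1)^2 \cdot (\text{const})\big) - 1$ — more precisely $\mathbb{E}[\exp(\langle \theta, \theta'\rangle_{\Sigma^{-1}})] $ over two independent copies — which factorizes over coordinates and is controlled by $\sum_j \pi_j^2 \sinh^2(\gamma_j^2/2\sigma_j^2) \leq c'$; (ii) a Chebyshev bound showing $\sum_j b_j \leq s$ with probability $\geq 1 - \eta/10$, using $\mathbb{E}\sum b_j = s/2$ and variance $\leq s/2$ (valid once $s \geq c(\eta)$), so the prior can be restricted to $\{\|\theta\|_0 \leq s\}$ with negligible cost; and (iii) a lower bound $\|\theta\|_t^t \gtrsim \mathbb{E}\|\theta\|_t^t \gtrsim \lambda^t s + \nu^t$ holding with high probability, which again is a concentration statement for the sum of independent nonnegative terms $\gamma_j^t b_j$ together with the identification $\mathbb{E}\|\theta\|_t^t \asymp \lambda^t s + \nu^t$ coming from the variational characterization. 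For $s < c(\eta)$, I would instead use $\epsilon^*(s,t,\Sigma) \geq \epsilon^*(1,t,\Sigma)$ and build a $1$-sparse mixture prior (pick a single coordinate $j$ with probability $\propto \pi_j$, set it to $\pm \gamma_j$), which separately gives the $\lambda$ and the $\sigma_1$ (hence $\nu$) contributions via the analysis in Lemma~\ref{lem:generalities_LB}; here the sparsity constraint holds deterministically so (ii) is vacuous.

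The main obstacle I anticipate is step (iii) combined with the precise identification $\mathbb{E}\|\theta\|_t^t \asymp \lambda^t s + \nu^t$ under the optimal choice of $(\pi_j,\gamma_j)$: because of the three-regime structure, one must carefully track how each block contributes, show that the intermediate and sparse pieces $\nu_{inter}^t, \nu_{sparse}^t$ of Lemma~\ref{lem:explicit_expr_nu_ellt} are dominated and thus folded into $\lambda^t s$ (this is where Lemma~\ref{lem:relations_between_contributions} is invoked), and verify that the linearization $\sinh(u) \asymp u$ versus $\sinh^2(u) \asymp e^u$ is applied on the correct blocks so that the chi-square constraint is tight. A secondary technical point is handling the non-smoothness of $\|\cdot\|_t$ for $t < 2$ when passing from $\mathbb{E}\|\theta\|_t^t$ to an almost-sure lower bound — but since $\|\theta\|_t^t = \sum_j |\theta_j|^t$ is itself an exact sum over coordinates (no composition with a non-smooth outer function is needed at the level of the $t$-th power), this should reduce to a Bernstein/Chebyshev estimate for sums of independent bounded random variables and is not a real difficulty. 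The genuinely delicate bookkeeping is the matching between the Lagrange-multiplier solution of~\eqref{eq:optim_pb} and the implicitly defined quantities $\lambda, \nu$ in~\eqref{eq:def_nu_ellt}--\eqref{eq:def_istar_ellt}.
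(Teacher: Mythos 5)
Your plan follows essentially the same route as the paper's proof: the split into $s\leq C(\eta)$ (handled via Lemma~\ref{lem:generalities_LB} and the $1$-sparse mixture) versus $s\geq c(\eta)$, the product prior $\theta_j = b_j\omega_j\gamma_j$ with the three-block parameter table, the factorized $\chi^2$ bound of the form $\prod_j\bigl[1 + 2\pi_j^2\sinh^2(\gamma_j^2/2\sigma_j^2)\bigr]$ controlled by linearizing $\sinh$ on $j\leq j_*$ and using $\sinh(x)\leq e^x/2$ on $j>j_*$, the Chebyshev bound on the random sparsity, and finally computing $\sum_j\gamma_j^t\pi_j$ and invoking Lemma~\ref{lem:relations_between_contributions} to absorb the intermediate and sparse contributions into $\lambda^t s$. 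One small remark: your step (iii), showing $\|\theta\|_t^t\gtrsim\mathbb{E}\|\theta\|_t^t$ holds with high probability so the prior is (after a negligible truncation) supported on the alternative, is flagged in the paper only informally as the prior's norm ``concentrating'' on that value — you are right that it needs a short Chebyshev/Paley–Zygmund justification for the sum of independent bounded terms $\gamma_j^t b_j$, and spelling it out would make the argument complete.
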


To prove the lower bound, we once again use Le Cam's two points method. 
Again, we distinguish between two cases. 
\vspace{1mm}

If $s< c(\eta)$ for a sufficiently large constant $c(\eta)$ depending only on $\eta$, then we bound from below $\epsilon^*(s,t,\Sigma)$ by $\epsilon^*(1,t,\Sigma)$ and propose a combination of $1$-sparse priors separated from the null hypothesis by $c\lambda$ and $c\nu$ respectively, and that cannot be distinguished from the null hypothesis with high probability (see Lemma~\ref{lem:generalities_LB} for details).
\vspace{1mm}

Conversely, if $s\geq c(\eta)$, we define the following prior distribution over the parameter space $\Theta = \R^d$. 
We define a random vector $\theta \in \R^d$ whose coordinates $\theta_j$ satisfy $\forall j \in [d]: \theta_j = b_j \omega_j \gamma_j$, where $b_j \sim \operatorname{Ber}(\pi_j)$, $\omega_j \sim \operatorname{Rad}(\frac{1}{2})$ are mutually independent, and where the parameters $\pi_j, \gamma_j$ are given in Table~\ref{tab:prior_param_1_2} below

\begin{table}[h!]
\centering
\begin{tabular}{|c|c|c|c|}
     \hline
     & {\phantom{\Big|}}$j \leq i_*${\phantom{\Big|}} & $i_*\! < j \leq j_*$ & $j> j_*$  
     \\
     \hline
     {\phantom{\Bigg|}} $\pi_j$ {\phantom{\Bigg|}} &  $1$ & $\dfrac{\sigma_j^4}{\nu^{\,t} \lambda^{4-t} }$ & ${\phantom{\Bigg|}}\dfrac{\sigma_j^t}{\nu^t} \exp\!\left(\!\!-\dfrac{\lambda^2}{\sigma_j^2}\right)$ {\phantom{\Bigg|}}\\
     \hline
     $\gamma_j^t$ &  $c\dfrac{\sigma_j^a}{\nu^{a-t}}$ {\phantom{\Big|}} & $c\lambda^t$ & $c\lambda^t$\\
     \hline
\end{tabular}
    \caption{Values of the prior parameters for $t\in[1,2]$.}
    \label{tab:prior_param_1_2}
\end{table}

In Table~\ref{tab:prior_param_1_2}, $c$ denotes some small enough constant depending only on $\eta$. 
The parameters in the above table are found by solving the variational problem~\eqref{eq:optim_pb} for $t \leq 2$, which is significantly more involved than in the case $t\geq 2$, as the constraint $\pi_j \leq 1$ can be saturated. 
This gives rise to a new phase transition occurring at $i_*$, on top of the phase transition at $j_*$. 
As a reminder, $j_*$ is the index after which the linearization $\sinh^2\big(\gamma_j^2/2\sigma_j^2\big) \asymp \gamma_j^4/\sigma_j^4$ no longer holds and has to be replaced by $\sinh^2\big(\gamma_j^2/2\sigma_j^2\big) \asymp \exp(\gamma_j^2/\sigma_j^2)$.

\vspace{2mm}

The indices $j\leq i_*$ form the \textit{fully dense regime}, which characterizes the largest values of the $\sigma_j$'s. 
In this regime, the Bernoulli parameters $\pi_j$ are all equal to $1$ (in other words, the optimal prior has no sparsity at all) and the optimal perturbations $\gamma_j^t = c\sigma_j^a/\nu^{a-t}$ are proportional to the values that would be optimal in absence of sparsity (up to the rescaling factor $1/\nu^{a-t}$). 
In this regime, sparsity is irrelevant.
As soon as $\pi_j$ no longer saturates the constraint ``$\pi_j=1$'', a phase transition occurs in the behavior of $\gamma_j$.   
An interesting phenomenon arises concerning the decay of $\gamma_j$.
The perturbation $\gamma_j^t$ first decreases from $j=1$ to $j = i_*$ until it reaches $c\lambda^t$.
After $i_*$ it remains equal to $c \lambda^t$ independently of $\sigma_j$. 
Over $\{i_*\!+\!1,\dots, d\}$, $\gamma_j$ therefore does not exhibit any phase transition, contrary to $\pi_j$. 
This is very surprising, given that the two parts $j\leq j_*$ and $j>j_*$ exhibit fundamentally different behaviors in the analysis of the lower bound, and it was unexpected to observe that the intermediate zone $j \in \{i_*\!+1, \dots, j_*\}$ and the sparse zone $j > j_*$ share the same magnitude of the $\gamma_j$'s.

\subsection{Upper bounds for $t\in [1,2]$}\label{subsec:UB_ellt}

In this subsection, we describe the tests achieving the rate in Theorem~\ref{th:rate_ellt}.
Let $\lambda$ and $\nu$ be defined as in~\eqref{eq:def_lambda_ellt} and~\eqref{eq:def_nu(lambda)_ellt} respectively. 
We let 
\begin{align}
    &\uptau ~= C_t \lambda^t + \nu^t/s, \quad \text{ where } C_t = (4t)^{t}  \label{eq:def_tau_ellt}\\
    &\alpha_j := \mathbb E \Big(\,|Z_j|^t \; \Big| \; |Z_j|^t  >  \uptau \Big) \quad \text{where $Z_j \sim \mathcal{N}(0,\sigma_j^2)$}, ~~\forall j >j_*. \nonumber
\end{align}

and now define the test statistics as follows:
\begin{align}
    & T_{fdense} = \sum_{j \leq i_*} \frac{1}{\sigma_j^{2b}} \left(X_j^2 -\sigma_j^2 \right), \quad \text{ where $b$ is defined in~\eqref{eq:def_a_b}},\label{eq:def_Tfdense_ellt}\\
    &T_{inter} = \sum_{j = i_*\!+1}^{j_*} X_j^2 - \sigma_j^2,
    \label{eq:def_Tinter_ellt}\\
    &T_{sparse} = \sum_{j > j_*} \big(|X_j|^t - \alpha_j\big)\mathbb 1\left\{|X_j|^t > \uptau\right\}.\label{eq:def_Tsparse_ellt}
\end{align}

For a large enough constant $C$ depending only on $\eta$, we finally define the test functions as follows
\begin{alignat}{2}
    &\psi_{fdense} &&= \mathbb 1\Bigg\{T_{fdense} \geq C \bigg(\sum_{j \leq i_*} \sigma_j^{a}\bigg)^{1/2}\Bigg\},\\
    &\psi_{inter} &&= \mathbb 1\Bigg\{T_{inter} \geq \frac{C}{\lambda^2} \sum_{j = i_*\!+1}^{j_*} \sigma_j^4\Bigg\},\\
    &\psi_{sparse} &&= \mathbb 1\Big\{T_{sparse} > C\rho \Big\}, \quad \quad \quad \quad  \text{ where } \rho = \lambda^t s +  \nu^t.\label{eq:def_rho_ellt}\\ 
    &~~ \psi^* &&= \psi_{fdense} \lor \psi_{inter} \lor \psi_{sparse},\label{eq:def_tests_ellt}
\end{alignat}

We prove the following Proposition which complements the lower bound from Proposition~\ref{prop:LB_gaussian_ellt} by a matching upper bound.
\begin{proposition}\label{prop:UB_gaussian_ellt}
Recall the definition of $\rho$ in~\eqref{eq:def_rho_ellt}. There exists a large enough constant $C'$ depending only on $\eta$ such that
\begin{align*}
\begin{cases}
    \mathbb P_\theta\left(\psi^* = 1\right) \leq \eta/2 & \text{ if } ~~ \theta=0,\\
    \mathbb P_\theta\left(\psi^* = 0\right)\leq \eta/2 & \text{ if } ~ \|\theta\|_0 \leq s \text{ and } \|\theta\|_t^t \geq C' \rho.
\end{cases}  
\end{align*}
\end{proposition}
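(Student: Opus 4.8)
\textbf{Proof plan for Proposition~\ref{prop:UB_gaussian_ellt}.}

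The plan is to analyze each of the three tests $\psi_{fdense}$, $\psi_{inter}$, $\psi_{sparse}$ separately, showing that under $H_0$ each one rejects with probability at most $\eta/6$ (so that by a union bound $\psi^\ast$ has type~I error at most $\eta/2$), and that under any alternative $\theta$ with $\|\theta\|_0\le s$ and $\|\theta\|_t^t \ge C'\rho$, at least one of the three tests rejects with probability at least $1-\eta/2$. The key structural fact I would exploit is the decomposition of the signal mass across the three coordinate blocks $\{j\le i_\ast\}$, $\{i_\ast< j\le j_\ast\}$, $\{j>j_\ast\}$: if $\|\theta\|_t^t\ge C'\rho$, then the contribution of $\theta$ restricted to at least one of these blocks is $\gtrsim \rho$ (up to the constant $C'/3$), and I would show that the test associated with that block detects it. This mirrors the three-regime structure already described in Subsection~\ref{subsec:UB_ellt}, and each block's analysis parallels a known argument: the fully dense block is a weighted chi-square test (as in the $s=d$ analysis, cf.\ \cite{laurent2012non}), the intermediate block is an unweighted chi-square test over coordinates with $\sigma_j\ge\lambda$, and the sparse block is a truncated higher-moment test in the spirit of \eqref{eq:sparse_test_isotropic} and the $t\ge2$ analysis in Subsection~\ref{subsec:LB_t>2}.

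For the type~I error, in each case I would bound $\operatorname{Var}_{H_0}$ of the test statistic and apply Chebyshev: for $T_{fdense}$, the summands $\sigma_j^{-2b}(X_j^2-\sigma_j^2)$ are independent with variance $2\sigma_j^{4-4b}=2\sigma_j^{2(a-t)\cdot(\cdot)}$ — more precisely one computes $\operatorname{Var}_{H_0}[T_{fdense}]\asymp \sum_{j\le i_\ast}\sigma_j^{4-4b}$ and checks, using $4-4b = 2a/?$ and the definition \eqref{eq:def_a_b} of $a,b$, that the threshold $C(\sum_{j\le i_\ast}\sigma_j^a)^{1/2}$ dominates a constant multiple of this standard deviation; for $T_{inter}$ one has $\operatorname{Var}_{H_0}[T_{inter}]=2\sum_{i_\ast<j\le j_\ast}\sigma_j^4$, matched against the threshold $(C/\lambda^2)\sum\sigma_j^4$ after checking $\sum\sigma_j^4 \gtrsim \lambda^2 (\sum\sigma_j^4)^{1/2}$, i.e.\ $(\sum_{i_\ast<j\le j_\ast}\sigma_j^4)^{1/2}\gtrsim\lambda^2$, which follows since each such $\sigma_j\ge\lambda$; for $T_{sparse}$ one controls $\operatorname{Var}_{H_0}[T_{sparse}]\le \sum_{j>j_\ast}\mathbb E[|Z_j|^{2t}\mathbb 1\{|Z_j|^t>\uptau\}]$ via Gaussian tail bounds, and the choice $\uptau = C_t\lambda^t+\nu^t/s$ is exactly calibrated so that this variance is $\lesssim \rho^2$ — this is the point where the truncation level matters, as explained in the discussion after \eqref{eq:sparse_test_isotropic}, and I would reuse the computation behind \eqref{eq:def_nu_ellt}.

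For the type~II error, suppose $\|\theta\|_0\le s$ and $\|\theta\|_t^t\ge C'\rho$. I would first argue that it suffices to handle the ``worst case'' where the signal is concentrated in a single block. If the $\{j\le i_\ast\}$-portion carries mass $\gtrsim\rho$, then $\mathbb E_\theta[T_{fdense}] = \sum_{j\le i_\ast}\sigma_j^{-2b}\theta_j^2$, and I would lower-bound this using Hölder/power-mean inequalities relating $\sum\sigma_j^{-2b}\theta_j^2$ to $\sum|\theta_j|^t$ given the constraint $\|\theta\|_0\le s$ and $\sigma_j$ decreasing — this is where the exponent $a=4t/(4-t)$ enters through a duality computation, and I expect this to be \textbf{the main obstacle}: one must show the detection threshold $C(\sum_{j\le i_\ast}\sigma_j^a)^{1/2}$ is crossed by the mean gap whenever the block signal exceeds $C'\rho/3$, which requires carefully combining the sparsity budget $s$, the definition of $i_\ast$ in \eqref{eq:def_istar_ellt}, and the variational characterization of $\lambda,\nu$. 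The intermediate block is similar but simpler since there the test is unweighted and all coordinates have comparable scale $\sigma_j\in[\lambda, \sigma_{i_\ast}]$. For the sparse block, I would split $\theta$-coordinates into those with $|\theta_j|^t$ above and below a constant multiple of $\uptau$: coordinates well above $\uptau$ each contribute $\gtrsim |\theta_j|^t$ to $\mathbb E_\theta[T_{sparse}]$ with controlled variance (shifted Gaussian tail estimates), while coordinates below contribute negligibly, and one checks that if these small coordinates carried most of the mass then the fully-dense or intermediate regime would have been triggered instead — so $\mathbb E_\theta[T_{sparse}]\gtrsim\rho$ and a final Chebyshev bound, using that $\operatorname{Var}_\theta[T_{sparse}]$ under the alternative is still $\lesssim \rho^2 + \rho\cdot(\text{signal}) $, closes the argument. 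Throughout, the constants are tracked so that $c_\nu$ and $C_t$ can be taken small/large enough to absorb the $\eta/6$ requirements.
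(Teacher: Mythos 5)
Your plan matches the paper's approach: Proposition~\ref{prop:UB_gaussian_ellt} is proved exactly by splitting $\theta$ into the three blocks $\{j\le i_*\}$, $\{i_*<j\le j_*\}$, $\{j>j_*\}$, observing that at least one block carries a constant fraction of $\|\theta\|_t^t$, establishing mean/variance bounds for $T_{fdense}$, $T_{inter}$ under $H_0$ and under the respective block-dominant alternative (Lemma~\ref{lem:exp_var_T_ellt}), similarly for $T_{sparse}$ (Lemma~\ref{lem:analysis_Tsparse}), and then applying Chebyshev with a union bound.

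Two points in your write-up are misstated, though neither breaks the overall strategy. First, the fully-dense type~II argument does \emph{not} use the sparsity budget (the paper explicitly remarks on this); the Hölder step with conjugate exponents $2/t$ and $2/(2-t)$ gives $\sum_{j\le i_*}\theta_j^2/\sigma_j^{2b} \ge \|\theta_{\le i_*}\|_t^2/(\sum_{j\le i_*}\sigma_j^a)^{(2-t)/t}$ directly, using only that $bt\cdot\frac{2}{2-t}=a$. This is actually the cleanest of the three blocks, not ``the main obstacle'' as you suggest; if anything, the hardest work is in the shifted-Gaussian variance control for $T_{sparse}$ (Lemma~\ref{lem:analysis_Tsparse}). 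The block that does need the sparsity budget is the intermediate one, where Hölder is used on the $(\ell^2,\ell^t)$ pair together with $\|\theta_{int}\|_0\le s$. Second, your argument that the small coordinates in the sparse block are negligible ``because the fully-dense or intermediate regime would have been triggered instead'' does not work: those coordinates live in $\{j>j_*\}$, a region the other two test statistics never see, so there is no such implication. The correct argument is direct and uses sparsity: defining $I=\{j>j_*:|\theta_j|^t\ge\Cbar\uptau\}$, the mass outside $I$ is $\sum_{j>j_*,\,j\notin I}|\theta_j|^t\le s\cdot\Cbar\uptau\lesssim\Cbar\rho$, since by construction $s\uptau = C_t\lambda^t s + \nu^t\lesssim\rho$, so if $\|\theta_{>j_*}\|_t^t\ge 4\Cbar\rho$ then $\|\theta_I\|_t^t\ge\frac{1}{2}\|\theta_{>j_*}\|_t^t$.
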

Proposition~\ref{prop:UB_gaussian_ellt} is proved in Appendix~\ref{subsec:proof_theorem_ellt_upper}.
As already evoked, the test $\psi^*$ is a combination of three tests, each one being taylored for the three different regimes. 
The first two tests statistics $T_{fdense}$ and $T_{inter}$ share the similarity of being non-truncated chi-square test statistics. 
Their reweightings are however different. 
The last two test statistics $T_{inter}$ and $T_{sparse}$ are both non-reweighted test statistics. 

\vspace{2mm}

The re-weighting in the test statistic $T_{fdense}$ shares similarities with a test statistic obtained in~\cite{chhor2022sharp} in the context of multinomial distributions. 
Letting $\mathcal{P}_d = \{p \in \R_+^d: \sum_{j=1}^d p_j = 1\}$, and given an observation $X \sim \mathcal{M}(p,n)$ drawn from a multinomial distribution with a probability parameter $p \in \R^d$ and a number of observations $n \in \N$, the authors consider the following testing problem: 
\begin{align*}
    H_0: p=p_0 \quad \quad \text{ against } \quad \quad H_1: \begin{cases}
    p\in \mathcal{P}_d\\
    \|p-p_0\|_t\geq \rho^*,
    \end{cases}
\end{align*}

Here, $t \in [1,2]$ and $p_0 \in \mathcal{P}_d$ is a given probability vector assumed to be sorted in non-increasing order $p_0(1)\geq \dots, \geq p_0(d)>0$.   
For a suitably defined index $I$, this problem involves two fundamentally different regimes. 
The ``bulk'' part $(p_0(1),\dots,p_0(I))$ involves a subgaussian regime, whereas the ``tail'' part $(p_0(I+1),\dots,p_0(d))$ involves a subpoissonian regime. 
The test statistic corresponding to the bulk part is a reweighted $\chi^2$ test statistic similar to $T_{fdense}$, where the reweighting is $1/p_0^b(j)$ instead of $1/\sigma_j^{2b}$. 
These two reweightings turn out to be analogous: Namely, each coordinate $X_j$ of $X$ has a standard deviation of the order of $\widetilde \sigma_j := \sqrt{np_0(j)}$ under $H_0$, hence proportional to $\sqrt{p_0(j)}$. 
Therefore, the reweighting $1/\widetilde \sigma_j^{2b}$ is proportional to $1/p_0^b(j)$. 
More connections with the multinomial setting are also discussed in Subsection~\ref{subsec:mult_testing}.

\section{Minimax rates in $L^\infty$ separation}\label{sec:Results_Linfty}

\revision{
In this section, we consider the case of the $L^\infty$ separation. 
The results from this Section can be obtained from Sections~\ref{sec:results_ellt_>2} and~\ref{sec:results_ellt_leq2} by noting that, for any $t\geq 1$, testing in $L^\infty$ separation is equivalent to testing in any $L^t$ separation with $s=1$. 
Indeed, when $t=\infty$, the problem reduces to detecting signals $\theta$ whose largest coordinate $|\theta_j|$ exceeds a certain threshold. 
Among such vectors, the most difficult ones to detect are clearly $1$-sparse, and their $L^t$ norm therefore coincides with their $L^\infty$ norm. 
The next lemma makes this claim precise.
\vspace{-1mm}
\begin{lemma}\label{lem:equiv_Linfty_Lt}
The following relations hold for any $s \in [d]$ and any $t \geq 1$:
\vspace{-2mm}
$$ \epsilon^*(s,\infty, \Sigma) = \epsilon^*(1,\infty, \Sigma) = \epsilon^*(1,t,\Sigma).$$
\end{lemma}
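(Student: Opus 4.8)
\textbf{Proof plan for Lemma~\ref{lem:equiv_Linfty_Lt}.}

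The plan is to establish the two equalities separately, both by showing that the relevant alternative parameter sets coincide or are related by a simple containment that forces the minimax risks to agree. First I would handle $\epsilon^*(s,\infty,\Sigma) = \epsilon^*(1,\infty,\Sigma)$. The key observation is that for any $\theta \in \R^d$ with $\|\theta\|_\infty \geq \epsilon$, the single coordinate $j_0 \in \arg\max_j |\theta_j|$ already satisfies $|\theta_{j_0}| \geq \epsilon$; hence the $1$-sparse vector $\theta' := \theta_{j_0} e_{j_0}$ lies in $\Theta(\epsilon, 1, \infty)$. Since $\Theta(\epsilon, 1, \infty) \subseteq \Theta(\epsilon, s, \infty)$ for every $s \geq 1$ (a $1$-sparse vector is $s$-sparse), the alternative for sparsity $s$ contains the alternative for sparsity $1$, so $R^*(\epsilon, s, \infty, \Sigma) \geq R^*(\epsilon, 1, \infty, \Sigma)$, giving $\epsilon^*(s,\infty,\Sigma) \geq \epsilon^*(1,\infty,\Sigma)$. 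For the reverse inequality, I would argue that adding coordinates to a $1$-sparse alternative vector can only help the tester: more precisely, for any $\theta \in \Theta(\epsilon, s, \infty)$ and the associated $1$-sparse truncation $\theta'$ above, the likelihood-ratio / data-processing structure shows the worst case over $\Theta(\epsilon, s, \infty)$ is attained on $1$-sparse vectors. Concretely, I would show $\sup_{\theta \in \Theta(\epsilon,s,\infty)} \mathbb{P}_\theta(\psi = 0)$ can be made no larger by restricting to $\Theta(\epsilon,1,\infty)$ once one notes that for a coordinatewise-monotone test of the form $\mathbb{1}\{\max_j |X_j|/\sigma_j > \text{threshold}\}$ the type II error is monotone in each $|\theta_j|$; combined with the matching lower bound this pins down $\epsilon^*(s,\infty,\Sigma) = \epsilon^*(1,\infty,\Sigma)$. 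The cleanest route avoiding test-specific arguments is: the lower bound construction for $\epsilon^*(1,\infty,\Sigma)$ is automatically a valid lower bound for $\epsilon^*(s,\infty,\Sigma)$ by the containment above, and the upper bound test for sparsity $1$ (which detects whether any single coordinate is large) works verbatim against $\Theta(\epsilon,s,\infty)$ since such vectors still have a coordinate of size $\geq \epsilon$.

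Next I would prove $\epsilon^*(1,\infty,\Sigma) = \epsilon^*(1,t,\Sigma)$ for any fixed $t \geq 1$. This is the more structural identity and follows from the fact that for a $1$-sparse vector $\theta = c\, e_{j}$ we have $\|\theta\|_t = |c| = \|\theta\|_\infty$, so the two norm constraints restricted to $1$-sparse vectors are literally identical: $\Theta(\epsilon, 1, t) = \Theta(\epsilon, 1, \infty) = \{c\, e_j : j \in [d],\ |c| \geq \epsilon\}$. Since the null hypothesis $\theta = 0$ is the same in both testing problems and the alternative parameter sets coincide exactly, the risk functionals $R(\psi,\epsilon,1,t,\Sigma)$ and $R(\psi,\epsilon,1,\infty,\Sigma)$ are identical as functions of $\psi$, hence so are the minimax risks $R^*$, hence so are the separation radii. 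This step is essentially a definitional unwinding once the norm identity on $1$-sparse vectors is noted.

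The main obstacle is the first equality, specifically the direction $\epsilon^*(s,\infty,\Sigma) \leq \epsilon^*(1,\infty,\Sigma)$: one must argue that enlarging the sparsity does not make the $L^\infty$-detection problem harder. I expect the cleanest justification is the ``upper bound test is still valid'' argument sketched above — the $1$-sparse-optimal test is defined purely in terms of detecting one large coordinate, and every $\theta \in \Theta(\epsilon,s,\infty)$ possesses such a coordinate, so its type II error is controlled exactly as in the $s=1$ analysis, while the type I error is unchanged; combined with the trivial lower bound $\epsilon^*(s,\infty,\Sigma) \geq \epsilon^*(1,\infty,\Sigma)$ from the set containment, this closes the loop. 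Care is needed only to ensure the relevant upper and lower bounds invoked from Sections~\ref{sec:results_ellt_>2} and~\ref{sec:results_ellt_leq2} are stated with explicit enough dependence on $s$ to be specialized to $s=1$, which they are.
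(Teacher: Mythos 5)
Your proof of the second identity $\epsilon^*(1,\infty,\Sigma) = \epsilon^*(1,t,\Sigma)$ is actually cleaner than the paper's. You observe that the alternative sets $\Theta(\epsilon,1,t)$ and $\Theta(\epsilon,1,\infty)$ are \emph{literally the same set}, because every $L^t$ norm equals the $L^\infty$ norm on $1$-sparse vectors; hence the risk functionals $R(\psi,\epsilon,1,t,\Sigma)$ and $R(\psi,\epsilon,1,\infty,\Sigma)$ coincide as functions of $\psi$, and so do the minimax risks and separation radii, giving exact equality with no further work. The paper instead detours through the specific tests $\psi_t,\psi_\infty$ from its upper bounds, which is less direct and implicitly relies on the same norm identity; your route is preferable.

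For the first identity $\epsilon^*(s,\infty,\Sigma)=\epsilon^*(1,\infty,\Sigma)$ your approach genuinely diverges from the paper's, and there is a gap. You establish $\epsilon^*(s,\infty,\Sigma) \geq \epsilon^*(1,\infty,\Sigma)$ by set containment, which matches the paper. For the converse you argue that the max-type test (or, more generally, any coordinate-wise monotone test) has the same worst-case type II error over $\Theta(\epsilon,s,\infty)$ as over $\Theta(\epsilon,1,\infty)$, and that the lower-bound prior for $s=1$ also applies for general $s$. This yields $\epsilon^*(s,\infty,\Sigma) \asymp \epsilon^*(1,\infty,\Sigma)$ up to the multiplicative constants appearing in the theorems being invoked, but it does \emph{not} yield the exact identity stated in the lemma. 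The minimax risk infimizes over \emph{all} tests, so fixing attention to the max-type family (which is only optimal up to constants) cannot pin down $R^*(\epsilon,s,\infty,\Sigma) = R^*(\epsilon,1,\infty,\Sigma)$. The paper's argument operates at the level of all tests via Le Cam's characterization: given a worst-case prior $\Pi$ on $\Theta(\epsilon,s,\infty)$, it pushes forward along the map $\phi$ that zeroes out all coordinates but the first extremal one, obtaining a prior $\phi\Pi$ supported on $1$-sparse vectors with the same $L^\infty$ norm, and asserts that $\operatorname{TV}(\mathbb P_{\phi\Pi},\mathbb P_0) \leq \operatorname{TV}(\mathbb P_\Pi,\mathbb P_0)$ since $\phi$ only shrinks each mean component towards zero. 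That prior-truncation step is what buys exactness, and it is the key idea missing from your writeup. If you only need the rate statement (which is all the paper ultimately uses), your argument suffices; but to prove the lemma as stated you should replace the test-invariance step with a prior-pushforward argument of the paper's type.
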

\vspace{-1mm}
From Lemma~\ref{lem:equiv_Linfty_Lt}, we observe that $\epsilon^*(1,t,\Sigma)$ is independent of $t \in [1,\infty]$, which is not immediate from Theorems~\ref{th:rate_ellt_>2} and \ref{th:rate_ellt}.
To obtain the expression of $\epsilon^*(s,\infty,\Sigma)$, we can therefore evaluate any $\epsilon^*(1,t',\Sigma)$ and choose $t'=2$ for simplicity.
The following theorem gives the behavior of the minimax separation radius in $L^\infty$ separation.
}
\begin{theorem}\label{th:rate_Linfty}
Define $\lambda$ and $\nu$ respectively as in~\eqref{eq:beta_t>2} and~\eqref{eq:implicit_separation_ellt_>2} by taking $t'=2$ and $s' = 1$ in these equations.
\vspace{-5.5mm}

\begin{enumerate}[label=\textbf{\roman*}.]
    \item\label{th:rate_Linfty_rate} There exist two constants $C,c>0$ depending only on $\eta$ such that for any $s \in [d]$, we have $c\,(\lambda + \nu)\leq \epsilon^*(s,\infty, \Sigma) \leq C(\lambda + \nu)$.
    \vspace{0mm}
    
    \item\label{th:rate_Linfty_UB} Moreover, there exists a large enough constant $C'$ depending only on $\eta$ such that the test~$\psi^*$ defined in~\eqref{eq:test_Linfty} satisfies
    \vspace{-3mm}
    \begin{align*}
    \begin{cases}
        \mathbb P_\theta\left(\psi^* = 1\right) \leq \eta/2 & \text{ if } ~~ \theta=0,\\
        \mathbb P_\theta\left(\psi^* = 0\right)\leq \eta/2 & \text{ if } ~ \|\theta\|_\infty \geq C' (\lambda + \nu).
    \end{cases}  
    \end{align*}
\end{enumerate}
\end{theorem}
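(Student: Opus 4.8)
The plan is to deduce everything from Lemma~\ref{lem:equiv_Linfty_Lt} together with the $t=2$, $s=1$ instance of Theorem~\ref{th:rate_ellt_>2}. First I would prove Lemma~\ref{lem:equiv_Linfty_Lt}. The inclusion $\epsilon^*(s,\infty,\Sigma)\ge \epsilon^*(1,\infty,\Sigma)$ is trivial since any $1$-sparse alternative is in particular $s$-sparse, so $\Theta(\epsilon,1,\infty)\subseteq\Theta(\epsilon,s,\infty)$ and the risk only increases. For the reverse direction, observe that for a vector $\theta$ with $\|\theta\|_\infty\ge\epsilon$, the coordinate $j_0=\arg\max_j|\theta_j|$ carries all the signal needed: replacing $\theta$ by $\theta_{j_0}e_{j_0}$ produces a $1$-sparse vector still satisfying $\|\theta_{j_0}e_{j_0}\|_\infty\ge\epsilon$, and this only makes detection harder (the $1$-sparse sub-alternative is the worst case). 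Hence the $\sup$ over $\Theta(\epsilon,s,\infty)$ in the definition of risk is attained, up to the argument just given, on $\Theta(\epsilon,1,\infty)$, giving $\epsilon^*(s,\infty,\Sigma)\le\epsilon^*(1,\infty,\Sigma)$. Finally, for a $1$-sparse vector $\theta$, $\|\theta\|_t=\|\theta\|_\infty$ for every $t\in[1,\infty]$, so $\Theta(\epsilon,1,t)=\Theta(\epsilon,1,\infty)$ as sets, and therefore $\epsilon^*(1,t,\Sigma)=\epsilon^*(1,\infty,\Sigma)$ for all $t\ge1$. This chain of equalities is exactly Lemma~\ref{lem:equiv_Linfty_Lt}.

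Next, to get part~\ref{th:rate_Linfty_rate}, I would invoke Lemma~\ref{lem:equiv_Linfty_Lt} to write $\epsilon^*(s,\infty,\Sigma)=\epsilon^*(1,2,\Sigma)$, then apply Theorem~\ref{th:rate_ellt_>2} (equivalently Corollary~\ref{corr:ellt>2_lower}) with $t'=2$, $s'=1$. That theorem yields $\epsilon^*(1,2,\Sigma)^2\asymp \lambda^2\cdot 1 + \nu^2$ where $\lambda,\nu$ are the quantities from~\eqref{eq:beta_t>2} and~\eqref{eq:implicit_separation_ellt_>2} evaluated at $t'=2$, $s'=1$; taking square roots and using $\sqrt{a+b}\asymp\sqrt a+\sqrt b$ for $a,b\ge0$ gives $\epsilon^*(s,\infty,\Sigma)\asymp\lambda+\nu$, i.e. the two-sided bound with constants $c,C$ depending only on $\eta$ (the dependence on $t$ disappears since $t'=2$ is fixed). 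One should double-check that $\lambda$ as defined in the statement of Theorem~\ref{th:rate_Linfty} — via~\eqref{eq:beta_t>2} with $t'=2$, $s'=1$ — is indeed $\lambda=\sqrt{\beta_+}$ where $\beta$ solves $\sum_j\sigma_j^2 e^{-\beta/\sigma_j^2}=\tfrac12\sqrt{\sum_j\sigma_j^4 e^{-\beta/\sigma_j^2}}$, and that $\nu=(\sum_j\sigma_j^4 e^{-\lambda^2/\sigma_j^2})^{1/4}$; these are well-defined by Lemma~\ref{lem:monotonicity_t>2}.

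For part~\ref{th:rate_Linfty_UB}, the test $\psi^*$ in~\eqref{eq:test_Linfty} should be precisely the test $\psi$ from~\eqref{eq:def_tests_>2} specialized to $t=2$, $s=1$ (a dense/sparse combination, where with $s'=1$ one of the two regimes may be empty). Then the type~I and type~II error guarantees follow directly from Theorem~\ref{th:rate_ellt_>2}.\ref{th:rate_ellt_>2_upper} applied with $t'=2$, $s'=1$: under $\theta=0$, $\mathbb P_0(\psi^*=1)\le\eta/2$; and whenever $\theta$ is $1$-sparse with $\|\theta\|_2^2\ge C'(\lambda^2+\nu^2)$ we get $\mathbb P_\theta(\psi^*=0)\le\eta/2$. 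To convert this into the stated $L^\infty$ guarantee, note that if $\|\theta\|_\infty\ge C''(\lambda+\nu)$ for suitable $C''$, then the $1$-sparse truncation $\theta_{j_0}e_{j_0}$ satisfies $\|\theta_{j_0}e_{j_0}\|_2^2=\|\theta\|_\infty^2\ge C'(\lambda^2+\nu^2)$, and since the test statistics in~\eqref{eq:def_tests_>2} are coordinatewise monotone in $|X_j|$, the power against $\theta$ is at least the power against $\theta_{j_0}e_{j_0}$; alternatively one argues directly that the single large coordinate suffices to trigger the sparse (or dense) threshold with probability $\ge1-\eta/2$. The main obstacle I anticipate is not any hard estimate — everything is a corollary of earlier results — but rather the bookkeeping of reducing a general (possibly dense, possibly high-sparsity) alternative $\theta$ with large $L^\infty$ norm to its worst-case $1$-sparse sub-vector, and making sure the test $\psi^*$ in~\eqref{eq:test_Linfty} is exactly the $(t'=2,s'=1)$ specialization so that Theorem~\ref{th:rate_ellt_>2}.\ref{th:rate_ellt_>2_upper} applies verbatim; one must also confirm the monotonicity/worst-case reduction is legitimate for the specific truncated statistic $T_{sparse}$, which is immediate since each summand $(|X_j|^2-\alpha_j)\mathbb 1\{|X_j|^2>\uptau\}$ only depends on $|X_j|$ and adding signal to one coordinate stochastically increases it.
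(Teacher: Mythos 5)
Your plan for part~\textbf{i} matches the paper exactly: invoke Lemma~\ref{lem:equiv_Linfty_Lt} to reduce to $\epsilon^*(1,2,\Sigma)$ and then read off the rate from Theorem~\ref{th:rate_ellt_>2} (equivalently Corollary~\ref{corr:ellt>2_lower}) at $t'=2$, $s'=1$, noting $\sqrt{\lambda^2+\nu^2}\asymp\lambda+\nu$. That part is fine.

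Your plan for part~\textbf{ii}, however, has a genuine gap. First, the test $\psi^*$ in~\eqref{eq:test_Linfty} is \emph{not} the specialization of the test $\psi$ from~\eqref{eq:def_tests_>2} to $t=2$, $s=1$: the former is the plain maximum test $\mathbb{1}\{\max_j|X_j|>C\rho\}$, whereas the latter is a combination of a chi-square statistic $T_{dense}=\sum_{j\le j_*}(X_j^2-\sigma_j^2)$ and a truncated statistic $T_{sparse}=\sum_{j>j_*}(|X_j|^2-\alpha_j)\mathbb{1}\{|X_j|^2>\uptau\}$. These are different tests, so the guarantee of Theorem~\ref{th:rate_ellt_>2}.\ref{th:rate_ellt_>2_upper} does not transfer to~$\psi^*$ for free. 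Second, the ``coordinatewise monotonicity'' you invoke to pass from a general $\theta$ to its $1$-sparse truncation $\theta_{j_0}e_{j_0}$ is false for $T_{sparse}$: the map $u\mapsto(u-\alpha_j)\mathbb{1}\{u>\uptau\}$ has a \emph{downward} jump at $u=\uptau$ (since $\alpha_j=\mathbb{E}[|Z_j|^t\mid|Z_j|^t>\uptau]\geq\uptau$), so it is not non-decreasing and stochastic dominance of $|X_j|$ does not transfer to $T_{sparse}$. Third, and most fundamentally, Theorem~\ref{th:rate_ellt_>2}.\ref{th:rate_ellt_>2_upper} (and the supporting Lemma~\ref{lem:analysis_Tsparse}) controls the type-II error only under the sparsity constraint $\|\theta\|_0\le s'$ — with $s'=1$ you would only get a guarantee against $1$-sparse alternatives — whereas Theorem~\ref{th:rate_Linfty}.\ref{th:rate_Linfty_UB} asserts power for \emph{every} $\theta$ with $\|\theta\|_\infty\ge C'(\lambda+\nu)$, with no sparsity assumption. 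You cannot repair this by truncating $\theta$ to $\theta_{j_0}e_{j_0}$ precisely because of the failed monotonicity. The paper avoids all this by giving a direct, elementary analysis of the max test in Appendix~\ref{subapp:UB_Linfty}: under $H_0$ one bounds $\mathbb{P}_0(\psi^*=0)=\prod_j(1-\mathbb{P}(|X_j|>C\rho))$ from below using $\sigma_j\le e(\lambda+\nu)$ and Lemma~\ref{lem:relation_tau_rho}; under $H_1$ one looks only at the coordinate $j=\arg\max_i|\theta_i|$ and bounds the single-coordinate tail. Neither step needs sparsity nor any reduction to the $t=2$ machinery, and the argument is immune to the non-monotonicity issue.

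A smaller point: your sketch of Lemma~\ref{lem:equiv_Linfty_Lt} for the non-trivial inequality $\epsilon^*(s,\infty,\Sigma)\le\epsilon^*(1,\infty,\Sigma)$ asserts that ``the sup over $\Theta(\epsilon,s,\infty)$ is attained on $\Theta(\epsilon,1,\infty)$,'' but for an arbitrary test the type-II error can decrease when you zero out a coordinate's signal, so this is not a valid minimax argument. The paper instead uses the Bayesian dual: push forward a hard prior $\Pi$ on $\Theta(\epsilon,s,\infty)$ through the map $\theta\mapsto\phi(\theta)=\theta_{i(\theta)}e_{i(\theta)}$ to get a hard prior on $\Theta(\epsilon,1,\infty)$, and show $\operatorname{TV}(\mathbb{P}_{\phi\Pi},\mathbb{P}_0)\le\operatorname{TV}(\mathbb{P}_\Pi,\mathbb{P}_0)$ via a data-processing-type argument.
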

Theorem~\ref{th:rate_Linfty}.\ref{th:rate_Linfty_rate} is a corollary of Lemma~\ref{lem:equiv_Linfty_Lt} and Theorem~\ref{th:rate_ellt_>2}.  
The proof of Theorem~\ref{th:rate_Linfty}.\ref{th:rate_Linfty_UB} is given in Appendix~\ref{subapp:UB_Linfty} and relies on the following test. 
For some large enough constant $C>0$ depending only on $\eta$, we define the test rejecting if at least one coordinate $|X_j|$ exceeds the threshold $C\rho$, where $\rho = \lambda+\nu$:
\begin{equation}\label{eq:test_Linfty}
    \psi^* = \mathbb 1\Big\{\max_{j \, \in \, [d]} |X_j| > C\rho\Big\}.
\end{equation}

As in the cases $t\in[1,2]$ and $t \geq 2$, the term $\lambda + \nu$ cannot be expressed as an explicit function of the $\sigma_j$'s in general. 
However, in the isotropic case, we recover a well-known result. 
Indeed, if $\sigma_1 = \dots = \sigma_d =: \sigma$, then $\lambda+ \nu \asymp \sigma\sqrt{\log(d)}$. 
This result makes intuitive sense. 
In the isotropic case, the largest value $X_j$ where $X_j \overset{iid}{\sim} \mathcal{N}(0,1)$ is of the order of $\sigma \sqrt{\log(d)}$ with high probability. 
The test~\eqref{eq:test_Linfty} achieving this rate is equivalent to checking that no value observed exceeds this threshold.
Note also that our bound does not depend on $s$. 
This is also clear: To detect in $L^\infty$ separation, the worst possible perturbations are $1$-sparse.\\ 

\revision{
It turns out that the minimax separation radius in $L^\infty$ norm admits an explicit expression with respect to the $\sigma_j$'s, which is given in the Theorem below. 
We thank the anonymous Referee for bringing this to our attention.
We first define the test statistic 
\begin{align}
    \widetilde \psi =\mathbb{1}\Big\{\max _{i}\left|X_{i}\right|>C \rho\Big\} \quad \text{ where } \quad
    \rho =\max _{i} \sigma_{i} \sqrt{\log (1+i)}.\label{eq:def_test_explicit_infty}
\end{align}
\begin{theorem}\label{th:rate_Linfty_explicit}
\begin{enumerate}
    \item (Lower bound) There exists a small constant $c>0$ depending only on $\eta$ such that
    $$\epsilon^{*}(1, \infty, \Sigma) \geq c \cdot \max_{i} \sigma_{i} \sqrt{\log (1+j)}.$$
    \item (Upper bound) There exist large enough constants $C, C^{\prime}$ depending only on $\eta$ such that the test $\widetilde \psi$ satisfies
    $$\begin{cases}\mathbb{P}_{\theta}(\widetilde\psi=1) \leq \eta/2 \quad \text { if } \theta=0, \\ \mathbb{P}_{\theta}(\widetilde \psi=0) \leq \eta/2 \quad \text { if }\|\theta\|_{\infty} \geq C^{\prime} \rho \text{ and } \|\theta\|_0 \leq s.\end{cases}$$
\end{enumerate}
\end{theorem}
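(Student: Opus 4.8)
The plan is to establish the two parts of Theorem~\ref{th:rate_Linfty_explicit} directly and independently: Part~2 (upper bound) by a short analysis of the test $\widetilde\psi$, and Part~1 (lower bound) by Le~Cam's two-point method with a mixture of $1$-sparse Rademacher-signed perturbations placed on the ``leading'' coordinates. An alternative for Part~1 would be to invoke Theorem~\ref{th:rate_Linfty}, which already gives $\epsilon^*(1,\infty,\Sigma)\asymp\lambda+\nu$, and merely check $\lambda+\nu\gtrsim\rho$ by a two-case comparison; but the direct route below is self-contained, so I will follow it. Throughout, let $i_0\in[d]$ attain the maximum, i.e. $\rho=\sigma_{i_0}\sqrt{\log(1+i_0)}=\max_i\sigma_i\sqrt{\log(1+i)}$.

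For Part~2, under $H_0$ I would use a union bound and the Gaussian tail $\mathbb P(|\mathcal N(0,1)|>u)\le 2e^{-u^2/2}$, together with $\rho^2/\sigma_i^2\ge\log(1+i)$, to get
\begin{align*}
\mathbb P_0\!\left(\max_{i}|X_i|>C\rho\right)\le\sum_{i=1}^d 2\exp\!\left(-\frac{C^2\rho^2}{2\sigma_i^2}\right)\le 2\sum_{i=1}^d (1+i)^{-C^2/2},
\end{align*}
and the right-hand side is a convergent series (for $C$ large) whose value is below $\eta/2$ once $C$ is a large enough constant. For the type~II error, pick $j_0$ with $|\theta_{j_0}|\ge C'\rho$; on $\{\widetilde\psi=0\}$ one has $|X_{j_0}|\le C\rho$, hence $|X_{j_0}-\theta_{j_0}|\ge(C'-C)\rho\ge(C'-C)\sqrt{\log 2}\,\sigma_{j_0}$, using $\rho\ge\sigma_{j_0}\sqrt{\log(1+j_0)}\ge\sigma_{j_0}\sqrt{\log 2}$. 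Thus $\mathbb P_\theta(\widetilde\psi=0)\le\mathbb P\!\left(|\mathcal N(0,1)|\ge(C'-C)\sqrt{\log 2}\right)\le\eta/2$ as soon as $C'-C$ is large; the sparsity hypothesis $\|\theta\|_0\le s$ plays no role here.

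For Part~1, by the standard reduction it suffices to produce, for $\epsilon=c\rho$, a prior $\mu$ supported on $\Theta(\epsilon,1,\infty)$ with $\chi^2(\mathbb P_\mu\,\|\,\mathbb P_0)$ small enough that $\tfrac12\sqrt{\chi^2(\mathbb P_\mu\,\|\,\mathbb P_0)}<1-\eta$, since then $R^*(\epsilon,1,\infty,\Sigma)\ge 1-\operatorname{TV}(\mathbb P_\mu,\mathbb P_0)>\eta$. Let $\mu$ draw $J$ uniformly on $\{1,\dots,i_0\}$ and an independent sign $\omega\sim\operatorname{Rad}(1/2)$, and set $\theta_J=\omega\,c\rho$ and $\theta_k=0$ for $k\ne J$; this $\theta$ is $1$-sparse with $\|\theta\|_\infty=c\rho=\epsilon$, so $\mu$ is admissible. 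The standard $\chi^2$ computation for single-coordinate Rademacher-signed Gaussian shifts gives
\begin{align*}
\chi^2(\mathbb P_\mu\,\|\,\mathbb P_0)=\frac{1}{i_0^2}\sum_{j=1}^{i_0}\Big(\cosh\big(c^2\rho^2/\sigma_j^2\big)-1\Big).
\end{align*}
For $j\le i_0$ one has $\sigma_j\ge\sigma_{i_0}$, hence $c^2\rho^2/\sigma_j^2\le c^2\log(1+i_0)$; applying $\cosh u-1\le u^2e^u$ and $\rho^2/\sigma_j^2\le\rho^2/\sigma_{i_0}^2=\log(1+i_0)$ bounds each summand by $c^4\log^2(1+i_0)(1+i_0)^{c^2}$, so $\chi^2(\mathbb P_\mu\,\|\,\mathbb P_0)\le c^4\,\log^2(1+i_0)(1+i_0)^{c^2}/i_0\le c^4K$, where $K:=\sup_{x\ge1}\log^2(1+x)(1+x)^{c^2}/x<\infty$ whenever $c<1$. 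Taking $c$ small enough that $c<1$ and $c^4K<4(1-\eta)^2$ concludes Part~1; combined with Lemma~\ref{lem:equiv_Linfty_Lt} it also gives the claim for every $s$.

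The crux is the lower-bound variance estimate: the crude inequality $\cosh u-1\le\tfrac12e^u$ only produces $\chi^2=O(1)$, which is not enough when $\eta$ is close to $1$. What makes it work is the sharper bound $\cosh u-1\le u^2e^u$ combined with the monotonicity $\sigma_1\ge\cdots\ge\sigma_d$, which is exactly what converts $\sum_{j\le i_0}\sigma_j^{-4}$ into $i_0\,\sigma_{i_0}^{-4}$ and drives $\chi^2$ to $0$ with $c$. The remaining ingredients --- the union bound for the type~I error, the one-coordinate argument for the type~II error, and the Le~Cam reduction --- are routine.
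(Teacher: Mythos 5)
Your proof is correct and follows essentially the same route as the paper: the lower bound puts a uniform $1$-sparse prior on the first $i_0$ coordinates with shift $c\rho$ and controls $\chi^2$ via the usual Ingster identity together with the monotonicity $\sigma_j\ge\sigma_{i_0}$ for $j\le i_0$, and the type~II error bound for $\widetilde\psi$ is the same one-coordinate tail argument using $\rho\ge\sigma_{j_0}\sqrt{\log 2}$. The only substantive variation is your type~I error bound, which replaces the paper's appeal to van~Handel's expectation lemma plus Markov by a direct union bound with the Gaussian tail and the convergent series $\sum_i(1+i)^{-C^2/2}$; this is a modest simplification that avoids citing an external concentration result. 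Your Rademacher-signed prior is also an inessential variation on the paper's one-sided prior. One tiny presentational point: your constant $K=\sup_{x\ge1}\log^2(1+x)(1+x)^{c^2}/x$ depends on $c$, so one should first fix $c_0<1$ and use monotonicity of $c\mapsto K(c)$ to observe that $c^4K(c)\le c^4K(c_0)\to0$ as $c\downarrow 0$ — but this is easily repaired and does not affect the argument.
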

The proof of Theorem~\ref{th:rate_Linfty_explicit} can be found in Appendix~\ref{app:proof_th_infty_explicit}. This theorem immediately establishes that $\epsilon^*(s,\infty,\Sigma) \asymp \max_i \sigma_i\sqrt{\log(1+i)}$.
This expression, which also appears in the threshold of test~$\widetilde\psi$, precisely represents the rate of the maximum of $d$
independent centered Gaussian random variables with variances $\sigma_1^2\geq\dots\geq\sigma_d^2$ respectively (see Lemmas 2.3 and 2.4 from~\cite{van2017spectral}, and~\cite{talagrand2014upper}). 
In the isotropic case, we recover $\epsilon^*(s,\infty,\sigma^2 I_d) \asymp \sigma \sqrt{\log(d)}$. 
Note that the two equivalent tests~\eqref{eq:test_Linfty} and~\eqref{eq:def_test_explicit_infty} are both adaptive to unknown sparsity, as their expression is independent of $s$.
\begin{corollary}\label{cor:s_leq_constant}
Let $C(\eta)\geq 1$ be any constant depending only on $\eta$, and assume that $s\leq C(\eta)$. 
Then, for any $t\in[1,\infty]$, it holds that
$$\epsilon^*(s,t,\Sigma) \asymp \max_{i} \sigma_i \sqrt{\log(1+i)}.$$
\end{corollary}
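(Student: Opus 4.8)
The plan is to prove both inequalities $\epsilon^*(s,t,\Sigma)\gtrsim\rho$ and $\epsilon^*(s,t,\Sigma)\lesssim\rho$, with $\rho:=\max_{i}\sigma_{i}\sqrt{\log(1+i)}$, by reducing everything to the sharp $L^\infty$ rate already established in Theorem~\ref{th:rate_Linfty_explicit}. The passage between an $L^\infty$ separation and a general $L^t$ separation is handled by the elementary comparison $\|\theta\|_\infty\ge\|\theta\|_0^{-1/t}\|\theta\|_t$, valid for every $\theta\in\R^d$ and every $t\in[1,\infty)$, and the hypothesis $s\le C(\eta)$ will be used only to make the resulting constants uniform over $t\in[1,\infty]$.

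For the lower bound I would first record the trivial monotonicity $\epsilon^*(1,t,\Sigma)\le\epsilon^*(s,t,\Sigma)$: since $s\ge1$ we have $\Theta(\epsilon,1,t)\subseteq\Theta(\epsilon,s,t)$, hence $R(\psi,\epsilon,1,t,\Sigma)\le R(\psi,\epsilon,s,t,\Sigma)$ for every test $\psi$ and every $\epsilon$, so $R^*(\epsilon,1,t,\Sigma)\le R^*(\epsilon,s,t,\Sigma)$ and the claimed inequality on separation radii follows. Lemma~\ref{lem:equiv_Linfty_Lt} then gives $\epsilon^*(1,t,\Sigma)=\epsilon^*(1,\infty,\Sigma)$, and the lower bound part of Theorem~\ref{th:rate_Linfty_explicit} gives $\epsilon^*(1,\infty,\Sigma)\ge c\rho$. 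Chaining these three facts yields $\epsilon^*(s,t,\Sigma)\ge c\rho$, with no constraint on $s$ needed in this direction.

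For the upper bound I would analyze the single $L^\infty$-type test $\widetilde\psi=\mathbb 1\{\max_i|X_i|>C\rho\}$ from~\eqref{eq:def_test_explicit_infty}. Its type I error is at most $\eta/2$ by Theorem~\ref{th:rate_Linfty_explicit}. For the type II error, fix $\theta$ with $\|\theta\|_0\le s$ and $\|\theta\|_t\ge\epsilon$. If $t=\infty$ then directly $\|\theta\|_\infty=\|\theta\|_t\ge\epsilon$; if $t<\infty$, then $\|\theta\|_t^t=\sum_j|\theta_j|^t\le\|\theta\|_0\,\|\theta\|_\infty^t\le s\,\|\theta\|_\infty^t$, so $\|\theta\|_\infty\ge s^{-1/t}\|\theta\|_t\ge s^{-1/t}\epsilon$. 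Since $1\le s\le C(\eta)$ and $t\ge1$ we have $s^{-1/t}\ge C(\eta)^{-1/t}\ge C(\eta)^{-1}$, hence in every case $\|\theta\|_\infty\ge\epsilon/C(\eta)$. Taking $\epsilon=C'C(\eta)\rho$, with $C'$ the constant of Theorem~\ref{th:rate_Linfty_explicit}, forces $\|\theta\|_\infty\ge C'\rho$, and that theorem then gives $\mathbb P_\theta(\widetilde\psi=0)\le\eta/2$. Adding the two error probabilities shows $R(\widetilde\psi,C'C(\eta)\rho,s,t,\Sigma)\le\eta$, hence $\epsilon^*(s,t,\Sigma)\le C'C(\eta)\rho$; together with the lower bound this gives $\epsilon^*(s,t,\Sigma)\asymp\rho$ for all $t\in[1,\infty]$.

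I do not expect a substantive obstacle: the statement is essentially a bookkeeping consequence of Theorem~\ref{th:rate_Linfty_explicit} and Lemma~\ref{lem:equiv_Linfty_Lt}. The only point requiring care is the uniformity of the implied constants over $t\in[1,\infty]$, and this is exactly what the assumption $s\le C(\eta)$ provides, via $\sup_{t\ge1}s^{1/t}=s\le C(\eta)$; without a bound on $s$ the $L^\infty\to L^t$ comparison would degrade like $s^{1/t}$ and the two sides would cease to match.
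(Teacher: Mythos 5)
Your proof is correct, and for the upper bound it takes a genuinely different — and arguably cleaner — route than the paper. The paper argues in three steps: (i) invoke Theorems~\ref{th:rate_ellt_>2}--\ref{th:rate_ellt} (or Lemma~\ref{lem:equiv_Linfty_Lt} if $t=\infty$) to get $\epsilon^*(s,t,\Sigma) \asymp \epsilon^*(1,t,\Sigma)$ for $s\leq C(\eta)$, (ii) apply Lemma~\ref{lem:equiv_Linfty_Lt} to replace $\epsilon^*(1,t,\Sigma)$ by $\epsilon^*(1,\infty,\Sigma)$, and (iii) use Theorem~\ref{th:rate_Linfty_explicit}. Step (i) leans on the full machinery of the main theorems, in which the quantities $\lambda$ and $\nu$ depend implicitly on $s$, so the assertion $\lambda(s)^t s+\nu(s)^t \asymp \lambda(1)^t+\nu(1)^t$ over $s\in[1,C(\eta)]$ is being absorbed into the $\asymp$ constants. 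Your argument sidesteps all of this on the upper-bound side: you analyze the single threshold test $\widetilde\psi$ directly and convert an $L^t$ separation to an $L^\infty$ separation via the elementary inequality $\|\theta\|_t^t \leq \|\theta\|_0\|\theta\|_\infty^t$, which under $s\leq C(\eta)$ costs only a bounded constant uniformly in $t\in[1,\infty]$. Your lower bound coincides with the easy half of the paper's step (i) (monotonicity in $s$) combined with steps (ii)--(iii). The net effect is a self-contained proof that depends only on Lemma~\ref{lem:equiv_Linfty_Lt} and Theorem~\ref{th:rate_Linfty_explicit}, and as a bonus makes explicit that the test $\widetilde\psi$, not just the rate, is optimal for every $t$ in this small-sparsity regime and that its description does not involve $s$ or $t$ at all.
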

\begin{proof}
From Theorems \ref{th:rate_ellt_>2}--\ref{th:rate_ellt} if $t\in [1,\infty)$ and Lemma~\ref{lem:equiv_Linfty_Lt} if $t = \infty$, we directly obtain that $\epsilon^*(s,t,\Sigma) \asymp \epsilon^*(1,t,\Sigma)$ when $s \leq C(\eta)$. 
The result follows from $\epsilon^*(1,t,\Sigma) = \epsilon^*(1,\infty,\Sigma)$ (Lemma~\ref{lem:equiv_Linfty_Lt}) and Theorem~\ref{th:rate_Linfty_explicit}.
\end{proof}
}

\section{Examples}\label{sec:examples}
To provide more insight into our results we now discuss some examples for specific heterogeneity profiles $\Sigma$. 
The proofs of the results presented in this Section can be found in Appendix~\ref{app:proof_examples}.

\subsection{Isotropic case}\label{subsec:isotropic}
Assume that $\sigma_1 = \dots = \sigma_d = \sigma$. 
Up to multiplicative constants, the minimax separation radius $\epsilon^*(s,t,\sigma^2 I_d)$ can be expressed as in Table~\ref{tab:isotropic}. 
Note that these results encompass the case $t=2$ which has been thoroughly examined in~\cite{collier2017minimax}.
\begin{table}[h!]
\begin{center}
    \begin{tabular}{|c|cc|c|}
\hline
 $\epsilon^*(s,t,\sigma^2 I_d)$ & \multicolumn{1}{c|}{\hspace{5mm} $t \leq 2$ \hspace{5mm} } & \hspace{5mm} $t \geq 2$ \hspace{5mm} & $t= \infty$                 \rule[-2ex]{0pt}{6ex}  \\ \hline
If $s \geq C\sqrt{d}$ & \multicolumn{1}{c|}{$\sigma d^{\frac{1}{4}} s^{\frac{1}{t} - \frac{1}{2}}$ }  & $\sigma d^{\frac{1}{2t}}$ & \multirow{2}{*}{$\sigma\sqrt{\log(d)}$} \rule[-3ex]{0pt}{7ex} \\ \cline{1-3}
If $s<C\sqrt{d}$ & \multicolumn{2}{c|}{$\sigma s^{\frac{1}{t}} \log^{\frac{1}{2}}\bigg(\dfrac{2\sqrt{d}}{s}\bigg)$}    &                  \rule[-3ex]{0pt}{7ex}  \\ \hline
\end{tabular}
\end{center}
    \caption{Minimax separation radii in the isotropic case, for $t \in [1,\infty]$.}
    \label{tab:isotropic}
\end{table}

In the isotropic case for $t \geq 2$, sparsity does not help for testing when $s \geq C \sqrt{d}$. 
However, this is no longer the case for $t<2$: sparsity always improves the rates as soon as $s \ll d$. 
The case $t \geq 2$ has been investigated in~\cite{gutzeit2019topics} in the isotropic case and without sparsity.


\subsection{Polynomially increasing variances}
For ease of notation, we assume in this subsection only that the $\sigma_j$'s are sorted in non-decreasing order: $0< \sigma_1 \leq \dots \leq \sigma_d$. 
We assume that for some fixed constant $\alpha$, we have $\forall j \in [d]: \sigma_j = j^\alpha$. 
The case where $t=2$ has been examined in~\cite{laurent2012non}, but with a logarithmic gap between the upper and the lower bound. 
Here, we assume that $t \geq 2$. 
The following relations hold up to constants depending on $\eta, \alpha$ and $t$
\begin{align*}
    \epsilon^*(s,t,\Sigma)^t \asymp \begin{cases}d^{\alpha t} s \log \left(C\dfrac{d}{s^2}\right) & \text{ if } s \leq \sqrt{d} \\
    d^{\alpha t + \frac{1}{2}} & \text{ otherwise.}
    \end{cases}
\end{align*}

\subsection{Variances decreasing at least exponentially}
Let $\phi: [0,\infty) \longrightarrow \R$ be a non-decreasing convex function such that $\phi(0) = 0$, and let $$\sigma_j = \exp\left(-\phi(j)\right) \quad \text{ for } \quad j \in \{0,\dots,d\}.$$

We note that here we let $\sigma_0 = 1$ for the sake of convenience of exposition. 
Let $j_0 = \min\{j \,|\, \sigma_j < 1/2\}$ if this minimum is taken over a non-empty set and $j_0 = d+1$ otherwise. 
Then the minimax separation radius satisfies
\begin{align}
    \forall t \geq 2: ~~ \epsilon^*(s,t,\Sigma) \asymp \epsilon^*(s,t,\sigma_{j_0}^2 I_{j_0}) \asymp \begin{cases} j_0^{1/2t} & \text{ if } s \geq C \sqrt{j_0},\\
    s \log^{t/2}\left(\dfrac{2\sqrt{j_0}}{s}\right) & \text{ otherwise.}
\end{cases}.\label{eq:rate_faster_exponential}
\end{align}
Here, the dimension $j_0$ can understood as the size of the set $\{0,\dots j_0\}$ on which the values $\sigma_j$ can be considered as constant: $\sigma_j \in \big[\frac{1}{2}, 1\big], \forall j \in \{0,\dots j_0\}$.
\vspace{2mm}

\revision{Note that this result encompasses the case of exponentially decreasing variances. 
More precisely, let $\alpha \in (0,1]$ and assume that $\forall j \in \{0,\dots d\}: \sigma_j = \alpha^j$. 
Let $j_0 = \min\big\{j \in [d] ~\big|~ \alpha^{j} < \alpha/2\big\}$ if this minimum is taken over a non-empty set, and set $j_0 = d+1$ otherwise.
The case where $t=2$ has been examined in~\cite{laurent2012non}. 
For $t \geq 2$,  the minimax separation radius is given by~\eqref{eq:rate_faster_exponential}.
Observe that this result also encompasses the isotropic case from Subsection~\ref{subsec:isotropic}.
For non-pathological decays of the values $\sigma_j$'s, that is, for $\alpha \leq 1-\delta$ where $\delta>0$ is some fixed constant, the index $j_0$ will typically be a constant depending on $\delta$.
Namely: $j_0 \leq \log_{\alpha^{-1}} (2) \leq \log_{(1-\delta)^{-1}} (2)$. 
Therefore, as soon as $s$ is greater than a constant (depending on $\delta$), the minimax separation radius will further simplify as $\epsilon^*(s,t,\Sigma) \asymp_\delta \alpha = \sigma_1$, regardless of the sparsity.
This makes intuitive sense: 
When the $\sigma_j$'s decay exponentially fast, the intrinsic dimension of the data, given by $d_{intrinsic} = \operatorname{Tr}(\Sigma^{t/2}) / \sqrt{\operatorname{Tr}(\Sigma^t)}$, is of the order of a constant. 
Therefore, sparsity should not be relevant if $s \geq \sqrt{d}_{intrinsic} = Cste$ (see the discussion in Subsection~\ref{subsec:L2_separation}).
\vspace{2mm}
}

\revision{However, the rate can be more subtle if $\alpha$ approaches $1$, and equation~\eqref{eq:rate_faster_exponential} reveals that whenever $\sigma_j = \alpha^j$ for $j=0\dots,d$, the testing problem is essentially as difficult in the isotropic case with covariance matrix $\alpha^2 I_{j_0}$. 
Here, the dimension $j_0$ can understood as the size of the set $\{0,\dots j_0\}$ on which the values $\sigma_j$ can be considered as constant: $\sigma_j \in \left[\sigma_1/2,\, \sigma_1\right], \forall j \in \{0,\dots, j_0\}$.
}

\section{Discussion}
\label{sec:discussion} 

In this Section, we discuss further connections and implications of our results in comparison to the literature along with possible future directions.

\subsection{Minimax estimation of $\|\theta\|_t$ for $t\geq 1$}
A natural question is to compare this paper's results about testing with the corresponding task of estimating $\|\theta\|_t$ for $t\geq 1$. 
Estimation of non-smooth functionals have been considered in~\cite{cai2011testing,jiao2015minimax,wu2019chebyshev,wu2016minimax,carpentier2019adaptive,fukuchi2017minimax,butucea2021locally}.
Some techniques used in the present paper can be linked with techniques developed in~\cite{collier2017minimax}, and more closely, in~\cite{collier2020estimation}.
The paper~\cite{collier2020estimation} considered the problem of estimating $\|\theta\|_t$ over $\big\{\theta \in \mathbb R^d ~\big|~ \|\theta\|_0 \leq s \big\}$ given an observation $X \sim \mathcal{N}(\theta, \sigma^2 I_d)$. 
The difficulty of this estimation problem is characterized by the minimax estimation risk defined as
$$ \mathcal R_{s,d}(\sigma,t) := \inf_{\widehat T} \sup_{\substack{\theta\in \R^d\\ \|\theta\|_0 \leq s}} \mathbb E \left[\big|\widehat T - \|\theta\|_t\big|^2\right],$$
where  the infimum is taken over all estimators $\hat T: \R^d \longrightarrow \R$.
In Table~\ref{tab:comparison_estim_test} below, we collect the results  of~\cite{collier2020estimation} for $t\geq 1$, and compare them with the results of the present paper. 
To preserve homogeneity, we give here the expression of the \textit{square root} of the minimax estimation risk of $\|\theta\|_t$, namely $\mathcal{R}_{s,d}^{1/2}(\sigma,t)$, as well as our expression for the minimax separation radius in $L^t$ norm $\epsilon^*(s,t,\sigma^2I_d)$. 
We denote by $2\N^*$ the set of positive even integers.

\begin{table}[!ht]
\centering
\begin{tabular}{|c|c|cc|}
\hline
If $t \notin 2\mathbb{N}^*$     
& $s \leq \sqrt d$                           
& \multicolumn{1}{l|}{$s > \sqrt d$ (Lower bound)}        
& $s > \sqrt d$ (Upper bound)     
\rule[-2ex]{0pt}{6ex}
\\ 
\hline
$\mathcal{R}_{s,d}^{1/2}(\sigma,t)$   
& \multirow{3}{*}{$\sigma s^{\frac{1}{t}} \sqrt{\log\!\Big(1 \! + \! \dfrac{d}{s^2}\Big)}$}
& \multicolumn{1}{l|}{$\sigma s^{\frac{1}{t}} \log^{\frac{1}{2}-t}\log\Big(1\!+\! \dfrac{s^2}{d}\Big)$} 
& $\sigma s^{\frac{1}{t}} \log^{-\frac{1}{2}}\Big(1\!+\!\dfrac{s^2}{d}\Big)$ 
\rule[-2ex]{0pt}{6ex}
\\ \cline{1-1} \cline{3-4}
$\epsilon^*(s,t,\sigma^2I_d)$, $t <2$ 
&  
& \multicolumn{2}{c|}{ $\sigma d^{\frac{1}{4}} s^{\frac{1}{t}-\frac{1}{2}}$}       
\rule[-2ex]{-3.5pt}{6ex}
\\ \cline{1-1} \cline{3-4} 
$\epsilon^*(s,t,\sigma^2I_d)$, $t \geq 2$ 
&                                 
& \multicolumn{2}{c|}{$\sigma d^{\frac{1}{2}t}$} 
\rule[-2ex]{-3.5pt}{6ex}
\\ \hline
\end{tabular}

\vspace{3mm}

\begin{tabular}{|c|c|c|}
\hline
If $t \in 2\mathbb{N}^*$ 
& $s\leq \sqrt{d}$
& $s > \sqrt{d}$
\rule[-2ex]{0pt}{6ex}
\\ 
\hline
$\mathcal{R}_{s,d}^{\frac{1}{2}}(\sigma,t)$   
& \multirow{2}{*}{$\sigma s^{\frac{1}{t}} \sqrt{\log\Big(1\!+\!\dfrac{d}{s^2}\Big)}$  } 
& \multirow{2}{*}{$\sigma d^{\frac{1}{2t}}$ } 
\rule[-2ex]{0pt}{6ex}
\\ 
\cline{1-1}
$\epsilon^*(s,t,\sigma^2I_d)$ 
&                                  
&                                           
\rule[-2ex]{0pt}{6ex}
\\ \hline
\end{tabular}
    \caption{Comparison of the minimax estimation and testing problems across $t\in[1,\infty)$.}
    \label{tab:comparison_estim_test}
\end{table}

\vspace{2mm}

Our analysis of the isotropic case can be found in Subsection~\ref{subsec:isotropic} (in Subsection~\ref{subsec:isotropic}, the lower bounds for $s \leq C\sqrt{d}$ involve the term $\log\left(2\sqrt{d}/s\right)$, but we use the fact that $\log(1+d/s^2) \asymp \log\left(2\sqrt{d}/s\right)$ when $s \leq \sqrt{d}$).
Interestingly, for $s \leq \sqrt{d}$ and for any $t\geq 1$, $\mathcal{R}_{s,d}^{1/2}(\sigma,t)$    and $\epsilon^*(s,t,\sigma^2I_d)$ are always of the same order. 
This is reflected in the similarity between our test statistic $T_{sparse}$ and the estimator used in~\cite{collier2020estimation} in the sparse zone $s \leq \sqrt{d}$. 
We recall that in this regime, for some constant $C_t$:
$$ T_{sparse} = \sum_{j=1}^d \left(|X_j|^t - \alpha_j\right) \mathbb 1_{|X_j|^t > \uptau}, \quad \text{ where } \uptau = C_t\lambda^t + \nu^t/s \asymp \sigma^t \log^{t/2} \left(1\!+\!\frac{d}{s^2}\right).$$
In comparison, the estimator of $\|\theta\|_t$ used in~\cite{collier2020estimation} is as follows:
$$  \widehat T = (\widehat N_t)_+^{1/t} \quad \text{ where } \quad \widehat N_t = \sum_{j=1}^d \left(|X_j|^t - \alpha_j\right) \mathbb 1\left\{|X_j|^2 > 2 \sigma^2 \log\left(1\!+\!d/s^2\right)\right\}.$$

In the estimation problem, the constant $2$ in the indicator function is important, in order to balance the bias and variance of the estimator. 
In the testing problem, the constant $2$ can be replaced by any sufficiently large constant, only at the price of a larger constant in the upper bound. 
\vspace{2mm}

However, when $s > \sqrt{d}$, the square root estimation rate $\mathcal{R}_{s,d}^{1/2}(\sigma,t)$ is always at least as large as the rate of testing $\epsilon^*(s,t,\sigma^2 I_d)$.
The only case where the two quantities coincide for $s > \sqrt d$ is when $t$ is an even integer. 
In this case, the functional $\theta \mapsto \|\theta\|_t^t$ is sufficiently smooth to ensure that there exists unbiased estimators with much faster rates than for other $L^t$ norms, which are not smooth.


\subsection{Multinomial testing}\label{subsec:mult_testing}

The case of the $L^1$ separation is an interesting special case of our results and could be of independent interest. 
Indeed, in the context of discrete distributions, the $L^1$ distance is proportional to the total variation distance, and is therefore commonly used for multinomial testing~\cite{valiant2017automatic,berrett2020locally,lam2022local,gerber2022likelihood,balakrishnan2019hypothesis,canonne2020survey,CanonneTopicsDT2022}. 
In this Subsection, we set $\mathcal{P} = \big\{p = (p_1,\dots,p_d) \in \mathbb R_+^d ~\big|~ \sum_{j=1}^d p_j = 1\big\}$, and denote by $\mathcal{M}(n,p)$ the multinomial distribution with parameters $n \in \N^*$ and $p \in \mathcal{P}$. 
We also denote by $\operatorname{Unif}(d) = \big(\frac{1}{d}, \dots, \frac{1}{d}\big)$ the uniform distribution over $\{1,\dots,d\}$. 
We also fix a histogram $N \sim \mathcal{M}(n,p)$ for some $p \in \mathcal{P}$.
Multinomial testing against sparse alternatives has been considered in~\cite{bhattacharya2021sparse}. 
Namely, the authors considered the following global testing problem:
\begin{align}
    p = \operatorname{Unif}(d) \quad \text{ against } \quad H_1: \begin{cases}p \in \mathcal{P} \\ \|p-\operatorname{Unif}(d)\|_1\geq \epsilon \text{ and } \|p-\operatorname{Unif}(d)\|_0\leq s. \end{cases}\label{eq:global_mul_testing_pb}
\end{align}

In the asymptotic $s =d^{1-\alpha}$ for $\alpha \in (0,1)$, they proved that the minimax separation radius $\epsilon^* = \epsilon^*(s,n,d)$ for Problem~\eqref{eq:global_mul_testing_pb} scales as
\begin{align}
    \epsilon^* \asymp \frac{s}{d} \land  \begin{cases}\dfrac{\sqrt{s}}{\sqrt{n}d^{1/4}} & \text{ if } \alpha \leq \frac{1}{2}\\[10pt]
    s\sqrt{\dfrac{\log d}{nd}}& \text{ if } \alpha >\frac{1}{2}.\label{eq:global_mult_rate}
    \end{cases}
\end{align}

The term $s/d$ represents the impossibility regime: Any distribution $p \in \mathcal{P}$ such that $\|p-\operatorname{Unif}(d)\|_0 \leq s$ necessarily satisfies $\|p-\operatorname{Unif}(d)\|_1 \leq 2s/d$.
The second term interestingly bears similarity with our results in $L^1$ separation. 
Indeed, for $t=1$ and $\Sigma = \sigma^2 I_d$, Theorem~\ref{th:rate_ellt} yields (see Subsection~\ref{subsec:isotropic}):
\begin{align}
    \epsilon^*(s,1,\sigma^2 I_d) \asymp \begin{cases}\sigma d^{1/4} \sqrt{s} & \text{ if } s\geq C\sqrt{d}, \\
    \sigma s \sqrt{\log\left(2\sqrt{d}/s\right)}& \text{ otherwise.}\end{cases}\label{eq:rate_gaussian_ell1} 
\end{align}

\revision{Here, the choice of $\Sigma = \sigma^2 I_d$ is natural for comparing the two results. 
Indeed, under $H_0$, the rescaled histogram $N/n$ can be written as $(N_1/n, \dots, N_d/n)$ where each coordinate $N_j/n$ has a variance of $1/nd$. 
This justifies considering $\sigma_j^2 = 1/nd =: \sigma^2$ for any $j \in [d]$, hence $\Sigma = \sigma^2 I_d$. 
As we observe, the second term in the rate~\eqref{eq:global_mult_rate} is exactly analogous to~\eqref{eq:rate_gaussian_ell1} when $\sigma^2 = 1/nd$.}
This comparison therefore proves that the testing problem~\eqref{eq:global_mul_testing_pb} is either impossible or analogous to a Gaussian testing problem in $L^1$ separation, and that the correlation between the coordinates of $X$ do not affect the minimax rates. 
Further interplays between correlation and sparsity in signal detection have been thoroughly discussed in~\cite{kotekal2021minimax}, in the case of an isotropic covariance matrix and with Euclidean separation. 
The results of the present paper could therefore find natural applications to the local analog of Problem~\eqref{eq:global_mul_testing_pb}, which is left for future work.\\

In the absence of sparsity, the paper~\cite{valiant2017automatic} considered the following local testing problem in multinomials:
\begin{align}
    H_0: p = p_0 \quad \text{ against } \quad H_1: p \in \left\{ q\in \mathcal{P} ~\big|~ \|q-p_0\|_1\geq \epsilon\right\}, \label{eq:Valiant}
\end{align}
where $p_0$ is a fixed and known distribution in the class $\mathcal{P}$. 
The result is as follows: 
Assume without loss of generality that $p_0(1) \geq \dots \geq p_0(d)$, and, for any $\delta>0$, define $p_{0,-\delta}^{-\max} = (0,p_2,\dots,p_j,0,\dots,0)$ where $j = \max \{j ~|~ \sum_{i\geq j} p_0(i) >\delta\}$. Then the minimax separation radius for Problem~\eqref{eq:Valiant} is defined as the solution to the equation:
\begin{equation}
    C\epsilon = \sqrt{\frac{\|p_{0,-\epsilon}^{-\max}\|_{2/3}}{n}} + \frac{1}{n},\label{eq:Valiant_rate}
\end{equation}
for some absolute constant $C>0$. See~\cite{balakrishnan2019hypothesis} and~\cite{chhor2022sharp} Appendix D for the equivalence between~\eqref{eq:Valiant_rate} and the formulation of the results in~\cite{valiant2017automatic}. 
See also~\cite{blais2019distribution} for a further discussion about the relation between the $\ell^1$ and the $\ell^{2/3}$ norms. 
The $2/3$-norm exhibits some similarities with the Gaussian testing problem~\eqref{eq:testing_pb} for $t=1$ and $s=d$. 
Indeed, in light of our Theorem~\ref{th:rate_ellt}, we get $\epsilon^*(d,1,\Sigma) = \|\sigma\|_{4/3}$. 
Fixing $J = \max \{j ~|~ \sum_{i\geq j} p_0(i) >\epsilon\}$, the term $\sqrt{\|p_{0,-\epsilon}^{-\max}\|_{2/3}/n}$ is exactly analogous to $\|\widetilde \sigma\|_{4/3}$ where $\forall i \in \{2,\dots,J\} : \widetilde \sigma_j = \sqrt{p_0(j)/n}$ which is proportional to the standard deviation of $N_j/n$. 
We can take this analogy further by comparing with the results in~\cite{chhor2022sharp}, which considered the problem
\begin{align}
    H_0: p = p_0 \quad \text{ against } \quad H_1: p \in \left\{ q\in \mathcal{P} ~\big|~ \|q-p_0\|_t\geq \epsilon\right\}, \label{eq:mult_ellt}
\end{align}
for $t \in [1,2]$. The authors proved that, for a suitably defined index $I \in \{1,\dots, d\}$, the minimax separation radius for Problem~\eqref{eq:mult_ellt} scales as
\begin{align*}
    \epsilon^* = \sqrt{\frac{\|p_{0, \leq I}^{-\max}\|_r}{n}} + \frac{\|p_{0,>I}\|_1^{(2-t)/t}}{n^{2(t-1)/t}} + \frac{1}{n},
\end{align*}
where $r = \frac{2t}{4-t}$ and $p_{0,\leq I}^{-\max} = (p_0(2),\dots, p_0(I))$ and $p_{0,>I} = (0,\dots,0, p_0(I+1),\dots, p_0(d))$. 
The term $\sqrt{\|p_{0, \leq I}^{-\max}\|_r/n\,}$ can therefore be written as $\|\widetilde \sigma'\|_a$ where $\forall j \in \{2,\dots, I\}: \widetilde \sigma'_j = \sqrt{p_0(j)/n}$ which is once again proportional to the standard deviation of $N_j/n$.
The paper~\cite{waggoner2015lp} also highlighted this duality in the global case rather than in the local one. 
The paper~\cite{chhor2021goodness} considered an analogous version of Problem~\eqref{eq:mult_ellt}, for Hölder-continuous densities. 

\subsection{Adaptation to unknown sparsity}

\revision{
    In the paper~\cite{kotekal2021minimax}, the authors obtain adaptation to unknown sparsity $s$ by successively applying the tests corresponding to each possible level of sparsity $s\in[d]$ and rejecting $H_0$ if one of these tests rejects $H_0$.
    We conjecture that some techniques from~\cite{kotekal2021minimax}, in particular Lemmas 29 and 30, could be useful in our setting to analyze the chi-squared test statistics $T_{fdense}, T_{inter}$ and the hard-thresholding test $T_{sparse}$ when $t\leq 2$. 
    However, there are several differences compared with our setting. 
    First, the above tests are respectively defined over the regions $\{1,\dots,i_*\}$, $\{i_*\!+1, \dots, j_*\}$ and $\{j_*\!+1,\dots,d\}$, and the cut-offs $i_*$ and $j_*$ depend intricately on the sparsity level. 
    Second, Lemma 30 from~\cite{kotekal2021minimax} is only valid for centered and standard normal variables, whereas our truncation test $T_{sparse}$ involves heterogeneous centered normal random variables. 
    Third, for $t\in(2,\infty)$, the test statistic $T_{dense} = \sum_{j\leq j_*} |X_j|^t - \mathbb E_{H_0} |X_j|^t$ is not a chi-squared type test statistic and does not concentrate sub-exponentially, making it impossible for us to apply Lemma 29 from~\cite{kotekal2021minimax}. 
    Obtaining adaptation for $t \in [1,\infty)$ would therefore require a careful analysis of the concentration of our test statistics under $H_0$ and is left for future work.
    However, in the case of the $L^\infty$ separation, our procedure is automatically adaptive since in this case, our test is independent of $s$.}

\subsection{Connection with linear regression}

Suppose that we observe $(X,y)$ satisfying 
\begin{align}
    y = X^\top \theta + \xi \quad \text{ where } \quad \begin{cases}
    y = (y_1,\dots,y_n)^\top \in \R^n\\
    \xi = (\xi_1,\dots,\xi_n)^\top \in \R^n\\
    \text{$X=\left[X_1,\dots,X_n\right] \in \R^{d \times n}$ has rank $d$},
\end{cases} \label{eq:linear_regression}    
\end{align}
where for any $i \in [n]$, $\xi_i \overset{iid}{\sim} \mathcal{N}(0,\sigma) $ for $ \sigma>0$.
    Assume that the columns of the design $X:=\left[X_1,\dots,X_n\right]$ are orthogonal (not necessarily orthonormal), and for simplicity, assume that $n=d$. 
    Then $X$ can be decomposed as $X = U\Sigma^{1/2}$, where $U^\top U  = I_n$ and $\Sigma$ is diagonal. 
    Setting $y= (y_1,\dots,y_n)^\top$, $\xi = (\xi_1,\dots,\xi_n)^\top$ and $z = \left(XX^{\top}\right)^{-1}\!\! X y$, we exactly obtain that $z = \theta + w$, where $w = \left(XX^{\top}\right)^{-1}\!\! X \xi \sim \mathcal{N}(0,\sigma^2\Sigma^{-1})$ in the present case. 
    Moreover, the regressor $\theta$ is often assumed to be sparse in many applications in genetics, communications, and compressed sensing. 
    We refer the interested reader to \cite{arias2011global,ingster2010detection,carpentier2019minimax,carpentier2022estimation} for a discussion of the relevant motivations.
    Therefore, observing $z \sim \mathcal{N}(\theta,\sigma^2\Sigma^{-1})$ for sparse $\theta$ and diagonal $\Sigma$ allows us to exactly recover the setting of Problem~\eqref{eq:testing_pb}.
    Our results consequently open up interesting developments in this setting of practical interest, which is left for future inquiries. 

\subsection{Toward general covariance matrix $\Sigma$?}
The natural and important case of a general covariance matrix $\Sigma$, not necessarily diagonal, stands out as a highly non-trivial extension of our results, and goes far beyond the scope of this paper. 
Solving this problem would allow for important developments in sparse linear regression. 
Indeed, for the linear regression problem in the low dimensional regime $d\leq n$, assume that we observe $(X,y)$ distributed according to~\eqref{eq:linear_regression}, with $\xi \sim N(0,I_n)$. 
Moreover, we assume that $X$ has full rank but not necessarily orthogonal columns or rows. 
Then, writing $z = \left(XX^{\top}\right)^{-1}\!\! Xy\,$ and $w = \left(XX^{\top}\right)^{-1}\!\! X\xi$, we can rewrite our model as $z = \theta + w \sim \mathcal{N}(\theta, \Sigma)$, where $\Sigma = X^\top\!\big(X X^\top\big)^{-2}X$, linking the linear regression model with the Gaussian sequence model. 
Despite its importance for many practical applications, the optimal detection rate for sparse signals~$\theta$ in this setting remains unknown. 
This is due to the major technical challenges arising when combining sparsity with non-isotropic noise. 
One of them is the heterogeneity of the eigenvalues of $\Sigma$, addressed in this paper.
\vspace{2mm}

Another important challenge is that the sparsity basis (i.e. the canonical basis of $\R^d$) might not be aligned with the basis of eigenvectors of $\Sigma$. 
An important step toward that direction is the paper~\cite{kotekal2021minimax}, 
which thoroughly solves the case $\Sigma = (1-\gamma) I_d + \gamma \mathbb 1 \mathbb 1^\top$ where $\mathbb 1 = (1, \dots, 1)^\top \in \R^d$, for any $\gamma \in [0,1]$, and in Euclidean separation. 
This paper develops important techniques to tackle the non-alignment of the two bases, and elegantly decomposes the problem under study into separate isotropic detection problems. 
In this regard, however, it does directly address heteroscedasticity. 
We believe that the present paper captures fundamentally different phenomena from those uncovered in~\cite{kotekal2021minimax}, and hope that these two papers will constitute a basis to further explore sparse signal detection with arbitrary covariance matrix.

\section{Conclusion and future work}
In this paper, we solved the problem of sparse signal detection in the heteroscedastic Gaussian sequence model with a diagonal covariance matrix $\Sigma$, for any $L^t$ separation, $t\geq 1$. 
The present paper is a step toward addressing the much more ambitious case of general $\Sigma$, and it will be interesting to see how the present results can be combined with correlations in the noise. 
The $L^1$ separation distance also opens up interesting future directions for sparse testing in multinomials, which is left for future work.
Another avenue of research for future work is to translate the present results concerning the heteroscedastic Gaussian sequence model to the setting of testing sparse linear regression with non-isotropic design, generalizing~\cite{ingster2010detection}. \\

\revision{
\textit{Acknowledgements} We would like to thank the anonymous Associate Editor and Referee for many suggestions that helped improve the present manuscript, and the anonymous Referee for pointing out the explicit expression of $\epsilon^*(s,\infty,\Sigma)$ from Theorem~\ref{th:rate_Linfty_explicit}.}

\newpage
\appendix

\section{Generalities}

\subsection{Generalities for lower bounds}\label{subapp:generalities_LB}

\begin{lemma}\label{lem:generalities_LB}(Case of constant sparsity)
Assume that $s \leq C$ where $C>0$ is a constant depending only on $\eta$. 
Then it holds that $\epsilon^*(s,t,\Sigma) \geq c(\lambda + \nu)$ for some sufficiently small constant $c>0$, depending only on $\eta$, where $\lambda$ and $\nu$ are defined as in \eqref{eq:beta_t>2}, \eqref{eq:implicit_separation_ellt_>2} if $t \in [2,\infty)$ and as in \eqref{eq:def_lambda_ellt}, \eqref{eq:def_nu(lambda)_ellt} if $t \in [1,2]$.
\end{lemma}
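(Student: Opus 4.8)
The plan is to prove the lower bound $\epsilon^*(s,t,\Sigma) \geq c(\lambda+\nu)$ in the constant-sparsity regime by a two-point (Le Cam) argument. Since $s \geq 1$, we have the monotonicity $\epsilon^*(s,t,\Sigma) \geq \epsilon^*(1,t,\Sigma)$, so it suffices to prove the bound for sparsity exactly $1$. I would establish this in two halves: produce a $1$-sparse prior separated from $H_0$ by $\asymp \lambda$ in $L^t$ norm that is indistinguishable from the null, and separately a $1$-sparse prior separated by $\asymp \nu$ (equivalently $\asymp \sigma_1$, since for $s$ bounded one checks $\nu \asymp \sigma_1$ from the defining equations \eqref{eq:implicit_separation_ellt_>2} or \eqref{eq:def_nu(lambda)_ellt}) that is also indistinguishable. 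Combining the two via $\lambda + \sigma_1 \asymp \lambda \vee \sigma_1$ then yields the claim up to a constant depending only on $\eta$.

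For the $\lambda$-part, I would take the mixture prior $\mathbb{P}_\pi = \frac{1}{d}\sum_{j=1}^d \mu_j$ where, under $\mu_j$, only coordinate $j$ is perturbed, set to $\pm c\lambda$ with equal probability (a Rademacher sign), for a small constant $c$; more precisely one should weight coordinate $j$ with probability proportional to the $\pi_j$ of \eqref{eq:pi_j_>2} (resp. Table~\ref{tab:prior_param_1_2}) restricted to the coordinates where $\sigma_j \gtrsim \lambda$, so that the perturbation size $\gamma_j \asymp \lambda$ there. The $\chi^2$ divergence $\chi^2(\mathbb{P}_\pi, \mathbb{P}_0)$ expands, after integrating out the Gaussian, into a double sum over $(j,k)$ of terms $\cosh(\theta_j^{(j)}\theta_j^{(k)}/\sigma_j^2)$-type contributions; the diagonal terms are controlled by $\sum_j w_j^2 \sinh^2(\gamma_j^2/2\sigma_j^2)$, which by the choice of $\gamma_j \asymp \lambda$ and $\sigma_j \gtrsim \lambda$ linearizes and is $\le c'$ by the normalization built into \eqref{eq:beta_t>2}, while the off-diagonal cross terms vanish because the supports are disjoint. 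This is exactly the indistinguishability condition described after \eqref{eq:optim_pb}. For the $\sigma_1$-part, I would use the single-coordinate prior putting $\theta_1 = \pm c\sigma_1$; here $\chi^2(\mathbb{P},\mathbb{P}_0) = \exp(c^2\sigma_1^2/\sigma_1^2) - 1 = e^{c^2}-1$, which is $\le c'$ for $c$ small. In both cases $\chi^2 \le c'$ gives $\mathrm{TV}(\mathbb{P},\mathbb{P}_0) \le \sqrt{c'}$, hence $R^*(\epsilon,1,t,\Sigma) \ge 1 - \sqrt{c'} > \eta$ for suitable $c'$, so $\epsilon^*(1,t,\Sigma) > c\lambda$ and $> c\sigma_1$ respectively.

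The main obstacle I anticipate is not the $\chi^2$ computation itself but verifying that the $1$-sparse prior supported on $\{j : \sigma_j \gtrsim \lambda\}$ actually achieves separation of order $\lambda$ in $L^t$ norm, i.e. that this index set is nonempty with enough mass and that the $\gamma_j$ there are genuinely $\asymp \lambda$ rather than much smaller — this requires unpacking the definition of $\lambda$ in \eqref{eq:beta_t>2}–\eqref{eq:gamma_j>2} (resp. \eqref{eq:def_lambda_ellt} and Table~\ref{tab:prior_param_1_2}) and using the $\arg\sinh$ formula $\gamma_j \asymp \lambda + \sigma_j$ to see $\gamma_j \asymp \lambda$ precisely on that set. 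A secondary subtlety is the unified treatment of $t \geq 2$ and $t \in [1,2]$: the two cases use slightly different expressions for $\lambda$ and $\nu$, so I would either argue each range separately using Theorems~\ref{th:rate_ellt_>2} and~\ref{th:rate_ellt} respectively, or, more cleanly, observe that for bounded $s$ all the defining equations collapse to comparable quantities and handle them together. Throughout, all constants are tracked to depend only on $\eta$ (and $t$), consistent with the $\lesssim/\gtrsim$ convention of the paper.
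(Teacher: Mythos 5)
The central gap is in your $\lambda$-part prior: you propose to support the one-sparse mixture only on indices with $\sigma_j \gtrsim \lambda$, so that the indistinguishability sum linearizes. But that index set can be \emph{empty}. In the isotropic case $\sigma_1=\dots=\sigma_d=\sigma$ with $s$ bounded, equation~\eqref{eq:beta_t>2} gives $\lambda^2 = 2\sigma^2\log(2\sqrt d/s) > \sigma^2$ for all $d\ge 1$, so $j_*=0$ and there are \emph{no} coordinates with $\sigma_j\ge\lambda$. Your prior is then supported on nothing and delivers no bound. The paper avoids this by \emph{not} restricting: the multinomial index $p$ is drawn from $(\pi_j)_{j\in[d]}$ over all $j$, with $\theta = c\lambda\,e_p$. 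The $\chi^2$ divergence then splits as $\sum_{j\le j_*}\pi_j^2(e^{c^2}-1)$ plus a genuinely nonlinearizable tail $\sum_{j>j_*}\pi_j^2(e^{c^2\lambda^2/\sigma_j^2}-1)$, and the latter is controlled by bounding it via $\sup_{\alpha\ge 0}\big(e^{-(1-c^2)\alpha}-e^{-\alpha}\big)$ and invoking a Dini-type uniform-convergence argument as $c\downarrow 0$. That device is the essential ingredient you are missing; without it, the sparse-coordinate contribution to $\chi^2$ cannot be made small by merely taking $c$ small, since $e^{c^2\lambda^2/\sigma_j^2}$ can be huge on those indices.

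Two secondary points. First, the direction of your asymptotics for $\gamma_j$ is reversed: from~\eqref{eq:gamma_j>2} one has $\gamma_j\asymp\lambda+\sigma_j$, so $\gamma_j\asymp\lambda$ precisely on $\sigma_j\lesssim\lambda$ (the sparse indices), whereas on $\sigma_j\gtrsim\lambda$ one has $\gamma_j\asymp\sigma_j$. This doesn't affect the paper's proof because the prior fixes the perturbation magnitude at $c\lambda$ regardless, so the separation is automatic — but it does indicate the intuition guiding your restriction was inverted. Second, you assert $\nu\asymp\sigma_1$ for bounded $s$; the paper only establishes (and needs) the one-sided bound $\nu\le C\sigma_1$, via $\nu^{2t}=\sum_j\sigma_j^t\cdot\sigma_j^t e^{-\lambda^2/\sigma_j^2}\le\sigma_1^t\cdot(s/2)\nu^t$. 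The reverse inequality $\nu\gtrsim\sigma_1$ can fail, for instance when $\lambda\gg\sigma_1$, though fortunately the one-sided bound suffices once $\epsilon^*\ge c\sigma_1$ is shown. Your $\sigma_1$-part prior $\theta_1=\pm c\sigma_1$ is fine and essentially what the paper does (it uses a $\operatorname{Ber}(1/2)$ spike with a KL bound; either works).
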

Note that for $t=2$, two definitions are possible for the quantities $\lambda $ and $\nu$, but the conclusion of the lemma still holds in both cases.

\begin{proof}[Proof of Lemma~\ref{lem:generalities_LB}]
We first set some notation and distinguish between two cases.
\begin{enumerate}
    \item For $t \in [2,\infty)$, let $\beta$, $\nu$, $j_*$ be defined as in \eqref{eq:beta_t>2}, \eqref{eq:implicit_separation_ellt_>2}, \eqref{eq:explicit_separation_ellt_>2} and let 
    $$\pi_j = \dfrac{2}{s}\dfrac{\sigma_j^t e^{-\beta/\sigma_j^2}}{\sqrt{\sum_{j=1}^d \sigma_j^{2t} \exp\left(-\beta/\sigma_j^2\right)}},\quad \text{ for any $j \in [d]$.}$$
    \item For $t \in [1,2]$, define $\lambda$, $\nu$, $j_*$ as in~\eqref{eq:def_lambda_ellt}, \eqref{eq:def_nu(lambda)_ellt}, 
\eqref{eq:def_jstar_ellt}, and for all $j \in [d]$, define 
$$
    \pi_j = \dfrac{2}{s}\mathbb 1_{j\leq i_*} + \dfrac{2\sigma_j^4}{s\nu^{\,t} \lambda^{4-t} } \mathbb 1_{i_* < j \leq j_*} + \dfrac{2\sigma_j^t }{s\nu^t} e^{-\lambda^2/\sigma_j^2 + 1} \mathbb 1_{\sigma_j < \lambda}.
$$
\end{enumerate} 

For $t=2$, choose either one of the definitions above. 
In both cases, note that $\sum_{j=1}^d \pi_j=1$. 
This is a consequence of equation~\eqref{eq:beta_t>2} if $t \geq 2$ and of~\eqref{eq:def_lambda_ellt} for $t \in [1,2]$.

Now, let $p \sim \operatorname{Mult}((\pi_1,\dots,\pi_d),1)$  and define the random vector $\theta$ such that $\forall j \in [d]: \theta_j = c\lambda\mathbb 1_{j=p}$ where $c>0$ is a small enough constant depending only on $\eta$. 
We denote by $\Pi$ the prior distribution over $\theta$, defined such that $\mathbb P_\Pi(\forall j \in [d]: \theta_j = c\lambda \mathbb 1_{j=p}) = \pi_j$.
Let $\mathbb P_{prior} = \mathbb E_{\theta\sim \Pi}\left[\mathcal{N}(\theta,\Sigma)\right]$ denote the corresponding mixture of normal distributions $\mathcal{N}(\theta,\Sigma)$ where $\theta \sim \Pi$.

First, we have $\|\theta\|_t = c\lambda$ and $\|\theta\|_0 = 1 \leq s$ a.s., so that $\theta \in \Theta(c\lambda, s, t)$ a.s. 
We can now compute the $\chi^2$ divergence between our prior and the null distribution. 
To do so, note that conditionally on $p=j$, we have
\begin{align*}
    \forall x = (x_1,\dots, x_d) \in \R^d: \quad \mathbb P_{prior}(x\,|\,p=j) = \frac{e^{-(x_j-c\lambda)^2/2\sigma_j^2}}{\sqrt{2\pi \sigma_j^2}}\prod_{k \neq j} \frac{e^{-x_k^2/2\sigma_k^2}}{\sqrt{2\pi \sigma_k^2}}.
\end{align*}

Therefore,
\begin{align*}
    &1+\chi^2\left(\mathbb P_{prior} \,||\, \mathbb P_0\right) 
    = \int_{\R^d} \left(\frac{\mathbb P_{prior}}{\mathbb P_0}(x)\right)^2 \mathbb P_0(x) dx = \int_{\R^d} 
    \left[
        \sum_{j=1}^d \pi_j \exp\left\{-\frac{(x_j-c\lambda)^2 - x_j^2}{2\sigma_j^2}\right\}
    \right]^2 
    \mathbb P_0(x)dx\\
    & = \sum_{i\neq j} \pi_i \pi_j \int_{\R^2} \!\exp\left\{-\frac{(x_i-c\lambda)^2 - x_i^2}{2\sigma_i^2}-\frac{(x_j-c\lambda)^2 - x_j^2}{2\sigma_j^2}\right\} \left(4\pi^2 \sigma_i^2\sigma_j^2\right)^{-1/2}\!\! \exp\left\{-\frac{x_i^2}{2\sigma_i^2} - \frac{x_j^2}{2\sigma_j^2}\right\} dx_idx_j\\
    &\quad + \sum_{j=1}^d \pi_j^2 \int_\R \exp\left\{-\frac{(x_i-c\lambda)^2 - x_i^2}{\sigma_i^2}\right\} \left(2\pi\sigma_j^2\right)^{-1/2}\exp\left\{- \frac{x_j^2}{2\sigma_j^2}\right\} dx_j\\
    & = \sum_{i\neq j} \pi_i \pi_j + \sum_{j=1}^d \pi_j^2 \int_\R \left(2\pi\sigma_j^2\right)^{-1/2} e^{c^2\lambda^2/\sigma_j^2} \exp\left\{-\frac{(x_j-2c\lambda)^2}{2\sigma_i^2}\right\} dx_j\\
    & = \sum_{i\neq j} \pi_i \pi_j + \sum_{j=1}^d \pi_j^2 \exp\left\{c^2\lambda^2/\sigma_j^2\right\} = 1 + \sum_{j=1}^d \pi_j^2 \left(\exp\left\{c^2\lambda^2/\sigma_j^2\right\} -1\right) \quad \text{ recalling that } \sum_{j=1}^d \pi_j = 1\\
    & = 1 + \sum_{j\leq j_*} \pi_j^2 \left(e^{c^2}-1\right) + 4\sum_{j>j_*} \frac{\sigma_j^{2t} e^{-\lambda^2/\sigma_j^2}}{\nu^{2t}} \exp\left\{-\lambda^2/\sigma_j^2\right\}\left(\exp\left\{c^2\lambda^2/\sigma_j^2\right\} -1\right)\\
    & \leq 1+ 2c^2 + \sup_{\alpha>0}\left[e^{-(1-c^2)\alpha} - e^{-\alpha}\right],
\end{align*}
for $c$ small enough. 
Now it remains to prove that the family of functions $$\left\{f_c: \alpha \in \R_+\longmapsto e^{-(1-c^2)\alpha} - e^{-\alpha} ~\big|~ c>0\right\}$$ uniformly converges to $0$ when $c \downarrow 0$. 
\vspace{1mm}

Let $\delta>0$ and let $A>0$ be such that $e^{-A^2/2} \leq \delta$. 
We note that the family of functions $f_c$ is continuous over the compact set $[0,A]$, converges pointwise to $0$ when $c\downarrow 0$, which is a continuous function, and that $(f_c)_{c>0}$ decreases when $c\downarrow0$. Therefore, by Dini's theorem, $(f_c)_c$ uniformly converges to $0$ as $c\downarrow 0$ over $[0,A]$. 
Now, let $c>0$ such that $\forall \alpha \in [0,A]: |f_c(\alpha)|\leq \delta$, then by definition of $A$, we also have $\forall \alpha \geq A: |f_c(\alpha)|\leq \delta$. 
Combining the two guarantees proves the desired uniform convergence over $\R_+$.
\vspace{1mm}

For $c$ small enough, we therefore have $\operatorname{TV}(\mathbb P_{prior}, \mathbb P_0) \leq \sqrt{\chi^2(\mathbb P_{prior}\,||\, \mathbb P_0)} \leq c'$ (see e.g.~\cite{tsybakov2008introduction}) where $c'$ can be arbitrarily small. 
Therefore,
\begin{align}
    R^*(\epsilon,s,t,\Sigma) &= \inf_{\psi} R(\psi,\epsilon,s,t,\Sigma) = \inf_{\psi} \left\{\mathbb P_{0} \left(\psi = 1\right) + \sup\left\{\mathbb P_{\theta} \left(\psi = 0\right)~ \Big| ~ \theta \in \Theta(\epsilon,s,t)\right\}\right\}\nonumber\\
    & \geq \inf_{\psi} \left\{\mathbb P_{0} \left(\psi = 1\right) + \mathbb E_{\theta\sim \Pi}\left[ \mathbb P_{\theta} \left(\psi = 0\right)\right]\right\} = \inf_{\psi} \left\{1 - \mathbb P_{0} \left(\psi = 0\right) + \mathbb P_{prior}\left[ \psi = 0\right]\right\}\nonumber\\
    & = 1 - \sup_{A \in \mathcal{B}(\R^d)} \left\{\mathbb P_{prior}(A) - \mathbb P_0(A)\right\} = 1 - \operatorname{TV}(\mathbb P_0,\mathbb P_{prior})\geq 1-c'> \eta,\label{eq:conclusion_LB}
\end{align}
for $c'$ smaller than a quantity depending only on $\eta$. 
At the last line, we denoted by $\mathcal{B}(\R^d)$ the family of Borelian subsets of $\R^d$.
Since for any point $\theta$ of the prior,  we have $\|\theta\|_t = c\lambda$, then by definition of $\epsilon^*(s,t,\Sigma)$, we have proved that $\epsilon^*(s,t,\Sigma)\geq c\lambda$. 
\vspace{1mm}

We now prove that $\epsilon^*(s,t,\Sigma)\geq c\nu$. 
Note that if $\lambda + \nu \asymp \lambda$, then the result is clear. 
Otherwise, we will show that the case $\nu \gg \lambda$ can only arise if $\nu \leq C \sigma_1$, and we will show that $\epsilon^*(s,t,\Sigma)\geq c\sigma_1$.
\vspace{1mm}

If $t \in [1,2]$, then recalling the notation of Lemma~\ref{lem:relations_between_contributions}, we have $\lambda + \nu \asymp \lambda + \nu_{fdense}$ since $s\leq C$. 
If $i_* = 0$, then $\lambda + \nu \asymp \lambda$, and we have proved $\epsilon^*(s,t,\Sigma)\geq c \lambda \asymp \lambda + \nu$. 
Otherwise, we note that
$$ 1\geq \sum_{j\leq i_*} \pi_j = \frac{2i_*}{s}, \quad \text{ so that } i_* \leq s \leq C.$$

Therefore, $\nu_{fdense} = \|\sigma_{\leq i_*}\|_a \leq \sigma_1 i_*^{1/a} \leq C\sigma_1.$ 
Moreover, if $t\geq 2$, then since $s\leq C$, we have
\begin{align*}
    &\nu^{2t} = \sum_{j=1}^d \sigma_j^{2t} e^{-\lambda^2/\sigma_j^2} =   \sum_{j=1}^d \frac{s\pi_j}{2}\sigma_j^t \nu^t \leq C
    \sigma_1^{t}\nu^t,
    \text{ so that } \nu \leq C\sigma_1.
\end{align*}

We now set a different prior over the parameter space. Namely, we define a random vector $\theta'$ such that $\forall j \in \{2,\dots,d\}: \theta'_j=0$ and $\theta'_1 = b \cdot c \sigma_1$ for a sufficiently small constant $c>0$ depending only on $\eta$ and where $b \sim \operatorname{Ber}(1/2)$. 
We let $\mathbb P'_{prior}$ denote the probability distribution corresponding to this mixture. 
Denoting by $\operatorname{KL}$ the KL-divergence between probability distributions (see for instance~\cite{tsybakov2008introduction}), we can compute $\operatorname{KL}(\mathbb P'_{prior} \,||\, \mathbb P_0)$ as follows. 
We set $\theta^{(0)} = 0$ and $\theta^{(1)} = \left(c\sigma_1,0\dots, 0\right)$.
Note that $\mathbb P'_{prior} = \frac{1}{2} \mathbb P_{\theta_0} + \frac{1}{2} \mathbb P_{\theta_1}$ and by convexity of the KL-divergence:
\begin{align*}
    \operatorname{KL}(\mathbb P'_{prior} \,||\, \mathbb P_0) \leq \frac{1}{2} \operatorname{KL}(\mathbb P_{\theta^{(1)}} \,||\, \mathbb P_{\theta^{(0)}}) \leq \operatorname{KL}\big(\mathcal{N}(0,\sigma_1^2) \,||\, \mathcal{N}(c \sigma_1,\sigma_1^2)\big) \leq c^2,
\end{align*}
so that $\operatorname{TV}(\mathbb P'_{prior} , \mathbb P_0) \leq \sqrt{\operatorname{KL}(\mathbb P'_{prior} \,||\, \mathbb P_0)} \leq c$. 
By the same argument as in~\eqref{eq:conclusion_LB}, we can conclude that $\epsilon^*(s,t,\Sigma) \geq c\sigma_1 \geq c\nu$ so that $\epsilon^*(s,t,\Sigma) \geq c (\lambda + \nu)$. 
The proof is complete.

\end{proof}

\subsection{Generalities for upper bounds}
\begin{lemma}\label{lem:analysis_Tsparse}(Analysis of $T_{sparse}$)
Let $\lambda$, $\nu$, $j_*$, $\uptau$, $\rho$ be defined as in ~\eqref{eq:beta_t>2}, \eqref{eq:implicit_separation_ellt_>2}, \eqref{eq:explicit_separation_ellt_>2}, \eqref{eq:def_nu_>2}, \eqref{eq:def_rho_>2} for $t \geq 2$ and as in~\eqref{eq:def_lambda_ellt}, \eqref{eq:def_nu(lambda)_ellt}, 
\eqref{eq:def_jstar_ellt}, \eqref{eq:def_tau_ellt}, \eqref{eq:def_rho_ellt} for $t \in [1,2]$. 
Let $T_{sparse}$ be defined as in~\eqref{eq:def_test_stat_>2} for $t \geq 2$ and \eqref{eq:def_Tsparse_ellt} for $t \in [1,2]$.
Then, there exists a constant $C_0$ depending only on $t$, such that when $\|\theta\|_0 \leq s$, we have

\begin{center}
    \begin{tabular}{|c|c|c|}
    \hline
    & Under $H_0$ & When $\left\|\theta_{>j_*}\right\|_t^t \geq 4\Cbar \rho$ \rule[-2ex]{0pt}{6ex} \\
    \hline
    $\mathbb E^2 T_{sparse}$ & $= 0$ & $\geq \Cbar\rho /\,2^{t+2}$ \rule[-2ex]{0pt}{6ex}\\
    \hline
    $\mathbb V T_{sparse}$ & $ \leq C_0 \rho$ & $\leq c\, \mathbb E_\theta^2 \left[T_{sparse}\right]. $\rule[-2ex]{0pt}{6ex}\\
    \hline
\end{tabular}
\end{center}
In the last cell, $c$ is a constant depending only on $\eta$ and $t$, that can be made arbitrarily small provided that $\Cbar = \Cbar(\eta,t)$ is large enough, and $\Cbar$ can be chosen independently of $C_0$. 
\end{lemma}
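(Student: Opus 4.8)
The plan is to prove the four cells of the table by direct moment computations on the truncated sum $T_{sparse} = \sum_{j>j_*} (|X_j|^t - \alpha_j)\mathbf 1\{|X_j|^t > \uptau\}$, handling the null and the alternative separately. The key tool throughout is that the truncation threshold $\uptau = C_t\lambda^t + \nu^t/s$ sits well above the ``bulk'' of each $|Z_j|^t$ with $Z_j\sim\mathcal N(0,\sigma_j^2)$ (since $\sigma_j < \lambda$ for $j>j_*$), so both the probability $\mathbb P(|Z_j|^t>\uptau)$ and the truncated moments $\mathbb E[|Z_j|^{kt}\mathbf 1\{|Z_j|^t>\uptau\}]$ decay like Gaussian tails $\exp(-c\,\uptau^{2/t}/\sigma_j^2)$, which I can bound using standard Gaussian tail estimates together with the definition of $\nu$ in~\eqref{eq:implicit_separation_ellt_>2} (resp.~\eqref{eq:def_nu_ellt}).

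\textbf{Null case.} Under $H_0$, $X_j = \sigma_j\xi_j$, so $|X_j|^t \overset{d}{=} |Z_j|^t$, and by the very definition $\alpha_j = \mathbb E[|Z_j|^t\mid |Z_j|^t > \uptau]$ we get $\mathbb E_{H_0}[(|X_j|^t-\alpha_j)\mathbf 1\{|X_j|^t>\uptau\}] = 0$ termwise, hence $\mathbb E_{H_0} T_{sparse} = 0$ and $\mathbb E_{H_0}^2 T_{sparse} = 0$. For the variance, independence across $j$ gives $\mathbb V_{H_0} T_{sparse} = \sum_{j>j_*}\mathbb V[(|X_j|^t-\alpha_j)\mathbf 1\{|X_j|^t>\uptau\}] \leq \sum_{j>j_*}\mathbb E[|X_j|^{2t}\mathbf 1\{|X_j|^t>\uptau\}]$. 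Here I would bound each term: writing $u = \uptau^{2/t}/\sigma_j^2$, a Gaussian-tail / Laplace-type estimate gives $\mathbb E[|Z_j|^{2t}\mathbf 1\{|Z_j|^t>\uptau\}] \lesssim_t \sigma_j^{2t}(1+u^t)e^{-u/2}$. Since $\uptau \geq C_t\lambda^t$ with $C_t = (4t)^t$ chosen large, $u \geq (4t)^2 \lambda^2/\sigma_j^2 \geq (4t)^2$, so the polynomial factor is absorbed and $\mathbb E[|Z_j|^{2t}\mathbf 1\{|Z_j|^t>\uptau\}] \lesssim_t \sigma_j^{2t} e^{-c\,\uptau^{2/t}/\sigma_j^2} \leq \sigma_j^{2t} e^{-\lambda^2/\sigma_j^2}\cdot e^{-(\uptau^{2/t}-\lambda^2)/\sigma_j^2}$. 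Using $\uptau^{2/t} \geq C_t^{2/t}\lambda^2 = (4t)^2\lambda^2 \geq \lambda^2 + \nu^{2}/s^{?}\dots$ — more carefully, I use $\uptau \geq \nu^t/s$ to extract a factor $s$, summing $\sum_{j>j_*}\sigma_j^{2t}e^{-\lambda^2/\sigma_j^2} = \nu^{2t}$ (definition of $\nu$) to conclude $\mathbb V_{H_0}T_{sparse} \lesssim_t \nu^{2t}/s \leq \rho$, giving the constant $C_0(t)$ in the first column. (For $t\in[1,2]$ the same computation uses~\eqref{eq:def_nu_ellt}, whose $j>j_*(x)$ term is exactly $\sum_{j>j_*}\sigma_j^{2t}\nu^{-2t}e^{-\lambda^2/\sigma_j^2+1} = \Cnu$, so $\sum_{j>j_*}\sigma_j^{2t}e^{-\lambda^2/\sigma_j^2} \asymp \nu^{2t}$.)

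\textbf{Alternative case.} Suppose $\|\theta\|_0\leq s$ and $\|\theta_{>j_*}\|_t^t \geq 4\Cbar\rho$. Write $S = \{j>j_*: \theta_j\neq 0\}$, so $|S|\leq s$. The mean is $\mathbb E_\theta T_{sparse} = \sum_{j>j_*}\big(\mathbb E_\theta[|X_j|^t\mathbf 1\{|X_j|^t>\uptau\}] - \alpha_j\mathbb P_\theta(|X_j|^t>\uptau)\big)$. For $j\notin S$ the term vanishes as in the null case; for $j\in S$ I claim the term is $\geq c_t(|\theta_j|^t - C_t'\uptau)_+$ for suitable constants — i.e. once $|\theta_j|^t$ exceeds a constant multiple of $\uptau$, the shifted Gaussian $|X_j|^t = |\theta_j + \sigma_j\xi_j|^t$ exceeds $\uptau$ with probability close to $1$ and contributes roughly $|\theta_j|^t$ to the sum, minus the centering $\alpha_j\mathbb P_\theta(\cdot)$ which is at most $\alpha_j \lesssim_t \uptau$ (since $\alpha_j$ is the conditional mean of $|Z_j|^t$ given it exceeds $\uptau$, which for small $\sigma_j$ is $\asymp_t \uptau$). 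Summing over $j\in S$ and using $|S|\leq s$ together with $\uptau \cdot s = C_t\lambda^t s + \nu^t \asymp \rho$: $\mathbb E_\theta T_{sparse} \geq c_t\big(\|\theta_{>j_*}\|_t^t - C_t'' \rho\big) \geq c_t\big(4\Cbar\rho - C_t''\rho\big) \geq \Cbar\rho$ once $\Cbar$ is large relative to $C_t''/c_t$; squaring and tracking constants gives $\mathbb E_\theta^2 T_{sparse} \geq \Cbar\rho / 2^{t+2}$ (the precise power of $2$ is bookkeeping, chosen to match the table). For the variance bound $\mathbb V_\theta T_{sparse} \leq c\,\mathbb E_\theta^2 T_{sparse}$, I split the sum over $S$ and $S^c$: the $S^c$ part has variance $\lesssim_t \nu^{2t}/s \lesssim \rho$ as in the null case, while the $S$ part has variance $\leq \sum_{j\in S}\mathbb E_\theta[(|X_j|^t-\alpha_j)^2\mathbf 1\{\cdot\}] \lesssim_t \sum_{j\in S}(|\theta_j|^{2t} + \sigma_j^{2t} + \uptau^2) \lesssim_t \|\theta_{>j_*}\|_{2t}^{2t} + s\uptau^2$. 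Bounding $\|\theta_{>j_*}\|_{2t}^{2t}\leq \|\theta_{>j_*}\|_t^{2t}$ and $s\uptau^2 \lesssim \rho^2/s \leq \rho^2 \lesssim \|\theta_{>j_*}\|_t^{2t}/\Cbar^2 \cdot(\dots)$, all of this is $\lesssim_t \|\theta_{>j_*}\|_t^{2t} \asymp_t \mathbb E_\theta^2 T_{sparse}$ up to a constant that I can make $\leq c$ by choosing $\Cbar$ large; crucially $\Cbar$ only needs to be large in terms of $\eta, t$ and never in terms of $C_0$, so the two thresholds can be chosen independently as claimed.

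\textbf{Main obstacle.} The delicate step is the lower bound on $\mathbb E_\theta T_{sparse}$ in the alternative case: a single coordinate $\theta_j$ with $|\theta_j|^t$ only slightly above $\uptau$ might contribute little or even negatively (because $|X_j|^t$ can fall below $\uptau$ under an unfavorable noise realization, and the $-\alpha_j\mathbb P_\theta$ centering term is always subtracted), so I cannot argue termwise that each coordinate helps — I must argue in aggregate that the \emph{total} signal mass $\|\theta_{>j_*}\|_t^t$, being $\geq 4\Cbar\rho \gg s\uptau$, forces enough coordinates to be well above threshold that their aggregate contribution dominates the aggregate centering correction $\sum_{j\in S}\alpha_j \lesssim s\uptau \asymp\rho$. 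The cleanest route is a case split on whether $\max_{j}|\theta_j|^t$ is large or whether the mass is spread out, or alternatively a truncation argument isolating the coordinates with $|\theta_j|^t \geq 2C_t\uptau$; getting the constants to line up exactly with $4\Cbar\rho$ on the hypothesis side and $\Cbar\rho/2^{t+2}$ on the conclusion side is the part requiring the most care, but it is purely a matter of choosing $\Cbar$ large enough at the end.
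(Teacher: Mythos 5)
Your overall plan follows the paper's route quite closely: you decompose the sum over $j > j_*$ into three pieces (coordinates with large signal, small-but-nonzero signal, and zero signal), bound the null-variance by the truncated $2t$-th moment using Gaussian tails and the definition of $\nu$, and lower-bound the alternative mean by arguing that the aggregate mass $\|\theta_{>j_*}\|_t^t \geq 4\Cbar\rho$ forces a set $I = \{j : |\theta_j|^t \geq \Cbar\uptau\}$ of coordinates carrying at least half the mass. This mirrors what the paper does, and your discussion of the ``main obstacle'' correctly identifies the right case split for the expectation bound.

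There is, however, a genuine gap in your treatment of $\mathbb{V}_\theta T_{sparse}$ under the alternative. For $j \in S$ you bound the variance of a single term by its raw second moment,
\[
\mathbb{V}\big[(|X_j|^t-\alpha_j)\mathbb{1}\{\cdot\}\big] \;\leq\; \mathbb{E}_\theta\big[(|X_j|^t-\alpha_j)^2\mathbb{1}\{\cdot\}\big] \;\lesssim_t\; |\theta_j|^{2t} + \sigma_j^{2t} + \uptau^2,
\]
and then conclude that the total is $\lesssim_t \|\theta_{>j_*}\|_{2t}^{2t} \leq \|\theta_{>j_*}\|_t^{2t} \asymp_t \mathbb{E}^2_\theta T_{sparse}$. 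The hidden constant in that $\lesssim_t$ is fixed once $t$ is fixed and does not go to zero as $\Cbar \to \infty$; but the lemma requires the ratio $\mathbb{V}_\theta T_{sparse}/\mathbb{E}_\theta^2 T_{sparse}$ to be $\leq c$ with $c$ arbitrarily small (choosing $\Cbar$ large). Consider a $1$-sparse alternative with $|\theta_j|^t = 4\Cbar\rho$ and a single $j$: then $\|\theta_{>j_*}\|_{2t}^{2t} = \|\theta_{>j_*}\|_t^{2t}$, and your bound gives $\mathbb{V}_\theta T_{sparse} \gtrsim_t \mathbb{E}_\theta^2 T_{sparse}$, not $\leq c\,\mathbb{E}_\theta^2 T_{sparse}$. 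The point you miss is that for $j \in I$ one has $\sigma_j^t \leq \lambda^t \leq \uptau/C_t \leq |\theta_j|^t/(\Cbar C_t)$, so $X_j$ is tightly concentrated around $\theta_j$; hence $\mathbb{E}[|X_j|^{2t}\zeta_j]$ and $\mathbb{E}^2[|X_j|^t\zeta_j]$ are both $\approx |\theta_j|^{2t}$ and the \emph{difference} --- the actual variance --- is only $c(\Cbar)\,|\theta_j|^{2t}$ with $c(\Cbar) \to 0$. Discarding the $-\mathbb{E}^2$ term destroys exactly this cancellation. The paper establishes it via a Taylor expansion of $u\mapsto |u|^\alpha$ around $\theta_j$ when $t \geq 2$, and for $t \in [1,2]$ via a conditioning argument on $\{|\xi_j|\leq C^\ast\sigma_j\}$, yielding the sharp $\mathbb{V}[(|X_j|^t-\alpha_j)\zeta_j] \leq c|\theta_j|^{2t}$ for $j \in I$. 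You would need an argument of this kind; the crude second-moment bound will not close the lemma as stated.
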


\begin{proof}

\begin{enumerate}
    \item Under $H_0$, $T_{sparse}$ is centered by definition. As for the variance, by independence, we get:
    \begin{align*}
        \mathbb V \big[T_{sparse}\big] & = \sum_{j>j_*} \mathbb E \left[\left(|X_j|^t - \alpha_j\right)^2 \mathbb 1\left\{|X_j|^t > \uptau\right\}\right] \\
        &= \sum_{j >j_*} \mathbb E \left[|X_j|^{2t} \, \mathbb 1\left\{|X_j|^t > \uptau\right\}\right] - \alpha_j^2 \,\mathbb P\left[|X_j|^t >\uptau\right]\\
        & \leq \sum_{j>j_*} \mathbb E \left[|X_j|^{2t} \, \mathbb 1\left\{|X_j|^t > \uptau\right\}\right] \leq C \rho^2 \quad \text{ by Lemma~\ref{lem:fourth_moment_ellt}}.
    \end{align*}
    \item Let $\|\theta_{>j_*}\|_t^t \geq 4\Cbar\rho$ and let 
    \begin{align*}
        I = \Big\{j >j_* ~\big|~ |\theta_j|^t \geq \Cbar \uptau \Big\}.
    \end{align*}
    We write $\theta_I = (\theta_j)_{j\in I}$ and $\theta_{>j_*} = (\theta_j)_{j>j_*}$.
    Then we have $\big\|\theta_I\big\|_t^t \geq \frac{1}{2}\big\|\theta_{j>j_*}\big\|_t^t \geq \Cbar\rho$. Indeed, 
    \begin{align*}
        \sum_{\substack{j>j_*\\ j\notin I}} |\theta_j|^t \leq s \Cbar\uptau \leq 2 \Cbar \rho \leq \frac{1}{2} \big\|\theta_{j>j_*}\big\|_t^t, \quad \text{ so that } \quad \big\|\theta_I\big\|_t^t = \big\|\theta_{>j_*}\big\|_t^t - \sum_{\substack{j>j_*\\ j\notin I}} \big|\theta_j|^t \geq \frac{1}{2}\|\theta_{j>j_*}\big\|_t^t.
    \end{align*}
    Throughout the proof, we shall use the fact that for any $j\in I$ and any $\alpha \geq 1$, we have
    \begin{align}
        \E |\xi_j|^\alpha \leq C(\alpha) \sigma_j^\alpha \leq C(\alpha) \lambda^\alpha \leq C(\alpha) \uptau^{\alpha/t} \leq \frac{C(\alpha)}{\Cbar^{\alpha/t}} |\theta_j|^\alpha \quad \text{ by definition of } I.\label{eq:property_I}
    \end{align}
    For the first inequality, see for example~\cite{winkelbauer2012moments}. Now, we can now bound from below $\mathbb E T$ as follows:
    \begin{itemize}
        \item Fix any $j \in I$. 
        Noting that $|\cdot|^t$ is convex, we have for any $u \in \R: |u|^t + |1-u|^t \geq 2^{1-t}$, so that for any $a,b \in \R$, we get $|a+b|^t \geq 2^{1-t}|a|^t-|b|^t$ (set $u = -b/a$ when $a\neq 0$). 
        We therefore get
    \begin{align}
        \mathbb E\left[(|X_j|^t - \alpha_j) \mathbb 1\left\{|X_j|^t > \uptau\right\}\right] 
        &\geq \mathbb E\left[\left(2^{1-t}|\theta_j|^t - |\xi_j|^t - \alpha_j\right) \mathbb 1\left\{|X_j|^t > \uptau\right\}\right]\nonumber\\
        &\geq 
            \mathbb E\left[2^{1-t}|\theta_j|^t \mathbb 1\left\{|X_j|^t > \uptau\right\}\right] - \mathbb E |\xi_j|^t - \alpha_j \nonumber\\
        & \geq 
            2^{-t} |\theta_j|^t - C|\sigma_j|^t - \alpha_j ~~ \text{ by Lemma~\ref{lem:prob_Xj2>2lambda_ellt}, for $\Cbar$ large enough}\nonumber\\
        & \geq 
            2^{-t}|\theta_j|^t - (C+\CL{\ref{lem:alpha_j_leq_2lambda_ellt}}) \uptau, \label{eq:expec_I_ellt}
    \end{align}
    where $\CL{\ref{lem:alpha_j_leq_2lambda_ellt}}$ is the constant from Lemma~\ref{lem:alpha_j_leq_2lambda_ellt}. 
    \item Fix any $j>j_*$ such that $j \in S \setminus I$. We have
    \begin{align}
        \mathbb E \left[\left(|X_j|^t - \alpha_j\right) \mathbb 1\left\{|X_j|^t > \uptau\right\}\right] \geq -\alpha_j \geq - \CL{\ref{lem:alpha_j_leq_2lambda_ellt}} \uptau. \label{eq:expect_I-S_ellt}
    \end{align}
    \item Fix any $j>j_*$ such that $j \notin S$. Then by definition of $\alpha_j$,
    \begin{align}
        \mathbb E \left[\left(|X_j|^t - \alpha_j\right) \mathbb 1\left\{|X_j|^t > \uptau\right\}\right] =0. \label{eq:expect_not_S_ellt}
    \end{align}
    \end{itemize}
\end{enumerate}

Combining equations~\eqref{eq:expec_I_ellt}, \eqref{eq:expect_I-S_ellt} and \eqref{eq:expect_not_S_ellt}, we can conclude that
\begin{align*}
    \mathbb E T_{sparse} &\geq 2^{-t}\|\theta_I\|_t^t - (1+2\CL{\ref{lem:alpha_j_leq_2lambda_ellt}}) s \uptau \geq 2^{-t-1}\|\theta_I\|_t^t ~~\text{ choosing $\Cbar$ large enough, by definition of $I$},
\end{align*}
so that $\mathbb E T_{sparse} \geq 2^{-t-2}\big\|\theta_{j>j_*}\big\|_t^t$, which proves the claim. \\

We now move to the variance term. 
Again, there are three cases.
\begin{enumerate}
    \item If $j \in S \setminus I$, then, using $\E |X_j|^{2t} \leq C|\theta_j|^{2t} + C \sigma_j^{2t}$ and Lemma~\ref{lem:alpha_j_leq_2lambda_ellt}, we get
    \begin{align}
        \mathbb V \left[\left(|X_j|^t - \alpha_j\right) \mathbb 1\left\{|X_j|^t > \uptau\right\}\right] &\leq \mathbb E \left[\left(|X_j|^t - \alpha_j\right)^2 \mathbb 1\left\{|X_j|^t > \uptau\right\}\right] \leq 2 \mathbb E |X_j|^{2t} + 2 \alpha_j^2\nonumber\\
    &   \leq C \theta_j^{2t} + (C + C\CL{\ref{lem:alpha_j_leq_2lambda_ellt}}^2) \uptau^2 \leq C\left(1+\Cbar+\CL{\ref{lem:alpha_j_leq_2lambda_ellt}}^2\right) \uptau^2.\label{eq:variance_S-I_ellt}
    \end{align}
    \item If $j \notin S$: we are back to the analysis of the variance under $H_0$, which allows us to directly conclude that
    \begin{align}
        \sum_{j\notin S}\mathbb V \left[\left(|X_j|^t - \alpha_j\right) \mathbb 1\left\{|X_j|^t > \uptau\right\}\right] &\leq  \sum_{j\notin S}\mathbb E \left[X_j^{2t} \, \mathbb 1\left\{|X_j|^t > \uptau\right\}\right] \leq C \rho^2 \quad \text{ by Lemma~\ref{lem:fourth_moment_ellt}}.\label{eq:variance_-S_ellt}
    \end{align}
    \item If $j \in I$, we prove that there exists a small constant $c>0$ such that $\mathbb V \left[\left(|X_j|^t - \alpha_j\right) \zeta_j\right] \leq c \big|\theta_j\big|^{2t}$.
    We define the random variable $\zeta_j = \mathbb 1\left\{|X_j|^t \geq \uptau\right\}$ and $q_j = \mathbb E \zeta_j$.
Note that $\mathbb E\left[|X_j|^t\right] \leq 2^{t-1} \mathbb E\left[|\theta_j|^t + |\xi_j|^t\right] \leq C \left(|\theta_j|^t + \sigma_j^t\right)$. 
Therefore, we have
\begin{align}
    \hspace{-10mm}&\mathbb V \left[\left(|X_j|^t - \alpha_j\right) \zeta_j\right]  =  \mathbb E \left[\left(|X_j|^t - \alpha_j\right)^2 \mathbb \zeta_j\right] - \mathbb E^2 \left[\left(|X_j|^t - \alpha_j\right) \mathbb \zeta_j\right]\nonumber\\
    & = \left\{\mathbb E\left[|X_j|^{2t} \zeta_j\right] - 2 \alpha_j \mathbb E\left[|X_j|^t \zeta_j\right] + \alpha_j^2 q_j\right\} 
    - 
    \left\{\mathbb E^2\left[|X_j|^t \zeta_j\right] - 2 \alpha_j q_j \mathbb E\left[|X_j|^t \zeta_j\right] + \alpha_j^2 q_j^2\right\}\nonumber\\
    & \leq \left\{\mathbb E\left[|X_j|^{2t} \zeta_j\right] - \mathbb E^2\left[|X_j|^t \zeta_j\right] \right\} + \alpha_j^2 q_j + 2 \alpha_j q_j \mathbb E\left[|X_j|^t \zeta_j\right] \nonumber \\
    & \leq \left\{\mathbb E\left[|X_j|^{2t} \zeta_j\right] - \mathbb E^2\left[|X_j|^t \zeta_j\right] \right\} + \CL{\ref{lem:alpha_j_leq_2lambda_ellt}}^2 \uptau^2 + 2 \CL{\ref{lem:alpha_j_leq_2lambda_ellt}} \uptau \; \mathbb E\left[|X_j|^t\right] 
    \nonumber\\
    & \leq \left\{\mathbb E\left[|X_j|^{2t} \zeta_j\right] - \mathbb E^2\left[|X_j|^t \zeta_j\right] \right\} + C \uptau^2 + C \uptau |\theta_j|^t
    \nonumber\\
    & \leq 
    \left\{\mathbb E\left[|X_j|^{2t} \zeta_j\right] - \mathbb E^2\left[|X_j|^t \zeta_j\right] \right\} 
    +
    \frac{C}{\Cbar} |\theta_j|^{2t}.\label{eq:var_Tsparse_ellt}
\end{align}

To control the term $\left\{\mathbb E\left[|X_j|^{2t} \zeta_j\right] - \mathbb E^2\left[|X_j|^t \zeta_j\right] \right\}$, there are two cases.

\begin{itemize}
    \item If $t \leq 2$, then we have
    \begin{align}
        \mathbb E\left[|X_j|^{2t} \zeta_j\right]& \leq \mathbb E\left[X_j^4 \right]^{t/2} = \left[\theta_j^4 + 6 \theta_j^2 \sigma_j^2 + 3\sigma_j^4 \right]^{t/2} \nonumber\\
        & \leq 
        \left[\theta_j^4 + 6 \theta_j^2 \cdot \frac{\theta_j^2}{\Cbar^{2/t}} + 3\frac{\theta_j^4}{\Cbar^{4/t}} \right]^{t/2}
        \quad 
        \text{ since over $I$ we have $\sigma_j \leq \lambda \leq \uptau^{1/t} \leq \frac{|\theta_j|}{\Cbar^{1/t}}$}\nonumber\\
        &
        \leq 
        \theta_j^{2t} \left( 1 +  \frac{9}{\Cbar^{2/t}} \right)^{t/2}.
        \label{eq:moment_2t_ellt}
    \end{align}
    Now, we define a large constant $C^*$, and in what follows, we first take $C^*$ sufficiently large before taking $\Cbar \in [C^*,+\infty)$ sufficiently large for fixed $C^*$.
    Defining $z_j = \mathbb 1 \left\{|\xi_j|\leq C^* \sigma_j\right\}$, we note that 
    \begin{align}
        \forall j \in I: \quad \E \left[|\xi_j| z_j \zeta_j\right] \leq C^*\sigma_j \E \left[ z_j \zeta_j\right] \leq \frac{C^*}{\Cbar} |\theta_j| \E\left[z_j \zeta_j\right] \leq |\theta_j| \E\left[z_j \zeta_j\right] \text{ if } \Cbar \geq C^*.\label{eq:C*}
    \end{align}
    Therefore, we get
    \begin{align}
        \mathbb E\left[|X_j|^t \zeta_j\right] 
        &\geq 
        \mathbb E^t\left[|X_j| \zeta_j\right] 
        \geq 
        \mathbb E^t\left[\Big||\theta_j| - |\xi_j| \Big| \, \cdot \zeta_j\right] 
        \geq 
        \left|\mathbb E\Big[\big(|\theta_j| - |\xi_j| \big)z_j \zeta_j \Big] \right|^t
        \nonumber \\
        & = 
            \mathbb E^t\Big[\big(|\theta_j| - |\xi_j| \big)z_j \zeta_j \Big] \quad \text{ by equation \eqref{eq:C*}} \nonumber\\
        & \geq
            \Big\{|\theta_j| \,\mathbb E (z_j \zeta_j) - C^* \sigma_j \mathbb E (z_j \zeta_j)\Big\}^t \\
        &  \geq 
            |\theta_j|^t \left(1-\frac{C^*}{\Cbar}\right)^t \mathbb P\left(|\xi_j|\leq C^* \sigma_j \text{ and } |X_j|^t \geq \uptau\right)^t \nonumber\\
        & \geq 
            |\theta_j|^t \left(1-\frac{C^*}{\Cbar}\right)^t \mathbb P\left( |X_j|^t \geq \uptau ~ \Big| ~ |\xi_j|\leq C^* \sigma_j \right)^t \mathbb P\left(|\xi_j|\leq C^* \sigma_j\right)^t
        \nonumber \\
        & =
            |\theta_j|^t \left(1-\frac{C^*}{\Cbar}\right)^t \mathbb P\left(|\xi_j|\leq C^* \sigma_j\right)^t \quad \text{ if $\Cbar$ is large enough for fixed $C^*$.}\label{eq:moment_t_ellt}
    \end{align}
    Note that the constant $\left(1-\frac{C^*}{\Cbar}\right)^t \mathbb P\left(|\xi_j|\leq C^* \sigma_j\right)^t$ can be made arbitrarily close to $1$ provided that we choose successively $C^*$ and $\Cbar$ large enough. 
    Now, combining equations~\eqref{eq:var_Tsparse_ellt}, \eqref{eq:moment_2t_ellt} and \eqref{eq:moment_t_ellt}, we get
    \begin{align}
        \mathbb V \left[\left(|X_j|^t - \alpha_j\right) \zeta_j\right] 
        &\leq 
        \theta_j^{2t} \left( 1 +  \frac{9}{\Cbar^{2/t}} \right)^{t/2}
        - |\theta_j|^{2t} \left(1-\frac{C^*}{\Cbar}\right)^{2t} \mathbb P\left(|\xi_j|\leq C^* \sigma_j\right)^{2t} 
        + \frac{C}{\Cbar} |\theta_j|^{2t}
        \nonumber
        \\
        & \leq c|\theta_j|^{2t},\label{eq:variance_I_ellt_<2}
    \end{align}
    where the constant $c$ can be made arbitrarily small by successively choosing $C^*$ and $\Cbar$ large enough. 
    
    \item If $t \geq 2$, we use the following Taylor expansion: For any $\alpha \geq 2$, for any $\theta_j, \xi_j\in \R$, there exists $\xi_j' \in (0,\xi_j)$ such that
    \begin{align}
        \big|\theta_j + \xi_j\big|^\alpha &= |\theta_j|^\alpha + z_j^{(\alpha)} 
        \nonumber\\
        \text{where } z_j^{(\alpha)} :&=
        \alpha \big|\theta_j^{\alpha-1} \xi_j\big| \operatorname{sign}(\theta_j) + \frac{\alpha (\alpha-1)}{2} \xi_j^2 \big|\theta_j + \xi'_j\big|^{\alpha-2}.\label{eq:taylor}
    \end{align}
    For $j\in I$, we can bound $\mathbb E \big|z_j^{(\alpha)}\big|$ from above as follows 
    \begin{align}
        \mathbb E \big|z_j^{(\alpha)}\big| \leq \alpha C \Cbar^{-1/t} |\theta_j|^\alpha + \frac{\alpha (\alpha-1)}{2} C \left\{ \frac{|\theta_j|^{\alpha}}{\Cbar^{2/t}} + \frac{|\theta_j|^\alpha}{\Cbar^{\alpha/t}} \right\} \leq C \frac{|\theta_j|^\alpha}{\Cbar^{1/t}},\label{eq:UB_zj_alpha}
    \end{align}
    for some constant $C$ depending on $\alpha$.
    We now have
    \begin{align*}
        &\mathbb E\left[|X_j|^{2t} \zeta_j\right] \leq \mathbb E\left[\big|\theta_j\big|^{2t} + \big|z_j^{(2t)}\big| \right] \leq |\theta_j|^{2t} \left(1 +  \frac{C}{\Cbar^{1/t}} \right),
        \nonumber \\
        &
        \mathbb E\left[|X_j|^t \zeta_j\right] \geq \mathbb E\left[\left(\big|\theta_j\big|^{t} - \big|z_j^{(t)}\big|\right) \zeta_j \right] \geq |\theta_j|^{t} q_j - \mathbb E \left[\big|z_j^{(t)}\big|\right] \geq |\theta_j|^{t} \left(q_j -  \frac{C}{\Cbar^{1/t}} \right) \quad \text{ by \eqref{eq:UB_zj_alpha}}.
    \end{align*}
    Combining the above two relations, we obtain
    \begin{align}
        \mathbb E\left[|X_j|^{2t} \zeta_j\right] - \mathbb E^2\left[|X_j|^t \zeta_j\right] \leq |\theta_j|^{2t} \left(1 +  \frac{C}{\Cbar^{1/t}} \right) - |\theta_j|^{2t} \left(q_j -  \frac{C}{\Cbar^{1/t}} \right)^2 \leq c \big|\theta_j\big|^{2t},\label{eq:variance_I_ellt>2}
    \end{align}
    where the constant $c$ can be made arbitrarily small by choosing $\Cbar$ large enough. 
\end{itemize}

    Now, combining equations~\eqref{eq:variance_S-I_ellt}, \eqref{eq:variance_-S_ellt} and \eqref{eq:variance_I_ellt_<2} for $t\leq 2$ or \eqref{eq:variance_I_ellt>2} for $t \geq 2$, we get
    \begin{align*}
        \mathbb V T &\leq \sum_{\substack{j>j_* \\ j \in S\setminus I} } C \uptau^2 + C\rho^2 + \sum_{j \in I} c \theta_j^{2t} \leq C \,s \uptau^2 + C\rho^2 + c \|\theta_{I}\|_{2t}^{2t}\\
        & \leq C \rho^2 + c \|\theta_{>j_*}\|_t^{2t} \quad\quad \text{ since $\|\cdot\|_{2t} \leq \|\cdot \|_t$}\\
        & \leq \frac{C}{4\Cbar}\left\|\theta_{>j_*}\right\|_t^t + c \|\theta_{>j_*}\|_t^{2t} \quad \text{ by assumption}\\
        & \leq 2c \|\theta_{>j_*}\|_t^{2t} , \quad\quad\quad \text{ provided that $\Cbar$ is large enough}\\
        & \leq 8c \, \mathbb E^2\! \left(T_{sparse}\right).
    \end{align*}
\end{enumerate}
\end{proof}

\begin{lemma}\label{lem:gaussian_tail_moments_ellt}
Let $Y \sim \mathcal{N}(0,1)$ and let $ \alpha \geq 1 $ and $x>0$ such that $x^2 \geq 2(\alpha-1) $. 
Then we have $$\mathbb E \big[ |Y|^\alpha \mathbb 1_{|Y|\geq x}\big] \leq 4 x^{\alpha -1} e^{-x^2/2}.$$
\end{lemma}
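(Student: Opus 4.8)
The plan is to reduce the expectation to a one‑dimensional Gaussian tail integral and control it with a single integration by parts, using the hypothesis $x^2\geq 2(\alpha-1)$ to absorb the leftover term. First I would write, by symmetry of the standard normal density,
$$\mathbb E\big[|Y|^\alpha \mathbb 1_{|Y|\geq x}\big]=\frac{2}{\sqrt{2\pi}}\int_x^\infty y^\alpha e^{-y^2/2}\,dy=:\frac{2}{\sqrt{2\pi}}\,I,$$
and record that $I<\infty$ (the Gaussian integral converges at infinity); this finiteness is what will later let me rearrange an inequality.

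Next I would integrate by parts in $I$, splitting $y^\alpha e^{-y^2/2}=y^{\alpha-1}\cdot\big(y e^{-y^2/2}\big)$ and using the antiderivative $-e^{-y^2/2}$ of the second factor:
$$I=\Big[-y^{\alpha-1}e^{-y^2/2}\Big]_x^\infty+(\alpha-1)\int_x^\infty y^{\alpha-2}e^{-y^2/2}\,dy=x^{\alpha-1}e^{-x^2/2}+(\alpha-1)\int_x^\infty y^{\alpha-2}e^{-y^2/2}\,dy,$$
the boundary term at $+\infty$ vanishing. On the range $y\geq x>0$ one has the elementary bound $y^{\alpha-2}=y^\alpha y^{-2}\leq y^\alpha x^{-2}$, valid whether or not $\alpha-2$ is nonnegative, so the last integral is at most $x^{-2}I$, whence
$$I\leq x^{\alpha-1}e^{-x^2/2}+\frac{\alpha-1}{x^2}\,I.$$

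Finally I would invoke $x^2\geq 2(\alpha-1)$, which gives $\tfrac{\alpha-1}{x^2}\leq\tfrac12$; subtracting this term (legitimate because $I<\infty$) yields $\tfrac12 I\leq x^{\alpha-1}e^{-x^2/2}$, i.e. $I\leq 2x^{\alpha-1}e^{-x^2/2}$. Substituting back,
$$\mathbb E\big[|Y|^\alpha \mathbb 1_{|Y|\geq x}\big]\leq\frac{4}{\sqrt{2\pi}}\,x^{\alpha-1}e^{-x^2/2}\leq 4\,x^{\alpha-1}e^{-x^2/2}.$$
There is no genuine obstacle in this argument; the only points requiring a touch of care are the finiteness of $I$ (needed to move the remainder to the left-hand side) and the uniform handling of $\alpha\in[1,2)$ and $\alpha\geq2$ through the single bound $y^{\alpha-2}\leq y^\alpha/x^2$ on the tail.
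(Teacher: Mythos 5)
Your proof is correct and follows essentially the same route as the paper's: integrate by parts, bound the remainder integral by $\tfrac12 I$ using $x^2\geq 2(\alpha-1)$, and rearrange. The only (welcome) difference is that you are slightly more careful than the paper, keeping the $\tfrac{2}{\sqrt{2\pi}}$ normalization and explicitly noting the finiteness of $I$ before subtracting.
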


\begin{proof}[Proof of Proposition~\ref{lem:gaussian_tail_moments_ellt}]
By integration by parts, we have 
\begin{align*}
    \int_x^\infty y^\alpha e^{-y^2/2} dy &= x^{\alpha-1} e^{-x^2/2} + (\alpha\!-\!1)\! \int_x^{\infty} \! y^{\alpha-2} e^{-y^2/2} dy\\
    &\leq x^{\alpha-1} e^{-x^2/2} + \frac{1}{2} \int_x^\infty y^\alpha e^{-y^2/2} dy ~~ \text{using that $y^2 \geq \alpha-1$ over $[x,+\infty)$},\\
    \text{so that } \quad 
        \mathbb E \big[ |Y|^\alpha \mathbb 1_{|Y|\geq x}\big] &= 2 \int_x^\infty y^\alpha e^{-y^2/2} dy \leq 4x^{\alpha-1} e^{-x^2/2}.
\end{align*}
\end{proof}

\begin{lemma}\label{lem:fourth_moment_ellt}
Let $j_*$, $\uptau$ and $\rho$ be respectively defined as in \eqref{eq:explicit_separation_ellt_>2}, \eqref{eq:def_nu_>2} and \eqref{eq:def_rho_>2} for $t \geq 2$ and as in \eqref{eq:def_jstar_ellt}, \eqref{eq:def_tau_ellt} and \eqref{eq:def_rho_ellt} for $t \in [1,2]$. 
Assume that for any $j >j_*$, we have $X_j \sim \mathcal{N}(0,\sigma_j^2)$. Then there exists a constant $C>0$ depending only on $t$, such that
$$ \sum_{j>j_*} \mathbb E\left[|X_j|^{2t} \mathbb 1\left\{|X_j|^t > \uptau\right\}\right] \leq C \rho^2.$$
\end{lemma}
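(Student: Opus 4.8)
The plan is to estimate each term via the standard Gaussian tail‑moment bound of Lemma~\ref{lem:gaussian_tail_moments_ellt}, then recognize the resulting sum as a constant multiple of $\sum_{j>j_*}\sigma_j^{2t}e^{-\lambda^2/\sigma_j^2}$, which is in turn controlled by $\nu^{2t}\le\rho^2$ through the equations defining $\nu$. If $\lambda=0$ then $j_*=d$, the sum is empty and there is nothing to prove; so I assume $\lambda>0$, and recall that $\sigma_j<\lambda$ for every $j>j_*$.

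First I would write $X_j=\sigma_j Y_j$ with $Y_j\sim\mathcal N(0,1)$, so that $\{|X_j|^t>\uptau\}=\{|Y_j|>x_j\}$ where $x_j:=\uptau^{1/t}/\sigma_j$. Since $\uptau\ge C_t\lambda^t=(4t)^t\lambda^t$ and $\sigma_j<\lambda$, one has $x_j\ge 4t\,(\lambda/\sigma_j)> 4t$, hence $x_j^2> 16t^2\ge 2(2t-1)$, so Lemma~\ref{lem:gaussian_tail_moments_ellt} applies with $\alpha=2t$ and yields $\mathbb E[\,|Y_j|^{2t}\mathbb 1_{|Y_j|>x_j}\,]\le 4x_j^{2t-1}e^{-x_j^2/2}$. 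An elementary estimate (the map $x\mapsto x^2/4-(2t-1)\log x$ is nonnegative and increasing on $[4t,\infty)$ for $t\ge1$) gives $x_j^{2t-1}e^{-x_j^2/2}\le e^{-x_j^2/4}$, so that
\[
\mathbb E\!\left[|X_j|^{2t}\mathbb 1\{|X_j|^t>\uptau\}\right]=\sigma_j^{2t}\,\mathbb E\!\left[|Y_j|^{2t}\mathbb 1_{|Y_j|>x_j}\right]\le 4\,\sigma_j^{2t}\,e^{-\uptau^{2/t}/(4\sigma_j^2)}.
\]

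Next, since $\uptau^{2/t}\ge(4t)^2\lambda^2\ge 4\lambda^2$, we have $\uptau^{2/t}/(4\sigma_j^2)\ge\lambda^2/\sigma_j^2$, hence $e^{-\uptau^{2/t}/(4\sigma_j^2)}\le e^{-\lambda^2/\sigma_j^2}$, and summing over $j>j_*$,
\[
\sum_{j>j_*}\mathbb E\!\left[|X_j|^{2t}\mathbb 1\{|X_j|^t>\uptau\}\right]\le 4\sum_{j>j_*}\sigma_j^{2t}e^{-\lambda^2/\sigma_j^2}.
\]
It then remains to check $\sum_{j>j_*}\sigma_j^{2t}e^{-\lambda^2/\sigma_j^2}\le\nu^{2t}\le\rho^2$. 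For $t\ge2$ this is immediate from~\eqref{eq:implicit_separation_ellt_>2}, which gives $\nu^{2t}=\sum_{j=1}^d\sigma_j^{2t}e^{-\lambda^2/\sigma_j^2}\ge\sum_{j>j_*}\sigma_j^{2t}e^{-\lambda^2/\sigma_j^2}$. For $t\in[1,2]$, evaluating~\eqref{eq:def_nu_ellt} at $x=\lambda$ (so $\overline\nu(\lambda)=\nu$ and $j_*(\lambda)=j_*$) and retaining only the last sum gives $\sum_{j>j_*}\frac{\sigma_j^{2t}}{\nu^{2t}}e^{-\lambda^2/\sigma_j^2+1}\le\Cnu$, i.e. $\sum_{j>j_*}\sigma_j^{2t}e^{-\lambda^2/\sigma_j^2}\le e^{-1}\Cnu\,\nu^{2t}\le\nu^{2t}$ (recall $\Cnu\le1$). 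In both cases $\nu^{2t}\le(\lambda^ts+\nu^t)^2=\rho^2$, so the claim holds with $C=4$. The argument is essentially a routine application of Lemma~\ref{lem:gaussian_tail_moments_ellt}; the only delicate point is that $\nu$ is defined differently in the two regimes $t\ge2$ and $t\in[1,2]$, so the last step must invoke the appropriate defining relation — \eqref{eq:implicit_separation_ellt_>2} in the former case and \eqref{eq:def_nu_ellt} in the latter — while the crucial quantitative input everywhere is the built‑in lower bound $\uptau\gtrsim\lambda^t$ coming from $C_t=(4t)^t$, which forces $x_j$ to be large.
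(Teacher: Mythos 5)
Your proof is correct, and it reaches the conclusion by a genuinely simpler route than the paper. Both arguments begin identically by rescaling to a standard Gaussian and invoking Lemma~\ref{lem:gaussian_tail_moments_ellt} with $\alpha=2t$, but the continuations diverge. The paper keeps the factor $v_j^{2t-1}e^{-v_j^2/2}$ intact and delegates the remaining work to a separate result, Lemma~\ref{lem:relation_tau_rho}, whose proof splits $v_j^2$ into contributions from \emph{both} summands of $\uptau=C_t\lambda^t+\nu^t/s$ and then performs a delicate chain of H\"older-type manipulations culminating in the bound $C\,\uptau^2 s^2\le C\rho^2$. You instead absorb the polynomial prefactor into the exponential once and for all via $x_j^{2t-1}e^{-x_j^2/2}\le e^{-x_j^2/4}$ on $x_j\ge 4t$, use only the $\lambda^t$ part of $\uptau$ to deduce $e^{-x_j^2/4}\le e^{-\lambda^2/\sigma_j^2}$, and then read off $\sum_{j>j_*}\sigma_j^{2t}e^{-\lambda^2/\sigma_j^2}\le\nu^{2t}$ directly from the defining relation for $\nu$ (equation~\eqref{eq:implicit_separation_ellt_>2} when $t\ge2$ and the sparse part of~\eqref{eq:def_nu_ellt} at $x=\lambda$ when $t\in[1,2]$), finishing with $\nu^{2t}\le\rho^2$. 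Your route is shorter, gives an explicit constant, and makes the role of $C_t=(4t)^t$ transparent: its only job is to push $x_j$ above the threshold where $x^{2t-1}e^{-x^2/4}\le 1$. The trade-off is that the paper's Lemma~\ref{lem:relation_tau_rho} is reused verbatim in the $L^\infty$ upper bound (Appendix~\ref{subapp:UB_Linfty}), so the authors' route amortizes that intermediate lemma across two applications, whereas your streamlined argument is specific to the present statement.
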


\begin{proof}[Proof of Lemma~\ref{lem:fourth_moment_ellt}]
We apply Lemma~\ref{lem:gaussian_tail_moments_ellt} with $\alpha = 2t$ and $x = C_t^{1/t}\lambda / \sigma_j \geq C_t^{1/t}$, which satisfies $x^2 \geq 2(\alpha-1)$. 
\begin{align*}
    & ~ 
        \sum_{j>j_*} \mathbb E
        \left[|X_j|^{2t} \mathbb 1\left\{|X_j|^t > \uptau\right\}\right] 
        = \sum_{j>j_*} \sigma_j^{2t} \, \mathbb E \left[|Y|^{2t} \, \mathbb 1 \left\{|Y|^t >\uptau/\sigma_j^t\right\}\right] ~~ \text{ where } Y \sim \mathcal{N}(0,1)\\
    \leq & 
        ~\sum_{j>j_*} \sigma_j^{2t} \cdot 4 v_j^{2t-1}e^{-v_j^2/2} = 4 \sum_{j>j_*} \uptau^{2-1/t} \sigma_j e^{-v_j^2/2} \quad \text{ by Lemma~\ref{lem:gaussian_tail_moments_ellt}, where $v_j = \frac{\uptau^{1/t}}{\sigma_j}$}\\
    \leq & C \rho^2 \quad \text{ by Lemma~\ref{lem:relation_tau_rho}}.
\end{align*}
\end{proof}

\begin{lemma}\label{lem:relation_tau_rho}
    Let $j_*$, $\uptau$ and $\rho$ be respectively defined as in \eqref{eq:explicit_separation_ellt_>2}, \eqref{eq:def_nu_>2} and \eqref{eq:def_rho_>2} for $t \geq 2$ and as in \eqref{eq:def_jstar_ellt}, \eqref{eq:def_tau_ellt} and \eqref{eq:def_rho_ellt} for $t \in [1,2]$. 
    Then there exists a constant $C>0$ depending only on $t$, such that
    $$\sum_{j>j_*} \uptau^{2-1/t} \sigma_j e^{-v_j^2/2} \leq C\rho^2 \quad\quad \text{ where  $v_j = \uptau^{1/t}/\sigma_j$} .$$
\end{lemma}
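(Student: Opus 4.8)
The plan is to reduce the bound to the defining relation for $\nu$ after rewriting the prefactor. The key opening observation is purely algebraic: since $v_j = \uptau^{1/t}/\sigma_j$, we have $\sigma_j = \uptau^{1/t}/v_j$, hence $\sigma_j^{2t} = \uptau^{2} v_j^{-2t}$ and therefore $\uptau^{2-1/t}\sigma_j = \sigma_j^{2t}\,v_j^{2t-1}$. Consequently the sum to be controlled equals $\sum_{j>j_*}\sigma_j^{2t}\,v_j^{2t-1}e^{-v_j^2/2}$, so it now suffices to show that the Gaussian-tail factor $v_j^{2t-1}e^{-v_j^2/2}$ is at most a $t$-dependent constant times $e^{-\lambda^2/\sigma_j^2}$.

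For $j>j_*$ we have $\sigma_j<\lambda$ by definition of $j_*$, so $u_j:=\lambda^2/\sigma_j^2>1$; moreover $\uptau\ge C_t\lambda^t$ with $C_t=(4t)^t$ gives $A:=\uptau^{2/t}/\lambda^2\ge C_t^{2/t}=16t^2\ge16$, and $v_j^2=Au_j$. I would then split the exponent: since $u_j=v_j^2/A\le v_j^2/16$, we get $v_j^2/2-u_j\ge 7v_j^2/16$, whence $e^{-v_j^2/2}=e^{-u_j}e^{-(v_j^2/2-u_j)}\le e^{-u_j}e^{-7v_j^2/16}$ and therefore $v_j^{2t-1}e^{-v_j^2/2}\le \big(\sup_{w>0}w^{(2t-1)/2}e^{-7w/16}\big)\,e^{-u_j}=:C_t'\,e^{-\lambda^2/\sigma_j^2}$, the supremum being finite and depending only on $t$ because $(2t-1)/2\ge 1/2$ and $7/16>0$.

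Summing over $j>j_*$ gives $\sum_{j>j_*}\uptau^{2-1/t}\sigma_j e^{-v_j^2/2}\le C_t'\sum_{j>j_*}\sigma_j^{2t}e^{-\lambda^2/\sigma_j^2}$, and I would finish by invoking the definition of $\nu$. For $t\ge2$, $\sum_{j=1}^d\sigma_j^{2t}e^{-\lambda^2/\sigma_j^2}=\nu^{2t}$ by~\eqref{eq:implicit_separation_ellt_>2}; for $t\in[1,2]$, the quantity $\sum_{j>j_*}\tfrac{\sigma_j^{2t}}{\nu^{2t}}e^{-\lambda^2/\sigma_j^2+1}$ is one of the nonnegative terms whose sum equals $\Cnu$ in~\eqref{eq:def_nu_ellt}, so $\sum_{j>j_*}\sigma_j^{2t}e^{-\lambda^2/\sigma_j^2}\le \Cnu e^{-1}\nu^{2t}\le\nu^{2t}$. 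In either regime $\sum_{j>j_*}\sigma_j^{2t}e^{-\lambda^2/\sigma_j^2}\le\nu^{2t}\le(\lambda^t s+\nu^t)^2=\rho^2$, which yields the claim with $C=C_t'$.

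The argument is short and I do not expect a genuine obstacle; the only points needing care are (i) getting the identity $\uptau^{2-1/t}\sigma_j=\sigma_j^{2t}v_j^{2t-1}$ correct, which is precisely what turns the dangerous $\uptau$-prefactor into a harmless Gaussian-tail polynomial in $v_j$, and (ii) keeping track of the fact that $t\ge2$ and $t\in[1,2]$ use slightly different defining equations for $\nu$, though both immediately deliver $\sum_{j>j_*}\sigma_j^{2t}e^{-\lambda^2/\sigma_j^2}\lesssim\nu^{2t}$.
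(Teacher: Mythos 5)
Your proof is correct, and it takes a genuinely different — and, I'd say, more direct — route than the paper's. The paper's proof first uses $v_j^2 \geq 2\lambda^2/\sigma_j^2 + \nu^2/(2s^{2/t}\sigma_j^2)$ to peel off $e^{-\lambda^2/\sigma_j^2}$, then trades the residual factor $e^{-\nu^2/(4s^{2/t}\sigma_j^2)}$ for the polynomial $(s^{1/t}\sigma_j/\nu)^{t-1}$ (via $e^{-x^2/4}\lesssim x^{1-t}$), which upgrades $\sigma_j$ only to $\sigma_j^{t}$. That route therefore has to invoke the $\lambda$-defining relation $\sum_{j>j_*}\sigma_j^{t}e^{-\lambda^2/\sigma_j^2}\lesssim s\nu^t$, and then chain constants through $\uptau s \lesssim \rho$ to reach $\rho^2$. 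Your opening identity $\uptau^{2-1/t}\sigma_j = \sigma_j^{2t}v_j^{2t-1}$ shortcuts all of this: the prefactor becomes $\sigma_j^{2t}$ outright, so after absorbing $v_j^{2t-1}$ into the tail via the exponent split (needing only $A\geq 16$, for which $C_t = (4t)^t$ is ample), the only structural input required is $\sum_{j>j_*}\sigma_j^{2t}e^{-\lambda^2/\sigma_j^2}\leq\nu^{2t}$, which is immediate from the definition of $\nu$ in both regimes, and $\nu^{2t}\leq\rho^2$ is trivial. In short, your version uses only the $\nu$-equation while the paper's uses both the $\nu$- and $\lambda$-equations; your constant tracking is also simpler. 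Two cosmetic remarks: the observation $u_j>1$ for $j>j_*$ is not actually used (only $u_j\geq 0$ is needed once $A\geq 16$); and it is worth flagging that the displayed exponent in the paper's proof has a sign typo ($+\nu^2/(4s^{2/t}\sigma_j^2)$ should be $-$), which your approach sidesteps entirely.
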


\begin{proof}
Recalling that $C_t = (4t)^{t} \geq 2^t$, we have
\begin{align}
    v_j^2 = \frac{\left(C_t \lambda^t + \nu^t/s\right)^{2/t}}{\sigma_j^2} \geq \max\left(C_t^{2/t} \frac{\lambda^2}{\sigma_j^2}, \frac{\nu^2}{s^{2/t} \sigma_j^2}\right) \geq \frac{1}{2} \left[C_t^{2/t} \frac{\lambda^2}{\sigma_j^2} + \frac{\nu^2}{s^{2/t} \sigma_j^2}\right] \geq \frac{2\lambda^2}{\sigma_j^2} + \frac{\nu^2}{2s^{2/t} \sigma_j^2}.\label{eq:LB_vj}
\end{align}
    We have
    \begin{align*}
        &\sum_{j>j_*} \uptau^{2-1/t} \sigma_j e^{-v_j^2/2} \leq \sum_{j>j_*} \uptau^{2-1/t} \sigma_j \exp\left(-\frac{\lambda^2}{\sigma_j^2} + \frac{\nu^2}{4s^{2/t} \sigma_j^2}\right) \\
    \leq & 
        \uptau^{2-1/t} \sum_{j>j_*} \sigma_j \exp\left(-\frac{\lambda^2}{\sigma_j^2}\right) \left(\frac{\sigma_j s^{1/t}}{\nu}\right)^{t-1} \quad \text{ using that } e^{-x^2/4} \leq C x^{1-t} \text{ for some } C = C(t)\\
    \leq & 
        C \uptau^{2-1/t} \frac{s^{1-1/t}}{\nu^{t-1}} \sum_{j>j_*}  \sigma_j^t \exp\left(-\frac{\lambda^2}{\sigma_j^2}\right) \leq C \uptau^{2-1/t} \frac{s^{1-1/t}}{\nu^{t-1}} s\nu^t \quad \text{ by equations~\eqref{eq:beta_t>2}, \eqref{eq:implicit_separation_ellt_>2} and \eqref{eq:def_f_ellt},~\eqref{eq:def_lambda_ellt}}\\
    = & 
        C (\uptau s)^2 \frac{\nu}{(\uptau s)^{1/t}} \leq C \uptau^2 s^2 \leq C\rho^2.
    \end{align*}
\end{proof}

\begin{lemma}\label{lem:prob_Xj2>2lambda_ellt}
Assume that, for some $j \in \{1,\dots,d\}$ and that for some large enough constant $C\geq 1$, and some arbitrary real number $\tau>0$ satisfying $\sigma_j^t \leq \uptau$, we have $|\theta_j|^t \geq C^t\uptau$ and let $X_j \sim \mathcal{N}(\theta_j, \sigma_j^2)$. Then 
$$ \mathbb P \left(|X_j|^t \leq \uptau \right) \leq e^{-(C-1)^2/2}.$$
\end{lemma}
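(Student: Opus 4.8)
The plan is to reduce the event $\{|X_j|^t \le \uptau\}$ to a standard Gaussian deviation event and then invoke the elementary Gaussian tail bound. First I would rephrase the hypotheses in terms of $\uptau^{1/t}$: the condition $\sigma_j^t \le \uptau$ reads $\sigma_j \le \uptau^{1/t}$, the condition $|\theta_j|^t \ge C^t \uptau$ reads $|\theta_j| \ge C\uptau^{1/t}$, and the event $\{|X_j|^t \le \uptau\}$ is exactly $\{|X_j| \le \uptau^{1/t}\}$.

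Next, on the event $\{|X_j| \le \uptau^{1/t}\}$ the triangle inequality gives
\[
|X_j - \theta_j| \ge |\theta_j| - |X_j| \ge C\uptau^{1/t} - \uptau^{1/t} = (C-1)\uptau^{1/t} \ge (C-1)\sigma_j,
\]
where the last step uses $\uptau^{1/t} \ge \sigma_j$ together with $C \ge 1$. Hence
\[
\mathbb P\big(|X_j|^t \le \uptau\big) \le \mathbb P\big(|X_j - \theta_j| \ge (C-1)\sigma_j\big) = \mathbb P\big(|Z| \ge C-1\big),
\]
where $Z = (X_j - \theta_j)/\sigma_j \sim \mathcal N(0,1)$.

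Finally I would apply the standard sub-Gaussian tail estimate $\mathbb P(Z \ge u) \le \tfrac12 e^{-u^2/2}$ for $u \ge 0$, so that $\mathbb P(|Z| \ge u) \le e^{-u^2/2}$; taking $u = C-1 \ge 0$ yields $\mathbb P(|X_j|^t \le \uptau) \le e^{-(C-1)^2/2}$, as claimed. There is no real obstacle here: the only points worth flagging are that $C \ge 1$ is needed for the triangle-inequality step to produce a nonnegative lower bound and for the tail bound to be applied with a valid argument, and that the normalization by $\sigma_j$ is what turns the heteroscedastic deviation into a standard normal one.
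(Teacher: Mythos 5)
Your proof is correct and follows essentially the same route as the paper: both reduce the event to a Gaussian deviation of size at least $(C-1)$ using $|\theta_j|\ge C\uptau^{1/t}\ge C\sigma_j$ and $\uptau^{1/t}\ge\sigma_j$, then apply a standard Gaussian tail bound. The only cosmetic difference is that the paper uses a one-sided bound on $\mathbb P(\mathcal N(0,1)\le(\uptau^{1/t}-\theta_j)/\sigma_j)$ after assuming $\theta_j\ge 0$ WLOG, whereas you phrase it via the two-sided event $\{|X_j-\theta_j|\ge(C-1)\sigma_j\}$, which is an equally valid and slightly more symmetric presentation.
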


\begin{proof}[Proof of Lemma~\ref{lem:prob_Xj2>2lambda_ellt}]
Assume without loss of generality that $\theta_j \geq 0$. We have
\begin{align*}
     \mathbb P \left(|X_j|^t \leq \uptau \right) &= \mathbb P \left(\bigg|\mathcal{N}\bigg(\frac{\theta_j}{\sigma_j}, 1\bigg) \bigg|^t \leq \frac{\uptau}{\sigma_j^t}\right) \leq \mathbb P \left(\mathcal{N}\bigg(\frac{\theta_j}{\sigma_j}, 1\bigg) \leq \frac{\uptau^{1/t}}{\sigma_j}\right)\\
     & = \mathbb P \left(\mathcal{N}\left(0, 1\right) \leq \frac{\uptau^{1/t} - \theta_j}{\sigma_j}\right) \leq \exp\left({-\frac{(\uptau^{1/t} - \theta_j)^2}{2\sigma_j^2}}\right) \leq e^{-(C-1)^2/2},
\end{align*}
using the relations $\theta_j \geq C^t \uptau \geq C^t \sigma_j$.
\end{proof}

\vspace{2mm}

\begin{lemma}\label{lem:alpha_j_leq_2lambda_ellt}
Let $0<\sigma_j^t \leq \uptau$ be any two positive numbers and let $X_j \sim \mathcal{N}(0,\sigma_j^2)$. Then for some absolute constant $\CL{\ref{lem:alpha_j_leq_2lambda_ellt}}>0$
$$ \alpha_j := \mathbb E \left[|X_j|^t \,|\, |X_j|^t > \uptau\right] \leq \CL{\ref{lem:alpha_j_leq_2lambda_ellt}} \uptau.$$
\end{lemma}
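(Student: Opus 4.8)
The plan is to standardize the Gaussian: set $Y = X_j/\sigma_j \sim \mathcal N(0,1)$ and $x = \uptau^{1/t}/\sigma_j$, noting that $x \ge 1$ since $\sigma_j^t \le \uptau$. Then $\{|X_j|^t > \uptau\} = \{|Y| > x\}$, so $\alpha_j = \sigma_j^t\, \mathbb E\big[|Y|^t \,\big|\, |Y| > x\big]$, while $\uptau = \sigma_j^t x^t$. Hence it suffices to establish the scale-free estimate $\mathbb E\big[|Y|^t \,\big|\, |Y| > x\big] \le \CL{\ref{lem:alpha_j_leq_2lambda_ellt}}\, x^t$ for every $x \ge 1$, with a constant depending only on $t$, and then multiply back by $\sigma_j^t$.

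To bound the numerator $\mathbb E\big[|Y|^t \mathbb 1_{|Y|>x}\big] = \sqrt{2/\pi}\int_x^\infty y^t e^{-y^2/2}\,dy$, I would substitute $y = x+s$ and use $(x+s)^2 \ge x^2 + 2xs$ to factor out $e^{-x^2/2}$, giving $\int_x^\infty y^t e^{-y^2/2}\,dy \le e^{-x^2/2}\int_0^\infty (x+s)^t e^{-xs}\,ds$; the change of variable $u = xs$ together with $(1+u/x^2)^t \le (1+u)^t$ (valid since $x \ge 1$) then yields $\int_x^\infty y^t e^{-y^2/2}\,dy \le M_t\, x^{t-1} e^{-x^2/2}$, where $M_t := \int_0^\infty (1+u)^t e^{-u}\,du < \infty$. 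For the denominator I would invoke the standard Mills-ratio bound $\int_x^\infty e^{-y^2/2}\,dy \ge \frac{x}{1+x^2} e^{-x^2/2} \ge \frac{1}{2x} e^{-x^2/2}$ for $x \ge 1$, so that $\mathbb P(|Y|>x) \ge \sqrt{2/\pi}\,\tfrac1{2x} e^{-x^2/2}$. Dividing the two bounds, the exponentials and the $\sqrt{2/\pi}$ prefactors cancel, leaving $\mathbb E\big[|Y|^t \,\big|\, |Y|>x\big] \le 2M_t\, x^t$; multiplying by $\sigma_j^t$ gives $\alpha_j \le 2M_t\, \sigma_j^t x^t = 2M_t\, \uptau$, which is the claim with $\CL{\ref{lem:alpha_j_leq_2lambda_ellt}} = 2M_t$. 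As an alternative one could bound the numerator using Lemma~\ref{lem:gaussian_tail_moments_ellt} in the range $x^2 \ge 2(t-1)$, and for $1 \le x^2 < 2(t-1)$ use the crude estimate $\mathbb E\big[|Y|^t \,\big|\, |Y|>x\big] \le \mathbb E|Y|^t/\mathbb P\big(|Y|>\sqrt{2(t-1)}\big) \le C(t)\, x^t$.

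There is no real obstacle here: the result is a routine Gaussian tail computation, the only mildly nonobvious step being the shift $y = x+s$, which separates the dominant mass $e^{-x^2/2}$ from a polynomially bounded integral. The one point worth flagging is that the constant must genuinely depend on $t$ — since $\mathbb E[|Y|^t \mid |Y|>1]$ grows with $t$ while $x^t = 1$ at $x=1$, no uniformly absolute constant can work — but this is immaterial since $t$ is a fixed parameter throughout the paper.
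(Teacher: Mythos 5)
Your argument is correct and follows the same high-level route as the paper's: standardize, bound the truncated moment and the tail probability separately, and take the ratio so the $e^{-x^2/2}$ factors cancel. The difference is in the numerator bound. The paper cites Lemma~\ref{lem:gaussian_tail_moments_ellt}, which requires $x^2 \geq 2(t-1)$, whereas the statement of the present lemma only guarantees $x = \uptau^{1/t}/\sigma_j \geq 1$; your shift substitution $y=x+s$, using $(x+s)^2 \geq x^2+2xs$ and then reducing to $M_t = \int_0^\infty(1+u)^te^{-u}\,du$, gives the same bound of order $x^{t-1}e^{-x^2/2}$ uniformly over all $x\geq 1$. This is a small but real improvement: as written, the paper's invocation of Lemma~\ref{lem:gaussian_tail_moments_ellt} does not cover the range $1 \leq x < \sqrt{2(t-1)}$ (nonempty once $t>3/2$), and your alternative patch --- Lemma~\ref{lem:gaussian_tail_moments_ellt} for $x^2\geq 2(t-1)$ plus a crude $\mathbb E|Y|^t/\mathbb P\big(|Y|>\sqrt{2(t-1)}\big)$ bound on $[1,\sqrt{2(t-1)})$ --- is exactly what is needed to repair that citation. (In the paper's actual applications $\uptau \geq (4t)^t\lambda^t$ and $\sigma_j<\lambda$, so $x\geq 4t$ always holds and there is no downstream issue; your version simply makes the lemma correct as stated.) Your closing remark is also right: the constant necessarily depends on $t$, so ``absolute constant'' in the statement should be read as ``depending only on $t$,'' consistent with the paper's convention that constants may depend on $\eta$ and $t$.
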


\begin{proof}[Proof of Lemma~\ref{lem:alpha_j_leq_2lambda_ellt}]
Letting $Z_j \sim \mathcal{N}(0,1)$, we have by Lemma~\ref{lem:gaussian_tail_moments_ellt}, and Lemma 4 from~\cite{collier2017minimax}:
\begin{align*}
    \alpha_j &= \mathbb E \Big[|X_j|^t \,\Big|\, |X_j|^t > \uptau\Big] = \frac{ \mathbb E \left[|X_j|^t \mathbb 1\left\{ |X_j|^t > \uptau\right\}\right]}{\mathbb P \left( |X_j|^t > \uptau\right)} = \sigma_j^t \frac{ \mathbb E \left[|Z_j|^t \mathbb 1\left\{ |Z_j|^t > \uptau/\sigma_j^t \right\}\right]}{\mathbb P \left( |Z_j|^t > \uptau/\sigma_j^t\right)}\\
    & \leq C\sigma_j^t \left(\frac{\uptau^{1/t}}{\sigma_j}\right)^{t-1} \frac{ \exp\left(-\uptau^{2/t}/2\sigma_j^2\right)}{\frac{\sigma_j}{\uptau^{1/t}} \exp\left(-\uptau^{2/t}/2\sigma_j^2\right)} =: \CL{\ref{lem:alpha_j_leq_2lambda_ellt}} \uptau.
\end{align*}
\end{proof}

\section{Proof of Theorem \ref{th:rate_ellt_>2}}\label{sec:proof_theorem_ellt>2}
We need the following lemma before we begin the proof of the theorem.
\begin{lemma}\label{lem:monotonicity_t>2}
The following function $\phi: \R \longrightarrow \mathbb R$ is continuous and strictly decreasing over $\mathbb R$:
$$ \phi(\beta) = \dfrac{\sum_{j=1}^d \sigma_j^2 \exp\left(-\beta/\sigma_j^2\right)}{\sqrt{\sum_{j=1}^d \sigma_j^4 \exp\left(-\beta/\sigma_j^2\right)}}.$$
\end{lemma}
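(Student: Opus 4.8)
\textbf{Proof plan for Lemma~\ref{lem:monotonicity_t>2}.}

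The plan is to prove the two claims — continuity and strict monotonicity — by elementary calculus, viewing $\phi$ as a ratio of two smooth positive functions of $\beta$. First I would set $A(\beta) = \sum_{j=1}^d \sigma_j^2 e^{-\beta/\sigma_j^2}$ and $B(\beta) = \sum_{j=1}^d \sigma_j^4 e^{-\beta/\sigma_j^2}$, so that $\phi = A/\sqrt{B}$. Both $A$ and $B$ are finite sums of exponentials, hence $C^\infty$ on $\R$ and strictly positive (since all $\sigma_j > 0$), which immediately gives continuity (indeed smoothness) of $\phi$. For the limiting behavior claimed in the text after the lemma (it is used to guarantee existence/uniqueness of $\beta$ in~\eqref{eq:beta_t>2} for $t=2$), one notes that as $\beta \to -\infty$ the term with the largest $\sigma_j$, i.e. $j=1$, dominates both sums, so $\phi(\beta) \to \sigma_1^2/\sqrt{\sigma_1^4} = 1 \cdot (\text{number of maximal indices})^{1/2}$ — more carefully, if $\sigma_1 = \dots = \sigma_m > \sigma_{m+1}$, then $A(\beta) \sim m\sigma_1^2 e^{-\beta/\sigma_1^2}$ and $B(\beta)\sim m\sigma_1^4 e^{-\beta/\sigma_1^2}$, so $\phi(\beta) \to \sqrt{m} \geq 1 \to +\infty$ is not quite right; rather one uses $\beta \to +\infty$ for the limit $0$ and a lower bound blowing up as $\beta\to-\infty$. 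Since the excerpt only asks me to prove continuity and strict decrease, I will focus on those and remark that the limits follow by the same dominated-term analysis.

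The substantive part is strict monotonicity. I would compute the derivative: $\phi'(\beta) = \frac{A'(\beta)\sqrt{B(\beta)} - A(\beta)\frac{B'(\beta)}{2\sqrt{B(\beta)}}}{B(\beta)} = \frac{2A'(\beta)B(\beta) - A(\beta)B'(\beta)}{2B(\beta)^{3/2}}$, where $A'(\beta) = -\sum_j e^{-\beta/\sigma_j^2}$ and $B'(\beta) = -\sum_j \sigma_j^2 e^{-\beta/\sigma_j^2}$. Note $B'(\beta) = -A(\beta)$, which is a convenient simplification. The sign of $\phi'$ is therefore the sign of $2A'(\beta)B(\beta) - A(\beta)B'(\beta) = -2B(\beta)\sum_j e^{-\beta/\sigma_j^2} + A(\beta)^2$. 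Writing $w_j = e^{-\beta/\sigma_j^2} > 0$, this equals $\Big(\sum_j \sigma_j^2 w_j\Big)^2 - 2\Big(\sum_j \sigma_j^4 w_j\Big)\Big(\sum_j w_j\Big)$. I would then show this is strictly negative by Cauchy–Schwarz: $\Big(\sum_j \sigma_j^2 w_j\Big)^2 = \Big(\sum_j (\sigma_j^2 \sqrt{w_j})\sqrt{w_j}\Big)^2 \leq \Big(\sum_j \sigma_j^4 w_j\Big)\Big(\sum_j w_j\Big)$, and since the factor $2 > 1$ and both factors on the right are strictly positive, the whole expression is strictly negative. Hence $\phi'(\beta) < 0$ for all $\beta$, proving strict decrease.

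The main obstacle — if there is one — is essentially bookkeeping: making sure the Cauchy–Schwarz step gives a \emph{strict} inequality with the factor of $2$ (it does, automatically, because $\big(\sum \sigma_j^4 w_j\big)\big(\sum w_j\big) > 0$ strictly whenever $d \geq 1$ and the $\sigma_j, w_j$ are positive, so $2\cdot(\text{positive}) > 1\cdot(\text{positive}) \geq (\sum\sigma_j^2 w_j)^2$). One should also note the lemma as stated is for the $t=2$ version of $\phi$ (with exponents $2$ and $4$), and the general-$t$ analogue used in~\eqref{eq:beta_t>2} has exponents $t$ and $2t$; the identical argument works there with $B' = -A$ replaced by the analogous relation and Cauchy–Schwarz applied to $\sigma_j^{t/2}\sqrt{w_j}$ and $\sqrt{w_j}$, but since the excerpt's Lemma~\ref{lem:monotonicity_t>2} is stated only for $t=2$ I would prove that case and remark the general case is identical. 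I expect the proof to be short, under half a page.
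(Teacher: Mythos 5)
Your proof is correct and follows essentially the same route as the paper: both compute $\phi'(\beta)$, observe that its sign is that of $\big(\sum_j \sigma_j^2 w_j\big)^2 - 2\big(\sum_j \sigma_j^4 w_j\big)\big(\sum_j w_j\big)$ with $w_j = e^{-\beta/\sigma_j^2}$, and conclude negativity from $\big(\sum_j \sigma_j^2 w_j\big)^2 \leq \big(\sum_j \sigma_j^4 w_j\big)\big(\sum_j w_j\big)$. The paper phrases the last step as Jensen's inequality for a normalized probability measure $\mu = \sum_j (w_j/\sum_k w_k)\,\delta_{\sigma_j}$ (i.e.\ $\mathbb{E}_\mu^2 Y^2 \leq \mathbb{E}_\mu Y^4$), whereas you apply Cauchy--Schwarz directly to the unnormalized weights — a cosmetic difference only.
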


\begin{proof}[Proof of Lemma~\ref{lem:monotonicity_t>2}]
The function $\phi$ is clearly differentiable hence continuous, and
\begin{align*}
    \phi'(\beta) &= \frac{\left(\sum_{j=1}^d \sigma_j^2 \exp\left(-\beta/\sigma_j^2\right) \right)^2 - 2 \left(\sum_{j=1}^d \exp\left(-\beta/\sigma_j^2\right) \right)\left(\sum_{j=1}^d \sigma_j^4 \exp\left(-\beta/\sigma_j^2\right) \right)}{2\left(\sum_{j=1}^d \sigma_j^4 \exp\left(-\beta/\sigma_j^2\right)\right)^{3/2}}
    \\
    &
    = \frac{1}{2}\left(\sum_{j=1}^d \sigma_j^4 \exp\left(-\beta/\sigma_j^2\right)\right)^{-3/2}
    \left(\sum_{j=1}^d \exp\left(-\beta/\sigma_j^2\right) \right)^{-2} \\
    &\times\left\{
        \left(
            \frac{\sum_{j=1}^d \sigma_j^2 \exp\left(-\beta/\sigma_j^2\right)}{\sum_{j=1}^d \exp\left(-\beta/\sigma_j^2\right)}
        \right)^2
        - 
        2 \frac{\sum_{j=1}^d \sigma_j^4 \exp\left(-\beta/\sigma_j^2\right)}{\sum_{j=1}^d \exp\left(-\beta/\sigma_j^2\right)}
    \right\}
    \\
    &
    = \frac{1}{2} \left(\sum_{j=1}^d \sigma_j^4 \exp\left(-\beta/\sigma_j^2\right)\right)^{-3/2}
    \left(\sum_{j=1}^d \exp\left(-\beta/\sigma_j^2\right) \right)^{-2} \left[\mathbb E^2_{Y\sim \mu} Y^2 - 2 \mathbb E_{Y \sim \mu} Y^4\right] <0.
\end{align*}

At the last line, we have defined the probability measure $\mu = \sum_{j=1}^d w_j\delta_{\sigma_j}$ where 
$$w_j = \frac{\exp\left(- \beta/\sigma_j^2\right)}{\sum_{j=1}^d \exp\left(- \beta/\sigma_j^2\right)} $$ 
and where $\delta_x$ denotes the Dirac measure at point $x\in \R$. 
We also used Jensen's inequality combined with the fact that, when $Y\sim \mu$, we have $\mathbb E Y^4 > 0$ as by assumption, $\sigma_j>0,\forall j\in[d]$.
\end{proof}

\begin{lemma}\label{lem:relations_between_contributions_ell>2}
Let $t\geq 2$ and let $\lambda, \nu$ and $j_*$ be defined as in~\eqref{eq:beta_t>2}, \eqref{eq:implicit_separation_ellt_>2} and~\eqref{eq:explicit_separation_ellt_>2}. We let
\begin{align*}
    \left\{\begin{array}{ll}
         s_{sparse} = \dfrac{1}{\nu^t}\sum_{j>j_*} \sigma_j^t \exp\left(-\lambda^2/\sigma_j^2\right),\\[10pt]
         \nu_{sparse}^t = \sqrt{\sum_{j>j_*} \sigma_j^{2t} \exp\left(-\lambda^2/\sigma_j^2\right)},\\[10pt]
         \nu_{dense}^t = \sqrt{\sum_{j\leq j_*} \sigma_j^{2t}}.
    \end{array}\right.
\end{align*}
Up to absolute constants, we have $\nu \asymp \nu_{dense} + \nu_{sparse}$ and $\nu_{sparse}^{2t} \leq \nu^t \lambda^t s_{sparse}$. 

\noindent In particular, it holds that $\nu^t + \lambda^t s \asymp \nu_{dense}^t + \lambda^t s$.
\end{lemma}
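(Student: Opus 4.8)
The plan is to prove the three claims in order, relying on the definitions of $\lambda$, $\nu$, $j_*$ from~\eqref{eq:beta_t>2}, \eqref{eq:implicit_separation_ellt_>2}, \eqref{eq:explicit_separation_ellt_>2} and on the partition of $[d]$ into the dense block $\{j \leq j_*\}$ (where $\sigma_j \geq \lambda$) and the sparse block $\{j > j_*\}$ (where $\sigma_j < \lambda$). First I would establish $\nu \asymp \nu_{dense} + \nu_{sparse}$. Recall $\nu^t = \big(\sum_{j=1}^d \sigma_j^{2t} e^{-\lambda^2/\sigma_j^2}\big)^{1/2}$, so $\nu^{2t} = \sum_{j \leq j_*}\sigma_j^{2t}e^{-\lambda^2/\sigma_j^2} + \sum_{j>j_*}\sigma_j^{2t}e^{-\lambda^2/\sigma_j^2} = \nu^{2t}_{\mathrm{dense,exp}} + \nu_{sparse}^{2t}$, where I write $\nu^{2t}_{\mathrm{dense,exp}} := \sum_{j\leq j_*}\sigma_j^{2t}e^{-\lambda^2/\sigma_j^2}$. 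Since $0 < e^{-\lambda^2/\sigma_j^2}\leq 1$, we immediately get $\nu^{2t}_{\mathrm{dense,exp}}\leq \nu_{dense}^{2t}$. For the reverse direction, on the dense block $\sigma_j \geq \lambda$, hence $\lambda^2/\sigma_j^2 \leq 1$ and $e^{-\lambda^2/\sigma_j^2}\geq e^{-1}$, giving $\nu^{2t}_{\mathrm{dense,exp}}\geq e^{-1}\nu_{dense}^{2t}$. Therefore $\nu^{2t}_{\mathrm{dense,exp}}\asymp \nu_{dense}^{2t}$, so $\nu^{2t}\asymp \nu_{dense}^{2t}+\nu_{sparse}^{2t}$, and taking $2t$-th roots (using $(x+y)^{1/2t}\asymp x^{1/2t}+y^{1/2t}$ up to constants depending only on $t$, which are absolute here since only $t\geq 2$ is relevant) yields $\nu \asymp \nu_{dense}+\nu_{sparse}$.

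Next I would prove $\nu_{sparse}^{2t}\leq \nu^t \lambda^t s_{sparse}$. On the sparse block $\sigma_j < \lambda$, so $\sigma_j^t < \lambda^t$, whence
\begin{align*}
\nu_{sparse}^{2t} = \sum_{j>j_*}\sigma_j^{2t}e^{-\lambda^2/\sigma_j^2} = \sum_{j>j_*}\sigma_j^t \cdot \sigma_j^t e^{-\lambda^2/\sigma_j^2} \leq \lambda^t \sum_{j>j_*}\sigma_j^t e^{-\lambda^2/\sigma_j^2} = \lambda^t \cdot \nu^t s_{sparse},
\end{align*}
by the definition $s_{sparse} = \nu^{-t}\sum_{j>j_*}\sigma_j^t e^{-\lambda^2/\sigma_j^2}$. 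This gives exactly $\nu_{sparse}^{2t}\leq \nu^t\lambda^t s_{sparse}$.

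Finally, for $\nu^t + \lambda^t s \asymp \nu_{dense}^t + \lambda^t s$: by the first part $\nu^t \asymp \nu_{dense}^t + \nu_{sparse}^t$, so it suffices to show $\nu_{sparse}^t \lesssim \nu_{dense}^t + \lambda^t s$, since then $\nu^t + \lambda^t s \lesssim \nu_{dense}^t+\lambda^t s$ while the reverse $\nu_{dense}^t + \lambda^t s \lesssim \nu^t+\lambda^t s$ is trivial from $\nu_{dense}^t\leq \nu^t$ (up to constants). From the second part, $\nu_{sparse}^{2t}\leq \nu^t\lambda^t s_{sparse}$; I would bound $s_{sparse}$ using the defining equation~\eqref{eq:beta_t>2}: the constraint there reads $\sum_j \sigma_j^t e^{-\lambda^2/\sigma_j^2} = \tfrac{s}{2}\sqrt{\sum_j\sigma_j^{2t}e^{-\lambda^2/\sigma_j^2}} = \tfrac{s}{2}\nu^t$, hence $\sum_{j>j_*}\sigma_j^t e^{-\lambda^2/\sigma_j^2}\leq \tfrac{s}{2}\nu^t$, i.e. $s_{sparse}\leq s/2$. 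Therefore $\nu_{sparse}^{2t}\leq \tfrac12 \nu^t\lambda^t s$, so $\nu_{sparse}^t \leq (\tfrac12 \nu^t\lambda^t s)^{1/2}\leq \tfrac12(\nu^t + \lambda^t s)$ by AM--GM, and since $\nu^t\asymp\nu_{dense}^t+\nu_{sparse}^t$ this rearranges (absorbing the $\nu_{sparse}^t$ on the right) to $\nu_{sparse}^t\lesssim \nu_{dense}^t + \lambda^t s$, as desired. The main obstacle, such as it is, is keeping the bookkeeping between the three slightly different ``$\nu$''-type quantities straight and making sure the AM--GM absorption step is carried out with constants that do not blow up; none of the individual estimates is deep.
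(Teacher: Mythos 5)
Your proof is correct, and the first two claims are established essentially exactly as in the paper: on the dense block $\{j\leq j_*\}$ the factor $e^{-\lambda^2/\sigma_j^2}\in[e^{-1},1]$, which gives $\nu^{2t}\asymp\nu_{dense}^{2t}+\nu_{sparse}^{2t}$, and on the sparse block $\sigma_j^t\leq\lambda^t$ gives $\nu_{sparse}^{2t}\leq\nu^t\lambda^t s_{sparse}$. For the final claim, however, your route diverges from the paper's. The paper uses a short dichotomy: if $\nu_{sparse}\leq\nu_{dense}$ the claim is immediate; otherwise $\nu^t\leq 2\nu_{sparse}^t$, so dividing $\nu_{sparse}^{2t}\leq\nu^t\lambda^t s_{sparse}$ by $\nu_{sparse}^t$ gives $\nu_{sparse}^t\leq 2\lambda^t s_{sparse}\leq 2\lambda^t s$, hence $\nu^t\lesssim\lambda^t s$. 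You instead use the defining equation~\eqref{eq:beta_t>2} to get $s_{sparse}\leq s/2$, then AM--GM to convert $\nu_{sparse}^{2t}\leq\tfrac12\nu^t\lambda^t s$ into $\nu_{sparse}^t\leq\tfrac12(\nu^t+\lambda^t s)$, and finally absorb $\nu_{sparse}^t$ into the left side. Both routes are valid. The paper's dichotomy has the small advantage that it sidesteps the absorption step, which in your version implicitly relies on the fact that the constant in $\nu^t\leq C(\nu_{dense}^t+\nu_{sparse}^t)$ is small enough ($C=1$ suffices here, since $\nu^{2t}\leq\nu_{dense}^{2t}+\nu_{sparse}^{2t}$), so that $\tfrac12 C<1$; you should flag this explicitly rather than hide it in ``$\asymp$''. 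One minor imprecision: the identity $\sum_j\sigma_j^t e^{-\lambda^2/\sigma_j^2}=\tfrac{s}{2}\nu^t$ is only an equality when $\beta\geq0$; if $\beta<0$ then $\lambda=0$ and, by the monotonicity from Lemma~\ref{lem:monotonicity_t>2}, the left-hand side is at most $\tfrac{s}{2}\nu^t$. Since you only need the inequality $s_{sparse}\leq s/2$, this does not break the argument, but the claim as stated should be an inequality.
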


\begin{remark}\label{rem:isotropic}
In the isotropic case $\sigma_j = \sigma, ~ \forall j \in [d]$ and for $t = 2$, we recall that $\lambda_2 $ plays the role of~$\lambda^2$.
Consider the case where $s \leq 2 \sqrt{d/e}$.
Then, we have $j_* = d$ so that $\nu_{dense} = 0$. 
Therefore, it holds that $\nu^t = \nu_{sparse}^t \leq \lambda_2 s_{sparse} \leq \lambda_2 s$, hence the term $\nu_2/s$ never dominates over $\lambda_2 s$.
\end{remark}

\begin{proof}[Proof of Lemma~\ref{lem:relations_between_contributions_ell>2}]
By definition of $j_*$, the following relations are true up to absolute constants:
\begin{align*}
    \nu = \sqrt{\sum_{j=1}^d \sigma_j^{2t} \exp\left(-\lambda^2/\sigma_j^2\right)} 
    &\asymp \sqrt{\sum_{j\leq j_*} \sigma_j^{2t} \exp\left(-\lambda^2/\sigma_j^2\right)} + \sqrt{\sum_{j>j_*} \sigma_j^{2t} \exp\left(-\lambda^2/\sigma_j^2\right)}\\ 
    &\asymp \sqrt{\sum_{j>j_*} \sigma_j^{2t} } + \sqrt{\sum_{j>j_*} \sigma_j^{2t} \exp\left(-\lambda^2/\sigma_j^2\right)}\\
    & = \nu_{dense}^t + \nu_{sparse}^t.
\end{align*}
Moreover, we have
\begin{align}
    \nu_{sparse}^{2t} = \sum_{j>j_*} \sigma_j^{2t} \exp\left(-\lambda^2/\sigma_j^2\right) \leq 
    \lambda^t \sum_{j>j_*} \sigma_j^t \exp\left(-\lambda^2/\sigma_j^2\right) = \nu^t \lambda^t s_{sparse}.\label{eq:relation_sparse_ell2}
\end{align}
We now prove that $\nu^t + \lambda^t s \asymp \nu_{dense}^t + \lambda^t s$. If $\nu_{sparse} \leq \nu_{dense}$, this relation is clear. Otherwise, $\nu^t \leq 2 \nu_{sparse}^t$ so that by~\eqref{eq:relation_sparse_ell2}, we have $\nu_{sparse}^t\leq 2 \lambda^t s_{sparse}$. Therefore, $\nu^t\leq 2 \nu_{sparse}^t \leq 4 \lambda^t s_{sparse} \leq 4 \lambda^t s$, so that $\nu^t + \lambda^t s \asymp \lambda^t s \asymp \nu_{dense}^t + \lambda^t s$. 
\end{proof}

\subsection{Proof of Lower Bounds for $t \geq 2$}\label{subsec:proof_theorem_ellt>2_lower}

\begin{proof}[Proof of Theorem \ref{th:rate_ellt_>2}.\ref{th:rate_ellt_>2_lower}]
First, if $s\leq C$ for some constant $C$ depending only on $\eta$, then by Lemma~\ref{lem:generalities_LB}, we have $\epsilon^*(s,t,\Sigma)^t \geq c(\lambda+\nu)^t \asymp \lambda^t s + \nu^t$. 
From now on, we will assume that $s$ is larger than a sufficiently large constant $c(\eta)$ depending only on $\eta$.
\vspace{1mm}

We denote by $\Pi$ the prior distribution over $\theta$, defined such that $\forall b \in \{0,1\}^d$, $\forall \omega \in \{\pm 1\}^d$: $$\mathbb P_{\Pi} \left(\theta = (b_j \omega_j \gamma_j)_j \right) = \frac{1}{2^d}\Pi_{j=1}^d \pi_j^{b_j} (1-\pi_j)^{1-b_j}. $$ 

Let $\mathbb P_{prior} = \mathbb E_{\theta\sim \Pi}\left[\mathcal{N}(\theta,\Sigma)\right]$ denote the corresponding mixture of normal distributions $\mathcal{N}(\theta,\Sigma)$ where $\theta \sim \Pi$.
Note that if $\beta \geq 0$, then $\sum_{j=1}^d \pi_j = s/2$, 
otherwise, if $\beta<0$, then by monotonicity (see Lemma~\ref{lem:monotonicity_t>2}), we have $\sum_{j=1}^d \pi_j \leq s/2$. 
Our prior has a random sparsity, equal to $\sum_{j=1}^d b_j$. 
Therefore, with high probability, its sparsity is at most $s$.
To justify this, note that
\begin{align*}
    \mathbb E\Bigg[\sum_{j=1}^d b_j\Bigg] = \sum \pi_j \leq s/2,
        \quad
        \text{ and }
        \quad
    \mathbb V \Bigg[\sum_{j=1}^d b_j\Bigg] = \sum_{j=1}^d \pi_j(1-\pi_j) \leq s/2.
\end{align*}

Provided that $s \geq 20/\eta$, we have by Chebyshev's inequality, 
\begin{align}
    \mathbb P\left(\sum_{j=1}^d b_j > s\right) \leq \frac{\mathbb V \left[\sum_{j=1}^d b_j\right]}{(s/2)^2} \leq \frac{2}{s} \leq \frac{\eta}{10}.\label{eq:control_prior_sparsity}
\end{align}

We use Lemma 23 in~\cite{liu2021minimax} to compute $\chi^2\left(\mathbb P_{prior} ~ || ~ \mathbb P_0\right)$. Let $\theta, \theta'$ be two independent random variables with distribution $\Pi$. Then 
\begin{align}
    &1+\chi^2\left(\mathbb P_{prior} ~ || ~ \mathbb P_0\right) = \mathbb E_{\theta, \theta'}\left[ \exp\left(\theta^\top \Sigma^{-1} \theta'\right)\right] = \prod_{j=1}^d \mathbb E_{\theta_j, \theta'_j}\left[ \exp{\left(\theta_j \theta'_j \sigma_j^{-2}\right)}\right]\nonumber\\
    & = \prod_{j=1}^d \mathbb E \left[\exp\left(b_j b'_j \omega_j \omega'_j \gamma_j^2 \sigma_j^{-2}\right)\right]\nonumber\\
    & = \prod_{j=1}^d \left[(1-\pi_j^2) + \pi_j^2 \left(\frac{1}{2}\exp\left(-\frac{\gamma_j^2}{\sigma_j^2}\right) + \frac{1}{2}\exp\left(\frac{\gamma_j^2}{\sigma_j^2}\right) \right)\right]\nonumber\\
    & = \prod_{j=1}^d \left[1 + 2 \pi_j^2 \sinh^2\left(\frac{\gamma_j^2}{2\sigma_j^2}\right) \right] ~\leq~ \exp\left[ \sum_{j=1}^d \pi_j^2 \cdot 2\sinh^2\left(\frac{\gamma_j^2}{2\sigma_j^2}\right)\right]\label{eq:chi2}\\
    & = \exp\left[ \sum_{j=1}^d \pi_j^2 \left[\sqrt{1+c^2\exp\left(2\lambda^2/\sigma_j^2\right)} - 1\right]\right] ~~ \text{ since } 2 \sinh^2 \left(\frac{\arg \sinh v}{2} \right) = \sqrt{1+v^2} -1 \nonumber\\
    & \leq \exp\left\{c \sum_{j=1}^d \pi_j^2  \exp\left(\frac{\lambda^2}{\sigma_j^2} \right)\right\} ~~ \text{ using } \sqrt{1+u}-1 \leq  \sqrt{u}.\label{eq:indist_cond}
\end{align}
There are two cases.
\begin{enumerate}
    \item First case: $\beta \leq 0$ i.e. $\lambda = 0$, then the relation~\eqref{eq:indist_cond} simplifies as:
    \begin{align*}
        1+\chi^2\left(\mathbb P_{prior} ~ || ~ \mathbb P_0\right) \leq \exp\left[c\sum_{j=1}^d \pi_j^2 \right] = \exp (c),
    \end{align*}
    ensuring indistinguishability if $c$ is small enough. 
    Moreover, using that $\arg \sinh u \geq \log(1+u)$, we conclude that the prior's $t$-th power of its $L^t$ norm concentrates on
    \begin{align*}
        \sum_{j=1}^d \pi_j \gamma_j^t &\geq \frac{\sum_{j=1}^d \sigma_j^{2t}  }{\sqrt{\sum_{j=1}^d\sigma_j^{2t}}}\log\left(1+c\right) = \log(1+c) \sqrt{\sum_{j=1}^d\sigma_j^{2t}},
    \end{align*}
    which exactly corresponds to the dense case. 
    Note that, since $\lambda = 0$, this quantity also coincides with $\log (1+c) \sqrt{\sum_{j=1}^d\sigma_j^{2t} e^{-\lambda^2/\sigma_j^2}} + \frac{1}{2} \lambda^t s$.
    \item Second case: $\lambda> 0$. Then the relation~\eqref{eq:indist_cond} simplifies as:
    \begin{align*}
        1+\chi^2\left(\mathbb P_{prior} ~ || ~ \mathbb P_0\right) \leq \exp\left[c \sum_{j=1}^d \pi_j^2  \exp\left(\lambda^2/\sigma_j^2\right)\right]
        &= \exp\left[c\cdot \frac{\sum_{j=1}^d \sigma_j^{2t}\exp\left(-\lambda^2/\sigma_j^2\right)}{\sum_{j=1}^d \sigma_j^{2t} \exp\left(-\lambda^2/\sigma_j^2\right)} \right] = \exp c.
    \end{align*}

Moreover, we have that $\arg \sinh u \geq \log(1+u)$. Note also that, for $x \geq 1$ and $c<1$, by concavity of $x\mapsto x^c$, the function $x^c$ is always below its tangent in $x=1$ so that $x^c \leq 1+c(x-1) \leq 1+cx$. Therefore, we have the relation $\log(1+cx) \geq c\log x$. Moreover, for $x\geq 1$ we also have $\log(1+cx) \geq \log(1+c)$, so that $\log(1+cx) \geq c\log x \lor \log(1+c)$. We now apply this for $x = \exp(\lambda^2/\sigma_j^2)$, which yields that the prior's $t$-th power of its $L^t$ norm concentrates on
    \begin{align*}
        \sum_{j=1}^d \pi_j \gamma_j^t &\geq \frac{\sum_{j=1}^d \sigma_j^{2t} \exp\left(-\lambda^2/\sigma_j^2\right) \log^{t/2}\left(1+c\exp\left(\lambda^2/\sigma_j^2\right)\right)}{\sqrt{\sum_{j=1}^d\sigma_j^{2t}\exp\left(-\lambda^2/\sigma_j^2\right)}} \\
        &\geq \log^{t/2} (1+c)  \sqrt{\sum_{j=1}^d\sigma_j^{2t}\exp\left(-\lambda^2/\sigma_j^2\right)} 
        \lor 
        c^t \lambda^t \frac{\sum_{j=1}^d \sigma_j^t \exp\left(-\lambda^2/\sigma_j^2\right)}{\sqrt{\sum_{j=1}^d\sigma_j^{2t}\exp\left(-\lambda^2/\sigma_j^2\right)}} \\
        &= \log^{t/2} (1+c) \sqrt{\sum_{j=1}^d\sigma_j^{2t} \exp\left(-\lambda^2/\sigma_j^2\right)} \lor c^t \frac{ \lambda^t s}{2}.
    \end{align*}
\end{enumerate}

\end{proof}

\subsection{Upper bounds for $t\geq 2$}\label{subsec:proof_theorem_ellt>2_upper}
\begin{proof}[Proof of Theorem \ref{th:rate_ellt_>2}.\ref{th:rate_ellt_>2_upper}]

We recall the definitions of the tests $T_{dense}$ and $T_{sparse}$ from equations~\eqref{eq:def_test_stat_>2}. 

We write $\theta_{\leq j_*} = (\theta_1,\dots, \theta_{j_*})$, and $\theta_{>j_*} = (\theta_{j_*+1},\dots, \theta_{d})$, where we recall that $j_* = \max\big\{j \in [d] ~\big|~ \sigma_j\geq \lambda\big\}$.
We show that there exist large enough constants $C_0, C_1$ and $\Cbar$ such that, when $\|\theta\|_0 \leq s$, we have:

\vspace{2mm}

\begin{tabular}[m]{|c|c|c||c|c|c|}
    \cline{1-6}
    & Under $H_0$ 
    & If $\left\|\theta_{\leq j_*}\right\|_t^t \geq \Cbar \rho$
    & 
    & Under $H_0$ 
    & If $\left\|\theta_{>j_*}\right\|_t^t \geq \Cbar \rho$ \rule[-2ex]{0pt}{6ex} \\
    \cline{1-6}
    $\mathbb E^2 T_{fdense}$ 
    & $= 0$ 
    & $\geq C_1\|\theta_{\leq j_*}\|_t^2$
    & $\mathbb E^2 T_{sparse}$ 
    & $= 0$ 
    & $\geq \Cbar\rho/2^{t+2} $ \rule[-2ex]{0pt}{6ex}\\
    \cline{1-6}
    $\mathbb V T_{fdense}$ 
    & $ \leq C_0\hspace{-1mm} \sum\limits_{j\leq j_*}\hspace{-1mm} \sigma_j^{2t}$ 
    & $\leq \Cbar^{-2/t} \mathbb E_\theta^2\! \left[T_{fdense}\right]$
    & $\mathbb V T_{sparse}$ 
    & $ \leq C_0 \rho$
    & $\leq c\, \mathbb E_\theta^2 \!\left[T_{sparse}\right] $\rule[-2ex]{0pt}{6ex}\\
    \cline{1-6}
\end{tabular}
\vspace{4mm}

\noindent where in the last cell, $c$ is a constant depending only on $\eta$ and $t$, that can be made arbitrarily small provided that $\Cbar$ is large enough, and $\Cbar$ can be chosen independently of $C_0$. 
Proposition~\ref{prop:UB_gaussian_ellt} is proved by combining the above relations with Chebyshev's inequality.

\vspace{3mm}

\textit{Analysis of $T_{dense}$.} Under $H_0$, $T_{dense}$ is centered by definition. 
Its variance under $H_0$ can be bounded from above as follows
\begin{align*}
    \mathbb V T_{dense} = \sum_{j\leq j_*} \mathbb V |X_j|^t \leq \sum_{j\leq j_*} \mathbb E |X_j|^{2t} \leq C \sum_{j\leq j_*} \sigma_j^{2t},
\end{align*}
for some constant $C$ depending on $t$ (see for example~\cite{winkelbauer2012moments}). 

\vspace{3mm}

Now, if $\left\|\theta_{\leq j_*}\right\|_t^t \geq \Cbar \rho$ and $\|\theta\|_0 \leq s$, then, writing $X_j = \theta_j + \xi_j $, we will prove that $\sum_{j\leq j_*}\mathbb E_\theta |X_j|^t - \mathbb E|\xi_j|^t \geq \frac{1}{4} \|\theta\|_t^t$.
We have $\mathbb E_\theta |X_j|^t \geq |\theta_j|^t \mathbb P\left(\theta_j \xi_j \geq 0\right) \geq \frac{1}{2} |\theta_j|^t.$
Therefore, if $|\theta_j|^t \geq 4 \, \mathbb E|\xi_j|^t$, then 
\begin{align*}
    \mathbb E_\theta |X_j|^t - \mathbb E|\xi_j|^t \geq \frac{1}{4} |\theta_j|^t.
\end{align*}

Otherwise, we can assume that $|\theta_j|^t < 4 \mathbb E \mathbb |\xi_j|^t$. 
We use a Taylor expansion that is analogous to~\eqref{eq:taylor}, except that we swap the roles of $\xi_j$ and~$\theta_j$. For fixed $\xi_j$, we define the function $\phi(\theta_j) = |\xi_j + \theta_j|^\alpha$, which is twice continuously differentiable. 
For any $\theta_j \in \R$, there exists $\theta'_j \in [0,\theta_j]$ such that $\phi(\theta_j) = \phi(0) + \theta_j \phi'(0) + \frac{1}{2}\theta_j^2 \phi''(\theta_j')$, or equivalently
\begin{align*}
    \big|\theta_j + \xi_j\big|^\alpha 
    &= 
    |\xi_j|^\alpha 
    + \alpha \big|\theta_j^{\alpha-1} \xi_j\big| \operatorname{sign}(\theta_j \xi_j) 
    + \frac{\alpha (\alpha-1)}{2} \theta_j^2 \big|\xi_j + \theta'_j\big|^{\alpha-2},
\end{align*}
for any $\alpha \geq 2$. Taking the expectation, for $\alpha = t$, gives
\begin{align*}
    \mathbb E \big|\theta_j + \xi_j\big|^t 
    &= 
    \mathbb E \left[|\xi_j|^t \right] + 0 +
    \frac{t (t-1)}{2} \theta_j^2  \mathbb E \left[ \big|\xi_j + \theta'_j\big|^{\alpha-2}\right] 
    \\
    & 
    \geq \mathbb E |\xi_j|^t 
    + 
    \frac{t (t-1)}{2} \theta_j^2 \, \mathbb E \left[|\xi_j|^{t-2} \mathbb 1\left\{\theta_j \xi_j >0\right\}\right] 
    \\
    &
    = \mathbb E |\xi_j|^t  
    + 
    C \theta_j^2 \sigma_j^{t-2}
    \\
    & 
    \geq \mathbb E |\xi_j|^t  
    + 
    C \theta_j^2 {C}'^{(2-t)/t} |\theta_j|^{t-2} \quad \text{ recalling that $|\theta_j|^t < 4 \mathbb E |\xi_j|^t =: C' \sigma_j^t$ }
    \\
    & 
    = \mathbb E |\xi_j|^t  
    + 
    C'' |\theta_j|^{t}.
\end{align*}

In both cases, we have found a constant $C''$ depending only on $t$ such that $\mathbb E \big|\theta_j + \xi_j\big|^t - E |\xi_j|^t \geq C'' \mathbb E |\theta_j|^{t}$, which yields
\begin{align}
    \mathbb E_\theta T_{dense} \geq C'' \|\theta_{\leq j_*}\|_t^t.
\end{align}

We now turn to the variance term. 
We have the following classical inequalities (see~\cite{ingster2003nonparametric}):
\begin{align*}
    & (|x|+|y|)^\alpha \leq 2^{\alpha-1} \left(|x|^\alpha + y^{\alpha}\right),\\
    & \left||x+y|^\alpha - |y|^\alpha\right|  \leq \alpha 2^{\alpha - 1} |x| \left(|x|^{\alpha-1} + |y|^{\alpha-1}\right),
\end{align*}
true for any $x,y\in \R$ and $\alpha \geq 1$. Therefore,
\begin{align*}
    \mathbb V \big|\theta_j + \xi_j\big|^t &= \min_{x \in \R} \mathbb E \left[\left(\big|\theta_j + \xi_j\big|^t - x\right)^2\right]\leq \mathbb E \left[\left(\big|\theta_j + \xi_j\big|^t - |\theta_j|^t\right)^2\right]\\
    & \leq 
    t^2 2^{2t-2} \mathbb E\left[|\xi_j|^2 \left(|\xi_j|^{t-1} + |\theta_j|^{t-1}\right)^2\right]
    \\
    & \leq t^2 2^{2t-1} \mathbb E\left[|\xi_j|^2 \left(|\xi_j|^{2t-2} + |\theta_j|^{2t-2}\right)\right]
    \leq 
    C \left(\sigma_j^{2t} + \sigma_j^2 |\theta_j|^{2t-2}\right).
\end{align*}

Now, setting $u = t$ and $v = \frac{t}{t-1}$, we have $\frac{1}{u} + \frac{1}{v} = 1$, so that by Hölder's inequality
\begin{align*}
    \sum_{j\leq j_*} \sigma_j^2 |\theta_j|^{2t-2} &\leq \left(\sum_{j\leq j_*} \sigma_j^{2t} \right)^{1/u} \left(\sum_{j\leq j_*} |\theta_j|^{2t}\right)^{1/v} = \left\|\sigma_{\leq j_*}\right\|_{2t}^2 \left\|\theta_{\leq j_*}\right\|_{2t}^{2t-2}\\
    & \leq e^{1/t} \Cbar^{-2/t} \left\|\theta_{\leq j_*}\right\|_t^{2t} \leq c \, \E^2 \big[T_{dense}\big],
\end{align*}
where in the last line we used the fact that, since $\forall j \leq j_*: \sigma_j \geq \lambda$, we have $\sqrt e\big\|\sigma_{\leq j_*}\big\|_{2t}^t \leq \nu^t \leq \rho \leq \frac{1}{\Cbar} \big\|\theta_{\leq j_*}\big\|_{t}^t$. 

\textit{Analysis of $T_{sparse}$:} See Lemma~\ref{lem:analysis_Tsparse}.

\end{proof}
\section{Proof of Theorem \ref{th:rate_ellt}}\label{sec:proof_theorem_ellt}
We first present a couple of useful lemmas before presenting the detailed proofs of the lower and upper bounds corresponding to Theorem \ref{th:rate_ellt}.

\begin{lemma}\label{lem:nu_continuous_ellt}
The functions $\overline \nu(x)$ and $f(x)$ are continuous with respect to $x$. Moreover, $\lim\limits_{x\to 0^+} f(x) = d$ and $\lim\limits_{x\to +\infty} f(x) = 0$.
\end{lemma}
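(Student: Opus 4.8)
The plan is to analyze the two defining relations --- equation~\eqref{eq:def_nu_ellt} for $\overline\nu(x)$ and equation~\eqref{eq:def_f_ellt} for $f(x)$ --- and exploit the fact that both are built from the index cutoff $j_*(x) = \max\{j \in [d] \mid \sigma_j \geq x\}$, which is a non-increasing, piecewise-constant, right-continuous step function of $x$ with finitely many jumps (located at the values $x = \sigma_1, \dots, \sigma_d$). The subtlety is that $j_*(x)$ itself is discontinuous, so I must check that the apparent jumps in the formulas for $\overline\nu$ and $f$ cancel out.

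First I would establish continuity of $\overline\nu(x)$. Fix $x$ and let $G(x,\overline\nu)$ denote the left-hand side of~\eqref{eq:def_nu_ellt}. On any open interval on which $j_*(x)$ is constant, $G$ is jointly continuous in $(x,\overline\nu)$, strictly decreasing in $\overline\nu$ from $+\infty$ (as $\overline\nu \to 0^+$) to $0$ (as $\overline\nu \to \infty$), so the implicit solution $\overline\nu(x)$ is continuous there by the standard implicit-function/monotonicity argument (continuity of an inverse of a continuous strictly monotone function, with continuous dependence on the parameter $x$). At a jump point $x_0 = \sigma_k$, I would check the one-sided limits: as $x \uparrow \sigma_k$ we have $j_*(x) = k$ (or the appropriate index), while as $x \downarrow \sigma_k$ the term for index $k$ moves from the first sum to the second sum. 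The key observation is that at $x = \sigma_k$ the two expressions for that index's contribution agree: $\frac{\sigma_k^a}{\overline\nu^a} \wedge \frac{\sigma_k^4}{x^{4-2t}\overline\nu^{2t}}$ evaluated at $x = \sigma_k$ equals $\frac{\sigma_k^a}{\overline\nu^a}$ (since $a = 4t/(4-t)$ gives $\sigma_k^a = \sigma_k^4 / \sigma_k^{4-2t}$, and one checks the minimum is attained by the first branch when $x = \sigma_k$), while the second-sum term $\frac{\sigma_k^{2t}}{\overline\nu^{2t}}\exp(-x^2/\sigma_k^2 + 1)$ evaluated at $x=\sigma_k$ equals $\frac{\sigma_k^{2t}}{\overline\nu^{2t}}$, and again $\sigma_k^{2t}/\overline\nu^{2t} = \sigma_k^a/\overline\nu^a$ precisely when... here I should be careful --- this matching is the crux and needs the exponent bookkeeping to work out, using $a$, and possibly the condition $\sigma_k^4 \gtrless x^{4-2t}\overline\nu^{2t}$ at the transition. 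Once the left-hand side $G(x,\overline\nu)$ is seen to be continuous in $x$ (the jump contributions matching across $x = \sigma_k$), continuity of $\overline\nu(x)$ follows from the monotonicity argument globally.

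Next, continuity of $f(x)$ follows by exactly the same mechanism: $f$ in~\eqref{eq:def_f_ellt} is a sum over $j \leq j_*(x)$ of $1 \wedge \frac{\sigma_j^4}{x^{4-t}\overline\nu^t(x)}$ plus a sum over $j > j_*(x)$ of $\frac{\sigma_j^t}{\overline\nu^t(x)}\exp(-x^2/\sigma_j^2 + 1)$; since $\overline\nu(x)$ is now known to be continuous, within each interval of constancy of $j_*$ the function $f$ is continuous, and at a transition point $x = \sigma_k$ the index-$k$ term $1 \wedge \frac{\sigma_k^4}{\sigma_k^{4-t}\overline\nu^t} = 1 \wedge \frac{\sigma_k^t}{\overline\nu^t}$ matches $\frac{\sigma_k^t}{\overline\nu^t}\exp(0+1)$... again up to the factor $e$, so I may need the minimum in the first sum or a constant adjustment to reconcile these; in any case the matching is a finite computation. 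Finally, for the limits: as $x \to 0^+$, $j_*(x) = d$ (all $\sigma_j \geq x$ eventually), the second sum is empty, and each term $1 \wedge \frac{\sigma_j^4}{x^{4-t}\overline\nu^t(x)} \to 1$ since $x^{4-t} \to 0$ and $\overline\nu(x)$ stays bounded (one checks from~\eqref{eq:def_nu_ellt} that $\overline\nu(x)$ does not blow up or vanish as $x \to 0$), giving $f(x) \to d$. As $x \to +\infty$, $j_*(x) = 0$, the first sum is empty, and each term $\frac{\sigma_j^t}{\overline\nu^t(x)}\exp(-x^2/\sigma_j^2 + 1) \to 0$ because the exponential decays super-polynomially while $\overline\nu(x)$ is controlled (from~\eqref{eq:def_nu_ellt} with only the $\exp(-x^2/\sigma_j^2)$ terms present, $\overline\nu(x) \to 0$ but only polynomially in those exponentials, so the ratio still vanishes). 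The main obstacle I anticipate is precisely verifying that the contributions of the term crossing from one sum to the other are continuous at each $x = \sigma_k$ --- i.e., the exponent arithmetic with $a$, $b$, and the minima --- and confirming the $o(1)$/boundedness control on $\overline\nu(x)$ near $0$ and $\infty$ needed to pass to the limits.
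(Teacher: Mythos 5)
Your plan mirrors the paper's: treat $j_*(x)$ as a piecewise-constant step function with finitely many jumps, establish continuity of $\overline\nu$ and $f$ on each interval of constancy by a monotonicity/uniqueness argument, verify that the term migrating between the two sums has matching contributions at each transition $x=\sigma_k$, and read the limits off directly. The paper proceeds the same way, reducing right-continuity at a jump to uniqueness of the solution of~\eqref{eq:def_nu_ellt} after checking that the migrating term's two expressions coincide.

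However, the exponent bookkeeping at the matching step --- which you yourself flag as the crux --- is wrong as written, so the proposal has a genuine gap. First, the identity $\sigma_k^a = \sigma_k^4/\sigma_k^{4-2t}$ would require $a=2t$, i.e.\ $4t/(4-t)=2t$, which holds only at $t=2$; for $t<2$ it fails. The correct simplification at $x=\sigma_k$ is $\sigma_k^4/(\sigma_k^{4-2t}\,\overline\nu^{2t}) = \sigma_k^{2t}/\overline\nu^{2t}$. Second, the minimum in the first sum of~\eqref{eq:def_nu_ellt} is attained by the \emph{second} branch at $x=\sigma_k$ when $t<2$, not the first as you assert: one first needs $\overline\nu(x_0)\geq x_0$ (which follows from~\eqref{eq:def_nu_ellt}, since otherwise the index $j=j_*$ alone makes the left-hand side exceed $\Cnu$), and then, because $a<2t$ and $x_0/\overline\nu\leq 1$, we have $(x_0/\overline\nu)^{2t}\leq(x_0/\overline\nu)^a$. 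Third, there is no stray factor of $e$ to ``reconcile'': the exponential at $x=\sigma_k$ equals $\exp(-\sigma_k^2/\sigma_k^2+1)=\exp(0)=1$; your $\exp(0+1)$ dropped the minus sign. With these corrections, the migrating term contributes $\sigma_k^{2t}/\overline\nu^{2t}$ to both forms in~\eqref{eq:def_nu_ellt} and $\sigma_k^t/\overline\nu^t$ to both forms in~\eqref{eq:def_f_ellt}, matching exactly. Your limit arguments are correct in outline; for $x\to+\infty$ the cleaner reason that $f\to 0$ is that $\overline\nu^t$ is the square root of a sum of $2t$-th powers while the numerator of $f$ is a sum of $t$-th powers, so the ratio decays like $\exp(-x^2/2\sigma_1^2)$.
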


\begin{proof}[Proof of Lemma~\ref{lem:nu_continuous_ellt}]
In this proof, we will use the notation from Section~\ref{sec:results_ellt_leq2}. 
Fix $x_0 \geq 0$ and recall~\eqref{eq:def_jstar_function} and \eqref{eq:def_jstar_ellt}. 
Note that $j_*(x)$ is always left-continuous: $j_*(x) = \lim_{y \to x^-} j_*(y)$. Therefore, from~\eqref{eq:def_nu_ellt}, $\overline \nu(x)$ is a left-continuous function. 
Now, we show that $x \mapsto j_*(x)$ and $x \mapsto \overline \nu(x)$ are right-continuous functions. 
Fix $x_0 \in \R$; we always have $\sigma_{j_*} \geq x_0$ by definition.
If $\sigma_{j_*}> x_0$, then $x \mapsto j_*(x)$ is clearly continuous on a neighborhood of $x_0$ and so is $x \mapsto \overline \nu(x)$. 
Otherwise, we have $\sigma_{j_*} = x_0$ and we show that $x \mapsto j_*(x)$ and $x \mapsto \overline \nu(x)$ are still right-continuous in $x_0$. 
Define
\begin{align}
    J(x_0) = \left\{j ~\big|~ \sigma_j = x_0 = \sigma_{j_*}\right\}.
\end{align}

By~\eqref{eq:def_nu_ellt}, we have that $\overline \nu^t(x_0) \geq C \dfrac{\sigma_{j_*}^{a}}{\overline \nu(x_0)^{a-t}} \land \dfrac{\sigma_{j_*}^4}{x_0^{4-2t}\, \overline \nu^t(x_0)} = C \dfrac{x_0^{a}}{\overline \nu(x_0)^{a-t}} \land \dfrac{x_0^{2t}}{\overline \nu^t(x_0)}$, so that $\overline \nu(x_0) \geq x_0$ and consequently, $\forall j \in J(x_0): \dfrac{\sigma_{j}^{a}}{\overline \nu(x_0)^{a-t}} \land \dfrac{\sigma_{j}^4}{x_0^{4-2t} \,\overline \nu^t(x_0)} =  \dfrac{x_0^{2t}}{\overline \nu^t(x_0)}$. 
We can now write
\begin{align}
    \overline \nu^t(x_0) &= \sum_{\substack{j\leq j_*\\j \notin J(x_0)}} \frac{\sigma_{j}^{a}}{\overline \nu(x_0)^{a-t}} \land \frac{\sigma_{j}^4}{x_0^{4-2t} \,\overline \nu^t(x_0)} + \big|J(x_0)\big| \frac{x_0^{2t}}{\overline \nu^t(x_0)} + \sum_{j>j_*} \frac{\sigma_j^{2t} }{\nu^t}\exp\!\bigg(\!\!-\frac{x_0^2}{\sigma_j^2}+1\bigg)\nonumber\\
    & = \sum_{\substack{j\leq j_*\\j \notin J(x_0)}} \frac{\sigma_{j}^{a}}{\overline \nu^{a-t}(x_0)} \land \frac{\sigma_{j}^4}{x_0^{4-2t} \,\overline \nu^t(x_0)} + \sum_{\substack{j>j_*\\
    \text{ or } j \in J(x_0)}} \frac{\sigma_j^{2t} }{\overline \nu^t(x_0)}\exp\!\bigg(\!\!-\frac{x_0^2}{\sigma_j^2}+1\bigg).\label{eq:nu(x0)_ellt}
\end{align}

Noting that for a sufficiently small $\delta>0$, we have for any $x \in (x_0, x_0+\delta)$
\begin{align*}
    \overline \nu^t(x) = \sum_{\substack{j\leq j_*\\j \notin J(x_0)}} \frac{\sigma_{j}^{a}}{\overline \nu^{a-t}(x)} \land \frac{\sigma_{j}^4}{x^{4-2t} \overline \nu^t(x)} + \sum_{\substack{j>j_*\\
    \text{ or } j \in J(x_0)}} \frac{\sigma_j^{2t} }{\overline \nu^t(x)}\exp\!\bigg(\!\!-\frac{x^2}{\sigma_j^2}+1\bigg).
\end{align*}

Now, the two sets of summation indices are \textit{fixed} when $x \in (x_0, x_0+\delta)$, so that the right-hand side is clearly continuous with respect to $x$ over $(x_0,x_0+\delta)$. 

Therefore, writing $\overline \nu(x_0^+) = \lim\limits_{x \to x_0^+} \overline \nu(x)$, we get
\begin{align*}
    \overline \nu^t(x_0^+) = \sum_{\substack{j\leq j_*\\j \notin J(x_0)}} \frac{\sigma_{j}^{a}}{\overline \nu^{a-t}(x_0^+)} \land \frac{\sigma_{j}^4}{x_0^{4-2t} \,\overline \nu^t(x_0^+)} + \sum_{\substack{j>j_*\\
    \text{ or } j \in J(x_0)}} \frac{\sigma_j^{2t} }{\overline \nu^t(x_0^+)}\exp\!\bigg(\!\!-\frac{x_0^2}{\sigma_j^2}+1\bigg).
\end{align*}

Comparing with~\eqref{eq:nu(x0)_ellt}, we note that $\overline \nu(x_0^+)$ and $\overline \nu(x_0)$ solve the same equation, hence $\overline \nu(x_0^+) = \overline \nu(x_0)$ by uniqueness of the solution of~\eqref{eq:def_nu_ellt}. 
This proves that $\nu$ is right-continuous in $x_0$, concluding the proof of the continuity of $\nu$. 
The continuity of $f$ can be proved by exactly following the same steps.\\

When $x \to 0^+$, we have $j_*(x) \to d$ and $\overline \nu(x) \to \Cnu^{-1/a}\|\sigma\|_{a}$. 
Therefore, $f(x) \to \sum_{j=1}^d 1 = d$. \\

When $x \to \infty$, we have $j_*(x) = 0$ and $\overline \nu(x) = \left(\sum\limits_{j=1}^d e\sigma_j^{2t}\exp\!\left(\!\!-\frac{x^2}{\sigma_j^2}\right)\right)^{1/2t}$ for $x$ large enough. 
Therefore, still for $x$ large enough, we have $f(x) = \dfrac{\sum_{j=1}^d \sigma_j^t \exp\!\Big(\!\!-\frac{x^2}{\sigma_j^2}\Big)}{\sqrt{\sum_{j=1}^d \sigma_j^{2t} \exp\!\Big(\!\!-\frac{x^2}{\sigma_j^2}\Big)}} \underset{x \to \infty}{\longrightarrow} 0$.
\end{proof}

\vspace{2mm}

\vspace{2mm}

\begin{lemma}\label{lem:relations_between_contributions}
Recall the notation of Section~\ref{sec:results_ellt_leq2}.
Writing $\sigma_{\leq i_*} = (\sigma_1,\dots, \sigma_{i_*})$ and $\sigma_{int} = (\sigma_{i_*\!+1},\dots, \sigma_{j_*})$, we let 
\begin{align*}
    \left\{\begin{array}{lll}
        s_{fdense} = i_*,
        & s_{inter} = \sum\limits_{j = i_*\!+1}^{j_*} \frac{\sigma_j^4}{\lambda^{4-t} \nu^t},
        & s_{sparse} = \sum_{j>j_*} \frac{e\sigma_j^t}{\nu^t} \exp\left(-\lambda^2/\sigma_j^2\right),\\
        \nu_{fdense}^t = \|\sigma_{\leq i_*}\|_a^t,
        & \nu_{inter}^t =  \lambda^{t-2} \|\sigma_{int}\|_4^2,
        & \nu_{sparse}^t = \sqrt{\sum_{j>j_*} e\sigma_j^{2t} \exp\left(-\lambda^2/\sigma_j^2\right)}.
    \end{array}\right.
\end{align*}
Note that by equations~\eqref{eq:def_f_ellt} and~\eqref{eq:def_lambda_ellt}, we have $\frac{s}{2} = s_{fdense} + s_{inter} + s_{sparse}$, and by Lemma~\ref{lem:explicit_expr_nu_ellt}, we have $\nu^t \asymp \nu^t_{fdense} + \nu^t_{inter} + \nu^t_{sparse}$. 
Then the following relations hold:
\begin{enumerate}
    \item $\lambda^t \left(s_{fdense}+s_{inter}\right) \leq 2 \nu^t$. In particular, we have $\nu^t + \lambda^t s_{sparse} \asymp \nu^t + \lambda^t s$.
    \item\label{item:rate_ellt} $\nu^{2t}_{inter} = \nu^t \lambda^t s_{inter} $ and $\nu^{2t}_{sparse} \leq \nu^t \lambda^t s_{sparse} $. In particular: $\nu^t + \lambda^t s \asymp \nu_{fdense}^t + \lambda^t s$.
\end{enumerate}
\end{lemma}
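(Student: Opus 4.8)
The plan is to establish the two displayed relations by elementary algebra, using the defining equations~\eqref{eq:def_nu_ellt}, \eqref{eq:def_f_ellt}, \eqref{eq:def_lambda_ellt}, \eqref{eq:def_istar_ellt} together with the two-sided estimate $\nu^t \asymp \nu_{fdense}^t + \nu_{inter}^t + \nu_{sparse}^t$ provided by Lemma~\ref{lem:explicit_expr_nu_ellt}. I would organize the argument into four short steps.

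First I would record the purely algebraic identities. Substituting $s_{inter} = \sum_{i_*<j\le j_*}\sigma_j^4/(\lambda^{4-t}\nu^t)$ into $\nu^t\lambda^t s_{inter}$ gives $\lambda^{2t-4}\sum_{i_*<j\le j_*}\sigma_j^4 = \lambda^{2(t-2)}\|\sigma_{int}\|_4^4 = \nu_{inter}^{2t}$, which is the first identity in item~\ref{item:rate_ellt}. Likewise, for the sparse term, $\nu^t\lambda^t s_{sparse} = \lambda^t\sum_{j>j_*}e\,\sigma_j^t\exp(-\lambda^2/\sigma_j^2)$ while $\nu_{sparse}^{2t} = \sum_{j>j_*}e\,\sigma_j^{2t}\exp(-\lambda^2/\sigma_j^2)$; since $\sigma_j<\lambda$ for every $j>j_*$ (this is exactly the definition $j_*=j_*(\lambda)$ in~\eqref{eq:def_jstar_ellt}), we have $\sigma_j^{2t}\le\lambda^t\sigma_j^t$ term by term, which yields $\nu_{sparse}^{2t}\le\nu^t\lambda^t s_{sparse}$.

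Next I would bound $\lambda^t s_{fdense}=\lambda^t i_*$. By~\eqref{eq:def_istar_ellt}, every $j\le i_*$ satisfies $\sigma_j^4\ge\lambda^{4-t}\nu^t$; using $a=4t/(4-t)$ from~\eqref{eq:def_a_b} one checks $(\lambda^{4-t}\nu^t)^{a/4}=\lambda^t\nu^{t^2/(4-t)}$ and $a-t^2/(4-t)=t$, so that $\sigma_j^a/\nu^a\ge\lambda^t/\nu^t$, and similarly $\sigma_j^4/(\lambda^{4-2t}\nu^{2t})\ge\lambda^t/\nu^t$. Hence each summand of the first sum in the defining equation~\eqref{eq:def_nu_ellt} of $\nu=\overline\nu(\lambda)$, restricted to $\{j\le i_*\}\subseteq\{j\le j_*\}$, is at least $\lambda^t/\nu^t$, so $\Cnu\ge i_*\lambda^t/\nu^t$, i.e. $\lambda^t s_{fdense}\le\Cnu\,\nu^t$. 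Combining this with $\lambda^t s_{inter}=\nu_{inter}^{2t}/\nu^t\lesssim\nu^t$ (which follows from the identity just proved and $\nu_{inter}^t\le\nu^t/C_1$, Lemma~\ref{lem:explicit_expr_nu_ellt}) gives $\lambda^t(s_{fdense}+s_{inter})\le 2\nu^t$ after relabeling constants, which is the first assertion of item~1. The ``in particular'' of item~1 then follows from $s/2=s_{fdense}+s_{inter}+s_{sparse}$ by writing $\lambda^t s=2\lambda^t(s_{fdense}+s_{inter})+2\lambda^t s_{sparse}\le 4\nu^t+2\lambda^t s_{sparse}$ together with $\lambda^t s\ge 2\lambda^t s_{sparse}$.

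Finally, for the ``in particular'' of item~\ref{item:rate_ellt} I would argue by cases on which of $\nu_{fdense}^t,\nu_{inter}^t,\nu_{sparse}^t$ is largest (by Lemma~\ref{lem:explicit_expr_nu_ellt} the largest one controls $\nu^t$ up to constants). If $\nu_{fdense}^t$ dominates, then $\nu^t\lesssim\nu_{fdense}^t$ directly. If $\nu_{inter}^t$ dominates, then $\nu^t\lesssim\nu_{inter}^t$, and inserting this into the identity $\nu_{inter}^{2t}=\nu^t\lambda^t s_{inter}$ gives $\nu_{inter}^t\lesssim\lambda^t s_{inter}\le\lambda^t s$, hence $\nu^t\lesssim\lambda^t s$; the case where $\nu_{sparse}^t$ dominates is identical using $\nu_{sparse}^{2t}\le\nu^t\lambda^t s_{sparse}$. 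Since the reverse inequality $\nu_{fdense}^t+\lambda^t s\lesssim\nu^t+\lambda^t s$ is immediate from $\nu_{fdense}^t\le\nu^t/C_1$, this proves $\nu^t+\lambda^t s\asymp\nu_{fdense}^t+\lambda^t s$. The only genuinely delicate point is the exponent bookkeeping in the $\lambda^t i_*\lesssim\nu^t$ step, where the precise value $a=4t/(4-t)$ is what makes both min-terms of~\eqref{eq:def_nu_ellt} collapse to $\lambda^t/\nu^t$ on $\{j\le i_*\}$; everything else is routine manipulation.
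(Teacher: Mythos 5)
Your proof is correct and follows essentially the same strategy as the paper's: the two algebraic identities in item~2, the exponent bookkeeping showing $\sigma_j^a\geq\lambda^t\nu^{a-t}$ for $j\leq i_*$, and the case analysis on the dominant contribution to $\widetilde\nu^t$ all match the paper's argument. The only minor variation is in the step $\lambda^t i_*\lesssim\nu^t$: the paper deduces $i_*\lambda^t\nu^{a-t}\leq\sum_{j\leq i_*}\sigma_j^a = \nu_{fdense}^a\leq\nu^a$ using Lemma~\ref{lem:explicit_expr_nu_ellt}, whereas you bound the first $i_*$ summands in the defining equation~\eqref{eq:def_nu_ellt} of $\nu$ below by $\lambda^t/\nu^t$ to get $i_*\lambda^t/\nu^t\leq\Cnu$; both use the same inequality $\sigma_j^a\geq\lambda^t\nu^{a-t}$ and are interchangeable.
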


\begin{proof}[Proof of Lemma~\ref{lem:relations_between_contributions}]
\begin{enumerate}
    \item We have by Lemma~\ref{lem:explicit_expr_nu_ellt}
\begin{align*}
    \lambda^t s_{fdense} &= \lambda^t i_* = \frac{i_* \lambda^t \nu^{a-t}}{\nu^{a-t}} \leq \frac{1}{\nu^{a-t}} \sum_{j\leq i_*} \sigma_j^{a} \leq \nu^t
    \quad \text{ by definition of } i_* \text{ from~\eqref{eq:def_istar_ellt}.}\\
    \lambda^t s_{inter} &= \sum\limits_{j = i_*\!+1}^{j_*} \frac{\sigma_j^4}{\lambda^{4-2t} \nu^t} \leq \nu^t.
\end{align*}

    \item We have by definition of $s_{int}$ and $\nu_{int}^t$:
    \begin{align*}
        \lambda^t s_{inter} = \sum\limits_{j = i_*\!+1}^{j_*} \frac{\sigma_j^4}{\lambda^{4-2t} \nu^t} = \frac{\nu_{inter}^{2t}}{\nu^t}.
    \end{align*}
    Moreover, we have  by definition of $s_{sparse}$ and $\nu_{sparse}^t$:
    \begin{align}
        \lambda^t s_{sparse} 
        = \sum_{j>j_*} \frac{e\lambda^t\sigma_j^t}{\nu^t} \exp\left(-\lambda^2/\sigma_j^2\right) 
        \geq \sum_{j>j_*} \frac{e\sigma_j^{2t}}{\nu^t} \exp\left(-\lambda^2/\sigma_j^2\right) =  \frac{ \nu_{sparse}^{2t}}{\nu^t}.\label{eq:relation_sparse}
    \end{align}
    Finally, we prove that $\nu^t + \lambda^t s \asymp \nu_{fdense}^t + \lambda^t s$. If $\max(\nu_{fdense}, \nu_{int}, \nu_{sparse}) = \nu_{fdense}$, then the result is clear. 
    Otherwise, assume first that $\nu_{fdense}\leq \nu_{int}\leq \nu_{sparse}$. 
    Then we have $\nu^t \leq 3\nu_{sparse}^t $, which, by equation~\eqref{eq:relation_sparse}, yields $\nu_{sparse}^t \leq 3 \lambda^t s_{sparse}$. In particular, $\nu^t \leq 9 \lambda^t s$, so that $\lambda^t s + \nu_{fdense}^t \asymp \lambda^t s + \nu^t \asymp \lambda^t s$.
    Proceeding similarly if $\nu_{fdense}\leq \nu_{sparse}\leq \nu_{int}$ concludes the proof.
\end{enumerate}
\end{proof}

\subsection{Lower bounds}\label{subsec:proof_theorem_ellt_lower}

\begin{proof}[Proof of Theorem \ref{th:rate_ellt}.\ref{th:rate_ellt_<2_lower}]
First, if $s\leq C$ for some constant $C$ depending only on $\eta$, then by Lemma~\ref{lem:generalities_LB}, we have $\epsilon^*(s,t,\Sigma)^t \geq c(\lambda+\nu)^t \asymp \lambda^t s + \nu^t$. 
From now on, we will assume that $s$ is larger than a sufficiently large constant $c(\eta)$ depending only on $\eta$.
\vspace{1mm}

We recall the definition of the prior from Subsection~\ref{subsec:LB_ellt}. We can bound from above the $\chi^2$ divergence between this prior and $\mathbb P_0$ as in~\eqref{eq:chi2}:
\begin{align*}
    1+\chi^2\left(\mathbb P_{prior} ~ || ~ \mathbb P_0\right) \leq \exp\left[ \sum_{j=1}^d \pi_j^2 \cdot 2\sinh^2\left(\frac{\gamma_j^2}{2\sigma_j^2}\right)\right].
\end{align*}

Recall the notation from Section~\ref{sec:results_ellt_leq2}. 
Lemma~\ref{lem:explicit_expr_nu_ellt} ensures that $\nu \geq \sigma_j$ for any $j \leq i_*$, so that $\gamma_j^t = c\sigma_j^a/\nu^{a-t} \leq c \sigma_j^t$. 
Moreover, by definition of $j_*$, we also have $\gamma_j = c \lambda \leq \sigma_j$ for any $j \in \{i_*\!+1,\dots, j_*\}$. 
Therefore, on the dense part $\{1,\dots, j_*\}$, we can use the relation $\sinh(x) \leq 2x$ which holds for any $x \leq 1$. We get:
\begin{align}
    2\sum_{j \leq j_*} \pi_j^2 \sinh^2\left(\frac{\gamma_j^2}{2\sigma_j^2}\right) \leq \sum_{j\leq j_*} \pi_j^2 \cdot \frac{\gamma_j^4}{\sigma_j^4} = c^4\sum_{j \leq i_*} \frac{\sigma_j^{a}}{\nu^{a}} +  \frac{c^4}{\lambda^{4-2t}\nu^{2t}} \sum_{j = i_*\!+1}^{j_*} \sigma_j^4 \leq 2 c^4 ~~ \text{ by Lemma~\ref{lem:explicit_expr_nu_ellt}.}\label{eq:control_chi2_dense_ellt}
\end{align}
Now, for any $j>j_*$, we use $\sinh(x) \leq e^x/2$ to get
\begin{align*}
    2\sum_{j > j_*} \pi_j^2 \sinh^2\left(\frac{\gamma_j^2}{2\sigma_j^2}\right) \leq 2\sum_{j>j_*} \frac{\sigma_j^{2t}}{\nu^{2t}} \exp\left(-2 \lambda^2/\sigma_j^2\right) \cdot \frac{\exp}{4}\left(\lambda^2/\sigma_j^2\right) = \sum_{j>j_*} \frac{\sigma_j^{2t}}{\nu^{2t}}  \exp\left(\lambda^2/\sigma_j^2\right).
\end{align*}

By Lemma~\ref{lem:explicit_expr_nu_ellt}, the latter quantity can be made arbitrarily small provided that $\Cnu$ is sufficiently small.
Combining this fact with~\eqref{eq:control_chi2_dense_ellt}, we conclude that $\chi^2\left(\mathbb P_{prior} ~ || ~ \mathbb P_0\right)$ can be made arbitrarily small by choosing $c$ and $\Cnu$ small enough, ensuring the indistinguishability condition.\\

By definition of $\lambda$ from~\eqref{eq:def_lambda_ellt}, we have $f(\lambda) = s/2$ so that $\sum_{j=1}^d \pi_j = s/2$. 
Therefore, with high probability, the prior's sparsity is at most $s$, provided that $s$ is greater than a sufficiently large constant depending only on $\eta$.
Now, letting $s_{sparse} = \sum_{j>j_*} \pi_j$, the prior's $L^t$ norm raised to the power $t$ concentrates on 
\begin{align*}
    \sum_{j=1}^d \gamma_j^t \pi_j &= \sum_{j\leq i_*} c \frac{\sigma_j^{a}}{\nu^{a-t}} + c\sum_{j=i_*}^{j_*} \frac{\sigma_j^4}{\lambda^{4-2t} \nu^t} + c\lambda^t s_{sparse}\\
    & \geq \sum_{j\leq i_*} c \frac{\sigma_j^{a}}{\nu^{a-t}} + c\sum_{j=i_*}^{j_*} \frac{\sigma_j^4}{\lambda^{4-2t} \nu^t} + \frac{c}{\nu^t} \sum_{j>j_*}\frac{\lambda^t \sigma_j^t + \sigma_j^{2t}}{2} \exp\left(-\frac{\lambda^2}{\sigma_j^2}\right) ~~ \text{ since } \lambda \geq \sigma_j \text{ for } j>j_*\\
    & \geq c \left( \nu^t + \frac{\lambda^t}{2} s_{sparse}\right), \text{ by Lemma~\ref{lem:explicit_expr_nu_ellt}.}
\end{align*}
Now, by Lemma~\ref{lem:relations_between_contributions}, we have $\nu^t + \lambda^t s_{sparse} \asymp \nu^t + \lambda^t s$. 
This concludes the proof.

\end{proof}

\subsection{Upper bounds for $t\in [1,2]$}\label{subsec:proof_theorem_ellt_upper}
\begin{proof}[Proof of Theorem \ref{th:rate_ellt}.\ref{th:rate_ellt_<2_upper}]
Theorem~\ref{th:rate_ellt}.\ref{th:rate_ellt_<2_upper} is proved by combining Lemmas~\ref{lem:exp_var_T_ellt}, \ref{lem:analysis_Tsparse} and Chebyshev's inequality.
\end{proof}

\begin{lemma}\label{lem:exp_var_T_ellt}
We write $\theta_{\leq i_*} = (\theta_1,\dots, \theta_{i_*})$, and $\theta_{int} = (\theta_{i_*\!+1},\dots, \theta_{j_*})$. 
There exist two large enough constants $C_0$ and $\Cbar$ such that, when $\|\theta\|_0 \leq s$ the following relations hold:

\vspace{2mm}

\begin{tabular}[m]{| c | c| c ||c|c|c|}
    \cline{1-6}
    & Under $H_0$ 
    & When $\left\|\theta_{\leq i_*}\right\|_t^t \geq \Cbar \rho$
    & 
    & Under $H_0$ 
    & When $\left\|\theta_{int}\right\|_t^t \geq \Cbar \rho$ \rule[-2ex]{0pt}{6ex}\\
    \cline{1-6}
    $\mathbb E^2 T_{fdense}$ 
    & $= 0$ 
    & $\geq \Cbar^{4/t} \displaystyle\sum_{j\leq i_*} \sigma_j^{a}$
    & $\mathbb E^2 T_{inter}$ 
    & $= 0$ 
    & $\geq 4 \Cbar^2\displaystyle\sum\limits_{j=i_*\!+1}^{j_*} \sigma_j^4$ \rule[-2ex]{0pt}{6ex}\\
    \cline{1-6}
    $\mathbb V T_{fdense}$ 
    & $ = 2 \sum\limits_{j\leq i_*} \sigma_j^{a}$ 
    & $\leq c \, \mathbb E_\theta^2 \left[T_{fdense}\right]$ 
    & $\mathbb V T_{inter}$ 
    & $ = 2\sum\limits_{j=i_*\!+1}^{j_*} \sigma_j^4$ 
    & $\leq c \, \mathbb E_\theta^2 \big[T_{inter}\big]$ \rule[-2ex]{0pt}{6ex}\\
    \cline{1-6}
\end{tabular}

In the above table, the constant $c$ can be made arbitrarily small provided that $\Cbar$ is large enough.
\end{lemma}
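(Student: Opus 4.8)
The plan is to analyze $T_{fdense}$ and $T_{inter}$ separately, since each is a (possibly reweighted) non-truncated chi-square statistic and the computations follow a common template. For the null-hypothesis column, note that $T_{fdense} = \sum_{j\le i_*}\sigma_j^{-2b}(X_j^2-\sigma_j^2)$ has zero mean by construction, and its variance is $\sum_{j\le i_*}\sigma_j^{-4b}\operatorname{Var}(X_j^2) = 2\sum_{j\le i_*}\sigma_j^{4-4b} = 2\sum_{j\le i_*}\sigma_j^{a}$, using that $4-4b = a$ which follows from the definitions $a = 4t/(4-t)$, $b = (4-2t)/(4-t)$ in~\eqref{eq:def_a_b} (indeed $4-4b = 4\cdot t/(4-t) = a$). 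The same computation with weights $1$ gives $\mathbb{E}_{H_0}T_{inter}=0$ and $\operatorname{Var}_{H_0}T_{inter} = 2\sum_{j=i_*+1}^{j_*}\sigma_j^4$. This settles the two left cells in each half of the table.

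For the alternative column, first I would lower-bound the means. On $\{j\le i_*\}$, writing $X_j = \theta_j+\xi_j$ with $\xi_j\sim\mathcal N(0,\sigma_j^2)$, we have $\mathbb{E}_\theta[\sigma_j^{-2b}(X_j^2-\sigma_j^2)] = \sigma_j^{-2b}\theta_j^2$, hence $\mathbb{E}_\theta T_{fdense} = \sum_{j\le i_*}\sigma_j^{-2b}\theta_j^2 \ge 0$; by Cauchy–Schwarz (or power-mean, using $t\le 2$) and the index definitions, $\|\theta_{\le i_*}\|_t^t \ge \Cbar\rho$ together with $\sigma_j\le \sigma_1$ and $\nu\ge\sigma_j$ for $j\le i_*$ (Lemma~\ref{lem:explicit_expr_nu_ellt}) should give $\big(\sum_{j\le i_*}\sigma_j^{-2b}\theta_j^2\big)^2 \gtrsim \Cbar^{4/t}\sum_{j\le i_*}\sigma_j^{a}$ after one invokes $\rho \gtrsim \nu^t \gtrsim \nu_{fdense}^t = \|\sigma_{\le i_*}\|_a^t$. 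The analogous but simpler estimate on $\{i_*<j\le j_*\}$ gives $\mathbb{E}_\theta T_{inter} = \sum_{i_*<j\le j_*}\theta_j^2 \ge \|\theta_{int}\|_2^2 \ge \|\theta_{int}\|_t^2 \ge \Cbar^{2/t}\rho^{2/t}$, and combining with $\rho\gtrsim \nu^t\lambda^t/\lambda^t$ handled via Lemma~\ref{lem:relations_between_contributions} item~\ref{item:rate_ellt} (where $\nu_{inter}^{2t} = \nu^t\lambda^t s_{inter}$) yields the claimed $\ge 4\Cbar^2\sum\sigma_j^4$.

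For the variance bounds under the alternative, the standard device is to compare $T$ to a centered version: $\operatorname{Var}_\theta T_{fdense} \le \sum_{j\le i_*}\sigma_j^{-4b}\mathbb{E}[(X_j^2-\sigma_j^2)^2] = \sum_{j\le i_*}\sigma_j^{-4b}(2\sigma_j^4 + 4\sigma_j^2\theta_j^2)$, which splits into the null variance term $2\sum\sigma_j^a$ plus $4\sum_{j\le i_*}\sigma_j^{-4b+2}\theta_j^2$. Both pieces must be shown to be $\le c\,\mathbb{E}_\theta^2 T_{fdense}$: the first because $\mathbb{E}_\theta^2 T_{fdense}\ge \Cbar^{4/t}\sum\sigma_j^a$ by the mean bound just proved, so dividing by $\Cbar^{4/t}$ makes it small; the second — the genuinely delicate one — requires a Hölder inequality in the style of the $T_{dense}$ analysis in Subsection~\ref{subsec:proof_theorem_ellt>2_upper}, pairing exponents so that $\sum \sigma_j^{2-4b}\theta_j^2 \le \|\sigma_{\le i_*}^{(\cdot)}\|\cdot\|\theta_{\le i_*}\|^{\cdot}$ produces a factor $\Cbar^{-c(t)}$ against $\mathbb{E}_\theta^2 T_{fdense} \asymp \big(\sum\sigma_j^{-2b}\theta_j^2\big)^2$. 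The same scheme, with $b=0$, handles $T_{inter}$. \textbf{The main obstacle} will be choosing the right Hölder exponents (depending on $a,b,t$) so that the reweighting $\sigma_j^{-2b}$ is absorbed and the $\Cbar$-smallness genuinely comes out; I expect this to mirror the computation $\sum_{j\le j_*}\sigma_j^2|\theta_j|^{2t-2}\le \|\sigma_{\le j_*}\|_{2t}^2\|\theta_{\le j_*}\|_{2t}^{2t-2}$ from the $t\ge 2$ proof, but here the interplay of $a$, $b$ and the rescaling by $\nu$ via Lemma~\ref{lem:explicit_expr_nu_ellt} makes the bookkeeping heavier, and one must be careful that $\Cbar$ can be taken large independently of $C_0$.
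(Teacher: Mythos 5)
Your computations for the null column and for the variance decomposition under the alternative are correct and line up with the paper: $\mathbb V_{H_0}T_{fdense}=2\sum_{j\leq i_*}\sigma_j^{4-4b}=2\sum_{j\leq i_*}\sigma_j^a$ since $4-4b=a$, and $\mathbb V_\theta T_{fdense}=2\sum\sigma_j^a+4\sum\sigma_j^{2-4b}\theta_j^2$. Your plan for the mean of $T_{fdense}$ (a H\"older inequality with exponents tied to $a,b,t$, then cancel against $\rho\gtrsim\nu_{fdense}^t=\|\sigma_{\leq i_*}\|_a^t$ from Lemma~\ref{lem:explicit_expr_nu_ellt}) is the right idea; the paper's instantiation is H\"older with conjugates $2/t$ and $2/(2-t)$ applied to $|\theta_j|^t=\big(|\theta_j|^t\sigma_j^{-bt}\big)\sigma_j^{bt}$, noting $2bt/(2-t)=a$. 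For the ``delicate'' variance term the paper's pairing is in fact just Cauchy--Schwarz on $\sum(\theta_j^2/\sigma_j^{2b})\,\sigma_j^{a/2}$ (using $2-4b=a/2-2b$), which is simpler than the $t$ versus $t/(t-1)$ pairing you expected to port over from the $t\geq 2$ case.

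The genuine gap is in your $T_{inter}$ mean bound. You write $\mathbb E_\theta T_{inter}=\|\theta_{int}\|_2^2\geq\|\theta_{int}\|_t^2$, but for $t\in[1,2)$ the inequality goes the other way: $\|\theta_{int}\|_2\leq\|\theta_{int}\|_t$, so the chain breaks. It breaks precisely where the lemma's hypothesis $\|\theta\|_0\leq s$ must enter: the correct step is the sparse H\"older bound $\|\theta_{int}\|_2^2\geq s^{1-2/t}\|\theta_{int}\|_t^2$ (note $s^{1-2/t}<1$), which converts an $\ell^t$ lower bound into an $\ell^2$ one at the cost of a sparsity factor. That factor is then absorbed by the $\lambda^t s$ piece of $\rho$ via the optimization
$\lambda^t s+\lambda^{t-2}\big(\sum_{i_*<j\leq j_*}\sigma_j^4\big)^{1/2}\geq\inf_{\lambda'>0}\big\{(\lambda')^t s+(\lambda')^{t-2}(\sum\sigma_j^4)^{1/2}\big\}\asymp s^{1-t/2}\big(\sum\sigma_j^4\big)^{t/4}$,
after which raising to the power $2/t$ and multiplying by $s^{1-2/t}$ cancels the $s$-dependence and lands on $(\sum\sigma_j^4)^{1/2}$, as required. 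Your chain instead stops at $\rho^{2/t}$, which is not comparable to $(\sum\sigma_j^4)^{1/2}$ without this sparsity step, and the relation $\nu_{inter}^{2t}=\nu^t\lambda^t s_{inter}$ from Lemma~\ref{lem:relations_between_contributions} does not rescue it, since $s_{inter}$ is a quantity tied to the variance profile and not to the support size of $\theta$. Replacing the reversed $\ell^2$--$\ell^t$ comparison by the sparse H\"older bound, and inserting the infimum step above, closes the argument.
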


\begin{proof}[Proof of Lemma~\ref{lem:exp_var_T_ellt}]

\phantom{ }

\begin{enumerate}
    \item \textit{Analysis of $T_{fdense}$:} Under $H_0$, the relations $\mathbb E T_{fdense} = 0$ and $\mathbb V T_{fdense} = 2 \sum\limits_{j\leq i_*} \sigma_j^{a}$ are clear. 
    Under the alternative, assume that $\|\theta_{\leq i_*}\|_t^t \geq C\rho \geq C\left[\sum_{j \leq i_*} \sigma_j^{a}\right]^{t/a}$ by Lemma~\ref{lem:explicit_expr_nu_ellt}. 
    By the Hölder inequality, we have
    \begin{align*}
        \mathbb E T_{fdense} = \sum_{j\leq i_*} \frac{\theta_j^2}{\sigma_j^{2b}} \geq \frac{\left\|\theta_{\leq i_*}\right\|_t^2}{\left(\sum\limits_{j\leq i_*} \sigma_j^{a}\right)^{(2-t)/t}} \geq \Cbar^{2/t} \left(\sum\limits_{j\leq i_*} \sigma_j^{a}\right)^{1/2}.
    \end{align*}
    As for the variance, we have
    \begin{align*}
        \mathbb V T_{fdense} &= 4\sum_{j\leq i_*} \frac{\theta_j^2 \sigma_j^2}{\sigma_j^{4b}} + 2 \sum_{j \leq i_*} \sigma_j^{a} = 4\sum_{j \leq i_*} \frac{\theta_j^2}{\sigma_j^{2b}} \sigma_j^{a/2} + 2 \sum_{j \leq i_*} \sigma_j^{a} \\
        & \leq 4 \sqrt{\sum_{j \leq i_*} \frac{\theta_j^4}{\sigma_j^{4b}} \sum_{j\leq i_*} \sigma_j^{a}} + 2 \sum_{j \leq i_*} \sigma_j^{a} \quad\quad \text{ by the Cauchy-Schwarz inequality}\\
        & \leq 4\sum_{j \leq i_*} \frac{\theta_j^2}{\sigma_j^{2b}} \cdot \frac{1}{\Cbar^{2/t}} \mathbb E \left[T_{fdense}\right] + \frac{2}{\Cbar^4} \mathbb E^2 \left[T_{fdense}\right]\\
        & \leq c \, \mathbb E^2 \left[T_{fdense}\right].
    \end{align*}
    Note that we did not need to use the fact that $\|\theta\|_0 \leq s$ in the analysis of $T_{fdense}$.
    \item \textit{Analysis of $T_{inter}$:} Under $H_0$, the relations $\mathbb E T_{inter} = 0$ and $\mathbb V T_{inter} = 2 \displaystyle\sum_{j > i_*}^{j_*} \sigma_j^4$ are clear. 
    Now, assume that $\|\theta\|_0 \leq s$ and $\left\|\theta_{int}\right\|_t^t \geq \Cbar \rho \geq \Cbar \lambda^t s+ \dfrac{\Cbar}{\lambda^{2-t}} \displaystyle\left[\sum\limits_{j>i_*}^{j_*} \sigma_j^4\right]^{1/2}$. Note that
    \begin{align}
        \frac{1}{\Cbar^t}\left\|\theta_{int}\right\|_t^t \geq \lambda^t s+\dfrac{1}{\lambda^{2-t}} \displaystyle\left[\sum\limits_{j>i_*}^{j_*} \sigma_j^4\right]^{1/2} \hspace{-4mm} \geq \inf_{\lambda' >0} (\lambda')^t s+ \dfrac{1}{(\lambda')^{2-t}} \displaystyle\left[\sum\limits_{j>i_*}^{j_*} \sigma_j^4\right]^{1/2} \hspace{-3mm} = 2 s^{1-t/2} \left[\sum\limits_{j>i_*}^{j_*} \sigma_j^4\right]^{t/4}.\label{eq:relation_interm_ellt}
    \end{align}
    Now, by Hölder's inequality, we can bound from below the expectation term as follows
    \begin{align}
        \mathbb E T_{inter} = \sum_{j>i_*}^{j_*} \theta_j^2 = \left\|\theta_{int}\right\|_2^2 \geq \frac{\|\theta_{int}\|_t^2}{s^{1-2/t}} \geq 4 \Cbar^{2/t} \left[\sum\limits_{j>i_*}^{j_*} \sigma_j^4\right]^{1/2}.\label{eq:exp_T_inter_ellt}
    \end{align}
    As for the variance, we have $\mathbb V \left[T_{inter}\right] = 2 \displaystyle\sum_{j>i_*}^{j_*} \sigma_j^4 + 4 \displaystyle\sum_{j>i_*}^{j_*}\sigma_j^2 \theta_j^2$, and by the Cauchy-Schwarz inequality:
    \begin{align*}
        \sum_{j>i_*}^{j_*}\sigma_j^2\theta_j^2 \leq \sqrt{\sum_{j>i_*}^{j_*} \sigma_j^4 \sum_{j> i_*}^{j_*} \theta_j^4} \leq \frac{1}{4\Cbar^{2/t}} \mathbb E \big[T_{inter}\big] \|\theta_{int}\|_2^2 \leq \frac{1}{4\Cbar^{2/t}} \mathbb E^2 \big[T_{inter}\big] \quad \text{ by~\eqref{eq:exp_T_inter_ellt}.}
    \end{align*}
    Therefore, sill by~\eqref{eq:exp_T_inter_ellt}:
    \begin{align*}
        \mathbb V \left[T_{inter}\right] \leq \frac{1}{\Cbar^{4/t}}\mathbb E^2\big[T_{inter}\big] + 4\cdot \frac{1}{4\Cbar^{2/t}} \mathbb E^2\big[T_{inter}\big] \leq c \, \mathbb E^2\big[T_{inter}\big].
    \end{align*}    
\end{enumerate}
\end{proof}

\subsection{Proof of Lemma \ref{lem:explicit_expr_nu_ellt}}

\begin{proof}[Proof of Lemma~\ref{lem:explicit_expr_nu_ellt}]
By definition of $i_*$ from~\eqref{eq:def_istar_ellt} and $\nu$ from~\eqref{eq:def_nu_ellt}, we have
\begin{align}
    \Cnu &= \frac{A}{\nu^a}+ \frac{B}{\nu^{2t}} \label{eq:nu_A_B}\\
    \text{where } A &= \sum_{j\leq i_*} \sigma_j^a \quad \text{ and } \quad B = \sum_{j>i_*} \frac{\sigma_j^4}{\lambda^{4-2t}} \land \sigma_j^{2t} \exp\left(-\frac{\lambda^2}{\sigma_j^2} +1\right).\nonumber
\end{align}

Therefore, we have $\nu \geq \left(\Cnu^{-1} A\right)^{1/a} \lor \left(\Cnu^{-1} B\right)^{1/2t}$, hence:
\begin{align*}
     \nu^t \geq \frac{\Cnu^{-t/a}}{2} A^{t/a} + \frac{\Cnu^{-1/2}}{2} B^{1/2}.
\end{align*}

Setting $C_1 = (\Cnu^{-t/a}/2) \, \land \, (\Cnu^{-1/2}/4)$ yields the lower bound part of the claim. 
For the upper bound part, note that equation~\eqref{eq:nu_A_B} yields
\begin{align*}
    \frac{\Cnu}{2} \leq \frac{A}{\nu^a} \lor \frac{B}{\nu^{2t}},
\end{align*}
therefore, we have $\nu \leq (2 A/\Cnu)^{1/a}$ or $\nu \leq (2 B/ \Cnu)^{1/2t}$ so that 
$$\nu^t \leq (2 A/\Cnu)^{t/a} \lor (2 B/\Cnu)^{1/2} \leq (2 A/\Cnu)^{t/a} + (2 B/\Cnu)^{1/2}.$$

Taking $C_2 = (2\Cnu)^{-t/a} \lor (2\Cnu)^{-1/2} $ concludes the proof.
\end{proof}
\section{Proof of Theorem~\ref{th:rate_Linfty}}\label{Appendix:Proof_Linfty}

\begin{proof}[Proof of Lemma~\ref{lem:equiv_Linfty_Lt}]
In this proof, we let $\epsilon^*_t(s) = \epsilon^*(1,t,\Sigma)$ for any $s\in [d]$ and $t \in [1,\infty]$, and we also define $\epsilon^*_t = \epsilon^*_t(1)$.

\begin{enumerate}

    \item We first prove that     $\epsilon^*_\infty(s) = \epsilon_\infty^*$. 
    The inequality $\epsilon^*_\infty(s) \geq \epsilon^*_\infty$ is clear. 
    To prove the converse bound, we define $\Pi$ as a prior over the parameter space $$\Theta(\epsilon^*_\infty(s),s,\infty) = \left\{\theta \in \R^d ~\big|~ \|\theta\|_\infty \geq \epsilon^*_\infty(s) \text{ and }
    \|\theta\|_0 \leq s\right\}$$ and denote by $\mathbb P_{\Pi} = \mathbb E_{\theta\sim \Pi}\left[\mathcal{N}(\theta, \Sigma)\right]$ the corresponding mixture induced by $\Pi$. 
    Assume moreover that $\operatorname{TV}(\mathbb P_{\Pi}, \mathbb P_0) \leq 1-\eta$. 
    For any $\theta \in \R^d$, we let $i(\theta) = \min \left\{\arg\max_{j \in [d]} |\theta_j| \right\}$ denote the smallest index $i$ such that $|\theta_i| = \|\theta\|_\infty$. 
    We also let $\phi(\theta) = \left(\theta_j \mathbb 1_{j=i(\theta)}\right)_{j\in[d]}$ be the vector obtained by zeroing out all coordinates of $\theta$ except for the first extremal one. 
    Note that $\phi(\theta)$ is always $1$-sparse and that $\|\phi(\theta)\|_\infty = \|\theta\|_\infty$ by construction.
    We consider the new prior $\phi \Pi$ by 
    $$P_{\phi\Pi} = \mathbb E_{\theta\sim \Pi} \left[\mathcal{N}(\phi(\theta), \Sigma)\right].$$
    Therefore, $\phi\Pi$ is a prior over $\Theta(\epsilon^*(s), 1, \infty)$ and clearly, $\operatorname{TV}(\mathbb P_{\phi\Pi}, \mathbb P_0) \leq \operatorname{TV}(\mathbb P_{\Pi}, \mathbb P_0) \leq 1-\eta$, which proves by~\eqref{eq:conclusion_LB} that $\epsilon^*_\infty(s) \leq \epsilon^*_\infty$.
    
    \item Now, we prove that $ \epsilon^*_\infty = \epsilon^*_t$. We let $\psi_\infty$ be the test defined in~\eqref{eq:test_Linfty} and $\psi_t$ be the test defined in~\eqref{eq:def_tests_ellt} for $t \in [1,2]$, or in~\eqref{eq:def_tests_>2} for $t \in[2,\infty)$. 
    Let $\theta \in \R^d$ such that $\|\theta\|_0 = 1$ and $\|\theta\|_\infty > \epsilon^*_t$. 
    Then we also have $\|\theta\|_t > \epsilon^*_t$, hence 
    \begin{align*}
    &\mathbb P_0\left(\psi_t = 1\right) + \mathbb P_{\theta}\left(\psi_t = 0\right) \leq \eta\\
    \implies  R(\psi_t, \epsilon^*_t, 1,\infty,\Sigma) & = \mathbb P_{0} \left(\psi_t = 1\right) + \sup \left\{\mathbb P_{\theta} \left(\psi_t = 0\right)~ \Big| ~ \theta \in \Theta(\epsilon^*_t,1,\infty)\right\}\leq \eta\\
    \implies R^*(\epsilon^*_t, 1,\infty, \Sigma) \leq & \eta \implies \epsilon^*_t \geq \epsilon^*_\infty.
    \end{align*}
    The proof of the converse bound $\epsilon^*_t \leq \epsilon^*_\infty$ is analogous.
\end{enumerate}
\end{proof}

\subsection{Upper bound for $t = \infty$}\label{subapp:UB_Linfty}
We recall the expression of our test $\psi^* = \mathbb 1\Big\{\exists j \in [d]: |X_j| > C(\lambda+\nu)\Big\}$, where $\lambda$ and $\nu$ are respectively defined as in~\eqref{eq:beta_t>2} and~\eqref{eq:implicit_separation_ellt_>2} by taking $t'=2$ and $s' = 1$ in these equations. 
We first show that $\forall j \in [d]: \sigma_j \leq e(\lambda+\nu)$. 
If $\sigma_1 \leq \lambda$, then the result is clear. 
Otherwise, we have $\nu = \left(\sum_{j=1}^d \sigma_j^4 e^{-\lambda^2/\sigma_j^2}\right)^{1/4} \geq \sigma_1 e^{-\lambda^2/\sigma_j^2} \geq \sigma_1/e\geq \sigma_j/e$. 
We now set $\rho = \lambda + \nu$ for readability.

\begin{itemize}
    \item Under $H_0$, taking $C \geq \frac{8}{\sqrt{2\pi}e}$, we have by Lemma 4 from~\cite{collier2017minimax}
\vspace{-2mm}
\begin{align*}
    \mathbb P_0 \left(\psi^* = 0\right) &= \mathbb P_{0} \left(\forall j \in [d]: |X_j|\leq C\rho\right) = \prod_{j=1}^d \left(1 - \mathbb P\left(|X_j|> C\rho\right)\right)\\
    & \geq 
        \prod_{j=1}^d \left(1 - \frac{4}{\sqrt{2\pi} C\rho/\sigma_j} \exp\left(-C^2\rho^2/\sigma_j^2\right)\right) \\
    & \geq 
        \exp\left\{-\sum_{j=1}^d \frac{40}{\sqrt{2\pi} C\rho/\sigma_j} \exp\left(-C^2\rho^2/\sigma_j^2\right)\right\} \quad \text{ using that } 1-x \geq e^{-10x} \text{ for } x \in \left[0,\frac{1}{2}\right]\\
    & \geq 
        \exp\left\{-c\right\} \geq 1- 2c, \quad\quad \text{ by Lemma~\ref{lem:relation_tau_rho}},
\end{align*}
for any small constant $c>0$, provided that $C$ large enough. 
Note that in Lemma~\ref{lem:relation_tau_rho}, since $s=1$, we have $\uptau = \rho$, where the notation $\rho$ represents $\rho^2$ in the present proof.

\item Under $H_1$: Assume that $\|\theta\|_\infty \geq C' (\lambda + \nu)$ for some large enough $C'$ depending on $\eta$. 
Let $j = \arg\max_{i=1}^d |\theta_i|$ and without loss of generality, assume that $\theta_j>0$. Then, writing $X_j = \theta_j + \xi_j$, we get
\begin{align*}
    \mathbb P\left(\psi^* = 1\right) &\geq \mathbb P\left(|X_j|> C\rho\right) \geq \mathbb P\left(X_j> C\rho\right)  \geq \mathbb P\left(\xi_j> C\rho - \theta_j\right)\\
    &\geq \mathbb P\left(\xi_j> (C - C')\rho\right) \geq \mathbb P\left(\mathcal{N}(0,1)> \frac{1}{e}\left(C-C'\right)\right) \geq 1-c
\end{align*}
for any small constant $c>0$, provided that, for fixed $C$, the constant $C'$ is large enough.
At the last line, we used the fact that $\sigma_j \leq e\rho$ for any $j\in[d]$.

\end{itemize}

\subsection{Proof of Theorem~\ref{th:rate_Linfty_explicit}}\label{app:proof_th_infty_explicit}

\textit{Lower bound}.
Let $\mathbb{P}_{0}$ denote the distribution $N(0, \Sigma)$. Set $\rho=\max _{i} \sigma_{i} \sqrt{\log (1+i)}$ and define the prior $\pi$ on $\mathbb{R}^{d}$ as follows. First, define
$$
i^{*}= \max \big\{\arg \max _{i} \sigma_{i} \sqrt{\log (1+i)}\big\}
$$

Draw a random index $I \sim \operatorname{Uniform}\left(\left\{1, \ldots, i^{*}\right\}\right)$ and set $\theta=c \rho e_{I}$ where $\left\{e_{1}, \ldots, e_{d}\right\}$ denotes the standard basis of $\mathbb{R}^{d}$. Indeed, $\theta \sim \pi$ implies $\|\theta\|_{0}=1$ and $\|\theta\|_{\infty}=c \rho$ and so $\pi$ is properly supported. Let $\mathbb{P}_{\pi}=\int \mathbb{P}_{\theta} \pi(d \theta)$ denote the mixture induced by $\pi$, where $\mathbb{P}_{\theta}$ is the distribution $N(\theta, \Sigma)$. 
Then, by Lemma 23 from~\cite{liu2021minimax}, we obtain
$$
1+\chi^{2}\left(\mathbb{P}_{\pi}, \mathbb{P}_{0}\right)=\mathbb{E}_{\theta, \theta^{\prime}}\left(\exp \left(\sum_{i=1}^{d} \frac{\theta_{i} \theta_{i}^{\prime}}{\sigma_{i}^{2}}\right)\right)=\mathbb E_{\theta, \theta^{\prime}}\left(\exp \left(c^{2} \rho^{2} \sigma_{I}^{-2} \mathbb{1}\left\{I=I^{\prime}\right\}\right)\right).
$$

Since $I \in\left\{1, \ldots, i^{*}\right\}$, we have $\sigma_{I} \geq \sigma_{i^{*}}$. Noting $\rho=\sigma_{i^{*}} \sqrt{\log \left(1+i^{*}\right)}$, we have
$$
\begin{aligned}
\mathbb{E}_{\theta, \theta^{\prime}}\left(\exp \left(c^{2} \rho^{2} \sigma_{I}^{-2} \mathbb{1}_{\left\{I=I^{\prime}\right\}}\right)\right) & \leq \mathbb{E}_{\theta, \theta^{\prime}}\left(\exp \left(c^{2} \log \left(1+i^{*}\right) \mathbb{1}\left\{I=I^{\prime}\right\}\right)\right) \\
& =\left(1-\frac{1}{i^{*}}+\frac{1}{i^{*}} e^{c^{2} \log \left(1+i^{*}\right)}\right)  \leq1+2c^{2}, 
\end{aligned}
$$
for some sufficiently small $c$.
Hence $\chi^{2}\left(\mathbb{P}_{\pi}, \mathbb{P}_{0}\right) \leq 2c^{2}$ can be made arbitrarily small.

\textit{Upper bound}. Note we can write $\psi=\max \left\{\psi_{+}, \psi_{-}\right\}$where
$$
\begin{aligned}
& \psi_{+}=\mathbb{1}\left\{\max _{i} X_{i}>C \rho\right\}, \\
& \psi_{-}=\mathbb{1}\left\{\max _{i}-X_{i}>C \rho\right\} .
\end{aligned}
$$

Therefore, an application of union bound yields $\mathbb{P}_{\theta}(\psi=1) \leq \mathbb{P}_{\theta}\left(\psi_{+}=1\right)+\mathbb{P}_{\theta}\left(\psi_{-}=1\right)$. Under the null hypothesis $\theta=0$, Lemma 2.3 of~\cite{van2017spectral} gives $\mathbb{E}_{0}\left(\max _{i} X_{i}\right)=\mathbb{E}_{0}\left(\max _{i}-X_{i}\right) \leq K \rho$ where $K>0$ is a universal constant. Markov's inequality yields
$$
\mathbb{P}_{0}\left(\psi_{+}=1\right)+\mathbb{P}_{0}\left(\psi_{-}=1\right) \leq \frac{2 K \rho}{C \rho}=\frac{2 K}{C}.
$$

The choice $C=(4 K/\eta)$ implies $\mathbb{P}_{0}(\psi=1) \leq \eta/2$.
Fix $\theta \in \mathbb{R}^{d}$ with $\|\theta\|_{\infty} \geq C^{\prime} \rho$. Note $\max _{i} \theta_{i} \geq C^{\prime} \rho$ or $\max _{i}-\theta_{i} \geq C^{\prime} \rho$. Suppose $\max _{i} \theta_{i} \geq C^{\prime} \rho$. Let $j^{*}=\arg \max _{j} \theta_{j}$. Writing $X_{i}=\theta_{i}+\xi_{i}$, consider
$$
\begin{aligned}
\mathbb{P}_{\theta}(\psi=0) & \leq \mathbb{P}_{\theta}\left(\psi_{+}=0\right)  =\mathbb{P}_{\theta}\left(\max _{i} X_{i} \leq C \rho\right)  \leq \mathbb{P}_{\theta}\left(X_{j^{*}} \leq C \rho\right) \\
& =\mathbb{P}_{\theta}\left(\theta_{j^{*}}+\xi_{j^{*}} \leq C \rho\right) \leq \mathbb{P}_{\theta}\left(\xi_{j^{*}} \leq\left(C-C^{\prime}\right) \rho\right) \\
& \leq \frac{\eta}{2}
\end{aligned}
$$
by choosing $C^{\prime}$ sufficiently large. The case $\max _{i}-\theta_{i} \geq C^{\prime} \rho$ is handled by essentially the same argument (examining $\psi_{-}$instead of $\left.\psi_{+}\right)$.

\section{Proof of examples}\label{app:proof_examples}

\subsection{Isotropic case}\label{subsec:proof_isotropic}

In this Subsection, assume that $\sigma_1 = \dots = \sigma_d = \sigma$. 
Assume that $t \geq 2$. We have by equation~\eqref{eq:beta_t>2}:
\begin{align*}
    s/2 = \frac{d \sigma^{t} \exp\left(-\beta/\sigma^2\right)}{\sqrt{d \sigma^{2t} \exp\left(-\beta/\sigma^2\right)}} = \sqrt{d}\exp\left(-\beta/2\sigma^2\right), \quad \text{ so that } \beta = 2 \sigma^2 \log\left(\frac{2\sqrt{d}}{s}\right).
\end{align*}

If $s \geq 2\sqrt{d}$, then $\lambda = 0$ so that $\nu^t = \sqrt{d} \sigma^t$ and $\epsilon^*(s,t,\sigma^2 I_d) \asymp \sigma d^{1/2t}$. 
Otherwise, by the definition of~$\nu$ in~\eqref{eq:implicit_separation_ellt_>2}:
$$ \nu^t = \sqrt{d \sigma^{2t} \exp\left(-\beta/\sigma^2\right) } = \sqrt{d} \sigma^t \frac{s}{2\sqrt{d}} = \sigma^t s/2,$$
so that $\epsilon^*(s,t,\sigma^2 I_d)^t \asymp \nu^t + \lambda^t s \asymp \sigma^t s \log^{t/2}\left(\sqrt{d}/s\right)$.
\vspace{2mm}

Now, assume that $t \leq 2$. If $s=d$, then from~\eqref{eq:def_lambda_ellt} we have $\lambda = 0$ and $\nu = \|\sigma\|_a$. 
Otherwise, we have $i_*=0$. If $s \geq \sqrt{d}$, we have $\lambda = \left(\sum_{j=1}^d \sigma^4\right)^{1/4}/\sqrt{s} = \sigma d^{1/4}/\sqrt{s}$ and $j_*=d$. 
Therefore, $\epsilon^*(s,t,\sigma^2 I_d)^t \asymp \nu^t + \lambda^t s \asymp \sigma^t d^{t/4} s^{1-t/2}$.
If $s < \sqrt{d}$, then $j_*=0$ and the analysis follows the same lines as in the case $t \geq 2:$ we have
\begin{align*}
    &s/2 = 
        \sum_{j=1}^d \frac{\sigma^t}{\nu^t}\exp \left(\frac{-\lambda^2}{\sigma^2}\right) 
        = 
        d\frac{\sigma^t}{\nu^t}\exp \left(\frac{-\lambda^2}{\sigma^2}\right) 
        \quad \text{ and } \quad 
    \nu^t= 
        \sqrt{\sum_{j=1}^d \sigma^{2t} \exp\left(\frac{-\lambda^2}{\sigma^2}\right)} 
        = 
        \sqrt{d} \sigma^{t} \exp\left(-\frac{\lambda^2}{2\sigma^2}\right)\\
        &\text{ i.e. } \quad s/2 = \sqrt{d}\exp\left(-\frac{\lambda^2}{2\sigma^2}\right)
        \quad
        \text{ hence }
        \quad
        \lambda^2 = 2 \sigma^2 \log\left(\frac{2\sqrt{d}}{s}\right).
\end{align*}

Therefore, we have $\epsilon^*(s,t,\sigma^2 I_d)^t \asymp \sigma^t s \log^{t/2}\left(2\sqrt{d}/s\right)$.

\subsection{Polynomially increasing variances}
Assume that $\alpha \geq 1$ and $t \geq 2$. 
We have 
\begin{align*}
    s/2 = \frac{\sum_{j=1}^d j^{\alpha t} \exp\left(-\lambda^2/\sigma_j^2\right)}{\sqrt{\sum_{j=1}^d j^{2\alpha t} \exp\left(-\lambda^2/\sigma_j^2\right)}} \geq \frac{\sum_{j\geq d/2} \left(\frac{d}{2}\right)^{\alpha t} \exp\left(-\lambda^2/d^{2\alpha}\right)}{\sqrt{ 2 \sum_{j \geq d/2} d^{2\alpha t} \exp\left(-\lambda^2/(d/2)^{2\alpha}\right)}} \asymp_\alpha \sqrt{d} \exp\left(-c \lambda^2/d^{2\alpha}\right).
\end{align*}
Similarly:
\begin{align*}
    s/2 = \frac{\sum_{j=1}^d j^{\alpha t} \exp\left(-\lambda^2/\sigma_j^2\right)}{\sqrt{\sum_{j=1}^d j^{2\alpha t} \exp\left(-\lambda^2/\sigma_j^2\right)}} \leq \frac{2\sum_{j\geq d/2} d^{\alpha t} \exp\left(-\lambda^2/(d/2)^{2\alpha}\right)}{\sqrt{\sum_{j \geq d/2} (d/2)^{2\alpha t} \exp\left(-\lambda^2/d^{2\alpha}\right)}} \asymp_\alpha \sqrt{d} \exp\left(-c' \lambda^2/d^{2\alpha}\right).
\end{align*}
Therefore, we have $\lambda \asymp_\alpha d^\alpha \sqrt{\log(d/s^2)}$ if $s\leq C \sqrt{d}$ and $\lambda = 0$ otherwise. 
Therefore, if $s \geq C \sqrt{d}$, then we have $\nu^{2t} = \sum_{j=1}^d j^{2\alpha} \asymp d^{2\alpha t + 1}$, so that $\epsilon^*(s,t,\Sigma) \asymp \nu \asymp d^\alpha d^{1/2t} = \sigma_{\max} d^{1/2t}$, where $\sigma_{\max} = \sigma_d = \max_j \sigma_j$. Otherwise,
\begin{align*}
    \nu^{2t} = \sum_{j=1}^d j^{2\alpha} \exp\left(-\lambda^2/j^{2\alpha}\right) \leq 2\sum_{j \geq d/2} d^{2\alpha t} \exp\left(-\lambda^2/d^{2\alpha}\right) \asymp_\alpha d^{2\alpha t +1 } \left(\frac{s^2}{d}\right) \leq d^{2\alpha t} s^2,
\end{align*}
therefore, $\epsilon^*(s,t,\Sigma) \asymp \lambda s^{1/t} \asymp d^{\alpha} s^{1/t} \sqrt{\log(d/s^2)}$.

\subsection{Exponentially decreasing variances}
Assume that $t \geq 2$. 
If $\alpha^{td} \geq \alpha^t/4$, then we are back to the isotropic case and $\epsilon^*(s,t,\Sigma) \asymp \epsilon^*(s,t,\alpha I_d)$. 
Otherwise, we have $\alpha^{d} < \alpha/4$. 
Let $j_0 = \min \left\{j ~\big|~ \alpha^j \leq \alpha/2\right\}$ and $j_1 = \min \left\{j ~\big|~ \alpha^j \leq \alpha/4\right\}$. 
Then,
\begin{align*}
    \sum_{j>j_1} \sigma_j^t \exp\left(-\beta/\sigma_j^2\right) \leq \alpha^{t (j_1+1)} \frac{1-\alpha^{t(d-j_1)}}{1-\alpha^t} \exp\left(-16 \beta/\alpha^2\right) 
    \leq  \frac{\alpha^t/4}{1-\alpha^t} \exp\left(-16 \beta/\alpha^2\right).
\end{align*}

Moreover, 
\begin{align*}
    \sum_{j < j_0} \sigma_j^t \exp\left(-\beta/\sigma_j^2\right) 
    \geq \sum_{j < j_0} \sigma_j^t \exp\left(-4\beta/\alpha^2\right) 
    =  \frac{\alpha^t - \alpha^{tj_0}}{1-\alpha^t} \exp\left(-4 \beta/\alpha^2\right)
    \geq  \frac{\alpha^t/2}{1-\alpha^t} \exp\left(-4 \beta/\alpha^2\right).
\end{align*}

Therefore, we always have $\sum\limits_{j=1}^d \sigma_j^t \exp\left(-\beta/\sigma_j^2\right) \asymp_t \sum\limits_{j\leq j_1} \sigma_j^t \exp\left(-\beta/\sigma_j^2\right)$. 
Proceeding similarly, we can also get $\sum\limits_{j=1}^d \sigma_j^{2t} \exp\left(-\beta/\sigma_j^2\right) \asymp_t \sum\limits_{j\leq j_1} \sigma_j^{2t} \exp\left(-\beta/\sigma_j^2\right)$. 
Now, for $j \leq j_1$, we have $\sigma_j^t \in [\alpha^t/4^t,\alpha^t]$, so that:
\begin{align*}
    s/2 \asymp \frac{\sum\limits_{j\leq j_1} \sigma_j^t \exp\left(-\beta/\sigma_j^2\right)}{\sqrt{\sum\limits_{j \leq j_1} \sigma_j^{2t} \exp\left(-\beta/\sigma_j^2\right)}} \asymp \sqrt{j_1} \exp\left(-\beta/C'\alpha^2\right), \quad \text{ hence } \beta = C'\alpha^2 \log\left(C\sqrt{j_1}/s\right),
\end{align*}
for some constants $C,C'$ depending only on $t$. 
Moreover, $\nu^{2t} \asymp \sum\limits_{j \leq j_1} \sigma_j^{2t} \exp\left(-\beta/\sigma_j^2\right)$. 
We exactly recover the analysis of the isotropic case from Subsection~\ref{subsec:proof_isotropic}.
In other words, it holds that $\epsilon^*(s,t,\Sigma) \asymp \epsilon^*(s,t,\alpha^2 I_{j_1})$, where by definition of $j_1$, we have $j_1 \asymp \log^{-1}(1/\alpha)$.

\subsection{Variances decreasing at least exponentially}
Assume that $t \geq 2$. 
If $\sigma_d \geq 1/4$, then we are back to the isotropic case and $\epsilon^*(s,t,\Sigma) \asymp \epsilon^*(s,t, I_d)$. 
Otherwise, we have $\sigma_d < 1/4$. 
Let $j_0 = \min \left\{j ~\big|~ \sigma_j < 1/2\right\}$. 
Note that we have $1 \leq j_0$ since $\sigma_0 = 1$.
We will use the following relations, which are due to the convexity of $\phi$, and to the fact that $\phi$ is increasing and satisfies $\phi(0)=0$:
\begin{align}
    &\phi(j_0 +\ell)  \geq \phi(j_0) + \ell \phi'(j_0), \quad \forall \ell \geq 0,\label{eq:phi_cvx_1}\\
    &\phi(j) \leq \frac{\phi(j_0)}{j_0} \!\cdot \! j, \quad \forall j \leq j_0,\label{eq:phi_cvx_2}\\
    &\phi'(j_0) \geq \frac{\phi(j_0)}{j_0}.\label{eq:phi_cvx_3}
\end{align}

The goal will be to prove that $\sum_{j=1}^d \sigma_j^t \exp\left(-\beta/\sigma_j^2\right) \asymp_t \sum_{j < j_0} \sigma_j^t \exp\left(-\beta/\sigma_j^2\right)$, 
in other words, that we can only restrict our analysis to $\{0,\dots,j_0\!-\!1\}$, which is the largest set where the variances can be considered ``almost constant'': $\forall j \in \{0,\dots,j_0\!-\!1\}: \sigma_j \in [1/4,1]$. 
To do so, our strategy is to justify the following relation, which clearly implies the latter one: 
\begin{align}
    \sum_{j\geq j_0} \sigma_j^t \exp\left(-\beta/\sigma_j^2\right) \lesssim \sum_{j \leq j_0} \sigma_j^t \exp\left(-\beta/\sigma_j^2\right). \label{eq:strategy}
\end{align}

To control the left hand side of~\eqref{eq:strategy}, we can write:
\begin{align*}
    \sum_{j \geq j_0} \sigma_j^t \exp\left(-\beta/\sigma_j^2\right) &= \sum_{j \geq j_0} \exp\left(-t\phi(j) -\beta/\sigma_j^2\right)\\
    &\leq \exp\left(-t\phi(j_0) - \beta/\sigma_{j_0}^2 \right)\sum_{\ell \geq0} \exp(-t \ell \phi'(j_0) ) \\
    & \leq \frac{\exp\left(-t\phi(j_0) - \beta/\sigma_{j_0}^2\right)}{1 - \exp(-t \phi'(j_0))}\\
    & < \frac{\exp\left(- \beta/\sigma_{j_0}^2\right)}{1 - \exp(-t \phi'(j_0))} \cdot 2^{-t}, \text{ by definition of } j_0. 
\end{align*}

To control the right-hand side of~\eqref{eq:strategy}, we can write
\begin{align*}
    \sum_{j \leq j_0} \sigma_j^t \exp\left(-\beta/\sigma_j^2\right) 
    &\geq \exp\left(-\beta/\sigma_{j_0}^2\right) \sum_{j \leq j_0} \exp\left(- t\frac{\phi(j_0)}{j_0} \!\cdot \! j\right) \\
    & = \exp\left(-\beta/\sigma_{j_0}^2\right) \frac{1 - \exp\left(- t\frac{\phi(j_0)}{j_0} \!\cdot \! j_0\right)}{1-\exp\left(- t\frac{\phi(j_0)}{j_0}\right)} \\
    &\geq \exp\left(-\beta/\sigma_{j_0}^2\right) \frac{1-2^{-t}}{1-\exp\left(- t\frac{\phi(j_0)}{j_0}\right)} \text{ by definition of } j_0\\
    & \geq  \exp\left(-\beta/\sigma_{j_0}^2\right) \frac{1-2^{-t}}{1-\exp\left(- t \phi'(j_0)\right)} \text{ by \eqref{eq:phi_cvx_3}}. 
\end{align*}

Therefore, the relation~\eqref{eq:strategy} is proven. 
Proceeding similarly, we can also get $\sum\limits_{j=1}^d \sigma_j^{2t} \exp\left(-\beta/\sigma_j^2\right) \asymp_t \sum\limits_{j\leq j_0} \sigma_j^{2t} \exp\left(-\beta/\sigma_j^2\right)$. 
Now, it follows that
\begin{align*}
    s/2 \asymp \frac{\sum\limits_{j\leq j_0} \sigma_j^t \exp\left(-\beta/\sigma_j^2\right)}{\sqrt{\sum\limits_{j \leq j_0} \sigma_j^{2t} \exp\left(-\beta/\sigma_j^2\right)}} \asymp \sqrt{j_0} \exp\left(-C'\beta \right), 
    \quad \text{ hence } \beta = C'\log\left(C\sqrt{j_0}/s\right),
\end{align*}
for some constants $C,C'$ depending only on $t$. 
Moreover, $\nu^{2t} \asymp \sum_{j \leq j_0} \sigma_j^{2t} \exp\left(-\beta/\sigma_j^2\right)$. 
We exactly recover the analysis of the isotropic case from Subsection~\ref{subsec:proof_isotropic}.
In other words, it holds that $\epsilon^*(s,t,\Sigma) \asymp \epsilon^*(s,t, I_{j_0})$, where by definition of $j_0$.
\section{Heuristic derivation of $T_{fdense}$ and $T_{inter}$}\label{appendix:derivation_weights}

In this Section, we explain how the optimization problem~\eqref{eq:optim_pb} can be leveraged to guess the weights in the chi-square type test statistics $T_{fdense}$ and $T_{inter}$, from~\eqref{eq:def_Tfdense_ellt} and~\eqref{eq:def_Tinter_ellt}, and in the test statistic  $T_{dense}$ when $t \geq 2$ from~\eqref{eq:def_test_stat_>2}. 
    In the dense case when $t\leq 2$, we look for test statistics $T_{fdense}$ and $T_{inter}$ that are of chi-squared type, meaning that they can be written in the following form
    $$T_{fdense} = \sum_{j\leq i_*}\omega_j (X_j^2 - \sigma_j^2) \quad \quad \text{ and } \quad\quad T_{inter} = \sum_{j = i_*+1}^{j_*} \omega_j (X_j^2 - \sigma_j^2),$$
    and the goal is to derive the weights $\omega_j$. 
    The intuition is that these test statistics are well-suited for playing against the optimal prior, whose parameters solve the optimization program (13):
    $$
    \max_{\gamma, \pi} ~~ \sum\limits_{j=1}^d \gamma_j^t \pi_j ~~ \text{ s.t. } ~~ \begin{cases}\sum_{j=1}^d \pi_j = s/2\\
    \pi_j \in [0,1] ~~ \forall j \in [d]\\
    \sum_{j=1}^d \pi_j^2 \sinh^2\left(\frac{\gamma_j^2}{2\sigma_j^2}\right) \leq c',\end{cases}$$
    
    By definition of the cut-off $j_*$, we have $\gamma_j \lesssim \sigma_j$ for any $j\leq j_*$. 
    Therefore, on the dense part $j \leq j_*$, the indistinguishability condition can be linearized as follows
    \begin{align}
        \sum_{j \leq j_*}\pi_j^2 \, \frac{\gamma_j^4}{\sigma_j^4} \leq c. \label{linearization}
    \end{align}
    
    Now, assume that we are given an observation from the prior $X\sim N(\theta, \Sigma)$ where $\theta_j = \pm b_j \gamma_j$ and $b_j \sim \operatorname{Ber}(\pi_j)$, and let us show that if $c$ is too large, the above test statistics reject the null hypothesis. 
    We can compute the expectation of our test statistic $T_{fdense}$ as follows (and $T_{inter}$ can be analyzed analogously):
    $$\mathbb E\left[T_{fdense}\right] = \sum_{j\leq i_*} \omega_j \left(\big(\pm b_j \gamma_j + \sigma_j \xi_j \big)^2 - \sigma_j^2\right) = \sum_{j\leq i_*} \omega_j \cdot \pi_j \gamma_j^2.$$
    
    By identification with~\eqref{linearization}, this suggests taking
    \begin{align}
        \omega_j \cdot \pi_j \gamma_j^2 = \pi_j^2 \,\frac{\gamma_j^4}{\sigma_j^4} \quad \quad \text{ i.e. } \quad\quad \omega_j = \pi_j \frac{\gamma_j^2}{\sigma_j^4}. \label{derivation_omega}
    \end{align}
    
    The expression of the optimal weights $\omega_j$'s is then given as follows.
    \begin{itemize}
        \item If $j \leq i_*$, then since the solution to the optimization problem ensures that $\pi_j = 1$ and $\gamma_j = c\frac{\sigma_j^{a/t}}{\nu^{a/t-1}}$, we obtain $\omega_j \propto \sigma^{-2b}$, where we recall that $a = 4t/(4-t)$ and $b= (4-2t)/(4-t)$.
        \item If $i_*<j\leq j_*$, then we also know that $\pi_j = \sigma_j^4/(\nu^t \lambda^{4-t})$ and $\gamma_j = c \lambda$ so that $\omega_j =c' \lambda^{2-t}$ is independent of $j$. In other words, reweighting is not helpful. 
    \end{itemize}
    
    To understand why taking $\omega_j$ as in~\eqref{derivation_omega} is a good idea, note that with this choice of $\omega_j$, the expectation of $T$ under the prior is $\mathbb E[T] \approx c$ (if the indistinguishability condition is saturated), whereas the variance under the prior can be computed as follows.
    \begin{align*}
        \mathbb V [T_{fdense}] &= \sum_{j \leq i_*} \omega_j^2 \left\{\mathbb E\left[(\pm b_j \gamma_j + \sigma_j \xi_j)^4\right] - \mathbb E^2\left[(\pm b_j \gamma_j + \sigma_j \xi_j)^2\right]\right\}\\
        & = \sum_{j\leq i_*} \omega_j^2 \left\{\pi_j \gamma_j^4 + 6 \pi_j \gamma_j^2 \sigma_j^2 + \sigma_j^4 - \left(\pi_j \gamma_j^2 + \sigma_j^2\right)^2\right\}\\
        & = \sum_{j\leq i_*} \omega_j^2 \left\{\pi_j(1-\pi_j) \gamma_j^4 + 4 \pi_j \gamma_j^2 \sigma_j^2 \right\}\\
        & \asymp \sum_{j\leq i_*} \omega_j^2 \pi_j \gamma_j^2 \sigma_j^2 \quad \text{ since $\gamma_j \lesssim \sigma_j$, by definition of $j_*$}\\
        & = \sum_{j\leq i_*} \pi_j^3 \frac{\gamma_j^6}{\sigma_j^6} = \bigg\|\Big(\pi_j \frac{\gamma_j^2}{\sigma_j^2}\Big)_{j\leq i_*}\bigg\|_3^3 \leq \bigg\|\Big(\pi_j \frac{\gamma_j^2}{\sigma_j^2}\Big)_{j\leq i_*}\bigg\|_2^3 \leq c^{3/2}.
    \end{align*}
    
    Now, if $c\ll 1$, then our lower bounds ensure that no test can distinguish between the prior and $H_0$. 
    Conversely, if $c\gg 1$, then $c^{3/2}\ll c^2 = \mathbb E^2[T_{fdense}]$ so that $T_{fdense}$ can distinguish between the two. 
    The same line of reasoning can also be applied to the test statistic $T_{dense}$ when $t\geq 2$. 
    Indeed, if we guess that $T_{dense}$ should be of the form
    $$T_{dense} = \sum_{j\leq j_*} \omega_j \left(|X_j|^t - \mathbb E_{H_0}|X_j|^t\right),$$
    then we can compute its expectation under the prior as follows. 
    Assume that we observe data point from the optimal prior $X \sim N(\theta, \Sigma)$ where $\theta_j = \pm b_j \gamma_j$ and $b_j \sim \operatorname{Ber}(\pi_j)$. 
    For $t \geq 2$, we always have $\gamma_j \asymp \sigma_j$ when $j \leq j_*$.
    Therefore, we get
    \begin{align*}
        \mathbb E T_{dense} &= \sum_{j\leq j_*} \pi_j \omega_j \mathbb E\left[|\gamma_j + \sigma_j \xi_j|^t - |\sigma_j \xi_j|^t \right] \\
        &= \pi_j \omega_j \mathbb E\left[|c \sigma_j + \sigma_j \xi_j|^t - |\sigma_j\xi_j|^t \right]\\
        & \asymp \pi_j \omega_j \sigma_j^t.
    \end{align*}
    
    At the last line, we used $\mathbb E\left[|N(c,1)|^t - |N(0,1)|^t\right] \asymp 1$ if $c\asymp 1$. 
    By identification with~\eqref{linearization}, this suggests taking
    $$\pi_j \omega_j \sigma_j^t \asymp \pi_j^2 \frac{\gamma_j^4}{\sigma_j^4} \Longleftrightarrow \omega_j \asymp \pi_j \sigma_j^{-t} \text{ since } \gamma_j \asymp \sigma_j.$$
    
    Recalling that $\pi_j = \sigma_j^t/\nu^t$ when $j \leq j_*$, we conclude that $\omega_j$ should be independent of $j$, in other words, that reweighting is suboptimal.

\bibliographystyle{alpha}
\newcommand{\etalchar}[1]{$^{#1}$}

\end{document}